\newlength{\storeparskip}
\theoremstyle{plain}
\newtheorem{thm}{Theorem}[section]
\newtheorem{lem}[thm]{Lemma}
\newtheorem{cor}[thm]{Corollary}
\newtheorem{prop}[thm]{Proposition}
\newtheorem{meta-conj}[thm]{Meta-Conjecture}
\theoremstyle{definition}
\newtheorem{defin}[thm]{Definition}
\newtheorem{defn}[thm]{Definition}
\newtheorem{notation}[thm]{Notation}
\newtheorem{rem}[thm]{Remark}
\newtheorem{rem2}[thm]{Remark}
\newtheorem{rem3}[thm]{Remark}
\newtheorem{exm}[thm]{Example}
\newtheorem{const}[thm]{Construction}
\newtheorem{notn}[thm]{Notation}
\newtheorem{thmx}{Theorem}
\definecolor{darkgreen}{rgb}{0,0.30,0} 
\definecolor{darkred}{rgb}{0.75,0,0}
\definecolor{darkblue}{rgb}{0,0,0.6} 
\definecolor{lightblue}{RGB}{179, 230, 255}
\def\makeautorefname#1#2{\expandafter\def\csname#1autorefname\endcsname{#2}}
\newcommand{\C}{\mathrm{C}}
\DeclareMathOperator{\Aut}{Aut}
\DeclareMathOperator{\id}{\textup{id}}
\DeclareMathOperator{\Sp}{Sp}
\DeclareMathOperator{\Cat}{\mathrm{Cat}}
\DeclareMathOperator{\Assoc}{Assoc} 
\DeclareMathOperator{\Fin}{\mathrm{Fin}}
\DeclareMathOperator{\Fun}{\mathrm{Fun}}
\DeclareMathOperator{\CAT}{\mathrm{CAT}}
\DeclareMathOperator{\Sym}{\mathrm{Sym}}
\DeclareMathOperator{\Set}{Set}
\newcommand{\Mon}{\mathrm{Mon}}
\DeclareMathOperator{\Env}{Env} 
\DeclareMathOperator{\rmO}{O}
\DeclareMathOperator{\THH}{THH}
\newcommand{\THHG}{\mathrm{THH}_{G}}
\newcommand{\TCG}{\mathrm{TC}_{G}}
\DeclareMathOperator{\THQ}{THQ}
\renewcommand{\TH}{\mathrm{TH}}
\DeclareMathOperator{\HH}{HH}
\DeclareMathOperator{\HC}{HC}
\DeclareMathOperator{\HP}{HP}
\DeclareMathOperator{\TP}{TP}
\DeclareMathOperator{\TC}{TC}
\DeclareMathOperator{\TD}{TD}
\DeclareMathOperator{\TQ}{TQ}
\newcommand{\TS}{\mathrm{T}\Sigma}
\DeclareMathOperator{\TO}{TO}
\DeclareMathOperator{\TB}{TB}
\newcommand{\TR}{\mathrm{T}\mathcal{R}}
\DeclareMathOperator{\THR}{THR}
\DeclareMathOperator{\rmK}{K}
\DeclareMathOperator{\TCR}{TCR}
\DeclareMathOperator{\THG}{\mathrm{TH}\mathbf{G}}
\DeclareMathOperator{\HG}{\mathrm{H}\mathbf{G}}
\DeclareMathOperator{\uTHG}{\mathrm{uTH}\mathbf{G}}
\DeclareMathOperator{\Pin}{Pin}
\DeclareMathOperator{\uTHH}{uTHH}
\DeclareMathOperator{\uTHR}{uTHR}
\DeclareMathOperator{\uTHQ}{uTHQ}
\newcommand{\CSG}{\mathrm{csGr}}
\newcommand{\Grparity}{\mathrm{Gr}_{/{C_2}}}
\DeclareMathOperator{\sd}{sd}
\DeclareMathOperator{\Hom}{Hom}
\DeclareMathOperator{\colim}{\operatorname{colim}}
\DeclareMathOperator{\hocolim}{\operatorname{hocolim}}
\newcommand{\sdual}{\cI}
\newcommand{\sbt}{\,\begin{picture}(-1,1)(0.5,-1)\circle*{1.8}\end{picture}\hspace{.05cm}}
\DeclareMathOperator{\per}{\textup{per}}
\newcommand{\op}{{\textup{op}}}
\newcommand{\bfC}{\mathbf{C}}
\newcommand{\bfB}{\mathbf{B}}
\newcommand{\bfG}{\mathbf{G}}
\newcommand{\bfGH}{\mathbf{G}\times \mathrm{H}}
\newcommand{\bfH}{\mathbf{H}}
\newcommand{\bfD}{\mathbf{D}}
\newcommand{\bfR}{\mathbf{R}}
\newcommand{\bfS}
{\mathbf{\Sigma}}
\newcommand{\bfGS}{\varphi \wr \mathbf{\Sigma}}
\newcommand{\bfCG}{\mathbf{C}\times G}
\newcommand{\bB}{\mathbb{B}}
\DeclareMathOperator{\bE}{\mathbb{E}}
\DeclareMathOperator{\bT}{\mathbb{T}}
\newcommand{\TT}{\mathbb{T}}
\newcommand{\OO}{\mathbb{O}}
\newcommand{\PP}{\mathbb{P}}
\newcommand{\cC}{{\mathscr{C}}}
\DeclareMathOperator{\cD}{\mathscr{D}}
\newcommand{\cL}{\mathscr{L}}
\DeclareMathOperator{\cM}{\mathscr{M}}
\DeclareMathOperator{\cO}{\mathscr{O}}
\newcommand{\cS}{\mathscr{S}}
\DeclareMathOperator{\cW}{\mathscr{W}}
 \DeclareMathOperator{\cI}{\mathcal{I}}
\newcommand{\rmT}{\mathrm{T}}
\newcommand{\gp}{\mathrm{gp}}
\newcommand{\Ind}{\mathrm{Ind}}
\newcommand{\Res}{\mathrm{Res}}
\newcommand{\SI}{\Sigma^\infty_+}
\DeclareMathOperator{\kF}{\mathbf{k}}
\DeclareMathOperator{\nF}{\mathbf{n}}
\newcommand{\mF}{\mathbf{m}}
\DeclareMathOperator{\pF}{\mathbf{p}}
\newcommand{\bfQ}{\mathbf{Q}}
\DeclareMathOperator{\Alg}{Alg}
\DeclareMathOperator{\Map}{Map}
\DeclareMathOperator{\Tor}{Tor}
\DeclareMathOperator{\inc}{inc}
\newcommand{\mD}{\langle m \rangle}
\newcommand{\nplusmD}{\langle n+m \rangle}
\newcommand{\nD}{\langle n \rangle}
\definecolor{seagreen}{RGB}{46,139,87}
\definecolor{newteal}{RGB}{38, 217, 207}
\definecolor{darkviolet}{RGB}{148,0,211}
\newcommand{\vperp}{\rotatebox[origin=c]{90}{$\perp$}}
\title[Topological $\Delta \bfG$-homology of rings with twisted $G$-action]{Topological $\Delta \bfG$-homology of rings\\ with twisted $G$-action}
\author[Angelini-Knoll]{Gabriel Angelini-Knoll}
\address{Department of Mathematics, Applied Mathematics, and Statistics, Case
Western Reserve University, Cleveland, OH, USA}
\email{gja39@case.edu}
\author[Merling]{Mona Merling} 
\address{Department of Mathematics, University of Pennsylvania, 209 33rd St, Philadelphia, PA 19104, USA}
\email{mmerling@math.upenn.edu}
\author[P\'eroux]{Maximilien P\'eroux}
\address{Department of Mathematics, Michigan State University, 619 Red Cedar Road, East Lansing, MI 48824, USA}
\email{peroux@msu.edu}
\begin{document} 

\begin{abstract}
We construct topological $\Delta \bfG$-homology for rings with twisted $G$-action. Here a ring with twisted $G$-action is a common generalization of a ring with anti-involution and a ring with $G$-action. This construction recovers as special cases topological Hochschild homology (THH) of rings, with its $S^1$-action, and Real topological Hochschild homology (THR) of rings with anti-involution, with its O(2)-action. A new example of this construction is quaternionic topological Hochschild homology (THQ) of rings with twisted $C_4$-action, which carries a $\Pin(2)$-action. We prove that THQ of a loop space with twisted $C_4$-action can be $\Pin(2)$-equivariantly identified with a twisted free loop space. Other new examples of interest are topological symmetric homology and topological hyperoctrahedral homology and more generally topological twisted symmetric homology. We prove a homotopical version of results of Fiedorowicz, Ault, and Graves computing these new topological homology theories on loop spaces with twisted $G$-action. A key step of independent interest in this program is the construction of a new family of crossed simplicial groups, which correspond to operads that encode the structure of rings with twisted $G$-action.
\end{abstract}

\keywords{Crossed simplicial group, topological Hochschild homology, ring spectrum with twisted $G$-action, operad, $\Pin(2)$-action}

\renewcommand{\subjclassname}{\textup{2020} Mathematics Subject Classification}
\subjclass[2020]{
16E40, 
55P43, 
55P91, 
18N60, 
55P42. 
}

\maketitle

\begin{spacing}{0.05}
\setcounter{tocdepth}{1}
\tableofcontents
\end{spacing}

\section{Introduction}

Cyclic homology was introduced by Connes~\cite{Con83} as an analogue of de Rham cohomology and it appeared independently in work of Tsygan~\cite{Tsy83} on the homology of Lie algebras. A key observation of Connes~\cite{Con83}, Tsygan~\cite{Tsy83}, and Loday--Quillen~\cite{LQ84} in the 1980's was that, in a sense that can be made precise, there is a circle action on the Hochschild complex $\HH(A)$ of an associative algebra, which when $A$ is commutative is witnessed by the de Rham differential. 
Then cyclic homology $\HC(A)$ is the Borel homology associated with this $S^1$-action on $\HH(A)$, and one can also define the associated Borel cohomology and Tate cohomology, which give rise to the variants called negative cyclic homology $\HC^-$ and periodic cyclic homology $\HP$. 
Negative cyclic homology, introduced by Jones~\cite{Jon87}, is the target of the trace map from algebraic $K$-theory 
\( \rmK_n(A)\to \HC^-_n(A)\) 
refining the trace map $\rmK_n(A)\longrightarrow \HH_n(A)$
defined by Dennis~\cite{Den76} in the 1970's. Goodwillie~\cite{Goo86} showed that this trace map can be used to compute algebraic K-theory rationally. 

The idea  of generalizing the Hochschild and cyclic homology from the homological context to the homotopical context, working over the deeper base of the sphere spectrum instead of the integers, goes back to ideas of Goodwillie and Waldhausen, who referred to this as ``brave new algebra". The dream of extending Hochschild homology to the homotopical context was first realized in work of~\cite{Bok85}. Notably,  topological Hochschild homology is even a more refined invariant than Hochschild homology when applied to discrete rings as exemplified by the fact that the Hochschild homology of a finite field over the integers is a divided power algebra whereas the topological Hochschild homology of a finite field is a polynomial algebra by~\cite{BokZ,Bre78}. 

Topological cyclic homology first appeared in work of B\"okstedt--Hsiang--Madsen \cite{BHM93}, marking the dawn of a new approach to algebraic K-theory computations known as trace methods. Notably, topological cyclic homology is not the homotopical analogue of cyclic homology, but rather a more refined invariant that requires one to work in the setting of spectra, see \cite[Remark~III.1.9]{NS18} and \cite[\S~12]{Law21}. The fact that topological cyclic homology closely approximates algebraic K-theory was proven by \cite{McC97,Dun89} and later extended by~\cite{DGM12}, \cite{CM21} and~\cite{LS25}. Since the 1990's, trace methods have proven to be the most powerful technique for computing algebraic $K$-theory of ring spectra, e.g.~\cite{HM97,HM03,BM08,AR02,AR08,AKACHR22}.

In 2018, Nikolaus--Scholze~\cite{NS18} reimagined the foundations of topological cyclic homology and this led to several conceptual and computational breakthroughs. For example, this new formulation plays an important role in the construction of syntomic cohomology and Nygaard--complete prismatic cohomology in~\cite{BMS18}, an important advance in the field of $p$-adic Hodge theory. Syntomic cohomology and Nygaard-complete prismatic cohomology have also been extended to the setting of ring spectra~\cite{HRW22,Pst23}. This perspective has led to several recent new breakthroughs computationally as well~\cite{LW22,AKAR23,AKN24,AKHW24}. 

Crossed simplicial groups, a generalization of Connes' cyclic category, were introduced  in the early 1990's by Fiedorowicz--Loday~\cite{FL91} and Krasauskas~\cite{Kra87} independently. The idea is to extend the simplex category $\Delta$ by allowing for additional automorphisms of $[n]$ given by groups $G_n$. Together the automorphisms form a simplicial set $\bfG_{\sbt}$ such that the $n$-simplices are groups, but the face and degeneracy maps are allowed to be crossed homomorphisms. Nevertheless, the geometric realization $|\bfG_{\sbt}|$ is a topological group. 

For example, in the case of the cyclic category the associated simplicial set is the minimial model for the circle with $n$-simplices given by the cyclic group of order $n+1$. The $S^1$-action on the Hochschild complex comes from this fact. Other examples include the dihedral category, where the automorphism groups are the dihedral groups $\{D_{2(n+1)}\}$, and the quaternionic category, where the automorphism groups are the generalized quaternion groups $\{Q_{4(n+1)}\}$. 
Fiedorowicz--Loday~\cite{FL91} introduced algebraic homology theories associated to any crossed simplicial group, for example dihedral homology and quaternionic homology are analogous to cyclic homology. 
 
A homotopical analogue of dihedral homology, known as Real topological Hoch\-schild homology introduced in \cite{HM23} is under active investigation. It can be used to study  Hermitian $K$-theory and $L$-theory of ring spectra with anti-involution~\cite{DMPR21,DMP24,AKGH21,nine21,nine23,nine23b}. This has applications to symmetric bilinear forms~\cite{RS24} as well as to surgery theory of manifolds~\cite{Ran08}. However, homotopical analogues of the homologies associated to other crossed simplicial groups had not been explored until our work to our knowledge. 

Our paper provides a unifying framework for studying variants of topological Hochschild homology, such as Real topological Hochschild homology, but it also introduces new interesting invariants. We set some language to describe our main construction. Any crossed simplicial group $\Delta \bfG$  determines a group homomorphism $G_0\to \{-1,1\}$, or parity, by the classification of crossed simplicial groups \cite{Abo87,Kra87,FL91}. Elements in $G$ that map to $1$ are called even and elements in $G$ that map to $-1$ are called odd. A ring spectrum with twisted $G$-action is an associative ring spectrum with a $G$-action on the underlying spectrum such that even elements of $G$ act by ring homomorphisms and odd elements of $G$ act by ring anti-homomorphisms. A crossed simplicial group is self-dual if it is equipped with an isomorphism $\Delta \bfG^\op\cong \Delta \bfG$. With these definitions in place, our main construction, which appears in \autoref{sec:homology-selfdual} and builds on the results in \autoref{sec:css} and \autoref{sec:twisted-G-actions} can be summarized as follows.

\begin{thmx}
For a self-dual crossed simplicial group $\Delta \bfG$, there is an associated topological homology $\THG(R)$ which carries a $|\mathbf{G}_{\sbt}|$-action, whose input is a ring spectrum $R$ with twisted $G_0$-action. 
\end{thmx}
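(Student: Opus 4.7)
The plan is to produce $\THG(R)$ as the realization of a twisted cyclic bar construction valued in $\Delta\bfG^\op$-objects and then invoke self-duality to obtain a $|\bfG_\sbt|$-action. Schematically, the picture to build is
\[
\bigl\{\text{ring spectra with twisted }G_0\text{-action}\bigr\}\ \longrightarrow\ \Fun(\Delta\bfG^\op,\Sp)\ \xrightarrow{|-|}\ \bigl\{\Sp\text{ with }|\bfG_\sbt|\text{-action}\bigr\},
\]
with $\THG(R)$ defined as the composite applied to $R$.

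First, I would set up the operadic input. Using the twisted symmetric crossed simplicial group $\bfGS$ introduced in the paper together with the promised correspondence between such crossed simplicial groups and operads, I would identify the operad $\mathcal{O}_\bfG$ whose algebras in $\Sp$ are exactly ring spectra with twisted $G_0$-action. This means a ring spectrum with twisted $G_0$-action is precisely an $\mathcal{O}_\bfG$-algebra, so that $R$ produces canonical structure maps $\mathcal{O}_\bfG(n)\wedge R^{\wedge n}\to R$ compatible with the parity-twisted $G_0$-action.

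Second, I would construct a $\Delta\bfG^\op$-shaped bar construction $N^\bfG R_\sbt$ out of this algebra structure. The $n$-simplices are $R^{\wedge(n+1)}$; the simplicial face and degeneracy maps are the usual cyclic bar construction ones (multiplication and unit); and the extra $G_n$-automorphisms at level $n$ are supplied by combining the $G_0$-action on the smash factors with the combinatorial permutations encoded in $\bfG_n$. The operadic/envelope characterization of $\bfGS$ gives precisely the compatibility needed: for any operad algebra, the envelope construction provides a functorial $\Delta\bfG^\op$-diagram extending the usual $\Delta^\op$-diagram, and parity is what guarantees that odd elements of $G_0$ act via the anti-homomorphism structure, matching their effect on tensor order after rotation/reflection.

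Third, I would realize. For any crossed simplicial group, the classical theorem of Fiedorowicz--Loday asserts that geometric realization of a $\Delta\bfG^\op$-object carries a canonical action of the topological group $|\bfG_\sbt|$; the same argument goes through enriched in spectra, giving a spectrum $|N^\bfG R_\sbt|$ with a natural right $|\bfG_\sbt|$-action. Using the chosen isomorphism $\Delta\bfG^\op\cong\Delta\bfG$ from self-duality, I would then transport this right action to a left action; this defines $\THG(R)$ and completes the construction.

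The main obstacle is verifying that the $G_n$-automorphisms really assemble with the face and degeneracy maps into a genuine $\Delta\bfG^\op$-object for every ring spectrum with twisted $G_0$-action. Checking this by hand would be a combinatorial morass involving the crossed simplicial identities between $\bfG_\sbt$ and $\Delta$. The whole point of first establishing that $\bfGS$ represents the operad $\mathcal{O}_\bfG$ is to reduce this verification to a clean statement: $\Delta\bfG^\op$-objects are, via the envelope, the same thing as $\mathcal{O}_\bfG$-algebras equipped with their tautological bar construction. Once that bridge is in place, functoriality does the rest, and the realization and self-duality steps are essentially formal.
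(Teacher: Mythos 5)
Your overall skeleton matches the paper's: operad $\Assoc^{\lambda_0}$ whose algebras are twisted $G_0$-rings, symmetric monoidal envelope identified with $\Delta\lambda_0\wr\bfS_+$, a bar construction through the envelope, and the Fiedorowicz--Loday realization theorem (Proposition~\ref{G action on DeltaG objects} in the paper) producing the $|\bfG_\sbt|$-action. However, the role you assign to self-duality is misplaced, and as written your step~2 does not produce the object your step~3 needs.

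The symmetric monoidal envelope is a \emph{covariant} construction: composing the canonical functor $\widetilde\lambda\colon\Delta\bfG\to\Delta\lambda_0\wr\bfS$ of \autoref{canonical map} with $\Env(R)$ and $\otimes$ yields a covariant functor $B_\bfG^\bullet(R)\colon\Delta\bfG\to\cC$, whose restriction along $\Delta\hookrightarrow\Delta\bfG$ is a \emph{cosimplicial} object (see \autoref{bar-construction} and the remark following it). It is emphatically not a $\Delta\bfG^\op$-diagram. The duality $\sdual\colon\Delta\bfG^\op\xrightarrow{\simeq}\Delta\bfG$ must be precomposed \emph{before} realization — this is exactly what turns the covariant diagram into a genuine $\Delta\bfG^\op$-object $B_\bullet^\bfG(R)=B_\bfG^\bullet(R)\circ\sdual$, whose underlying simplicial object can then be realized. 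Once you have the $\Delta\bfG^\op$-object, the realization theorem of Fiedorowicz--Loday (as in Proposition~\ref{G action on DeltaG objects}) hands you a \emph{left} $|\bfG_\sbt|$-action directly; there is no residual right action to convert. Your plan places $\sdual$ after realization to convert a claimed right action to a left one, but if you could not build the $\Delta\bfG^\op$-object in step~2 then the realization theorem does not apply and there is no action of either handedness to convert.

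So the missing idea is not a large one — it is the insight that the envelope's output lands on the wrong side and self-duality is precisely the device that fixes the variance \emph{before} you realize. With $\sdual$ moved to where it belongs, the rest of your outline (the operadic identification $\Env(\Assoc^{\lambda_0})\cong\Delta\lambda_0\wr\bfS_+$, the factorization $\widetilde\lambda$, and the realization formalism) is exactly what the paper does, and the argument goes through.
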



In the self-dual case, the construction $\THG$ recovers for $\Delta \bfC$ the topological Hochschild homology THH of an associative ring, which has an $S^1$-action, and recovers for $\Delta \bfD$ the Real topological Hochschild homology THR of an associative ring with anti-involution, which has an $O(2)$-action. For $\Delta \bfQ$, the construction $\THG$ gives a new invariant which we call quaternionic topological Hochschild homology THQ, which is equipped with a $\Pin(2)$-action. 

Our approach to the construction of topological $\Delta \bfG$-homology generalizes the higher categorical refinement of the Loday construction due to Nikolaus--Scholze~\cite[Definition~III.2.3]{NS18}. To accomplish this, we prove three key technical results, which are of independent interest: (1) for any group with parity $\varphi \colon G\to\{-1,1\}$ we construct a new crossed simplicial group $\Delta \bfGS$, and we show that any crossed simplicial group $\Delta \bfG$  maps to the crossed simplicial group associated to the canonical parity $G_0\to \{-1,1\}$ (\autoref{csg} and \autoref{canonical map}), (2) we show that twisted $G$-rings are algebras over a twisted operad $\Assoc^\varphi$ (\autoref{TwistedGringoperad}), and (3) we show symmetric monoidal functors from a pointed version of the crossed simplicial group $\Delta \bfGS$ we constructed to spectra encodes algebras over $\Assoc^\varphi$ (\autoref{iso of categories} and \autoref{algebras description}).

Topological Hochschild homology can be viewed as the norm from the trivial group to $S^1$ by \cite{ABGHLM18} and Real topological Hochschild homology can be viewed as the norm from the cyclic group of order two to $O(2)$ by~\cite{AKGH21}.  We therefore view our construction as providing combinatorial models for the (Borel completion) of norms for more general compact Lie groups. Norms play a fundamental role in equivariant homotopy theory, for example see~\cite{HHR16,BDS22}. In the setting of topological groups, the theory of norms is under active investigation, see~\cite{BMM22} for a more geometric approach. As a special case, we plan to lift our construction of quaternionic topological Hochschild homology to a genuine $\Pin(2)$-equivariant spectrum satisfying the universal property of the norm from the cyclic group of order four to $\Pin(2)$. 

The norm perspective informs our computations. As a special case, we give a description of quaternionic topological Hochschild homology in terms of \emph{twisted free loop spaces}  with a $\Pin(2)$-action. Twisted free loop spaces arise naturally in the study of fixed point theory, see for example~\cite{KW07,CP19,KY23}. Here we consider the twisted free loop space 
$ \cL^{\tau}X\coloneqq
\{ \gamma \colon [0,1] \to X \mid t^2\gamma(0)=\gamma(1)\} 
$
where $t$ is a generator for $C_4\subset \Pin(2)$. Alternatively, we show that this can be identified with the norm $\Map^{C_2}(\Pin(2),X)\simeq \cL^{\tau}X$
as a space with left $\Pin(2)$-action. We show that topological quaternionic homology of a loop space with twisted $C_4$-action $\Omega^{q}X$ defined in \autoref{loop-space-twisted-G-action}
is a twisted free loop space.

\begin{thmx}[{\autoref{thm: main computation}}]
Let $X$ be a space with $C_4$-action and let $q\colon C_4\to C_2$ be the quotient. 
Then there is an equivalence
\begin{align*}
\THQ(\Sigma^\infty_+\Omega^{q}X) &\simeq \Sigma_{+}^{\infty}\cL^{\tau}X 
\end{align*}
of spectra with left $\Pin(2)$-action.
\end{thmx}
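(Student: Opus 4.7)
The strategy mirrors, for $\Delta\bfQ$, the classical identifications $\THH(\Sigma^{\infty}_+\Omega X)\simeq \Sigma^{\infty}_+\mathcal{L}X$ and their Real refinement $\THR(\Sigma^{\infty}_+\Omega^{\sigma}X)\simeq \Sigma^{\infty}_+\mathcal{L}^{\sigma}X$. By Theorem~A, $\THQ$ is produced by a Nikolaus--Scholze style Loday construction over $\Delta\bfQ^{\op}$, with the $\Pin(2)$-action coming from the identification $|\bfQ_{\sbt}|\simeq \Pin(2)$. Since $\Sigma^{\infty}_+$ is symmetric monoidal and preserves sifted colimits, the proof reduces to a space-level claim: the Loday-type realization over $\Delta\bfQ^{\op}$ of the twisted cyclic-bar-like simplicial space built from $\Omega^{\sigma}X$ is equivalent, as a left $\Pin(2)$-space, to $\mathcal{L}^{\tau}X$.

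First, I would set up the simplicial object. By the operadic translation of the paper, $\Omega^{\sigma}X$ is a grouplike $\NAssoc^{\varphi}$-algebra in spaces for the parity $\varphi\colon C_4\to\{\pm 1\}$, and the $\Delta\bfQ^{\op}$-shaped simplicial space has $(\Omega^{\sigma}X)^{\times(n+1)}$ in degree $n$, with faces and degeneracies built from multiplication, the twisted $C_4$-action, and the crossed-simplicial structure of the generalized quaternion groups $Q_{4(n+1)}$. Second, I would run the standard bar-construction argument in this twisted setting: grouplikeness forces the realization to fibre over $B\Omega^{\sigma}X\simeq X$ with homotopy fibre $|\bfQ_{\sbt}|\simeq \Pin(2)$, and the crossed-simplicial identities glue these into a path fibration whose monodromy is multiplication by $t^2$, the generator of $\ker(q\colon C_4\to C_2)$ and the central element of $\Pin(2)\subset$ the quaternions. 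This identifies the realization with $\mathcal{L}^{\tau}X$; equivalently, with $\Map^{C_2}(\Pin(2),X)$ under the norm description stated in the introduction.

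The main obstacle lies in step two: rigorously tracking the crossed-simplicial structure of $\Delta\bfQ$ under realization so that the resulting $\Pin(2)$-action matches the one on $\mathcal{L}^{\tau}X$. In particular, one must verify that the odd elements of $C_4$, which act on $\Omega^{\sigma}X$ by anti-homomorphisms (loop reversal composed with the $C_4$-action), conspire with the non-cyclic faces of $\Delta\bfQ$ to produce the path-reversal-and-translation action of the odd component of $\Pin(2)$ on the twisted loop space. This is where the twisted symmetric crossed simplicial group $\Delta\bfGS$ introduced in the paper plays its technical role, since $\Delta\bfQ$ maps into it and the twisted $\NAssoc^{\varphi}$-structure on $\Omega^{\sigma}X$ is exactly what guarantees that the simplicial identities collapse to the defining relation $t^2\gamma(0)=\gamma(1)$. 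Once this equivariance is established, combining the two identifications yields the desired equivalence of $\Pin(2)$-equivariant spectra.
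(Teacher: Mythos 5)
Your reduction to a space-level statement via $\Sigma^\infty_+$ and the monoid-ring assembly map matches the paper's approach (Theorem~\ref{assembly theorem} and Corollary~\ref{cor: group ring computation for csg with duality}). However, your proposed space-level argument contains a genuine error: you assert that the realization over $\Delta\bfQ^{\op}$ fibres over $B\Omega^\sigma X\simeq X$ with homotopy fibre $|\bfQ_{\sbt}|\simeq\Pin(2)$. This is not true. The map $\cL^\tau X\to X$ given by $\gamma\mapsto\gamma(0)$ has fibre a path space equivalent to $\Omega X$, not $\Pin(2)$; likewise the cyclic-nerve-type realization of a group-like monoid $M$ fibres over $BM$ with fibre $M$, not $S^1$ or $\Pin(2)$. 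The group $\Pin(2)$ enters not as a fibre but as the indexing object of a coinduction, $\cL^\tau X\simeq\Map^{C_2}(\Pin(2),X)$. The ``monodromy'' step built on the incorrect fibration therefore does not pin down the $\Pin(2)$-action, which is exactly the thing that needs care.

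The paper's actual proof of the space-level equivalence fills this gap with a concrete construction you do not account for: it introduces cosimplicial categories $\PP_\sbt=\TT_{2\sbt+1}\cup\TT_{2\sbt+1}$ with twisted $C_4$-action, on which $Q_{4(n+1)}$ acts compatibly with the $\Delta\bfQ$-structure, and proves $B\PP_n\simeq\Pin(2)$ as left $C_4$-spaces via Quillen's Theorem~A together with the compatibility of $\Ind$ with groupoid completion (Proposition~\ref{prop: classifying space is a circle}). It then defines an \emph{unstable} quaternionic topological Hochschild homology $\uTHQ(\cC)$ of $\infty$-categories with twisted $C_4$-action, computes $\uTHQ(X)\simeq\Map^{C_4}(\Pin(2),X)\simeq\cL^\tau X$ levelwise for a groupoid $X$ (Corollary~\ref{cor: uTHG of space is free loop space}), and finally connects this to the bar construction on $\Omega^\sigma X$ via the group-like identification $\bB^{\lambda_0}M\simeq B^{\lambda_0}M$ and $B^\sigma\Omega^\sigma X\simeq X$ (Proposition~\ref{prop: THG of group-like monoids}, Corollary~\ref{cor: main computation but for spaces THG of loop space}). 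In short, the categorical models $\PP_n$ and the unstable $\uTHQ$ are precisely the machinery that makes the equivariance rigorous; your proposal correctly identifies the difficulty but does not supply this ingredient, and the fibration heuristic you substitute is incorrect as stated.
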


We also recover a result of \cite{Hog16}, \cite{DMPR21}, \cite{HHKWZ24}, and~\cite{DMP24} about Real topological Hochschild homology of loop spaces with anti-involution.

Beyond the self-dual case, there are many crossed simplicial groups of interest such as the symmetric, braid, reflexive, and hyperoctahedral crossed simplicial groups. Fiedorowicz--Loday~\cite{FL91} construct homology theories associated to arbitrary crossed simplicial groups in the algebraic context. In the case of the symmetric, hyperoctahedral, and reflexive categories these have been studied in the algebraic context by \cite{Fie,Aul10,Gra22,Gra22b,BR23,LR24}. We construct a homotopical homology theory associated to an arbitrary crossed simplicial group in \autoref{sec:homology}, again building on \autoref{sec:css} and \autoref{sec:twisted-G-actions}. 
  
\begin{thmx}\label{theorem C}
    For a crossed simplicial group $\Delta \bfG$, there is an associated positive topological $\Delta \bfG$-homology $\mathrm{T}\mathbf{G}^+(R)$ spectrum, whose input is a ring spectrum $R$ with twisted $G_0$-action. When $\Delta \bfG$ is self-dual, $\mathrm{T}\mathbf{G}^+(R)$ is equivalent to the homotopy $|\bfG_{\sbt}|$-orbits of $\THG(R)$.
\end{thmx}

Our construction of $\Delta \bfGS$ from \autoref{csg} associated to a group with parity $\varphi\colon G\to \{-1, 1\}$, which recovers the symmetric crossed simplicial group if $\varphi$ is the trivial group homomorphism from the trivial group, and the hyperoctahedral crossed simplicial group if $\varphi$ is the identity, is also not self-dual. 
The associated positive topological $\Delta \bfG$-homology of \autoref{theorem C} when $\Delta \bfG=\Delta \bfGS$ yields a new example, which we call topological twisted symmetric homology $\mathrm{T}\varphi(R)$ of a ring spectrum $R$ with twisted $G$-action.
Our construction $\mathrm{T}\varphi(R)$ provides new homotopical analogues of symmetric homology and hyperoctahedral homology in particular. 

As an application, we prove a homotopical analogue of theorems of Fiedoricz \cite{Fie}, Ault~\cite{Aul10} and Graves~\cite{Gra22} as well as generalize this result to the setting of topological twisted symmetric homology. Below, we state a special case.

\begin{thmx}[{\autoref{thm: topological positive hyperoctahedral homology}}]
Let $\varphi\colon G\to C_2$ be a group homomorphism.
If $X$ is a connected space with $G$-action, then there is an equivalence of spectra
\[
\mathrm{T}\varphi(\Sigma^\infty_+\Omega^{\varphi} X)\simeq (\Sigma^\infty_+ \Omega QX)_{hG}\,.
\] 
\end{thmx}
Here $\Omega^{\varphi}X$ is a loop space with twisted $G$-action as defined in \autoref{loop-space-twisted-G-action}. 
We write $QX$ for the Borel $G$-space with underlying space $\colim_{k}\Omega^k\Sigma^kX$ and $\Omega Y$ for the Borel $G$-space whose underyling space is the loop space of $Y$, see \autoref{twisted symmetric} for details. 

Hochschild homology and cyclic homology can be interpreted as functor homology~\cite{PR02}. In the same vein, we present our constructions as a form of homotopical functor homology, see \autoref{sec:functor-homology}. In particular, this leads to natural bivariant analogues of the constructions in this paper. We hope that this perspective can connect our construction to the growing body of research in the area of functor homology, for example~\cite{DT24,KV24}. See~\cite{FT15} for a survey. 

\subsection{Outline}
In \autoref{sec:css}, after we give an overview of crossed simplicial groups, we show our  technical result (1), the construction of  a new family of crossed simplicial groups $\Delta \bfGS$ associated to groups with parity; i.e. group homomorphisms $\varphi\colon G\to \{-1,1\}$, which are a generalization of the hyperoctahedral crossed simplicial groups, and (2) showing that the usual functor from a crossed simplicial group $\Delta \bfG$ to the hyperoctahedral crossed simplicial group $\Delta \bfH$ factors through this new construction $\Delta \bfGS$ for a parity $\varphi$ of the automorphism group $G_0$, which is part of the structure of the crossed simplicial group $\Delta \bfG$. The combinatorics of checking the axioms of a crossed simplicial group and this factorization occupy most of \autoref{sec:css}.

In \autoref{sec:twisted-G-actions}, we introduce twisted $G$-rings for groups with parity in the sense of \cite{DK15} and we show our technical result (2), namely that they are algebras over a twisted operad $\Assoc^\varphi$. This is a particular case of the semidirect product operads defined in \cite{SW03}. Lastly, we prove our technical result (3), namely that a pointed version $\Delta \bfGS_+$ is the active part of the category of operators associated to the operad $\Assoc^\varphi$. Consequently, symmetric monoidal functors from $\Delta \bfGS_+$ to spectra correspond to twisted $G$-rings. In other words, the category $\Delta \varphi \wr \bfS_+$ is the symmetric monoidal envelope of $\Assoc^\varphi$.

In \autoref{sec:thgdef}, using all the ingredients from the previous two sections, we build the $\infty$-categorical generalization of the cyclic bar construction for twisted $G$-rings. This defines analogues of Hochschild homology $\THG$, topological negative cyclic homology $\mathrm{T}\mathbf{G}^{-}$, and topological periodic cyclic homology $\mathrm{T}\mathbf{G}^{\textup{per}}$ for an arbitrary self-dual crossed simplicial group. For an arbitrary crossed simplicial group, we also define a homotopical analogue of the homology of crossed simplicial groups. We also give an interpretation of our construction in terms of homotopical functor homology. 

In \autoref{computations}, we compute the Real and quaternionic topological Hochschild homologies of loop spaces with twisted action in terms of twisted free loop spaces. For this, we introduce categorical models of $S^1$, $O(2)$ and $\Pin(2)$ and use them to define unstable versions of
Real topological Hochschild homology and quaternionic topological Hochschild homology, which serve as intermediaries in the computation. We also compute analogues of topological negative cyclic homology and topological periodic cyclic homology for the quaternionic crossed simplicial group in the case of loop spaces with twisted $C_4$-action in \autoref{TQ+}. 

In \autoref{twisted symmetric}, we compute the topological twisted symmetric homology of loop spaces with twisted $G$-action. As special cases this provides a homotopical analogue of a result of Fiedorowicz and Ault on the symmetric homology of loop spaces and a result of Graves on the hyperoctahedral homology of loop spaces with anti-involution. 

\subsection{Generalizations of Hochschild homology}
Here, we highlight some special cases of the main construction in this paper and describe some future research.

\subsubsection{Quaternionic topological Hochschild homology}
The quaternionic crossed simplicial group $\Delta \bfQ$ gives rise to a new homology that we call quaternionic topological Hochschild homology equipped with a left $\mathrm{Pin}(2)$-action. We expect that this can be refined to a genuine $C_4$-spectrum and reserve the name hyperreal Hochschild homology for such an invariant.\footnote{The authors thank J.D. Quigley for suggesting the name \emph{hypperreal} following the convention for hyperreal Bordism (cf. \cite[p.2]{HZ18}).}
We expect that hyperreal Hochschild homology enjoys the universal property of the norm from $C_4$ to $\Pin(2)$. We also expect that this theory is equipped with a notion of hyperreal cyclotomic structure, which can be used to define a $C_4$-equivariant invariant called hyperreal topological cyclic homology.

The presence of a $\Pin(2)$-action on quaternionic topological Hochschild homology suggests that this theory could shed light on involutive Heegaard Floer homology~\cite{HM17} and involutive bordered Floer homology~\cite{HL19}  just as topological Hochschild homology is related to bordered Floer homology~\cite{LOT15}. For example, in the setting of topological Hochschild homology, this was employed by \cite{Law21} to recast the non-commutative Hodge-to-de Rham spectral sequence in Heegaard Floer homology due to Lipshitz--Treuman~\cite{LT16}. One might also hope to construct a quantum variant of quaternionic topological Hochschild homology by working with relative quaternionic topological Hochschild homology with suitable twisted bimodule coefficients as in~\cite[Remark~3.18]{AGW25}. 
 
\subsubsection{Equivariant topological Hochschild homology}
Our work also provides a combinatorial model for $G$-equivariant topological Hochschild homology.
It was suggested that in the case of $C_n$-equivariant topological Hochschild homology there is a $C_n$-equivariant trace map
$\mathsf{K}_{C_n}(R)\longrightarrow \iota_{\Delta_n}^*\operatorname{THH}_{C_n}(R)$,
in~\cite[Conjecture~6.2.2]{AGHKK23}. Here $\mathsf{K}_{C_n}(R)$ is $C_n$-equivariant algebraic $K$-theory in the sense of~\cite{Mer17} and $\iota_{\Delta_n}^*\operatorname{THH}_{C_n}(R)$ is the restriction to the diagonal cyclic subgroup $\Delta_n$ of order $n$ of $C_n$-equivariant topological Hochschild homology.\footnote{Note that $\operatorname{THH}_{C_n}(R)$ would be $N_{C_n}^{S^1\times C_n}(R)$ in the notation of~\cite{AGHKK23} and the notation $\THH_{C_n}$ has a different meaning in loc.\ cit.} 
Trace methods in this setting is currently being actively developed in work of Chan--Gerhardt--Klang~\cite{CGK}. We therefore hope that our combinatorial model for equivariant topological Hochschild homology will aid in the computation of equivariant algebraic $K$-theory groups.  

\subsubsection{Equivariant Real topological Hochschild homology}

Our construction specializes to $G$-equivariant Real topological Hochschild homology. One might ask whether there exists a $G$-equivariant Real algebraic $K$-theory, along the same lines as $G$-equivariant Real cobordism explored in~\cite{CWY25}. We hope our construction of $G$-equivariant Real topological Hochschild homology could be useful for studying this conjectural $G$-equivariant Real algebraic $K$-theory. For example, one might generalize results of Kylling--R\"ondigs--{\O}stvaer~\cite{KRO20} to the setting of Artin L-functions along the lines of Elmanto--Zhang~\cite{EZ24}.

\subsubsection{Topological twisted symmetric homology and topological braid homology}

Our construction also specializes to topological symmetric homology, topological hyperoctahedal homology and more generally topological twisted symmetric homology. In the algebraic case, this was studied originally by Fiedorowcz~\cite{Fie}, Ault~\cite{Aul10} and Graves~\cite{Gra22}. There are several open conjectures about topological symmetric homology for example that we believe our methods could be useful for shedding light on. This is especially interesting in light of the recent connections to representation homology~\cite{BR23} where the authors resolve some of these conjectures in the characteristic zero case. Results in the case of symmetric homology suggest that there might be analogues for topological braid homology where the role of the commutative operad is replaced by an $\mathrm{E}_2$ operad. 

\subsection{Acknowledgements}
The authors would like to acknowledge contributions  to this paper arising from conversations with
Daniel Berwick-Evans, Thomas Blom, David Chan, Emanuele Dotto, Daniel Graves, Liam Keenan, Inbar Klang, Connor Malin, Thomas Nikolaus, Birgit Richter, JD Quigley, Brian Shin, Sarah Whitehouse, and Foling Zou. They especially thank David Chan for key insights and an anonymous referee for valuable feedback that significantly improved the paper.
Merling acknowledges partial support from NSF DMS grants CAREER 1943925 and FRG 2052988. Angelini-Knoll is grateful to Max Planck Institute for Mathematics in
Bonn for its hospitality and financial support. This project has received funding from the European Union's Horizon 2020 research and innovation programme under the Marie Sk\l{}odowska-Curie grant agreement No 1010342555. 
\thinspace \includegraphics[scale=0.1]{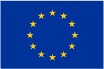} 

\section{Crossed simplicial groups}\label{sec:css}
In this section, we introduce groups with parity and construct their associated crossed simplicial groups, which generalize the hyperoctahedral crossed simplicial group \cite[Example 6]{FL91}. We start with a quick survey of the theory of crossed simplicial groups following \cite{FL91,DK15} and refer the reader to these sources for additional details. 

\subsection{Survey of crossed simplicial groups}
Recall that $\Delta$ is a small category with objects the totally ordered sets $[n]=\{0,1, \dots, n\}$ and order preserving maps, and it is generated by the morphisms $\sigma_i\colon [n+1]\to [n]$, which repeats $i$,  and $\delta_i\colon [n-1]\to[n]$, which skips $i$, and which satisfy the co-simplicial identities \cite[p.~4]{GJ99}. 
\begin{defin}
A \emph{crossed simplicial group} is a small category $\Delta \bfG$ with  objects $[n]$ for $n\ge 0$ and morphism sets
 \[\Delta \bfG ([n],[m])\coloneqq \Delta([n],[m])\times \Aut_{\Delta\bfG}([n]),\]
where $\Aut_{\Delta \bfG}([n])=G_n^{\op}$ for some sequence of groups $\{G_n \}_{n\ge 0}$ satisfying the following \emph{fundamental axiom of crossed simplicial groups}: every morphism $f\in \Delta\bfG([n], [m])$ has a unique  factorization as a composite $f=\phi\circ g$ where $\phi\in \Delta([n],[m])$ and $g\in  G_n^{\op}$. 
\end{defin} 

\begin{rem}
Note that the underlying set of $G_{n}^{\op}$ is the same as that of $G_{n}$, so when speaking of the underlying set we omit the superscript. The definition is arranged so that $\Aut_{\Delta\bfG^{\op}}([n])=G_{n}$. 
\end{rem}

By definition, every  crossed simplicial group $\Delta \bfG$ comes equipped with a faithful functor 
$
    \iota \colon \Delta \hookrightarrow \Delta\bfG 
$
whose essential image is a wide subcategory.  
Let $g\in G_m^{\op}$ and $\phi \in \Delta \bfG([n], [m])$. By the fundamental axiom of crossed simplicial groups, the composite $g\circ \phi$  in $\Delta \bfG$  can be uniquely written as $g^*\phi \circ \phi^*g$ for some $g^*\phi\in \Delta([n],[m])$ and $\phi^*g\in G_n^{\op}$. 
In other words, the diagram
\[
\begin{tikzcd}
    & {[m]} \ar{d}{g}\\
    {[n]}\ar{r}[swap]{\phi} & {[m]}
\end{tikzcd}
\]
can be uniquely completed to a commuting diagram 
\[
\begin{tikzcd}
    {[n]} \ar[dashed]{r}{g^*\phi} \ar[dashed]{d}[swap]{\phi^*g} & {[m]}\ar{d}{g}\\
    {[n]}\ar{r}[swap]{\phi} & {[m]}
\end{tikzcd}
\]
\noindent in $\Delta \bfG$ where by convention the order of morphism composition $g\circ h$ in $G_n^{\op}$ corresponds to the order of multiplication $g\cdot h$ in $G_n$ when we vertically compose diagrams of this form. 
Thus, for every $g\in G_m$ and every $\phi \in \Delta([n],[m])$, this rule defines maps of sets $g^\ast\colon \Delta([n],[m]) \to \Delta([n],[m])$ and $\phi^*\colon G_m \to G_n$ that determines the crossed simplicial structure in the following way.

\begin{prop}[{\cite[1.3, 1.6]{FL91}}]\label{FL characterization of CSG}
    Given a crossed simplicial group $\Delta \bfG$, the maps of sets $g^*$ and $\phi^*$ defined above have the following properties.\footnote{We are using the numbering of \cite[1.6]{FL91} for the six identities that characterize a crossed simplicial group.}
  
    \begin{enumerate}
        \item The maps $\phi^*$ assemble into  a simplicial set:
        \begin{align*}
           \bfG_{\sbt}\colon \Delta^{\op} & \longrightarrow \mathrm{Set}\\
            {[n]} & \longmapsto G_n\\
            \Big( [n]\stackrel{\phi}\rightarrow [m]\Big) & \longmapsto \Big( G_m \stackrel{\phi^*}\rightarrow G_n\Big)
        \end{align*}
     Moreover, $\phi^*$ preserves the group identities. In other words, the identities
    \begin{equation}\tag{1.h}\label{eq: 1.h}
            (\phi \circ \psi)^*(g)=\psi^*(\phi^*(g))
        \end{equation}
        \begin{equation}\tag{3.h}\label{eq: 3.h}
            (\id_{[m]})^*(g)=g, \quad \quad \phi^*(e_m)=e_n,
        \end{equation}
    hold for any $g\in G_m$, $\phi\in\Delta([n],[m])$ and $\psi\in \Delta([k],[n])$. Here $e_n\in G_n$ denotes the group identity in $G_n$.
    \item The maps $g^*$ determine a right action of $G_m$ on $\Delta([n],[m])$:
    \begin{align*}
        \Delta([n], [m])\times G_m & \longrightarrow \Delta([n],[m])\\
        (\phi, g) & \longmapsto g^*\phi.
    \end{align*}
    Moreover, when $m=n$, the action preserves identities on $\Delta$. In other words, the identities
    \begin{equation}\tag{1.v}\label{eq: 1.v}
        (gh)^*(\phi)=h^*(g^*(\phi))
    \end{equation}
    \begin{equation}\tag{3.v}\label{eq: 3.v}
        e_m^*\phi=\phi, \quad \quad g^*(\id_{[n]})=\id_{[n]},
    \end{equation}
   hold for any $g,h\in G_m$ and $\phi\in \Delta([n],[m])$.
    \item Furthermore, the set maps $\phi^*$ and $g^*$ preserve compositions with a crossing, in the following sense:
    \begin{equation}\tag{2.h}\label{eq: 2.h}
        g^*(\phi\circ \psi)=g^*\phi \circ (\phi^*g)^*(\psi),
    \end{equation}
    \begin{equation}\tag{2.v}\label{eq: 2.v}
        \phi^*(gh)=\phi^*g \cdot (g^*\phi)^*(h),
    \end{equation}
    for all $g,h\in G_m$, $\phi\in\Delta([n], [m])$ and $\psi\in \Delta([k], [n])$.
    \end{enumerate}
    Conversely, a sequence of groups $\{G_n\}_{n\geq 0}$ with two sets of functions $g^*$ and $\phi^*$ satisfying the relations above defines a crossed simplicial group.
\end{prop}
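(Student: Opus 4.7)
The plan is to treat the two directions separately, with both reducing to a bookkeeping exercise: the forward direction uses associativity in $\Delta\bfG$ together with the unique factorization axiom, while the converse direction reverse-engineers $\Delta\bfG$ from the data.

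For the forward direction, the strategy is to compute a triple composite in $\Delta\bfG$ in two ways using associativity, and then apply the unique factorization axiom to extract the identities. To derive \eqref{eq: 1.h} and \eqref{eq: 2.h}, I take $g \in G_m$, $\phi \in \Delta([n],[m])$, and $\psi \in \Delta([k],[n])$, and expand $g\circ\phi\circ\psi$ two ways: on one hand it equals $g^*(\phi\circ\psi)\circ(\phi\circ\psi)^*g$ by a single application of the factorization; on the other hand, factoring $g\circ\phi$ first and then $\phi^*g\circ\psi$ yields the composite
\[
g^*\phi \;\circ\; (\phi^*g)^*\psi \;\circ\; \psi^*(\phi^*g).
\]
Uniqueness of the factorization forces the $\Delta$-parts and the $G_k$-parts of the two expressions to agree, giving \eqref{eq: 2.h} and \eqref{eq: 1.h} respectively. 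Symmetrically, for \eqref{eq: 1.v} and \eqref{eq: 2.v}, I expand $(gh)\circ\phi = g\circ(h\circ\phi)$, factor the right-hand side twice, and match with the single factorization on the left. The identities \eqref{eq: 3.h} and \eqref{eq: 3.v} are then read off from the unit axioms $e_m\circ\phi = \phi\circ e_n = \phi$ and $g\circ\id_{[m]} = \id_{[m]}\circ g = g$ combined with uniqueness.

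For the converse, the plan is to construct $\Delta\bfG$ explicitly. Take objects $[n]$ and morphisms $\Delta\bfG([n],[m]) := \Delta([n],[m]) \times G_n$, with a pair $(\phi,g)$ intended to represent the formal composite $\phi\circ g$. Composition is forced by the forward calculation:
\[
(\psi,h)\circ(\phi,g) \;:=\; \bigl(\psi\circ h^*\phi,\; (\phi^*h)\cdot g\bigr),
\]
where $\cdot$ denotes multiplication in $G_n^{\op}$. The fundamental axiom is then built into the definition, and the identity morphism at $[n]$ is $(\id_{[n]}, e_n)$; the two halves of \eqref{eq: 3.h} and \eqref{eq: 3.v} give the left and right unit laws. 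Functoriality of the correspondence $\bfG_{\sbt}$ is exactly \eqref{eq: 1.h} and the first half of \eqref{eq: 3.h}, and the right action property is \eqref{eq: 1.v} and the first half of \eqref{eq: 3.v}.

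The main obstacle is the associativity check in the converse. Expanding both $\bigl((\chi,k)\circ(\psi,h)\bigr)\circ(\phi,g)$ and $(\chi,k)\circ\bigl((\psi,h)\circ(\phi,g)\bigr)$ produces two \emph{a priori} different formulas involving iterated applications of the operations $(-)^*(-)$. Reconciling the $\Delta$-components requires the ``cocycle-like'' identity \eqref{eq: 2.h} to unwind $k^*(\psi\circ h^*\phi)$, together with \eqref{eq: 1.h} to rewrite $(\psi\circ h^*\phi)^*k$. Reconciling the $G_n$-components requires \eqref{eq: 2.v} to expand $\phi^*(\psi^*k\cdot h)$ and \eqref{eq: 1.v} to rewrite $(\psi^*k\cdot h)^*\phi$. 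The identities assembled together conspire so that the two sides match; this is the content of the converse, and the place where every one of the six relations is needed. Once associativity is checked, the uniqueness of the pair $(\phi, g)$ representing each morphism gives the fundamental axiom, and the forward computation confirms that the maps $g^*$ and $\phi^*$ recovered from the constructed $\Delta\bfG$ are the originals.
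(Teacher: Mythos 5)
Your approach is correct and is the standard argument for this result, which the paper does not reprove but instead cites directly from Fiedorowicz--Loday. The forward direction (expand a triple composite two ways and invoke uniqueness of the $\Delta$-then-$G^{\op}$ factorization) and the converse (build $\Delta\bfG$ with hom-sets $\Delta([n],[m])\times G_n$ and check associativity, unitality, and the fundamental axiom) are exactly the right moves, and you correctly identify that associativity in the converse is where all six identities are consumed in tandem.

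One small convention caveat worth tracking carefully if you write this out in full: the paper sets $\Aut_{\Delta\bfG}([n])=G_n^{\op}$, so the morphism corresponding to the product $gh\in G_m$ is the \emph{categorical} composite $h\circ g$, not $g\circ h$. Your line ``expand $(gh)\circ\phi = g\circ(h\circ\phi)$'' has the order flipped; with the paper's convention you should expand $(gh)\circ\phi = h\circ(g\circ\phi)$, which after two factorizations gives precisely $(gh)^*\phi = h^*(g^*\phi)$ and $\phi^*(gh) = \phi^*g \cdot (g^*\phi)^*h$ as in \eqref{eq: 1.v} and \eqref{eq: 2.v}. (As written, your expansion would produce $(gh)^*\phi = g^*(h^*\phi)$, the anti-version.) This is purely a bookkeeping matter coming from $G$ versus $G^{\op}$ and does not affect the soundness of the overall strategy or the converse construction, where you correctly use $G_n^{\op}$-multiplication in the second coordinate.
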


Every simplicial group $\bfG_{\sbt}$ satisfies the axioms where the right $G_m$-action on $\Delta([n],[m])$ is trivial. By \cite{Lod87}, \cite[Theorem 5.3 (i)]{FL91}, the geometric realization 
$|\bfG_{\sbt}|$ of a crossed simplicial group is a topological group. We give a table showing some common examples of crossed simplicial groups in \autoref{fig: common crossed simplicial g}.

\begin{figure}
    \centering
    $\begin{array}{c|c|c|c}
    \text{crossed simplicial group} & \Delta \bfG & G_n & |\bfG_{\sbt}| \\ \hline 
\text{cyclic category} & \Delta \bfC & C_{n+1} & S^1 \\ 
\text{dihedral category} & \Delta \bfD & D_{2(n+1)} & O(2) \\ 
\text{quaternionic category} & \Delta \bfQ & Q_{4(n+1)} & \text{Pin}(2) \\ 
\text{reflexive category} & \Delta \bfR & C_2 & C_2  \\
\text{braid category} & \Delta \bfB & B_{n+1} & *\\
\text{symmetric category} & \Delta \bfS & \Sigma_{n+1} & *  \\
\text{hyperoctahedral category} & \Delta \bfH     & C_2\wr \Sigma_{n+1} & *  \\
    \end{array}$
    \caption{Table of common crossed simplicial groups}
    \label{fig: common crossed simplicial g}
\end{figure}

For proofs that each of these form crossed simplicial groups, we refer the reader to \cite[Examples 4--6]{FL91}. We will also recover a proof that $\Delta \bfH$ is a crossed simplicial group as a special case of \autoref{csg}. 

We review in detail the construction of the crossed simplicial group $\Delta \bfS$ since we build on it in \autoref{crossedforpar}.
\begin{exm}\label{sym_example}
The symmetric groups $\{\Sigma_{n+1}\}$ define a crossed simplicial group $\Delta \bfS$ as follows. For a diagram
\[
\begin{tikzcd}
    & {[n]} \ar{d}{\gamma}\\
    {[m]}\ar{r}[swap]{\phi} & {[n]}
\end{tikzcd}
\]
where $\gamma \in \Sigma_{n+1}$ is a permutation and $\phi$ is a map in $\Delta$, the maps $\phi^\ast \gamma$ and $\gamma^\ast \phi$ are defined as the only set maps that make the following set diagram commute
\[
  \xymatrix{
    [m] \ar[d]_-{\phi^\ast \gamma} \ar[r]^-{\gamma^\ast \phi } & [n] \ar[d]^-\gamma\\
    [m] \ar[r]_-{\phi } &[n]
    }
\]
    and such that the permutation $\phi^\ast \gamma$ preserves the order of the preimages of $\gamma^\ast \phi$. More precisely, since the map $\gamma^\ast \phi\colon  [m]\to [n]$ is completely determined by the sizes of the inverse images, by the commutativity of the diagram these need to agree with the sizes of the inverse images of $\phi$. Then the permutation $\phi^\ast \gamma$ is also completely determined by the commutativity of the set diagram together with the requirement that orders of preimages get preserved. See an example in \autoref{figure of delta sigma}.
\begin{figure}
\centering
\begin{tikzpicture}[baseline= (a).base]
\node[scale=0.60] (a) at (1,1){
\begin{tikzcd}[row sep=small]
	&&&&&& \textcolor{rgb,255:red,214;green,92;blue,214}{5} &&& {\gamma^\ast \phi} \\
	&&&&& \textcolor{rgb,255:red,214;green,92;blue,214}{4} \\
	&&&& \textcolor{rgb,255:red,92;green,214;blue,92}{3} &&&&&&&& \textcolor{rgb,255:red,214;green,92;blue,214}{3} \\
	&&& \textcolor{rgb,255:red,92;green,214;blue,92}{2} &&&&&&&& \textcolor{rgb,255:red,92;green,214;blue,92}{2} \\
	&& \textcolor{rgb,255:red,92;green,214;blue,92}{1} &&&&&&&& \textcolor{rgb,255:red,92;green,92;blue,214}{1} \\
	& \textcolor{rgb,255:red,228;green,158;blue,78}{0} &&&&&&&& \textcolor{rgb,255:red,228;green,158;blue,78}{0} \\
	&&&&&& \textcolor{rgb,255:red,92;green,214;blue,92}{5} &&&&&& {\gamma} \\
	&&&&& \textcolor{rgb,255:red,92;green,214;blue,92}{4} \\
	{\phi^\ast \gamma} &&&& \textcolor{rgb,255:red,92;green,214;blue,92}{3} &&&&&&&& \textcolor{rgb,255:red,92;green,92;blue,214}{3} \\
	&&& \textcolor{rgb,255:red,228;green,158;blue,78}{2} &&&&&&&& \textcolor{rgb,255:red,92;green,214;blue,92}{2} \\
	&& \textcolor{rgb,255:red,214;green,92;blue,214}{1} &&&&&&&& \textcolor{rgb,255:red,228;green,158;blue,78}{1} \\
	& \textcolor{rgb,255:red,214;green,92;blue,214}{0} &&&&&&&& \textcolor{rgb,255:red,214;green,92;blue,214}{0} \\
	&&&&& {\phi}
	\arrow[draw={rgb,255:red,214;green,92;blue,214}, no head, from=1-7, to=3-13, thick]
	\arrow[draw={rgb,255:red,214;green,92;blue,214}, no head, from=1-7, to=11-3, thick]
	\arrow[draw={rgb,255:red,214;green,92;blue,214}, no head, from=2-6, to=3-13, thick]
	\arrow[draw={rgb,255:red,214;green,92;blue,214}, no head, from=2-6, to=12-2, thick]
	\arrow[draw={rgb,255:red,92;green,214;blue,92}, no head, from=3-5, to=7-7, thick]
	\arrow[draw={rgb,255:red,92;green,214;blue,92}, no head, from=4-4, to=4-12, thick]
	\arrow[draw={rgb,255:red,92;green,214;blue,92}, no head, from=4-4, to=8-6, thick]
	\arrow[draw={rgb,255:red,92;green,214;blue,92}, no head, from=4-12, to=3-5, thick]
	\arrow[draw={rgb,255:red,92;green,214;blue,92}, no head, from=5-3, to=4-12, thick]
	\arrow[draw={rgb,255:red,92;green,214;blue,92}, no head, from=5-3, to=9-5, thick]
	\arrow[draw={rgb,255:red,228;green,158;blue,78}, no head, from=6-2, to=6-10, thick]
	\arrow[draw={rgb,255:red,228;green,158;blue,78}, no head, from=6-2, to=10-4, thick]
	\arrow[draw={rgb,255:red,92;green,214;blue,92}, no head, from=7-7, to=10-12, thick]
	\arrow[draw={rgb,255:red,92;green,214;blue,92}, no head, from=8-6, to=10-12, thick]
	\arrow[draw={rgb,255:red,92;green,214;blue,92}, no head, from=9-5, to=10-12, thick]
	\arrow[draw={rgb,255:red,92;green,92;blue,214}, no head, from=9-13, to=5-11, thick]
	\arrow[draw={rgb,255:red,228;green,158;blue,78}, no head, from=10-4, to=11-11, thick]
	\arrow[draw={rgb,255:red,92;green,214;blue,92}, no head, from=10-12, to=4-12, thick]
	\arrow[draw={rgb,255:red,214;green,92;blue,214}, no head, from=11-3, to=12-10, thick]
	\arrow[draw={rgb,255:red,228;green,158;blue,78}, no head, from=11-11, to=6-10, thick]
	\arrow[draw={rgb,255:red,214;green,92;blue,214}, no head, from=12-2, to=12-10, thick]
	\arrow[draw={rgb,255:red,214;green,92;blue,214}, no head, from=12-10, to=3-13, thick]
\end{tikzcd}
};
\end{tikzpicture}
\caption{Example in $\Delta \bfS$ for some $\phi\colon[5]\to [3]$ and $\gamma\in \Sigma_4$.}
\label{figure of delta sigma}
\end{figure}
It is shown in \cite{Lod92} that these formulas do indeed define a crossed simplicial group. 
\end{exm} 
We end this section with a lemma that produces new crossed simplicial groups from previously known ones. This result will be useful for some examples appearing in later sections. 
\begin{lem}\label{extension by constant}
Suppose $\Delta \bfG$ is a crossed simplicial group and $H$ is a {discrete group}, regarded as a constant simplicial group. Then there is a crossed simplicial group $\Delta \bfGH$ where 
$\Aut_{\Delta \bfGH }([n])=G^{\op}_n\times H^\op$ and for
$\phi\in \Delta([m], [n])$, $g\in G_n$ and $h\in H$:
\[
\phi^*(g,k)=(\phi^*g, h), \quad \quad (g,h)^*\phi=g^*\phi.\]
\end{lem}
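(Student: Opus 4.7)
The plan is to verify the six axioms (1.h), (2.h), (3.h), (1.v), (2.v), (3.v) of \autoref{FL characterization of CSG} directly from the given formulas, using the fact that they already hold for $\Delta \bfG$. The underlying principle is that, because $H$ is a constant simplicial group, the assignment $\phi \mapsto \id_H$ satisfies the simplicial identities trivially, and because $H$ is defined to act trivially on $\Delta([n],[m])$, the $H$-component decouples from the $\Delta$-part in every calculation. So the argument is essentially bookkeeping, with no combinatorial obstruction to overcome.

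More concretely, I would unwind both sides of each axiom in turn. For the horizontal identities, $(\phi \circ \psi)^*(g,h) = ((\phi \circ \psi)^*g,\, h) = (\psi^*\phi^*g,\, h) = \psi^*(\phi^*g,h) = \psi^*\phi^*(g,h)$ gives (1.h), while (3.h) is immediate since $(\id_{[m]})^*(g,h) = (g,h)$ and $\phi^*(e_m, e_H) = (e_n, e_H)$. For the vertical identities, the product in $G_n \times H$ is componentwise so
\[ ((g_1,h_1)(g_2,h_2))^*\phi = (g_1 g_2)^* \phi = g_2^*(g_1^*\phi) = (g_2,h_2)^*((g_1,h_1)^*\phi), \]
giving (1.v), and (3.v) is immediate.

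The two mixed crossing identities are also straightforward. For (2.h),
\[ (g,h)^*(\phi\circ\psi) = g^*(\phi\circ\psi) = g^*\phi \circ (\phi^*g)^*\psi = (g,h)^*\phi \circ (\phi^*(g,h))^*\psi, \]
since $\phi^*(g,h) = (\phi^*g, h)$ and the $H$-component acts trivially. For (2.v), both sides of
\[ \phi^*((g_1,h_1)(g_2,h_2)) = \phi^*(g_1,h_1) \cdot ((g_1,h_1)^*\phi)^*(g_2,h_2) \]
reduce to $(\phi^*g_1 \cdot (g_1^*\phi)^*g_2,\ h_1 h_2)$ by applying (2.v) for $\Delta \bfG$ in the first coordinate and using that the $H$-factor is multiplied componentwise.

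Since there is no genuinely hard step, the main care needed is to track conventions: the semidirect product structure on $\Aut_{\Delta \bfGH}([n]) = G_n^{\op} \times H^{\op}$ corresponds to coordinate-wise multiplication in $G_n \times H$ under the opposite-group identification, which is consistent with $H$ acting trivially. No nontrivial extension data appears, which is exactly why the construction works for an arbitrary discrete $H$ without further hypotheses.
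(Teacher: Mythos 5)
Your proof is correct and follows essentially the same approach as the paper: the paper's proof simply cites \autoref{FL characterization of CSG} and notes that the crossed simplicial structure of $\Delta \bfGH$ is determined entirely by that of $\Delta \bfG$, which is precisely the verification you carry out axiom by axiom. (One small verbal slip: you say ``semidirect product structure,'' but as your own calculation and closing remark make clear, $\Aut_{\Delta \bfGH}([n]) = G_n^{\op}\times H^{\op}$ is a direct product with no twisting; this does not affect correctness.)
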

 \begin{proof}
    This follows from \autoref{FL characterization of CSG} and the fact that the crossed simplicial structure of $\Delta \bfG\times \mathrm{H}$ is entirely determined by the crossed simplicial group structure of $\Delta \bfG$.
\end{proof}

\subsection{The twisted symmetric crossed simplicial group}
The following definition will be useful for defining a certain class of crossed simplicial groups. We write $C_2=\{-1,1\}$ for the cyclic group of order two.

\begin{defin}\label{gppar}
Let $G$ be a group equipped with a group homomorphism $\varphi\colon G \to C_2$. We call $\varphi\colon G\to C_2$ a \emph{parity} and the pair $(G,\varphi)$ a \emph{group with parity}.
We refer to 
$\varphi^{-1}(1)$ and $\varphi^{-1}(-1)$, as even and odd elements of $G$,  respectively. 
\end{defin}

\begin{rem}
In~\cite{KP18} a group with parity is called an oriented group and in~\cite{SS94}, for example, a group with parity is called a group with sign structure. We follow the terminology of~\cite{DK15}. 
\end{rem}

The hyperoctahedral crossed simplicial group $\Delta \bfH$ is defined by generalizing the definition of $\Delta \bfS$ from \autoref{sym_example}. Recall that the hyperoctahedral groups satisfy $H_{n+1} = C_2\wr \Sigma_{n+1}$, so their elements can be viewed as signed permutations. The formulas that define the associated crossed simplicial group are the same as in \autoref{sym_example}, except that the labels on the permutation are used to either preserve or reverse inverse images. For the explicit definition, see \cite[Section 3.1]{FL91}. We now give a more general definition of a crossed simplicial group associated to a group with parity, which will recover the hyperoctahedral group as an example. 

\begin{const}[The twisted symmetric crossed simplicial group]\label{crossedforpar}
Let $(G,\varphi)$ be a group with parity, and consider the collection of groups $\{G\wr \Sigma_{n+1}\}$. For $\phi\colon [m]\to [n]$ in $\Delta$ and $(g_0, \dots, g_n;\ \gamma)\in G\wr \Sigma_{n+1}$ we define  
\[
(g_0, \dots, g_n;\ \gamma)^\ast \phi \colon [m] \to [n]
\]
as $\gamma^\ast \phi$, where the latter is the definition of the formula for $\Delta \bfS$ from \autoref{sym_example}. Moreover, if $e$ is the identity in $G$, we define 
$$\phi^\ast(e,\dots, e;\ \gamma)=(e, \dots, e;\ \phi^\ast \gamma) \in G\wr \Sigma_{m+1},$$ 
where again $\phi^\ast \gamma$ is the definition in $\Delta \bfS$.

On the generators $\delta_i$ and $\sigma_i$ of $\Delta$, we define
$$\delta_i^\ast(g_0, \dots, g_n;\ \id)=(g_0, \dots, g_{i-1}, g_{i+1}, \dots, g_n;\ \id)$$ and 
$$\sigma_i^\ast(g_0,\dots, g_n;\ \id)=(g_0, \dots, g_i, g_i, \dots, g_n;\ \tau),$$
where $$\tau= \begin{cases} \id & \text{ if } \varphi(g_i)=1 \\
(i, i+1) & \text{ if } \varphi(g_i)=-1\end{cases}.$$
We can extend this by the rule $(\phi\circ \psi)^\ast=\psi^\ast\circ \phi^\ast$. Then note that the formula for $\phi^\ast (g_0,\dots, g_n;\ \id)$ repeats each $g_i$ as many times as the size of $\phi^{-1}(i)$ and the permutation is the one that permutes within the blocks of preimages, either preserving or reversing the order depending on the parity of the corresponding element $g_i$.

Now in general, we define
\[
\phi^\ast (g_0,\dots, g_n;\ \gamma)= \phi^\ast (g_0,\dots, g_n;\ \id)\cdot \phi^\ast(e, \dots, e;\ \gamma)\,.
\]
See for instance \autoref{fig: new csg}.
\begin{figure}
    \centering
    \begin{tikzpicture}[baseline= (a).base]
 \node[scale=0.60] (a) at (1,1){
\begin{tikzcd}[row sep=small]
	&&&&& \textcolor{rgb,255:red,214;green,92;blue,214}{g_0^+} \\
	&&&& \textcolor{rgb,255:red,214;green,92;blue,214}{g_0^+} && \textcolor{rgb,255:red,214;green,92;blue,214}{5} &&& {\gamma^\ast \phi} \\
	&&& \textcolor{rgb,255:red,92;green,214;blue,92}{g_2^-} && \textcolor{rgb,255:red,214;green,92;blue,214}{4} \\
	&& \textcolor{rgb,255:red,92;green,214;blue,92}{g_2^-} && \textcolor{rgb,255:red,92;green,214;blue,92}{3} &&&&&&&& \textcolor{rgb,255:red,214;green,92;blue,214}{3} \\
	& \textcolor{rgb,255:red,92;green,214;blue,92}{g_2^-} && \textcolor{rgb,255:red,92;green,214;blue,92}{2} &&&&&&&& \textcolor{rgb,255:red,92;green,214;blue,92}{2} \\
	\textcolor{rgb,255:red,228;green,158;blue,78}{g_1^-} && \textcolor{rgb,255:red,92;green,214;blue,92}{1} &&&&&&&& \textcolor{rgb,255:red,92;green,92;blue,214}{1} \\
	& \textcolor{rgb,255:red,228;green,158;blue,78}{0} &&&&&&&& \textcolor{rgb,255:red,228;green,158;blue,78}{0} \\
	&&&&&& \textcolor{rgb,255:red,92;green,214;blue,92}{5} &&&&&&& {\gamma} \\
	&&&&& \textcolor{rgb,255:red,92;green,214;blue,92}{4} \\
	{\phi^\ast \gamma} &&&& \textcolor{rgb,255:red,92;green,214;blue,92}{ 3} &&&&&&&& \textcolor{rgb,255:red,92;green,92;blue,214}{3} \\
	&&& \textcolor{rgb,255:red,228;green,158;blue,78}{2} &&&&&&&& \textcolor{rgb,255:red,92;green,214;blue,92}{2} && \textcolor{rgb,255:red,92;green,92;blue,214}{g_3^+} \\
	&& \textcolor{rgb,255:red,214;green,92;blue,214}{1} &&&&&&&& \textcolor{rgb,255:red,228;green,158;blue,78}{1} && \textcolor{rgb,255:red,92;green,214;blue,92}{g_2^-} \\
	& \textcolor{rgb,255:red,214;green,92;blue,214}{0} &&&&&&&& \textcolor{rgb,255:red,214;green,92;blue,214}{0} && \textcolor{rgb,255:red,228;green,158;blue,78}{g_1^-} \\
	&&&&& {\phi} &&&&& \textcolor{rgb,255:red,214;green,92;blue,214}{g_0^+}
	\arrow[color={rgb,255:red,214;green,92;blue,214}, no head, from=2-7, to=4-13, thick]
	\arrow[color={rgb,255:red,214;green,92;blue,214}, no head, from=3-6, to=4-13, thick]
	\arrow[color={rgb,255:red,92;green,214;blue,92}, no head, from=4-5, to=5-12, thick]
	\arrow[color={rgb,255:red,92;green,214;blue,92}, no head, from=4-5, to=10-5, thick]
	\arrow[color={rgb,255:red,92;green,214;blue,92}, no head, from=5-4, to=5-12, thick]
	\arrow[color={rgb,255:red,92;green,214;blue,92}, no head, from=5-4, to=9-6, thick]
	\arrow[color={rgb,255:red,92;green,214;blue,92}, no head, from=6-3, to=5-12, thick]
	\arrow[color={rgb,255:red,92;green,214;blue,92}, no head, from=6-3, to=8-7, thick]
	\arrow[color={rgb,255:red,92;green,92;blue,214}, no head, from=6-11, to=10-13, thick]
	\arrow[color={rgb,255:red,228;green,158;blue,78}, no head, from=7-2, to=7-10, thick]
	\arrow[color={rgb,255:red,228;green,158;blue,78}, no head, from=7-2, to=11-4, thick]
	\arrow[color={rgb,255:red,92;green,214;blue,92}, no head, from=8-7, to=11-12, thick]
	\arrow[color={rgb,255:red,92;green,214;blue,92}, no head, from=9-6, to=11-12, thick]
	\arrow[color={rgb,255:red,92;green,214;blue,92}, no head, from=10-5, to=11-12, thick]
	\arrow[color={rgb,255:red,228;green,158;blue,78}, no head, from=11-4, to=12-11, thick]
	\arrow[color={rgb,255:red,92;green,214;blue,92}, no head, from=11-12, to=5-12, thick]
	\arrow[color={rgb,255:red,214;green,92;blue,214}, no head, from=12-3, to=2-7, thick]
	\arrow[color={rgb,255:red,214;green,92;blue,214}, no head, from=12-3, to=13-10, thick]
	\arrow[color={rgb,255:red,228;green,158;blue,78}, no head, from=12-11, to=7-10, thick]
	\arrow[color={rgb,255:red,214;green,92;blue,214}, no head, from=13-2, to=3-6, thick]
	\arrow[color={rgb,255:red,214;green,92;blue,214}, no head, from=13-2, to=13-10, thick]
	\arrow[color={rgb,255:red,214;green,92;blue,214}, no head, from=13-10, to=4-13, thick]
\end{tikzcd}
};
\end{tikzpicture}
    \caption{Example in $\Delta \varphi\wr\bfS$ for some $\phi\colon[5]\to [3]$ and $\gamma\in \Sigma_4$.}
    \label{fig: new csg}
\end{figure}
\end{const}

\begin{thm}\label{csg}
\textup{\autoref{crossedforpar}} defines a crossed simplicial group $\Delta \bfGS$.
\end{thm}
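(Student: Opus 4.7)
The strategy is to verify the six axioms of \autoref{FL characterization of CSG}. The construction in \autoref{crossedforpar} is arranged so that each axiom reduces either to the corresponding axiom for $\Delta \bfS$ (known from \autoref{sym_example}) or to a direct combinatorial check on the $G$-labels, with the parity $\varphi$ entering only through the block-reversal twist.

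The ``vertical'' axioms (1.v) and (3.v) involve only the action $g^\ast \phi$. Since we defined $(g_0,\ldots,g_n;\gamma)^\ast \phi = \gamma^\ast \phi$ to depend only on the underlying permutation of $g$, and the wreath-product projection $G\wr \Sigma_{n+1} \to \Sigma_{n+1}$ is a group homomorphism, both identities follow immediately from (1.v), (3.v) for $\Delta \bfS$. For the ``horizontal'' simplicial identities (1.h) and (3.h), I would use the factorization
\[
\phi^\ast(g_0,\ldots,g_n;\gamma) \;=\; \phi^\ast(g_0,\ldots,g_n;\id)\cdot \phi^\ast(e,\ldots,e;\gamma).
\]
The second factor reduces to the simplicial structure on $\bfS_\bullet$; the first is the ``label-only'' piece, which repeats each $g_i$ over the preimage block $\phi^{-1}(i)$ and inserts a block-reversal permutation on those blocks with $\varphi(g_i) = -1$. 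The cosimplicial identities for this piece can be verified by a routine case analysis on the generators $\delta_i, \sigma_i$ of $\Delta$, making use of the fact that the functions sending $g_i \mapsto g_i$ and $g_i \mapsto$ (block-reverse) are compatible with face and degeneracy composition.

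Axiom (2.h), $g^\ast(\phi\circ\psi) = g^\ast\phi \circ (\phi^\ast g)^\ast \psi$, then follows from (2.h) for $\Delta \bfS$ together with the observation that the block-reversal contribution to $\phi^\ast g$ lies in the stabilizer of $(\phi^\ast\gamma)^\ast \psi$, since it permutes only within preimage blocks of $\phi$ (which are further subdivided, not merged, by post-composition with $\psi$). I expect the main obstacle to be axiom (2.v),
\[
\phi^\ast(g h) \;=\; \phi^\ast g \cdot (g^\ast \phi)^\ast h.
\]
Writing $g = (g_0,\ldots,g_n;\gamma)$ and $h = (h_0,\ldots,h_n;\eta)$, the wreath-product rule gives
\[
gh \;=\; \bigl(g_0\, h_{\gamma^{-1}(0)},\,\ldots,\, g_n\, h_{\gamma^{-1}(n)};\; \gamma\eta\bigr).
\]
Expanding $\phi^\ast(gh)$ via the definition and comparing with $\phi^\ast g \cdot (\gamma^\ast\phi)^\ast h$, the matching of the $G$-labels is direct, but the twist-permutation parts agree \emph{exactly because} $\varphi(g_i\, h_{\gamma^{-1}(i)}) = \varphi(g_i)\,\varphi(h_{\gamma^{-1}(i)})$, i.e.\ because $\varphi$ is a group homomorphism into $C_2$. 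This multiplicativity of the parity is the decisive ingredient that makes the whole construction a well-defined crossed simplicial group.
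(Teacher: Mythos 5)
Your approach aligns closely with the paper's: both isolate \autoref{eq: 2.v} as the only non-formal axiom, handle the ``vertical'' axioms by projecting to $\Sigma_{n+1}$, and correctly identify multiplicativity of $\varphi$ as the decisive ingredient. However, your verification of \autoref{eq: 2.v} is too thin to stand as a proof, and it skips the step that actually makes the verification tractable. The paper first shows (using \autoref{eq: 1.h} and \autoref{eq: 2.h}) that \autoref{eq: 2.v} need only be checked on the generators $\delta_i, \sigma_i$; you instead attempt to verify it for arbitrary $\phi$, but then assert that ``the twist-permutation parts agree exactly because $\varphi$ is a homomorphism'' without computing. The reason this is nontrivial is that for general $\phi$ the twist is a \emph{product} of block-reversals, and comparing the two sides of \autoref{eq: 2.v} forces you to check how two independently placed block-reversals interact after conjugation by $\phi^\ast\gamma$. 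In the generator reduction this becomes a finite check — a clean case for $\delta_i$ (no nontrivial blocks), and a four-case analysis for $\sigma_i$ (one block of size two). The case you are implicitly waving through is $\varphi(g_i)=\varphi(h_{\gamma^{-1}(i)})=-1$: the left side of \autoref{eq: 2.v} then has \emph{no} reversal on $\phi^{-1}(i)$ since $\varphi(g_i h_{\gamma^{-1}(i)})=+1$, while the right side has \emph{two} reversals (one from $\phi^\ast g$, one transported through $(\gamma^\ast\phi)^\ast h$), and you need to observe explicitly that these compose to the identity on the block and land in the right positions. That observation is where the argument actually closes; ``multiplicativity'' is the organizing principle, but it is not a substitute for the computation.

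Two smaller points. First, your justification of \autoref{eq: 2.h} via ``$\tau$ lies in the stabilizer of $(\phi^\ast\gamma)^\ast\psi$'' is not literally true: $\tau^\ast\psi$ and $\psi$ can differ as maps $[k]\to[n]$. What saves \autoref{eq: 2.h} is that after post-composing with $\gamma^\ast\phi$ the fibre sizes are summed over each block $\phi^{-1}(i)$, and $\tau$ only permutes within those blocks, so the aggregated fibre counts agree. Your intuition is right, but state the mechanism correctly. Second, \autoref{eq: 1.h} is not a consequence of a case analysis — in \autoref{crossedforpar} the extension to general $\phi$ is \emph{defined} by $(\phi\circ\psi)^\ast=\psi^\ast\circ\phi^\ast$, so \autoref{eq: 1.h} holds by fiat and the burden falls entirely on the other identities, most of all \autoref{eq: 2.v}.
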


\begin{proof}
We apply  \autoref{FL characterization of CSG}.
By construction, we have forced \autoref{eq: 1.h} to be true. It is straightfoward to verifiy \autoref{eq: 3.h} and \autoref{eq: 3.v}. \autoref{eq: 2.h} and \autoref{eq: 1.v} follow from the fact that the maps $(g_0, \ldots, g_n ; \gamma)^*$ are determined entirely by the crossed simplicial structure of $\Delta \bfS$.

Therefore we only have to verify \autoref{eq: 2.v}. We first argue that it is enough to verify the property on the generators $\delta_i$ and $\sigma_i$ of $\Delta$. Indeed, suppose we are given a sequence of groups $\{H_n\}_{n\geq 0}$
with set of operations $\phi^*$ and $h^*$ in which  \autoref{eq: 1.h} and \autoref{eq: 2.h} hold for all $\phi\in \Delta([m],[n])$ and $h\in H_n$. Suppose for fixed $\phi\in\Delta([m],[n])$ and $\psi\in \Delta([k],[m])$ one has verified equations: 
\[
\phi^*(rr')=\phi^*(r)\cdot (r^*\phi)(r'), \quad \quad \psi^*(ss')=\psi^*s \cdot (s^*\psi)^*(s'),
\]
for any  $r,r'\in H_n$ and $s,s'\in H_m$. Then for any $t,t'\in H_m$:
\begin{align*}
    (\phi \circ \psi)^*(tt') & = \psi^*(\phi^*(tt')) & \text{by \autoref{eq: 1.h}},\\
    & = \psi^* \Big( \phi^*t \cdot (t^*\phi)^*(t') \Big) \\
    & = \psi^*(\phi^*t) \cdot \left((\phi^*t)^*(\psi)\right)^* ((t^*\phi)^*(t'))\\
    & =  (\phi \circ \psi)^*t \cdot \Big(t^*\phi \circ (\phi^*t)^*(\psi)\Big)^*(t') & \text{by \autoref{eq: 1.h},}\\
    & = (\phi \circ \psi)^*(t) \cdot (t^*(\phi \circ \psi))^*(t') & \text{by \autoref{eq: 2.h}.}
\end{align*}
In other words, we have shown that if \autoref{eq: 2.v} is true for $\phi$ and $\psi$, then it is also true for $\phi\circ \psi$, provided \autoref{eq: 1.h} and \autoref{eq: 2.h} hold. Therefore, in order to show \autoref{eq: 2.v} holds for $\Delta \bfGS$ it suffices to check it on the generators $\delta_i$ and $\sigma_i$ of $\Delta$.

Let us show that \autoref{eq: 2.v} is true for $\delta_i\colon [n-1]\rightarrow [n]$. For any elements $(g_0, \ldots, g_n ; \gamma)$, $(g'_0, \ldots, g'_n ; \gamma')\in G\wr \Sigma_{n+1}$, we need to show:
\begin{align*}
\delta_i^* \Big( (g_0, \ldots, g_n ; \gamma) & \cdot (g'_0, \ldots, g'_n ; \gamma') \Big) \\
 &=\delta_i^*(g_0, \ldots, g_n ; \gamma) \cdot ((g_0, \ldots, g_n ; \gamma)^*\delta_i)^*(g'_0, \ldots, g'_n ; \gamma')\\
 & =\delta_i^*(g_0, \ldots, g_n ; \gamma) \cdot (\gamma^*\delta_i)^*(g'_0, \ldots, g'_n ; \gamma').
\end{align*}
On the left hand side, we have:
\begin{align*}
    &\delta_i^*\Big(  (g_0, \ldots, g_n ; \gamma)  \cdot (g'_0, \ldots, g'_n ; \gamma') \Big) \\
    & = \delta^*_i \Big( g_0g'_{\gamma^{-1}(0)}, \ldots, g_n g'_{\gamma^{-1}(n)} ; \gamma \gamma' \Big)\\
     & =\delta^*_i(g_0g'_{\gamma^{-1}(0)}, \ldots, g_n g'_{\gamma^{-1}(n)}; \id) \cdot \delta^*_i(e, \ldots, e ; \gamma \gamma')\\
     & = (g_0g'_{\gamma^{-1}(0)}, \ldots, g_{i-1}g'_{\gamma^{-1}(i-1)},g_{i+1}g'_{\gamma^{-1}(i+1)},  \ldots, g_n g'_{\gamma^{-1}(n)} ; \delta_i^*(\gamma \gamma')),
     \end{align*}
while on the right hand side, we have:
\begin{align*}
    & \delta_i^*(g_0, \ldots, g_n ; \gamma) \cdot (\gamma^*\delta_i)^*(g'_0, \ldots, g'_n ; \gamma')\\
    & = \delta^*_i(g_0, \ldots, g_n; \id) \cdot \delta^*_i(e, \ldots, e ; \gamma)\cdot \delta^*_{\gamma^{-1}(i)}(g_0', \ldots, g_n' ; \id)  \cdot\delta^*_{\gamma^{-1}(i)}(e,\ldots, e;\gamma')\\
    & = (g_0, \ldots, g_{i-1}, g_{i+1}, \ldots , g_n ; \delta^*_i\gamma) \cdot (g_0', \ldots, g'_{\gamma^{-1}(i-1)},g'_{\gamma^{-1}(i+1)}, \ldots, g'_n ; \delta^*_{\gamma^{-1}(i)} \gamma').
\end{align*}
Consequently, we obtain the desired equality by noticing that
\[\delta^*_i\gamma \cdot \delta^*_{\gamma^{-1}(i)}\gamma'=\delta_i^*\gamma \cdot (\gamma^*\delta_i)^*\gamma'=\delta_i^*(\gamma \gamma')\]
and that $\delta_i^*\gamma$ by definition makes the following diagram commute
\[
\begin{tikzcd}
    {[n]}\ar{r}{\delta_{\gamma^{-1}(i)}} \ar{d}[swap]{(\delta_i^*\gamma)}& {[n+1]} \ar{d}{\gamma}\\
    {[n]} \ar{r}[swap]{\delta_i} & {[n+1]},
\end{tikzcd}
\]
so deleting the element $g'_{\gamma^{-1}(i)}$ and then permuting the rest of the $g'$'s according to $\delta_i^*\gamma$  gives the same sequence of elements that we get by first permuting all the $g'$'s according to $\gamma$ and deleting the element $g'_{\gamma^{-1}(i)}$ in the $i$-th position.

Let us now show that \autoref{eq: 2.v} is true for degeneracies $\sigma_i\colon [n+1]\rightarrow [n]$.
For any $(g_0, \ldots, g_n ; \gamma), (g'_0, \ldots, g'_n ; \gamma')\in G\wr \Sigma_{n+1}$, we need to show:
\begin{align*}
\sigma_i^* \Big( (g_0, \ldots, g_n ; \gamma) & \cdot (g'_0, \ldots, g'_n ; \gamma') \Big) \\
 & =\sigma_i^*(g_0, \ldots, g_n ; \gamma) \cdot (\gamma^*\sigma_i)^*(g'_0, \ldots, g'_n ; \gamma').
\end{align*}
On the left hand side, we have:
\begin{align*}
& \sigma_i^* \Big( (g_0, \ldots, g_n ; \gamma)  \cdot (g'_0, \ldots, g'_n ; \gamma') \Big) \\
 & =\sigma_i\Big( g_0g'_{\gamma^{-1}(0)}, \ldots, g_n g'_{\gamma^{-1}(n)} ; \gamma \gamma' \Big)\\
 & =(g_0g'_{\gamma^{-1}(0)}, \ldots, g_ig'_{\gamma^{-1}(i)}, g_ig'_{\gamma^{-1}(i)}, \ldots, g_n g'_{\gamma^{-1}(n)};\tau \sigma^*_i(\gamma \gamma')),
\end{align*}
where $\tau$ is either $\id$ or $(i, i+1)$ depending on the value of the parity $\varphi(g_i g'_{\gamma^{-1}(i)})$. Note that since $\varphi$ is a homomorphism, this is the product of the parities $\varphi(g_i)\varphi(g'_{\gamma^{-1}(i)})$.
On the right hand side, we have: 
\begin{align*}
&\sigma_i^*(g_0, \ldots, g_n ; \gamma) \cdot (\gamma^*\sigma_i)^*(g'_0, \ldots, g'_n ; \gamma')\\
&=(g_0, \ldots, g_i, g_i, \ldots, g_n; \tau_g \sigma^*_i\gamma) \cdot (g_0', \ldots, g'_{\gamma^{-1}(i)}, g'_{\gamma^{-1}(i)}, \ldots, g_n ; \tau_{g'}\sigma^*_{\gamma^{-1}(i)}\gamma')  
\end{align*}
where $\tau_g$ is either $\id$ or $(i, i+1)$ depending on the value of $\varphi(g_i)$ and $\tau_{g'}$ is either $\id$ or $(\gamma^{-1}(i), \gamma^{-1}(i)+1)$ depending on the value of $\varphi(g'_{\gamma^{-1}(i)})$. We go through the possible cases.

\begin{description}
   \item[Case 1]
$\varphi(g_i)=1=\varphi'(g'_{\gamma^{-1}(i)})$. The left hand side becomes  
    \[
(g_0g'_{\gamma^{-1}(0)}, \ldots, g_ig'_{\gamma^{-1}(i)}, g_ig'_{\gamma^{-1}(i)}, \ldots, g_n g'_{\gamma^{-1}(n)};\sigma^*_i(\gamma \gamma')),
    \]
    while the right hand side becomes
    \[
(g_0, \ldots, g_i, g_i, \ldots, g_n;  \sigma^*_i\gamma) \cdot (g_0', \ldots, g'_{\gamma^{-1}(i)}, g'_{\gamma^{-1}(i)}, \ldots, g_n ; \sigma^*_{\gamma^{-1}(i)}\gamma').
    \]
     We can conclude these are equal by using that $\sigma^*_i(\gamma \gamma')=\sigma^*_i\gamma\cdot \sigma^*_{\gamma^{-1}(i)}\gamma'$ and the fact that $\sigma_i\gamma$ is entirely determined by the commutative diagram:
    \[
\begin{tikzcd}
    {[n+1]}\ar{r}{\sigma_{\gamma^{-1}(i)}} \ar{d}[swap]{\sigma_i^*\gamma}& {[n]}  \ar{d}{\gamma} \\
    {[n+1]}  \ar{r}[swap]{\sigma_i} & {[n]}.
\end{tikzcd}
    \]
   \item[Case 2] 
 $\varphi(g_i)=-1=\varphi(g'_{\gamma^{-1}(i)})$. The left hand side is as in Case 1, but the right hand side now becomes
  \begin{align*}
& (g_0, \ldots, g_i, g_i, \ldots, g_n; (i, i+1) \sigma^*_i\gamma)  \\ 
& \cdot (g_0', \ldots, g'_{\gamma^{-1}(i)}, g'_{\gamma^{-1}(i)}, \ldots, g_n ; (\gamma^{-1}(i), \gamma^{-1}(i)+1)\sigma^*_{\gamma^{-1}(i)}\gamma') 
  \end{align*}
We can conclude as in Case 1 since we have: 
  \[
(i, i+1)\cdot \sigma^*_i\gamma= \sigma_i^*\gamma \cdot  (\gamma^{-1}(i), \gamma^{-1}(i)+1).
  \]
  
   \item[Case 3] 
 $\varphi(g_i)=-1$ and $\varphi(g'_{\gamma^{-1}(i)})=1$. 
    The left hand side becomes: 
    \[
(g_0g'_{\gamma^{-1}(0)}, \ldots, g_ig'_{\gamma^{-1}(i)}, g_ig'_{\gamma^{-1}(i)}, \ldots, g_n g'_{\gamma^{-1}(n)};(i, i+1)\sigma^*_i(\gamma \gamma')),
    \]
    while the right hand side becomes
    \begin{align*}
& (g_0, \ldots, g_i, g_i, \ldots, g_n; (i, i+1) \sigma^*_i\gamma)   \\
& \cdot (g_0', \ldots, g'_{\gamma^{-1}(i)}, g'_{\gamma^{-1}(i)}, \ldots, g_n ; \sigma^*_{\gamma^{-1}(i)}\gamma').
  \end{align*}
  Since the $\gamma^{-1}(i)$-th and $(\gamma^{-1}(i)+1)$-th terms are identical, then we can conclude as in Case 1 and Case  2.

 \item[Case 4] 
$\varphi(g_i)=1$ and $\varphi(g'_{\gamma^{-1}(i)})=-1$. The left hand side is as in Case 3 while the right hand side becomes
  \begin{align*}
& (g_0, \ldots, g_i, g_i, \ldots, g_n;  \sigma^*_i\gamma)  \\ 
& \cdot (g_0', \ldots, g'_{\gamma^{-1}(i)}, g'_{\gamma^{-1}(i)}, \ldots, g_n ; (\gamma^{-1}(i), \gamma^{-1}(i)+1)\sigma^*_{\gamma^{-1}(i)}\gamma'). 
  \end{align*}
  We can again conclude as in Case 1 and Case 2. \qedhere
\end{description}
\end{proof}

\begin{rem}\label{contractible}
When $G$ is the trivial group, $\Delta \bfGS$ is just $\Delta \bfS$. When $\varphi=\id_{C_2}$, then $\Delta \id_{C_2}\wr \bfS$ is precisely the \emph{hyperoctahedral crossed simplicial group} $\Delta \bfH$ defined in \cite[\S 3.1]{FL91}. When $\varphi$ is the trivial group homomorphism, then we recover \cite[Theorem~3.10]{FL91}.  Moreover, we see from the discussion of~\cite[Example~6]{FL91} that the simplicial set $G\wr \Sigma_{\bullet+1}$ associated to $\Delta \varphi \wr \Sigma_{\bullet+1}$ is contractible for any parity $\varphi$. 
\end{rem}

\begin{rem}
After a draft of our paper appeared in preprint form, we were informed that the crossed simplicial groups from \autoref{crossedforpar} have been considered independently in work in progress of Daniel Graves and Sarah Whitehouse. 
\end{rem}

\begin{prop}
Let $\Delta \bfG$ be a crossed simplicial group. Then, for any $g\in G_n$, and any $0\leq i \leq n$, there exists $0\leq j \leq n$ such that $g^\ast \sigma_i=\sigma_j$ and $g^\ast \delta_i=\delta_j$ simultaneously.
\end{prop}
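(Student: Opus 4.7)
My plan is to define an action of $G_n$ on the set $[n]$ using the restriction of $g$ to ``vertices'', and then show that both $g^\ast \sigma_i$ and $g^\ast \delta_i$ have index equal to the image of $i$ under this action. For each $k \in [n]$, let $\iota^n_k \colon [0] \to [n]$ denote the unique order-preserving map with $0 \mapsto k$; these exhaust $\Delta([0],[n])$. Applying the fundamental axiom to $g \circ \iota^n_k$ yields a unique factorization $g \circ \iota^n_k = \iota^n_{g \cdot k} \circ (\iota^n_k)^\ast g$ in $\Delta \bfG$, and this defines a function $g \cdot \colon [n] \to [n]$. Since every $g \in G_n$ is invertible as a morphism in $\Delta \bfG$, the uniqueness clause of the fundamental axiom forces $g^{-1} \cdot (g \cdot k) = k$, so each $g\cdot$ is a set-theoretic bijection of $[n]$.

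To compute $g^\ast \delta_i$, I would write the factorization $g \circ \delta_i = (g^\ast \delta_i) \circ (\delta_i^\ast g)$ and precompose with $\iota^{n-1}_k$ for each $k \in [n-1]$. Using the identities $\delta_i \circ \iota^{n-1}_k = \iota^n_{\delta_i(k)}$ and $(g^\ast \delta_i) \circ \iota^{n-1}_l = \iota^n_{(g^\ast \delta_i)(l)}$ (the latter because $g^\ast \delta_i$ is a set map), the uniqueness of the factorization produces the equality of set maps
\[
g \cdot \delta_i(k) \;=\; (g^\ast \delta_i)\bigl((\delta_i^\ast g) \cdot k\bigr), \qquad k \in [n-1],
\]
where $\delta_i^\ast g \in G_{n-1}$ acts on $[n-1]$ bijectively by the same construction. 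Taking images as $k$ varies, the left side equals $g \cdot ([n] \setminus \{i\}) = [n] \setminus \{g \cdot i\}$, while the right side equals the image of $g^\ast \delta_i$. Hence $g^\ast \delta_i \in \Delta([n-1],[n])$ is an order-preserving map with image of cardinality $n$, and therefore must be the face map $\delta_{g \cdot i}$.

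The same reasoning applied to $\sigma_i \colon [n+1] \to [n]$ gives the set-map identity
\[
g \cdot \sigma_i(k) \;=\; (g^\ast \sigma_i)\bigl((\sigma_i^\ast g) \cdot k\bigr), \qquad k \in [n+1].
\]
This time I would compare fiber sizes instead of images: the left side has its unique fiber of size two at $l = g \cdot i$ (using bijectivity of $g$), while the right side inherits the fiber profile of $g^\ast \sigma_i$ because $(\sigma_i^\ast g)\cdot$ is a bijection of $[n+1]$. Therefore $g^\ast \sigma_i$ is an order-preserving surjection whose unique size-two fiber lies at $g \cdot i$, forcing $g^\ast \sigma_i = \sigma_{g \cdot i}$. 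The common value $j = g \cdot i$ proves the claim.

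The only delicate point is verifying that $g \cdot$ is a genuine set-theoretic bijection of $[n]$, which rests on the invertibility of $g$ in $\Delta \bfG$ together with the uniqueness clause in the fundamental axiom of crossed simplicial groups; once this is in hand, the image and fiber-size comparisons are routine.
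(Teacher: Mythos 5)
Your proof is correct, but it takes a genuinely different route from the paper's. The paper treats the two halves separately and then reconciles the indices: it shows $g^\ast\sigma_i$ is some degeneracy by observing that $\sigma_i\delta_i=\id$ forces $g^\ast\sigma_i$ to be split epi; it shows $g^\ast\delta_i$ is some face map by a contradiction argument that writes out the epi-mono factorization of $g^\ast\delta_i$ and then uses invertibility of $g$ to rule out any degeneracy in that factorization; and finally it invokes the pair of identities $\sigma_i\delta_i = \id = \sigma_i\delta_{i+1}$ together with bijectivity of $(\sigma_i^\ast g)^\ast$ to pin the two indices to the same value $j$. Your argument instead builds the vertex permutation $g\cdot$ on $[n]$ up front (via the factorizations $g\circ\iota^n_k = \iota^n_{g\cdot k}\circ(\iota^n_k)^\ast g$, with bijectivity coming from invertibility of $g$ and uniqueness in the fundamental axiom), then reads off both conclusions as $\delta_{g\cdot i}$ and $\sigma_{g\cdot i}$ by comparing images and fiber sizes in the set-map identities that uniqueness yields. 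In effect you are independently constructing what the paper later calls $\theta_g^{-1}$ in \autoref{const: functor V} and proving the third characterization there \emph{first}, whereas the paper proves this proposition first and only afterward packages it into $V_n$. Your version is arguably cleaner because one piece of data (the vertex action) handles both face and degeneracy cases uniformly; the paper's version is a bit more self-contained and does not require setting up the auxiliary action. Both are valid.
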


\begin{proof}
Let  $\sigma_i \colon [n+1]\to [n]$. Note that since $\sigma_i \delta_i=\id$, we have that $g^\ast\sigma_i \circ (\sigma_i^\ast g)^\ast \delta_i
= g^\ast(\sigma_i \delta_i) =\id$, thus $g^\ast \sigma_i$ is surjective, thus it must be a degeneracy $\sigma_j \colon [n+1]\to [n]$ for some $0\leq j\leq n$. 

Now let $\delta_i\colon [n-1]\to [n]$ and let $g^\ast \delta_i=\delta_{i_1}\dots\delta_{i_r}\sigma_{i_1}\dots\sigma_{i_r}$ be its factorization into degeneracies followed by face maps.  Now note that $(g^{-1}g)^\ast=\id$, but also we can compute 
\begin{align*}
(g^{-1}g)^\ast \delta_i&= (g^{-1})^\ast(\delta_{i_1}\dots\delta_{i_r}\sigma_{i_1}\dots\sigma_{i_r})\\
&=(g^{-1})^\ast (\delta_{i_1}\dots\delta_{i_r}\sigma_{i_1}\dots\sigma_{i_{r-1}}) \big((\delta_{i_1}\dots\delta_{i_r}\sigma_{i_1}\dots\sigma_{i_{r-1}})^\ast(g^{-1})\big)^\ast(\sigma_{i_r}).
\end{align*}
But since we have already shown that $\big((\delta_{i_1}\dots\delta_{i_r}\sigma_{i_1}\dots\sigma_{i_{r-1}})^\ast(g^{-1})\big)^\ast(\sigma_{i_r})$ has to be a degeneracy, we have a contradiction to the entire composite being injective. Thus there are no degeneracies that can occur in the factorization of  $g^\ast \delta_i\colon[n-1]\to [n]$, so it must be equal to a single face map $\delta_k$ for some $0\leq k \leq n$. 

Lastly we show that $j=k$. Using the simplicial identities $\sigma_i \delta_i=\id$ and $\sigma_i \delta_{i+1}=\id$ for $\delta_i, \delta_{i+1}\colon [n]\to[n+1]$, we have that $(\sigma_i^\ast g)^\ast\delta_i$ and $(\sigma_i^\ast g)^\ast\delta_{i+1}$ are  equal to  $\delta_j$ or $\delta_{j+1}\colon[n]\to
[n+1]$. But since $(\sigma_i^\ast g)^\ast$ is a bijection, one of these expressions is $\delta_j$ and the other one then has to be $\delta_{j+1}$.

Suppose that $(\sigma_i^\ast g)^\ast\delta_i=\delta_j$ and $(\sigma_i^\ast g)^\ast\delta_{i+1}=\delta_{j+1}$, so that we have the following commuting diagrams in $\Delta \bfG$ since $\delta_i^\ast\sigma_i^\ast g=g$.
   \[ \xymatrix{
    [n] \ar[r]^-{\delta_i} \ar[d]_-g & [n+1]\ar[d]^-{\sigma_i^\ast g}\\
   [n] \ar[r]^-{\delta_j} &[n+1]
   } \ \ \ \ \ \ \ \ \ \ \ \  \xymatrix{
    [n] \ar[r]^-{\delta_{i+1}} \ar[d]_-g & [n+1]\ar[d]^-{\sigma_i^\ast g}\\
   [n] \ar[r]^-{\delta_{j+1}}& [n+1]
   }\]
Precompose the top map with $\delta_i\colon [n-1]\to [n]$. Then note that for $\delta_k=g^\ast \delta_i$ we must have that $\delta_j\delta_k=\delta_{j+1}\delta_k$ since $\delta_i\delta_i=\delta_{i+1}\delta_i$. Thus $k=j$. The argument is completely symmetrical in the case when $(\sigma_i^\ast g)^\ast\delta_i=\delta_{j+1}$ and $(\sigma_i^\ast g)^\ast\delta_{i+1}=\delta_j$.
\end{proof}

\begin{const}\label{const: functor V}
For any crossed simplicial group $\Delta \bfG$, for every $n$, there is a homomorphism $V_n\colon G_n\to \Sigma_{n+1}$, where the permutation $V_n(g)=\theta_g \in \Sigma_{n+1}$ that $g$ maps to is defined by $\theta_g^{-1}(i) =j$, where given $0\leq i \leq n$, the value $j$ is defined in any of the following equivalent ways.
\begin{enumerate}
\item The value $j$ is the one determined by $g^\ast \sigma_i=\sigma_j$.
\item The value $j$ is the one determined by  $g^\ast \delta_i=\delta_j$.
\item Letting  $[0]\xrightarrow{\phi_i} [n]$  denote the map  that sends $0$ to $i$, the  value $j$ is the one determined by $g^\ast \phi_i=\phi_j$.
\end{enumerate}
Equivalently, the permutation $V_n(g)$ is the one determined by the action by left composition of $G_n$ on the set $\Hom_{\Delta \bfG}([0], [n])=G_0\times \Hom_{\Delta}([0], [n])$.  These homomorphisms $V_n$ assemble into a functor $$V= \text{Hom}_{\Delta \bfG}([0],-)/ G_0 \colon \Delta \bfG \to \text{Fin}$$ 
where $\text{Fin}$ denotes the category of finite sets and maps of finite sets. 
\end{const}

\begin{rem}
When $\Delta \bf G$ is the cyclic category, this is the functor used by Nikolaus--Scholze~\cite[pp~ 293]{NS18} in their definition of topological Hochschild homology.\footnote{Note that in \cite{NS18}, the object $[1]_{\Lambda}$ corresponds to $[0]$ in the standard model for the cyclic category.}
\end{rem}

\begin{cor}[{\cite[Proposition 1.7]{FL91}}]\label{crossed simplicial groups with faces and degeneracies}
A crossed simplicial group is a simplicial set $\bfG_{\sbt}$ such that for each $n\geq 0$,  $G_n$ is a group together with a group homomorphism $V_n\colon G_n\rightarrow \Sigma_{n+1}$ denoted $g\mapsto \theta_g$ such that:
\begin{enumerate}
    \item $\delta^*_i(gg')=\delta^*_i(g)\delta^*_{\theta_g^{-1}(i)}(g')$, $\sigma^*_i(gg')=\sigma^*_i(g)\sigma^*_{\theta_g^{-1}(i)}(g')$;
    \item 
    the following set diagrams are commutative:
    \[
    \begin{tikzcd}
        {[n-1]} \ar{r}{\delta_{\theta_g^{-1}(i)}} \ar{d}[swap]{\theta_{\delta^*_i(g)}} & {[n]} \ar{d}{\theta_g} & {[n+1]} \ar{r}{\sigma_{\theta_g^{-1}(i)}} \ar{d}[swap]{\theta_{\sigma^*_i(g)}} & {[n]}\ar{d}{\theta_g}\\
        {[n-1]} \ar{r}[swap]{\delta_i} & {[n]} & {[n+1]} \ar{r}[swap]{\sigma_i} & {[n]}
    \end{tikzcd}
    \]
\end{enumerate}
\end{cor}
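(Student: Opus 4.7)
My plan is to prove both directions using \autoref{FL characterization of CSG} as the target characterization. For the forward direction, given a crossed simplicial group $\Delta \bfG$, the underlying data of a simplicial set $\bfG_{\sbt}$ is provided by item~(1) of \autoref{FL characterization of CSG}, and the homomorphism $V_n\colon G_n \to \Sigma_{n+1}$ is supplied by \autoref{const: functor V}. Condition~(1) of the corollary is then simply the specialization of \autoref{eq: 2.v} to the generators $\phi=\delta_i$ and $\phi=\sigma_i$ together with the identifications $\delta_i^*g = g$-indexed data (via the simplicial set $\bfG_{\sbt}$), while the commutative set diagrams in condition~(2) are the defining squares of $g^*\delta_i$ and $g^*\sigma_i$, which, by the second proposition preceding \autoref{const: functor V}, equal $\delta_{\theta_g^{-1}(i)}$ and $\sigma_{\theta_g^{-1}(i)}$, respectively.

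For the reverse direction, I assume the data of a simplicial set $\bfG_{\sbt}$ whose $n$-simplices are groups, together with homomorphisms $V_n$ satisfying (1) and (2) on generators, and must produce an action $(g,\phi)\mapsto g^*\phi$ of $G_m$ on $\Delta([n],[m])$ making the six identities of \autoref{FL characterization of CSG} hold. First I define $g^*\delta_i \coloneqq \delta_{\theta_g^{-1}(i)}$ and $g^*\sigma_i \coloneqq \sigma_{\theta_g^{-1}(i)}$, and then extend to arbitrary $\phi\in\Delta([n],[m])$ by iterating the rule
\[
g^*(\phi\circ\psi) \coloneqq g^*\phi \circ (\phi^*g)^*\psi
\]
along a factorization of $\phi$ into generators. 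The main obstacle is to show that this extension is \emph{well-defined}, i.e.\ independent of the chosen factorization into generators. For this I need to verify that the rule respects each of the cosimplicial identities, which reduces, by an easy induction, to checking that the two crossed maps $g^*(\alpha\circ\beta)$ and $g^*(\alpha'\circ\beta')$ obtained from each cosimplicial relation $\alpha\circ\beta=\alpha'\circ\beta'$ between generators agree. This in turn follows from (a) the commutativity of the squares in condition~(2), which determine $\theta_g^{-1}(i)$ purely combinatorially, and (b) the fact that $V_n$ is a group homomorphism so that the induced permutations compose correctly.

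Once well-definedness is established, I verify the six axioms. Identity \autoref{eq: 1.h} and the first half of \autoref{eq: 3.h} hold by virtue of $\bfG_{\sbt}$ being a simplicial set; the second half of \autoref{eq: 3.h}, namely $\phi^*(e_m)=e_n$, reduces to generators via \autoref{eq: 1.h}, where it follows by setting $g=g'=e_m$ in condition~(1) of the corollary (giving $\delta_i^*(e_m)=\delta_i^*(e_m)^2$, hence $=e_n$, and similarly for $\sigma_i$). Identity \autoref{eq: 2.h} holds for composites of generators essentially by the definition of $g^*$, and extends to all of $\Delta$ by the same inductive argument used to establish well-definedness. Identity \autoref{eq: 3.v} follows from $\theta_{e_m}=\id$ (as $V_n$ is a homomorphism). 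For \autoref{eq: 1.v} and \autoref{eq: 2.v}, I use the same extension-from-generators strategy as in the proof of \autoref{csg}: the displayed calculation there shows that, given \autoref{eq: 1.h} and \autoref{eq: 2.h}, the identity \autoref{eq: 2.v} for $\phi$ and $\psi$ implies it for $\phi\circ\psi$, so it suffices to verify it on $\delta_i$ and $\sigma_i$, which is precisely condition~(1) of the corollary; the argument for \autoref{eq: 1.v} is analogous and is proved by induction using \autoref{eq: 2.h}.
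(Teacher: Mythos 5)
The paper supplies no proof of this corollary; it is stated only with the citation \cite[Proposition~1.7]{FL91}, so there is no internal argument to compare against. Judged on its own, your forward direction is correct and complete: condition~(1) is \autoref{eq: 2.v} specialized to $\phi=\delta_i,\sigma_i$ using $g^*\delta_i=\delta_{\theta_g^{-1}(i)}$ and $g^*\sigma_i=\sigma_{\theta_g^{-1}(i)}$, and condition~(2) is the image under the functor $V$ of \autoref{const: functor V} of the canonical crossed squares in $\Delta\bfG$.

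In the converse direction the actual gap is well-definedness. You reduce it to ``checking that the two crossed maps $g^*(\alpha\circ\beta)$ and $g^*(\alpha'\circ\beta')$ obtained from each cosimplicial relation agree,'' but carry out none of these checks; the assertion that this follows from ``(a) the commutativity of the squares in condition~(2) \ldots\ and (b) the fact that $V_n$ is a group homomorphism'' is a claim, not an argument, and this is precisely the step where condition~(2) does all the work, so it cannot be waved through. You can dissolve the problem entirely by observing that your formulas $g^*\delta_i\coloneqq\delta_{\theta_g^{-1}(i)}$ and $g^*\sigma_i\coloneqq\sigma_{\theta_g^{-1}(i)}$ are exactly $\theta_g^*\delta_i$ and $\theta_g^*\sigma_i$ computed in $\Delta\bfS$ (\autoref{sym_example}), and simply \emph{defining} $g^*\phi\coloneqq\theta_g^*\phi$ for all $\phi\in\Delta$ via the $\Delta\bfS$ structure. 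Then well-definedness is automatic, and \autoref{eq: 1.v}, \autoref{eq: 3.v}, \autoref{eq: 2.h} for the data $(\bfG_{\sbt},V_\bullet)$ are inherited from the corresponding identities in $\Delta\bfS$ using that $V_n$ is a homomorphism and, for \autoref{eq: 2.h}, that condition~(2) asserts precisely the compatibility $\theta_{\phi^*g}=\phi^*\theta_g$ on generators (extended to all $\phi$ by \autoref{eq: 1.h}, which holds on both sides as simplicial-set identities). Your treatment of \autoref{eq: 3.h} via $g=g'=e$ in condition~(1) and the reduction of \autoref{eq: 2.v} to generators via the computation in the proof of \autoref{csg} are both fine as written.
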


\begin{rem}\label{rem: map of crossed simplicial group}
    Notice from \autoref{crossed simplicial groups with faces and degeneracies} that a functor $f\colon \Delta \bfG \rightarrow \Delta \mathbf{G'}$ is specified by a sequence of group homomorphisms $f_n\colon G_n\rightarrow G_n'$ such that:
\begin{enumerate}
    \item $\theta_g=\theta_{f_n(g)}$, for all $n\geq 0$, $g\in G_n$;
    \item $f_{n-1}(\delta^*_i(g))=\delta^*_i(f_n(g))$, for all $n\geq 1$, $g\in G_n$, and $0\leq i \leq n$;
    \item $f_{n+1}(\sigma^*_i(g))=\sigma^*_i(f_n(g))$, for all $n\geq 0$, $g\in G_n$, and $0\leq i \leq n$.
\end{enumerate}
\end{rem}

\begin{rem}\label{rem: functor from group with parities to crossed simplicial groups}
\autoref{crossedforpar} is natural in $(G,\varphi)$ in the sense that it defines a functor
\begin{align*}
  \Delta -\wr \mathbf{\Sigma}\colon \Grparity & \longrightarrow \CSG\\
    (G, \varphi) & \longmapsto \Delta \bfGS
\end{align*}
from the category $\Grparity$ of groups with parities to the full subcategory $\CSG$ of small categories $\Cat$ spanned by crossed simplicial groups.
Indeed, given groups with parities $(G, \varphi)$ and $(G', \varphi')$, and a group homomorphism $f\colon G\rightarrow G'$ compatible with the parities, i.e. $\varphi'\circ f=\varphi$, we can define a functor $\Delta \bfGS \rightarrow \Delta \varphi' \wr \mathbf{\Sigma}$ by specifying group homomorphisms
\begin{align*}
    G \wr \Sigma_{n+1} & \longrightarrow G' \wr \Sigma_{n+1}\\
    (g_0, \ldots, g_n ; \gamma) & \longmapsto (f(g_0), \ldots, f(g_n); \gamma)
\end{align*}
that are compatible with the crossed simplicial structure following \autoref{rem: map of crossed simplicial group}.
\end{rem}

\begin{thm}[\protect{\cite[Theorem 3.6]{FL91}}]\label{classification of csg}
For every crossed simplicial group $\Delta \bfG$, there is a canonical functor 
\[ 
\lambda \colon \Delta \bfG \longrightarrow \Delta \bfH
\]
whose essential image is a sub-crossed simplicial group $\Delta \bfG^{\prime}$ of $\Delta \bfH$. The kernels $G''_n$ of the group homomorphisms $\lambda_n \colon G_n\to G_n^{\prime}\subset \Sigma_{n+1}\wr C_2$ form a simplicial group. 
\end{thm}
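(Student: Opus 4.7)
The plan is to construct $\lambda$ degree-by-degree by enhancing the homomorphisms $V_n \colon G_n \to \Sigma_{n+1}$ of \autoref{const: functor V} with sign data extracted from the canonical parity $\varphi \colon G_0 \to C_2$. Given $g \in G_n$, the factorization axiom applied to $g \circ \phi_i$ (where $\phi_i \colon [0] \to [n]$ is the map sending $0$ to $i$) reads
\[
g \circ \phi_i = \phi_{\theta_g^{-1}(i)} \circ \phi_i^* g
\]
with crossing element $\phi_i^* g \in G_0$. Defining $\epsilon_i(g) \coloneqq \varphi(\phi_i^* g)$, I propose
\[
\lambda_n(g) \coloneqq \bigl(\epsilon_0(g), \ldots, \epsilon_n(g);\, \theta_g\bigr) \in C_2 \wr \Sigma_{n+1}.
\]

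To show $\lambda_n$ is a group homomorphism, I would specialize \autoref{eq: 2.v} to $\phi = \phi_i$ and $g, h \in G_n$, obtaining the identity
\[
\phi_i^*(gh) = \phi_i^* g \cdot \phi_{\theta_g^{-1}(i)}^* h
\]
in $G_0$. Applying $\varphi$, a homomorphism into the abelian group $C_2$, yields $\epsilon_i(gh) = \epsilon_i(g) \cdot \epsilon_{\theta_g^{-1}(i)}(h)$, which is precisely the wreath-product multiplication rule. Combined with $\theta_{gh} = \theta_g \theta_h$ from the functoriality of $V$, this gives $\lambda_n(gh) = \lambda_n(g)\lambda_n(h)$.

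To see that the $\lambda_n$ assemble into a functor of crossed simplicial groups, I would invoke \autoref{rem: map of crossed simplicial group}. The condition $\theta_g = \theta_{\lambda_n(g)}$ holds by construction. For compatibility with face and degeneracy maps, observe that \autoref{eq: 1.h} applied to $\psi \circ \phi_k$ for a simplicial map $\psi$ gives $\phi_k^*(\psi^* g) = \phi_{\psi(k)}^* g$; applying $\varphi$ reproduces the sign pattern prescribed by the formulas of \autoref{crossedforpar} in the case $\varphi = \id_{C_2}$, i.e.\ in $\Delta \bfH$. The essential image $\Delta \bfG'$ is then automatically a sub-crossed simplicial group, since each $G_n' = \lambda_n(G_n)$ is a subgroup and the crossed simplicial operations of $\Delta \bfH$ are preserved by functoriality.

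Finally, for the kernels $G_n'' = \ker \lambda_n$: any $g \in G_n''$ satisfies $\theta_g = \id$, so \autoref{crossed simplicial groups with faces and degeneracies}(1) specializes to $\delta_i^*(gg') = \delta_i^*(g) \cdot \delta_i^*(g')$ and $\sigma_i^*(gg') = \sigma_i^*(g) \cdot \sigma_i^*(g')$ whenever $g, g' \in G_n''$. Thus the face and degeneracy maps restrict to honest group homomorphisms on the kernels, and functoriality of $\lambda$ ensures the simplicial structure preserves them, so $\{G_n''\}$ is a simplicial group. The main obstacle I anticipate is the intrinsic construction of the canonical parity $\varphi$ from the crossed simplicial structure alone, e.g.\ by setting $\varphi(g) = -1$ iff the permutation $\theta_{\sigma_0^* g} \in \Sigma_2$ is nontrivial, and verifying that this choice is compatible with all six identities of \autoref{FL characterization of CSG}; once $\varphi$ is in hand, everything else is a direct combinatorial verification.
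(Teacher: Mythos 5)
This theorem is stated in the paper as a citation to \cite[Theorem~3.6]{FL91} with no proof supplied, so there is no internal argument to compare against; the relevant adjacent machinery is Construction~\ref{const: functor V}, Proposition~\ref{homomorphismLn}, and Theorem~\ref{canonical map}. Your proposed $\lambda_n(g) = (\varphi(\phi_0^*g),\dots,\varphi(\phi_n^*g);\,\theta_g)$ is indeed the map the paper works with implicitly: it is precisely the composite of the paper's $L_n$ (Proposition~\ref{homomorphismLn}) with the parity projection, and the paper's Theorem~\ref{canonical map} asserts exactly that $\lambda$ factors as $\tilde\lambda$ followed by $\Delta\lambda_0\wr\bfS\to\Delta\bfH$. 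Your homomorphism argument via \eqref{eq: 2.v} matches the calculation in Proposition~\ref{homomorphismLn}, and your observation $\epsilon_k(\psi^*g)=\epsilon_{\psi(k)}(g)$ from \eqref{eq: 1.h} correctly handles the $C_2$-labels in the simplicial compatibility; your argument for the kernels is also sound (given functoriality of $\lambda$).

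The genuine gap is in the degeneracy case of the simplicial compatibility, which you gloss over as ``reproduces the sign pattern.'' For $\lambda_{n+1}(\sigma_i^*g)=\sigma_i^*(\lambda_n(g))$ to hold, the $\Delta\bfH$ rule demands $\theta_{\sigma_i^*g}=\tau\cdot\sigma_i^*\theta_g$, where $\tau\in\{\id,(i,i+1)\}$ is governed by $\varphi(\phi_i^*g)$ and $\sigma_i^*\theta_g$ denotes the order-preserving solution in $\Delta\bfS$. The commutative square from \autoref{crossed simplicial groups with faces and degeneracies}(2) pins down $\theta_{\delta_i^*g}=\delta_i^*\theta_g$ exactly (since $\delta_i$ is injective, so there is a unique solution), but it determines $\theta_{\sigma_i^*g}$ only up to the transposition $(i,i+1)$, since the two preimages of $\theta_g^{-1}(i)$ under $\sigma_{\theta_g^{-1}(i)}$ can be interchanged without breaking commutativity. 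That this ambiguity is resolved exactly by $\varphi(\phi_i^*g)$ is the heart of the theorem and requires an independent argument---for instance, restricting along the order-preserving inclusion $\iota\colon[1]\hookrightarrow[n+1]$ hitting $\{i,i+1\}$ and using $\iota^*(\sigma_i^*g)=\sigma_0^*(\phi_i^*g)$ to reduce to the level-$0$ definition of $\varphi$. Note that the paper's Theorem~\ref{canonical map} handles this step by \emph{applying the functor $\lambda$ itself}, which would be circular in a self-contained proof of the present statement. You also rightly flag that $\varphi$ must be shown to be a well-defined homomorphism from intrinsic data; this is closeable (e.g.\ $g^*\sigma_0=\sigma_0$ forces $\sigma_0^*$ to be a homomorphism $G_0\to G_1$ by \eqref{eq: 2.v}, after which $\varphi$ is a composite of homomorphisms), but you have not closed it, and without $\varphi$ in hand the whole construction is unanchored.
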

The subcrossed simplicial groups $\Delta \bfG^{\prime}$ of $\Delta \bfH$ were completely classified by \cite{Abo87} (cf. 
\cite[Proposition 1.5]{Kra87} and \cite[Theorem 3.6]{FL91}).

Using the classification of crossed simplicial groups from  \autoref{classification of csg}, we produce the following family of groups with parity. 

\begin{exm}\label{examples of groups with parity}
Let $\Delta\bfG$ be a crossed simplicial group. Then by \autoref{classification of csg}, there is a group homomorphism 
\[ \lambda_0 \colon G_0^{\op} \longrightarrow H_0=C_2,\] 
which we call the \emph{canonical parity} of the group $G_0^{\op}=\Aut_{\Delta \bfG}([0])$. 
Explicitly, note that an element $g\in G_0$ determines an element $\sigma_0^\ast g$ for the unique map $\sigma_0\colon [1]\to [0]$. The value $\lambda_0(g)$ is defined as 1 or -1, depending on whether $(\sigma_0^\ast g)^\ast(\delta_0)$ gives $\delta_0$ back or $\delta_1$, where $\delta_0, \delta_1$ are the two maps $[0]\to [1]$ skipping $0$ and $1$, respectively.
We call $(G_0^{\op},\lambda_0)$ the \emph{group with parity associated to the crossed simplicial group $\Delta \bfG$}. 
\end{exm}

\begin{rem}\label{rem: functor from crossed simplicial groups to groups with parities}
The definition of the canonical parity $(-)_0\colon \lambda_0\colon G_0^{\op}\rightarrow C_2$ of \autoref{examples of groups with parity} is natural in $\Delta \mathbf{G}$, in the sense that it defines a functor 
\begin{align*}
  (-)_0\colon\CSG &\longrightarrow \Grparity\\
  \Delta \mathbf{G} & \longmapsto (G_0^{\op}, \lambda_0)
\end{align*}  
from the full subcategory $\CSG$ of small categories $\Cat$, spanned by crossed simplicial groups, to the category of $\Grparity$ of groups with parities. 
Indeed, given a functor $f\colon \Delta \mathbf{G}\rightarrow \Delta \mathbf{G}'$, \autoref{rem: map of crossed simplicial group} shows that we can obtain a group homomorphism $f_0\colon G_0^{\op}\rightarrow (G_0')^{\op}$ compatible with the canonical parities in the sense that the diagram commutes:
\[
\begin{tikzcd}
    G_0^{\op} \ar{rr}{f_0}\ar{dr}[swap]{\lambda_0} & & (G_0')^{\op} \ar{dl}{\lambda_0'}\\
    & C_2.
\end{tikzcd}
\]
We observe in \autoref{adjunction} that the functor $(-)_0$ is adjoint to the functor $\Delta -\wr \mathbf{\Sigma}$ and we give an explicit description of the unit of this adjunction in \autoref{canonical map}.
\end{rem}

Let $\Delta \bfG$ be a crossed simplicial group. Consider the wreath product $G_0^{\op} \wr \Sigma_{n+1} $.
Let $g\in G_n^{\op}$, and let $[0]\xrightarrow{\phi_i} [n]$  be the map  that sends $0$ to $i$. Consider the corresponding canonical factorization: 
\[\xymatrix{
[0] \ar[d]_-{\phi_i^\ast g} \ar[r]^-{\phi_i} & [n]\ar[d]^-{g}\\
[0]\ar[r]_-{g^{\ast} \phi_i} &[n]
}\]
where this square completion agrees with the one in previous conventions because the vertical arrows are invertible. 
As before, define the permutation $\theta_g\in \Sigma_{n+1}$ by $\theta_g^{-1}(i) = (g^\ast \phi_i) (0)$, 
Define a map $L_n\colon G_n^{\op}\to  G_0^{\op} \wr \Sigma_{n+1}$
as follows:
$$L_n(g)=( \phi_0^\ast g,  \phi_1^\ast g, \dots,  \phi_n^\ast g; \theta_g).$$

\begin{prop}[{\protect{\cite[Proposition I.37]{DK15}}}]\label{homomorphismLn}
The above function $L_n\colon G_n^{\op}\to  G_0^{\op} \wr \Sigma_{n+1}$  is a group homomorphism.
\end{prop}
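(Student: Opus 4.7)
The plan is to verify the homomorphism identity $L_n(gh) = L_n(g) \cdot L_n(h)$ directly, by comparing the tuple and permutation coordinates of the two sides. The key inputs are the axioms \autoref{eq: 1.v} and \autoref{eq: 2.v} of \autoref{FL characterization of CSG}, together with the characterization $g^*\phi_i = \phi_{\theta_g^{-1}(i)}$ recorded in \autoref{const: functor V}(3).

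For the permutation coordinate, applying \autoref{eq: 1.v} to $\phi = \phi_i$ gives
\[
(gh)^*\phi_i = h^*\!\bigl(g^*\phi_i\bigr) = h^*\!\bigl(\phi_{\theta_g^{-1}(i)}\bigr) = \phi_{\theta_h^{-1}(\theta_g^{-1}(i))},
\]
so $\theta_{gh} = \theta_g \theta_h$; this is essentially already recorded by \autoref{const: functor V}, which asserts that $V_n$ is a homomorphism. For the tuple coordinate, the cocycle formula \autoref{eq: 2.v} applied to $\phi = \phi_i$ yields
\[
\phi_i^*(gh) = \phi_i^*(g)\cdot (g^*\phi_i)^*(h) = \phi_i^*(g)\cdot \phi_{\theta_g^{-1}(i)}^*(h),
\]
which is exactly the $i$-th coordinate of the product on the right-hand side, given the multiplication formula $(a_i \cdot b_{\alpha^{-1}(i)})_i$ with $\alpha = \theta_g$ as recalled in the proof of \autoref{crossedforpar}. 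Combining the two coordinates completes the verification.

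Conceptually, one can view this as an instance of a general principle: the formula for $L_n$ records the left action of $\Aut_{\Delta \bfG}([n]) = G_n^{\op}$ on $\Hom_{\Delta \bfG}([0],[n])$ by post-composition, which is equivariant for the free right action of $\Aut_{\Delta \bfG}([0]) = G_0^{\op}$ by pre-composition. The homomorphism $L_n$ is the one induced by this action after identifying $\Hom_{\Delta \bfG}([0],[n])$ with the free right $G_0^{\op}$-module of rank $n+1$ with basis $\{\phi_i\}_{i=0}^n$, whose $G_0^{\op}$-equivariant automorphism group is precisely $G_0^{\op}\wr \Sigma_{n+1}$. The main obstacle is bookkeeping conventions (the direction of the opposite-group product in $G_n^{\op}$ and $G_0^{\op}$, and the precise form of the wreath product multiplication from \autoref{crossedforpar}); once these conventions are fixed as in the paper, the computation above goes through formally.
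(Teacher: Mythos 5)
Your proof is correct and takes essentially the same approach as the paper's: both decompose the check into the permutation coordinate (giving $\theta_{gh}=\theta_g\theta_h$) and the tuple coordinate (giving $\phi_i^*(gh)=(\phi_i^*g)\cdot(\phi_{\theta_g^{-1}(i)}^*h)$), then match against the wreath-product multiplication. The only cosmetic difference is that the paper reads these two identities directly off the stacked commuting squares of the canonical factorization in $\Delta\bfG$, whereas you invoke the packaged identities~\eqref{eq: 1.v} and~\eqref{eq: 2.v} of \autoref{FL characterization of CSG}, which encode exactly the same information.
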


\begin{proof}
To check that this is a group homomorphism, consider the composites 
\[
\xymatrix{
[0] \ar[d]_-{\phi_i^\ast g} \ar[r]^-{\phi_i} & [n]\ar[d]^-g\\
[0]\ar[d]_-{(g^\ast\phi_i)^\ast h} \ar[r]_-{g^\ast \phi_i } &[n]\ar[d]^-h\\   
[0] \ar[r]_-{h^\ast(g^\ast\phi_i)} &[n]
}  
\xymatrix{
[0]\ar[dd]_-{= \ \ \ (\phi_i^\ast g)((g^\ast\phi_i)^\ast h)}  \ar[rr]^-{\phi_i} && [n]\ar[dd]^-{h\circ g}\\
&&\\
[0]\ar[rr]_-{(gh)^\ast \phi_i} &&[n]
}
\] 
\noindent Note that by definition $g^\ast\phi_i= \phi_{\theta_g^{-1}(i)}$, $h\circ g=g\cdot h$ in $G^{\op}$, and  $\theta_{gh}^{-1}(i)=\theta_h^{-1}(\theta_g^{-1}(i))=(\theta_g\theta_h)^{-1}(i)$, thus
\begin{align*}
 L_n(g)L_n(h) &= (\phi_0^\ast g,  \phi_1^\ast g, \dots,  \phi_n^\ast g; \theta_g) (  \phi_0^\ast h,  \phi_1^\ast h, \dots,  \phi_n^\ast h; \theta_h)\\
 &= \big( (\phi_0^\ast g)  (\phi_{\theta_g^{-1}(0)}^\ast h),\ (\phi_1^\ast g)  (\phi_{\theta_g^{-1}(1)}^\ast h),  \dots, (\phi_n^\ast g)  (\phi_{\theta_g^{-1}(n)}^\ast h); \theta_g \theta_h\big)\\
 &= L_n(gh).  \qedhere
 \end{align*}
\end{proof}

\begin{prop}\label{canonical map}
When $(G_0^{\op},\lambda_0)$ is the group with parity associated to a crossed simplicial group $\Delta \bfG$, the functor $\lambda \colon \Delta \bfG\to \Delta \bfH$ has a canonical factorization through a functor $\tilde{\lambda}\colon \Delta \bfG\to \Delta \lambda_0 \wr \bfS$.
\end{prop}

\begin{proof}
We have a group homomorphism $L_n\colon G_n^{\op}\to  G_0^{\op}\wr \Sigma_{n+1} $. 
As noted in \autoref{rem: map of crossed simplicial group}, to result in a functor 
$\Delta \bfG\to \Delta \lambda_0 \wr \bfS$ 
we only need to verify the three equations in the remark.
To show that $\theta_g=\theta_{L_n(g)}$ for all $g\in G_n$, notice that we have for all $0\leq i\leq n$:
\begin{align*}
   \phi_{\theta^{-1}_{L_n(g)}(i)} & =L_n(g)^*\phi_i  = \big(\phi_0^*g, \dots, \phi_n^*g; \theta_g\big)^* \phi_i 
     = \theta_g^*\phi_i 
     = \phi_{\theta^{-1}_g(i)}
\end{align*}
Thus $\theta^{-1}_{L_n(g)}(i)=\theta^{-1}_g(i)$ for all $0\leq i\leq n$, and thus $\theta_g=\theta_{L_n(g)}$.

 We now need to show $L_{n-1}(\delta^*_i(g))=\delta_i^*(L_n(g))$ for $n\geq 1$, $g\in G_n$ and $0\leq i \leq n$.
 By definition, we have on the one hand:
 \begin{align*}
     \delta^*_i(L_n(g))  = \delta^*_i(\phi_0^*g, \dots, \phi_n^*g; \theta_g)
     = \big( \phi_0^*g, \dots, \phi_{i-1}^*g, \phi_{i+1}^*g, \dots, \phi^*_ng; \delta^*_i \theta_g\big).
 \end{align*}
 On the other hand, we have:
 \[
 L_{n-1}(\delta_i^* g) = \big( \phi_0^*(\delta^*_i g), \dots, \phi^*_{n-1}(\delta^*_i g) ; \theta_{\delta^*_ig}\big)
 \]
 Notice that for $0\leq k \leq n-1$, we have:
 \[
 \delta_i\circ \phi_k = \begin{cases} 
    \phi_k & \text{if } k<i \\ 
    \phi_{k+1}& \text{if }k\geq i.
\end{cases}
 \]
 Therefore we obtain:
 \[
 \phi_k^*(\delta_i^*g)=(\delta_i\circ \phi_k)^*(g) = \begin{cases} 
    \phi_k^*g & \text{if } k<i \\ 
    \phi_{k+1}^*g& \text{if }k\geq i.
\end{cases}
 \]
 So we are left to prove that $\delta^*_i \theta_g=\theta_{\delta^*_ig}$.
 Recall that the permutation $\delta_i^* \theta_g$ in $\Delta \bfS$ is defined uniquely by sending the subset ${\delta^*_i}^{-1}(k)\subseteq [n]$ to the set $(\theta_g^*\delta_i)^{-1}(\theta_g(k))=\delta_{\theta_g^{-1}(i)}^{-1}(\theta_g(k))$ and preserving its order. Therefore we obtain:
 \[
 \delta_i^*\theta_g(k) =\begin{cases}
     \theta_g(k) & \text{if }k<i, \theta_g(k)<\theta_g(i)\\
     \theta_g(k)-1 & \text{if }k<i, \theta_g(k)\geq \theta_g(i)\\
     \theta_g(k+1) & \text{if } k\geq i, \theta_g(k+1)<\theta_g(i)\\
     \theta_g(k+1)-1 & \text {if }k\geq i, \theta_g(k+1)\geq \theta_g(i).
 \end{cases}
 \]
 Meanwhile, recall that $\theta_{\delta^*_i g}^{-1}(k)={(\delta^*_i g)}^* \phi_k(0)$. We have the commutative diagram 
 \[
 \begin{tikzcd}[column sep=large]
     {[0]} \ar{r}{{(\delta^*_ig)}^*\phi_k} \ar{d}[swap]{\phi_k^*\delta_i^*g} & {[n-1]} \ar{r}{\delta_{\theta^{-1}_g(i)}} \ar{d}{\delta_i^*g} & {[n]}\ar{d}{g}\\
     {[0]}\ar{r}{\phi_k} & {[n-1]}\ar{r}{\delta_i} & {[n]}
 \end{tikzcd}
 \]
 in $\Delta \bfG$. By uniqueness of the factorization in $\Delta \bfG$, we obtain that:
 \[
 \delta_{\theta_g^{-1}(i)} \circ {(\delta^*_i g)}^* \phi_k = \begin{cases}
     g^*\phi_k & \text{if }k<i\\
     g^*\phi_{k+1}& \text{if }k\geq i.
 \end{cases}
 \]
 As $\delta_{\theta_g^{-1}(i)}$ skips the entry $\theta_g^{-1}(i)$, we get then:
 \[
 {(\delta^*_i g)}^* \phi_k (0) =\begin{cases}
     g^*\phi_k(0) & \text{if }k<i, \theta_g(k)<\theta_g(i)\\
     g^*\phi_k(0)-1 & \text{if }k<i, \theta_g(k)\geq \theta_g(i)\\
    g^*\phi_{k+1}(0) & \text{if } k\geq i, \theta_g(k+1)<\theta_g(i)\\
     g^*\phi_{k+1}(0)-1 & \text {if }k\geq i, \theta_g(k+1)\geq \theta_g(i).
 \end{cases}
 \]
 Thus we have just shown $\delta^*_i \theta_g=\theta_{\delta^*_ig}$, concluding that $L_{n-1}(\delta^*_i(g))=\delta_i^*(L_n(g))$.

 We now need to show $L_{n+1}(\sigma^*_i(g))=\sigma^*_i(L_n(g))$, for all $n\geq 0$, $g\in G_n$, and $0\leq i \leq n$.
 Be definition, we have on one hand:
 \begin{align*}
     \sigma_i^*(L_n(g)) & = \sigma^*_i \big( \phi_0^*g, \dots, \phi_n^* g ; \theta_g\big)\\
     & = \big( \phi_0^*g, \dots, \phi_i^*g, \phi_i^*g, \dots, \phi_n^*g ; \tau \sigma^*_i\theta_g \big),
 \end{align*}
 where the permutation $\tau$ is defined by:
 \[
 \tau= \begin{cases}
     \id & \text{if }\lambda_0(\phi_i^*g)=1\\
     (i, i+1) & \text{if }\lambda_0(\phi_i^*g)=-1.
 \end{cases}
 \]
 On the other hand, we have:
 \[
 L_{n+1}(\sigma^*_ig) = \big( \phi_0^*(\sigma_i^* g), \dots, \phi_{n+1}^*(\sigma_i^* g) ; \theta_{\sigma^*_i g}\big). 
 \]
 Notice that for $0\leq k \leq n+1$, we have:
 \[
 \sigma_i\circ \phi_k= \begin{cases}
     \phi_k & \text{if }k\leq i\\
     \phi_{k-1} & \text{if }k>i.
 \end{cases}
 \]
 Therefore we obtain:
 \[
 \phi_k^*(\sigma^*_i g)=(\sigma_i \circ \phi_k)^*(g)= \begin{cases}
     \phi_k^*g & \text{if }k\leq i\\
     \phi_{k-1}^*g & \text{if }k>i.
 \end{cases}
 \]
 So we are left to prove that $\tau \sigma^*_i\theta_g=\theta_{\sigma^*_i g}$.
 Recall that the permutation $\sigma^*_i\theta_g$ in $\Delta \bfS$ is defined uniquely by sending the subset ${\sigma^*_i}^{-1}(k)\subseteq [n]$ to the set $(\theta_g^*\sigma_i)^{-1}(\theta_g(k))=\sigma^{-1}_{\theta^{-1}_g(i)}(\theta_g(k))$ and preserving its order. Therefore we obtain:
 \[
 \sigma_i^*\theta_g(k) =\begin{cases}
     \theta_g(k) & \text{if }k\leq i, \theta_g(k)\leq \theta_g(i)\\
     \theta_g(k)+1 & \text{if }k \leq i, \theta_g(k)> \theta_g(i)\\
     \theta_g(k-1) & \text{if } k> i, \theta_g(k-1)< \theta_g(i)\\
     \theta_g(k-1)+1 & \text {if }k> i, \theta_g(k-1)\geq \theta_g(i).
 \end{cases}
 \] 
So $\tau \sigma^*_i\theta_g$ satisfies these same formulas for $k\neq i, i+1$ while $i, i+1$ map to $\theta_g(i)$, $\theta_g(i)+1$ if $\tau=\id$ or to 
$\theta_g(i)+1, \theta_g(i)$ if 
$\tau=(i,i+1)$. 

On the other hand, recall that $\theta^{-1}_{\sigma^*_ig}(k)=(\sigma^*_i g)^*\phi_k(0)$.
 We have the commutative diagram in $\Delta \bfG$:
 \[
 \begin{tikzcd}[column sep=large]
     {[0]} \ar{r}{(\sigma^*_i g)^*\phi_k} \ar{d}[swap]{\phi_k^*\sigma_i^*g} & {[n+1]} \ar{r}{\sigma_{\theta^{-1}_g(i)}}\ar{d}{\sigma_i^*g} & {[n]}\ar{d}{g}\\
     {[0]} \ar{r}{\phi_k}& {[n+1]}\ar{r}{\sigma_i} & {[n]}.
 \end{tikzcd}
 \]
 Just as in the cases for cofaces, noting that $\sigma_i\phi_k=\phi_k$ for $k\leq i$ and $\sigma_i\phi_k=\phi_{k-1}$ for $k>i$,  since $\sigma_{\theta^{-1}_g(i)}$ repeats the entry $\theta^{-1}_g(i)$, we obtain the following formulas for $k\neq i, i+1$:
  \[
 {(\sigma^*_i g)}^* \phi_k (0) =\begin{cases}
     g^*\phi_k(0) & \text{if }k<i, \theta_g(k)< \theta_g(i)\\
     g^*\phi_k(0)+1 & \text{if }k<i, \theta_g(k)> \theta_g(i)\\
    g^*\phi_{k-1}(0) & \text{if } k>i+1, \theta_g(k-1)< \theta_g(i)\\
     g^*\phi_{k-1}(0)+1 & \text {if }k> i+1, \theta_g(k-1)> \theta_g(i).
 \end{cases}
 \]
Therefore for $k\neq i, i+1$ these formulas agree precisely with those above. Noting that for $k=i, i+1$, we have $\phi_k\sigma_i=\phi_i$, from the above diagram we already know that $(\sigma^*_i g)^* \phi_k (0)$ must be $\theta_g(i)$ or $\theta_g(i)+1$ and we need to argue that this depends on $\tau$.  We now argue for $k=i$; the case $k=i+1$ is similar. 
 Applying the functor $\lambda\colon \Delta \bfG\rightarrow \Delta \bfH$ to obtain the commutative diagram in $\Delta \bfH$:
\[
 \begin{tikzcd}[column sep=large]
     {[0]} \ar{r}{(\sigma^*_i g)^*\phi_i} \ar{d}[swap]{\lambda_0(\phi_i^*g)} & {[n+1]} \ar{r}{\sigma_{\theta^{-1}_g(i)}}\ar{d}{\lambda_{n+1}(\sigma_i^*g)} & {[n]}\ar{d}{\lambda_n(g)}\\
     {[0]} \ar{r}{\phi_i}& {[n+1]}\ar{r}{\sigma_i} & {[n]}
 \end{tikzcd}
 \]
By definition, the permutation showing up in $\lambda_n(g)\in C_2\wr \Sigma_{n+1}$ is $\theta_g$, and the $i$th element in the tuple of elements of $C_2$ is $\tau$. Therefore, using the composition rule in $\Delta \bfH$, we see that the permutation showing up in  $\lambda_{n+1}(\sigma_i^*g)$ is precisely $\tau \sigma^*_i \theta_g$. Again, using the composition rule in $\Delta \bfH$, we see that the value $(\sigma^*_i g)^*\phi_i(0)$ is
\[
(\sigma^*_i g)^*\phi_i(0) = \begin{cases}
    \theta_g(i) & \text{if } \tau=\id,\\
    \theta_g(i)+1 &  \text{if }\tau=(i, i+1),
\end{cases}
\]
so it precisely depends on the parity of $\lambda_0(\phi_i^*g)\in C_2$.
We conclude  $\tau \sigma^*_i\theta_g$ equals $\theta_{\sigma^*_i g}$ as desired.
\end{proof}

\begin{rem}\label{adjunction}
Following notations from \autoref{rem: functor from group with parities to crossed simplicial groups} and \autoref{rem: functor from crossed simplicial groups to groups with parities}, the map $\Delta \mathbf{G}\rightarrow \Delta\lambda_0\wr \mathbf{\Sigma}$ of \autoref{canonical map} is the unit of an adjunction:
 \[
 \begin{tikzcd}[column sep =large]
     (-)_0\colon\CSG \ar[bend left]{r}
     \ar[phantom, "\perp" description,xshift=0.1ex]{r}& \Grparity \colon {\Delta -\wr \mathbf{\Sigma}.} \ar[bend left]{l}
 \end{tikzcd}
 \]
\end{rem}

Lastly we introduce a variant of the category $\Delta \bfGS$ where we adjoin an initial object to it in order to equip it with a monoidal structure.  Our goal is to characterize twisted $G$-monoids in $\cC$ as monoidal functors from this monoidal category into $\cC$.

\begin{defin}[Whiskering]\label{def:whiskering}
 Given a category $\cC$, we define $\cC_+$ to be the category with objects $\{e\}\cup \mathrm{ob}(\cC)$ and morphisms
\[ \mathrm{Hom}_{\cC_+}(a,b)=\begin{cases}
\cC(a,b) & \text{ if } a,b\in \mathrm{ob}(\cC) \\ 
\id_{e} & \text{ if } a,b=e \\ 
*  & \text{ if } a=e,b\in \text{ob}(\cC)  \\
\emptyset  & \text{ if } a\in \text{ob}(\cC),b=e 
\end{cases}
\]
where composition is defined in the evident way. In the case of $\cC=\Delta \bfG$ we write $\langle 0\rangle\coloneqq e$ and $\nD \coloneqq [n-1]$ otherwise. 
\end{defin}

Note that $\Delta_+$ is a monoidal category with monoidal product given by the disjoint union and relabeling of elements $[n-1]\sqcup [m-1]=[n+m-1]$, and defining $e$ to act as the neutral element. Using our notation, this is written $\nD +\mD =\nplusmD$. On morphisms $f,g\in \Delta$, the symmetric monoidal structure is given by $f\sqcup g$. Note that the associativity and unitality are strict. As observed in \cite[Proposition~6.3]{Gra22}, this monoidal structure extends to a symmetric monoidal structure on $\Delta \bfH_+$, where the symmetry is given by the permutation that swaps the two blocks in the disjoint union\footnote{Since the associativity and unitality are strict, this is a symmetric strict monoidal category, also called a permutative category.}. Completely analogously, the category $\Delta \bfGS_+$ can be equipped with a symmetric monoidal structure. We record this as a lemma.

\begin{lem}\label{lemma:pro}
The category $\Delta \bfGS_+$ is a symmetric monoidal category with monoidal product defined on objects by
\(
    \nD +\mD=\nplusmD.
\)

\end{lem}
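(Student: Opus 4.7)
The strategy is to lift the symmetric monoidal structure on $\Delta_+$ and on $\Delta\bfH_+$ (cf.\ \cite[Prop.~6.3]{Gra22}) to $\Delta\bfGS_+$ by defining the monoidal product on morphisms via block concatenation of the $G$-labels and block sum of the permutations, and then to check the axioms of a symmetric monoidal category.

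First, I would extend the monoidal product to morphisms. On automorphisms, define
\[
(g_0,\dots,g_n;\gamma) + (g'_0,\dots,g'_m;\gamma')
 \coloneqq (g_0,\dots,g_n,g'_0,\dots,g'_m;\ \gamma\oplus\gamma'),
\]
where $\gamma\oplus\gamma'\in\Sigma_{n+m+2}$ is the block-sum permutation, and set $\phi+\psi\coloneqq \phi\sqcup\psi$ on the $\Delta$-part. For a general pair of morphisms, write each one as $\phi\circ g$ using the fundamental axiom and extend by the factorization. The monoidal unit is $\langle 0\rangle$, with $\langle 0\rangle + \nD=\nD=\nD+\langle 0\rangle$ on the nose. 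The symmetry isomorphism $\tau_{n,m}\colon \nD+\mD\to \mD+\nD$ is defined by the element $(e,\dots,e;\ \chi_{n,m})\in G\wr\Sigma_{n+m+1}$, where $\chi_{n,m}$ is the block-swap permutation.

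The key technical step, and the main obstacle, is to verify that $+$ is a bifunctor on $\Delta\bfGS_+$: given composable pairs $(f_1,f_2)$ and $(h_1,h_2)$ in $\Delta\bfGS$, one must show $(f_1\circ f_2)+(h_1\circ h_2) = (f_1+h_1)\circ(f_2+h_2)$. By the fundamental axiom, this reduces to the identity
\[
(\phi\sqcup\psi)^\ast\bigl((g_0,\dots,g_n;\gamma)+(g'_0,\dots,g'_m;\gamma')\bigr)
= \phi^\ast(g_0,\dots,g_n;\gamma)+\psi^\ast(g'_0,\dots,g'_m;\gamma'),
\]
together with the corresponding identity $(g+g')^\ast(\phi\sqcup\psi)=g^\ast\phi\sqcup g'^\ast\psi$. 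Both of these follow by inspection of \autoref{crossedforpar}: the label-concatenation is clearly compatible with disjoint union of simplicial maps, and the permutation part reduces to the analogous bifunctoriality for $\Delta\bfS_+$, which is well known (it is Graves's argument in~\cite[Prop.~6.3]{Gra22} applied to the symmetric part), since disjoint unions do not interact with parities across blocks. It suffices to check this on the generating faces and degeneracies of $\Delta$, as in the proof of \autoref{csg}.

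Once bifunctoriality is established, the remaining axioms (associator, unitor, hexagon, naturality of the symmetry) are inherited from the same axioms for $\Delta\bfS_+$. Indeed, each axiom is an equality of morphisms of the form $\phi\circ (g_0,\dots,g_n;\gamma)$, and by construction both the $\Delta$-part and the label part behave strictly associatively and unitally under $+$, while the permutation $\chi_{n,m}$ satisfies the Mac Lane coherence on the nose in $\Sigma_{n+m+1}$. Naturality of $\tau_{n,m}$ against morphisms of $\Delta\bfGS_+$ is the only nontrivial remaining check; this again reduces to naturality in the $\Delta\bfS_+$-part, using that the trivial $G$-labels on $\tau_{n,m}$ are preserved under crossings by~\eqref{eq: 2.v} and \eqref{eq: 2.h}. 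This yields the desired symmetric monoidal structure.
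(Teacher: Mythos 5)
Your proposal is correct, and it is essentially the approach the paper has in mind. In fact the paper gives \emph{no} proof at all for \autoref{lemma:pro}: the authors point to \cite[Prop.~6.3]{Gra22} for the hyperoctahedral case $\Delta\bfH_+$ and then remark, just above the lemma, that ``completely analogously, the category $\Delta\bfGS_+$ can be equipped with a symmetric monoidal structure.'' Your sketch supplies exactly the details that are left implicit: the block-sum definition of $+$ on automorphisms, the extension via the fundamental factorization, the identification of $\langle 0\rangle$ as the strict unit, the block-swap symmetry $\chi_{n,m}$ with trivial $G$-labels, and the reduction of bifunctoriality to the two crossing identities $(\phi\sqcup\psi)^\ast(g+g')=\phi^\ast g+\psi^\ast g'$ and $(g+g')^\ast(\phi\sqcup\psi)=g^\ast\phi\sqcup g'^\ast\psi$. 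Your key observation — that disjoint union is block-diagonal, so the parity-dependent permutation $\tau$ in \autoref{crossedforpar} never mixes the two blocks — is precisely what makes the $\Delta\bfH_+$ argument carry over verbatim to the general $(G,\varphi)$. The one small point worth noting explicitly (you treat it implicitly) is that in addition to the two crossing identities one also needs that concatenation $G\wr\Sigma_{n+1}\times G\wr\Sigma_{m+1}\to G\wr\Sigma_{n+m+2}$ is a group homomorphism and that $\sqcup$ is functorial on $\Delta_+$; both are elementary and together with the crossing identities give bifunctoriality by the fundamental axiom.
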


\section{Twisted \texorpdfstring{$G$}{TEXT}-actions}\label{sec:twisted-G-actions}
Rings with twisted $G$-action are a simultaneous generalization of rings with anti-involution and rings with $G$-action. In \autoref{sec:twisted-G-objects}, we give a formal definition of rings with twisted $G$-action in 1-categories and in \autoref{subsection: twisted G-rings as algebras} we show that these arise as algebras over a certain $G$-operad.  We then extend these constructions to the setting of symmetric monoidal $\infty$-categories in \autoref{sec:twisted-G-infinity}, which we will then use throughout the rest of the paper. 

\subsection{Twisted \texorpdfstring{$G$}{TEXT}-objects}\label{sec:twisted-G-objects} 
Given a finite group $G$, we write $BG$ for the classifying space of $G$. We follow \cite[Section I.3]{DK15} in defining objects with twisted $G$-action with respect to a group with parity in the sense of \autoref{gppar}. More generally, we have the following definition.
 
 \begin{defin}
     A category $\cC$ with parity is a functor $\cC\to BC_2$. We call morphisms that map to $1$ even and morphisms that map to $-1$ odd. A functor of categories with parity is a functor that is compatible with the maps to $BC_2$.
\end{defin}

If $(G, \varphi)$ is a group with parity, then $BG\xrightarrow{B\varphi} BC_2$ is a category with parity. If $\varphi$ is non-trivial then $B\varphi$ is a Grothendieck op-fibration. 

Suppose $\cC$ is a category with $C_2$-action, i.e. a functor $BC_{2}\to \mathrm{Cat}$ where $\mathrm{Cat}$ is the (large) category of (small) categories. Consider the associated Grothendieck construction $C_2\rtimes \cC$. Recall that this category has the same objects as $\cC$ and a morphism $C_0\to C_1$ is a pair $(f,g)$ of an element $g \in C_2$ and a
  morphism $f\colon gC_0 \to C_1$ in $\cC$. The composition is given by
$ (f_1,g_1) \circ (f_2,g_2) = (f_1 \circ g_1(f_2),\ g_1g_2)$. The category $C_2\rtimes \cC$ has a canonical parity $C_2\rtimes \cC\to BC_2$. Note that this is a Grothendieck op-fibration.

\begin{defin}\label{catpar}
    Let $(G, \varphi)$ be a group with parity and let $\cC$ be a category with $C_2$-action. A \emph{twisted $G$-object} in $\cC$ is a functor of categories with parity $BG\to C_2\rtimes \cC.$
\end{defin}

\begin{defin}
    Given a group with parity $(G, \varphi)$, a \textit{category with twisted $G$-action} is a twisted $G$-object in $\Cat$, the category of categories, with $C_2$-action given by $\cC\mapsto \cC^{\op}$.
    We denote by $\Cat^\varphi=\Fun_{BC_2}(BG, C_2\rtimes\Cat)$ the category of small categories with twisted $G$-actions. 
If $\varphi\colon C_2\to C_2$ is $\id_{C_2}$ we denote it by $\Cat^\tau$ and if $\varphi\colon C_2\to C_2$ is the trivial homomorphism, we denote it by $\Cat^{BC_2}$. We will use the same terminology in the $\infty$-categorical context later.
\end{defin}

Essentially, a category with twisted $G$-action is a category $\cC$ on which $g$ acts by a covariant functor $g\colon \cC\to\cC$ if $g$ is even and by a functor $g\colon \cC^\op\to \cC$ if $g$ is odd.

\begin{defin}\label{def:twisted-G-monoid}
Let $(G, \varphi)$  be a group with parity and let $\Mon_{\cC}$ be the category of monoids in a symmetric monoidal category $\cC$. Consider the anti-involution on $\Mon_{\cC}$ given by $M\mapsto M^{\op}$, where $M^{\mathrm{op}}$ is the monoid defined using opposite multiplication.  
A \emph{monoid with twisted $G$-action} $M$ in $\cC$ is an object in $\Mon_{\cC}$ with twisted $G$-action.
We denote by $\Mon^\varphi(\cC)$ the induced category of monoids with twisted $G$-action in $\cC$.
\end{defin}

In this section, we follow the $1$-categorical convention of referring to monoid objects in a symmetric monoidal $1$-category $\cC$ as monoids. In $\infty$-categorical language, we would only refer to such objects as monoids in $\cC$ if the symmetric monoidal structure on $\cC$ were Cartesian. For the Cartesian monoidal structure in the $\infty$-category of spaces, monoids are usually referred as $\mathbb{E}_1$-spaces.

Note that the data of a monoid with twisted $G$-action in a category $\cC$ is a monoid in $\cC$ which is also an object in $\cC$ with $G$-action, and where the action by an element $g$ is through a monoid homomorphism or anti-homomorphism, depending on whether $g$ is even or odd respectively, where the actions compose compatibly. In particular, a discrete ring with twisted $G$-action is a ring $R$ on which $g$ acts by ring homomorphisms if $g$ is even and by ring anti-homomorphisms  if $g$ is odd.

\begin{rem}
Rings with twisted $G$-action have also been considered by Koam and Pirashvili \cite{Koa18,KP18}, under the name oriented algebras. It would be interesting compare their constructions to some of the constructions appearing in this paper.
\end{rem}

\begin{exm}
   When $\varphi$ is the trivial homomorphism, a twisted $G$-monoid in abelian groups is a ring with $G$-action. When $\varphi=\id_{C_2}$ then a twisted $C_2$-monoid $R$ in abelian groups is a ring with anti-involution.
\end{exm}

\begin{exm}
 Let $q \colon C_4\rightarrow C_2$ be the quotient homomorphism. Then a monoid with a twisted $C_4$-action is a monoid $M$ together with a monoid homomorphism $t\colon M^\op\rightarrow M$, such that $t^4=\id$. \footnote{If we instead chose the trivial morphism $C_{4}\to C_{2}$ then a monoid with twisted $C_{4}$-action would be a monoid homomorphism $t\colon M\rightarrow M$, such that $t^4=\id$, so the notion of twisted $C_{4}$-action depends on the parity $C_{4}\to C_{2}$ although it is supressed from the terminology.}
 Of course, every monoid with anti-involution is a monoid with $C_4$-twisted action, but not every such monoid need to be from a monoid with anti-involution.
 For instance, the quaternions $\mathbb{H}$ can be given the structure of an $\mathbb{R}$-algebra with a twisted $C_4$-action that is not a twisted $C_2$-action by defining the action of the generator $t$ of $C_4$ on $(1,i, j,k)\mapsto (1, j, -i,-k)$ and extending $\mathbb{R}$-linearly. We can check that $t^4=\id$ but $t^2\neq \id$, and $t,t^3$ act by antihomomorphisms, whereas $t^2$ acts by a homomorphism.  
\end{exm}

\subsection{Rings with twisted \texorpdfstring{$G$}{TEXT}-action as algebras over an operad}\label{subsection: twisted G-rings as algebras}
Given an operad $\cO$ in spaces, recall that there is an associated category of operators $\cO^\otimes$ introduced in~\cite{MT78} and generalized in~\cite[2.1.1.7]{HA}. The objects of $\cO^\otimes$ are the finite sets $\nF=\{0, 1, \dots, n\}$ pointed at $0$, and the morphisms are given by
\[ \cO^\otimes(\mF, \nF) = \coprod_{\phi\in\Fin_\ast (\mF,\nF)} 
\prod_{1\leq j\leq n} \cO(|\phi^{-1}(j)|).  \]
For $(\phi,c_1,\dots, c_n)\colon \mF\to \nF$ and $(\psi,d_1, \dots, d_m)\colon \kF \to \mF$, composition is defined as
\[ (\phi,c_1,\dots, c_n)\circ (\psi,d_1, \dots, d_m) = \left(\phi\circ \psi, 
\prod_{1\leq j\leq n}\gamma\left(c_j;\prod_{\phi(i) = j} d_i\right)\sigma_j\right), \]
where $\gamma$ denotes the structural maps of the operad $\cO$.  The $d_i$ for which $\phi(i) =j$ 
are ordered by the natural order on their indices $i$
and $\sigma_j$ is that permutation of $|(\phi\circ\psi)^{-1}(j)|$ letters which converts 
the natural ordering of $(\phi\circ\psi)^{-1}(j)$ as a subset of $\{1,\dots,k\}$ to
its ordering obtained by regarding it as $\coprod_{\phi(i)=j}\psi^{-1}(i)$, so ordered
that elements of $\psi^{-1}(i)$ precede elements of $\psi^{-1}(i')$ if $i< i'$ and 
each $\psi^{-1}(i)$ has its natural ordering as a subset of $\{1,\dots,k\}$. 

We recall the definition of the symmetric monoidal envelope of an operad from \cite[2.2.4.1]{HA}. See~\cite[\S3]{Law21} for further discussion. 
\begin{defn}
    The symmetric monoidal envelope $\Env(\cO)$, sometimes denoted $\Env(\cO^\otimes)$ or $\cO^{\otimes}_{\textup{act}}$, of an operad $\cO$ is the subcategory of $\cO^\otimes$ for which the morphisms $(\alpha, \{\preccurlyeq_i\}_{1\leq i \leq n})\colon\mF \rightarrow \nF$ require $\alpha$ to be active, namely $\alpha^{-1}(0)=\{0\}$. Thus $\alpha$ is really the data of a map in $\Fin$ instead of $\Fin_*$. 
    Its symmetric monoidal structure is determined on objects by combining $\nF$
 and $\mF$ to $\nF+\,\mF$.    
\end{defn}

It is shown in~\cite[Lemma~4.2]{MT78} that an algebra $X$ over the operad $\cO$ in spaces is precisely a functor $\cO^\otimes\to \mathrm{Top}$ for which  $\nF \mapsto X^{\times n}$, where $\mathrm{Top}$ is the category of based topological spaces.
In fact, this applies more generally to an algebra over an operad $\cO$ in a general symmetric monoidal category $\cC$ and in $\infty$-categories this is built into the definition, cf.~\cite[2.1.2.7]{HA}. 
Moreover, there is a symmetric monoidal product on $\Env(\cO)$ such that strong symmetric monoidal functors $\Env(\cO)\to \cC$ correspond precisely to $\cO$-algebras in $\cC$, see \cite[2.2.4.9]{HA}.

\begin{exm}
    The \emph{associative operad} $\Assoc$ is defined by $\Assoc(n)=\Sigma_n$, where each permutation of $\Assoc(n)$ can be instead thought of as an ordering $\preccurlyeq_n$ on the set $\{1,\dots, n\}$. With this interpretation, as decribed in \cite[4.1.1.4]{HA}, the category of operators $\Assoc^\otimes$ has objects the same as $\Fin_*$, and given a pair of objects $\mF$, $\nF$ in $\Fin_*$, a morphism $\mF \rightarrow \nF$ in $\Assoc^\otimes$ consists of a map $\alpha\colon \mF \rightarrow \nF$ of finite pointed sets together with a linear ordering $\preccurlyeq_i$ on each inverse image $\alpha^{-1}(i) \subseteq \mF$, for $i=1, \ldots, n$.
 Given a pair of morphisms
  $\Big(\beta, \{ \preccurlyeq'_j\}_{1\leq j \leq p}\Big)\colon\nF \rightarrow \pF$
 and $\Big(\alpha, \{\preccurlyeq_i\}_{1\leq i \leq n}\Big)\colon\mF \rightarrow \nF$, 
the composition rule in the category of operators then gives the map of finite pointed sets $\beta \circ \alpha\colon \mF \rightarrow \pF$ together with the linear ordering $\{\preccurlyeq_j''\}_{1\leq j \leq p}$ defined as follows. For each $j=1, \ldots, p$, given $a,b \in (\beta\circ \alpha)^{-1}(j)\subseteq \mF$, we have $a \preccurlyeq_j'' b$ if and only if $\alpha(a) \preccurlyeq_j' \alpha (b)$ and $a \preccurlyeq_i b$ if $\alpha(a)=\alpha(b)=i$.

\end{exm}

We highlight the following proposition, which we leave as an exercise for now, since it will follow as a special case of \autoref{iso of categories}, see~\cite{PR02} for related work. Recall that $\Delta \bfS_+$ is a symmetric monoidal category by \autoref{lemma:pro}.

\begin{prop}
    There is an isomorphism of symmetric monoidal categories $\Env(\Assoc)\cong \Delta \bfS_+.$
\end{prop}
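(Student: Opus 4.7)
The plan is to construct an explicit functor $F\colon \Env(\Assoc)\to \Delta \bfS_+$, show it is a bijection on objects and morphism sets, and verify functoriality. On objects, set $F(\nF)=\langle n\rangle$. The initial object case is automatic: the whiskering of \autoref{def:whiskering} ensures there is a unique morphism out of $\langle 0\rangle$ and none into it from $\langle n\rangle$ with $n\geq 1$, and the same holds in $\Env(\Assoc)$ because the only active map from $\mathbf{0}$ is the basepoint map and no active map can target $\mathbf{0}$ from $\mF$ with $m\geq 1$.

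The core combinatorial observation is that, for $m,n\geq 1$, specifying an active $\alpha\colon \mF\to \nF$ together with linear orders $\{\preccurlyeq_i\}_{1\leq i\leq n}$ on the fibers $\alpha^{-1}(i)$ is equivalent to specifying a non-decreasing map $\phi\in \Delta([m-1],[n-1])$ together with a permutation $\gamma\in\Sigma_m$: given the former, list the elements of $\alpha^{-1}(1)$ according to $\preccurlyeq_1$, then those of $\alpha^{-1}(2)$ according to $\preccurlyeq_2$, and so on, to obtain $\gamma$, and set $\phi=\alpha\circ \gamma$; conversely, given $(\phi,\gamma)$, take $\alpha=\phi\circ\gamma^{-1}$ equipped with the order on $\alpha^{-1}(i)=\gamma(\phi^{-1}(i))$ inherited from the natural order on $\phi^{-1}(i)$ via $\gamma$. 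This gives a bijection defining $F(\alpha,\{\preccurlyeq_i\})=(\phi,\gamma^{-1})$ in $\Delta([m-1],[n-1])\times \Sigma_m^{\op}$.

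The main step is to check that $F$ preserves composition. In $\Env(\Assoc)$, composing $(\alpha,\{\preccurlyeq_i\})\colon \mF\to \nF$ with $(\beta,\{\preccurlyeq_j'\})\colon \nF\to \pF$ gives $\beta\alpha$ together with the lexicographically refined order on each $(\beta\alpha)^{-1}(j)$, comparing first by $\alpha$-images via $\preccurlyeq_j'$ and breaking ties within each $\alpha$-fiber using $\preccurlyeq_i$. In $\Delta \bfS_+$, the composite of $(\psi,\tau^{-1})$ after $(\phi,\gamma^{-1})$ is computed by rewriting $\tau^{-1}\circ \phi=(\tau^{-1})^*\phi\circ \phi^*(\tau^{-1})$ via the crossed simplicial structure of \autoref{sym_example}, where $(\tau^{-1})^*\phi$ is the unique non-decreasing map with the fiber-size profile of $\tau^{-1}\circ \phi$ and $\phi^*(\tau^{-1})$ is the permutation that reorders blocks of $\phi$-preimages according to $\tau^{-1}$ while preserving the natural order inside each block. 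Both recipes produce the same non-decreasing map (determined by the fiber sizes of $\beta\alpha$) and the same permutation (determined by sorting each $(\beta\alpha)$-fiber lexicographically), so $F$ preserves composition.

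The main obstacle is keeping track of the $\op$-conventions in $\Sigma_m^{\op}$ and the left/right action in the crossed simplicial factorization $\gamma\circ\phi=(\gamma^*\phi)\circ (\phi^*\gamma)$. An alternative route that sidesteps this bookkeeping is to appeal to the universal property of $\Env(\Assoc)$ as the symmetric monoidal envelope of the associative operad: it suffices to verify the corresponding universal property for $\Delta \bfS_+$ equipped with the monoidal structure of \autoref{lemma:pro}, namely that the object $\langle 1\rangle$, together with the unique active map $\langle 2\rangle\to \langle 1\rangle$ as multiplication and $\langle 0\rangle\to \langle 1\rangle$ as unit, exhibits $\langle 1\rangle$ as a universal associative monoid. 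Either approach yields the desired isomorphism of categories.
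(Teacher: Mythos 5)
Your proof follows essentially the same route as the paper's. The paper does not prove this proposition in isolation—it is stated as a special case of \autoref{iso of categories}, whose proof constructs exactly the explicit functor you describe: on objects $\nF\mapsto\nD$, and on morphisms one packages the active map $\alpha$ into the order-preserving map $\phi$ recording fiber sizes, while the linear orders $\{\preccurlyeq_i\}$ on fibers become the (block) permutation in $\Sigma_m^{\op}$. You have simply specialized this to $G=\{e\}$, so the $\ell_i$ labels drop out. Your description of the inverse bijection and the recipe for recovering $(\alpha,\{\preccurlyeq_i\})$ from $(\phi,\gamma)$ is also consistent with the paper's construction, and you are right to flag that the $\op$-conventions on $\Sigma_m$ and whether one takes $\gamma$ or $\gamma^{-1}$ are where the bookkeeping lives; the paper does not spell this out either (it deflects the composition check to a worked example in \autoref{Fcomputations} rather than a general argument).

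Two small remarks. First, your paragraph on preserving composition is actually slightly more informative than what the paper offers for the general theorem: the observation that both sides produce the map determined by the fiber sizes of $\beta\alpha$ and the lexicographic refinement of the fiber orders is the right general slogan, and one could turn it into a complete proof by reducing to the generators $\delta_i,\sigma_i$ of $\Delta$ as is done elsewhere in the paper. Second, the alternative route you sketch via the universal property of $\Env(\Assoc)$—verifying that $\langle 1\rangle$ with the active maps $\langle 2\rangle\to\langle 1\rangle$ and $\langle 0\rangle\to\langle 1\rangle$ is a universal associative algebra in $\Delta\bfS_+$—is genuinely different from the paper's combinatorial argument and would sidestep the permutation bookkeeping, at the cost of having to verify that the evident symmetric monoidal functor $\Env(\Assoc)\to\Delta\bfS_+$ induced by this algebra is essentially surjective and fully faithful (or equivalently checking the universal property on the $\Delta\bfS_+$ side), which is not entirely free either. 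Either route works; the paper takes the first.
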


Given an operad $\cO$ in $G$-spaces, we review the definition of the operad $\cO\rtimes G$ constructed in \cite[Definition 2.1]{SW03}. 

\begin{defin}
Let  $\cO$ be an operad in $G$-spaces with structure map $\gamma_{\cO}$. The twisted $G$-operad $\cO\rtimes G$  is defined as $\cO\rtimes G(k)=\cO(k)\times G^k$, with $\Sigma_k$ acting diagonally, and with structure map
\[
\gamma \colon (\cO\rtimes G) (k) \times (\cO\rtimes G) (n_1) \times \cdots \times (\cO\rtimes G) (n_k) \rightarrow (\cO\rtimes G) (n_1+\cdots+n_k)
\]
given by 
\[
\big((\phi, \underline{g}), (\psi_1, \underline{h}^1), \ldots, (\psi_k,\underline{h}^k)\big) \mapsto (\gamma_{\cO}(\phi, g_1\psi_1, \ldots, g_k\psi_k), g_1\underline{h}^1, \ldots, g_k\underline{h}^k)
\]
where $\underline{g}=(g_1, \ldots, g_k)\in G^k$, $\underline{h}^i=(h^i_1, \ldots, h^i_{n_i})\in G^{n_i}$ and $g_i\underline{h}^i=(g_ih^i_1, \ldots, g_ih^i_{n_i})$.
The unit in $\cO\rtimes G$ is formed by the units of $\cO$ and $G$.
\end{defin}

We will primarily consider the following special case. 

\begin{exm}\label{example: twisted operad}
Let $(G,\varphi)$ be a group with parity. Consider the action of $C_2$ on $\Assoc$ given by sending an ordering to its reverse ordering as in \cite[Remark 4.1.1.7]{HA}.\footnote{Note that this is different from the $C_2$-action given on $\Assoc(n)=\Sigma_n$ by sending a permutation to its inverse. The action that sends an ordering to its reverse ordering, when viewed as permutations, is not a group homomorphism, but only an action on the underlying set $\Sigma_n$. For example, clearly the identity does not go to the identity.} Note that the structure map of the operad $\Assoc$ is $C_2$-equivariant with respect to this action. Let $\Assoc^{\varphi}$ denote the operad $\Assoc\rtimes G$ where $G$ acts on $\Assoc$ by restriction along the group homomorphism $\varphi\colon G\to C_2$. We write $\Assoc^{\tau}$ when $\varphi=\id_{C_2} :C_2\to C_2$.
\end{exm}
\begin{rem}
The operad $\Assoc^{\tau}$ was previously studied by Kro~\cite{Kro05}. 
\end{rem}

\begin{exm}\label{example: algebra over twisted operad are twisted rings}
    An algebra over $\Assoc^\varphi$ in $\Set$ is precisely a (discrete) twisted $G$-monoid. We spell out the details in the category $\Set$, though the arguments generalize to an arbitrary symmetric monoidal category tensored over $\Set$. The map $\theta_0$ specifies the unit of $R$. The map $\theta_2\colon \Assoc(2)\times G^2 \times R^2\to R$, when specified to
    \[
    \theta_2((1<2), (e, e), (\--, \--))\colon R^2\to R
    \] 
    gives the multiplication, where $e$ denotes the unit element of $G$. This is an associative and unital operation as in the case of $\Assoc$-algebras. But now, additionally, the map $\theta_1\colon \Assoc(1)\times G\times R\to R$ specifies maps $\theta_1(g,-)\colon R\to R$ for each $g\in G$. We claim that these maps are a monoid homomorphism or monoid anti-homomorphism depending on the parity of $\varphi(g)$.
To see this, recall first that we have a commuting diagram
\[
\begin{tikzcd}
    \Assoc(2)\times G^2 \times \Assoc(1)\times G \times \Assoc(1)\times G \times R^2 \ar{r}{\gamma\times \id_{R^2}} \ar{d}{\cong} & \Assoc(2)\times G^2 \times R^2 \ar{dd}{\theta_2}\\
    \Assoc(2)\times G^2 \times \Assoc(1)\times R\times  G \times \Assoc(1)\times R \times G \ar{d}{\id_{ \Assoc(2)\times G^2}\times \theta_1\times\theta_1} &\\
    \Assoc(2)\times G^2 \times R^2\ar{r} \ar{r}{\theta_2}&R
\end{tikzcd}
    \]
    \noindent which gives us in particular that
\begin{equation}\label{theta1prod}
        \theta_2((1<2), (g,g), (a,b))=\theta_2((1<2),(e,e),(\theta_1(g, a),\theta_1(g, b)))\,.
\end{equation}

We also have a commuting diagram 
\[\begin{tikzcd}
    \Assoc(1)\times G \times  \Assoc(2)\times G^2 \times R^2\ar{rrr}{\id_{ \Assoc(1)\times G}\times \theta_2} \ar{d}{\gamma\times \id_{R^2}} &&&  \Assoc(1)\times G \times R \ar{d}{\theta_1}\\
    \Assoc(2)\times G^2  \times R^2\ar{rrr}{\theta_2} &&& R
\end{tikzcd}
\]
\noindent from which we get that 
\begin{equation}\label{theta2}
    \theta_1(g, \theta_2(1<2, e,e), a,b)=\begin{cases}
        (\theta_2(2<1, g,g), a,b), & \text{if } \varphi(g)=-1 \\
        \theta_2((1<2, g,g), a,b), & \text{if } \varphi(g)=1,
    \end{cases}
\end{equation}
\noindent using that 
\[
\gamma((1,g); (1<2); e,e))=(\gamma_{\Assoc}(1; g\cdot (1<2)); g,g)\,,
\] 
where the action of $g$ is via $\varphi\colon G\to C_2$ composed with the $C_2$-action that reverses order. Thus, from \autoref{theta1prod} and \autoref{theta2} we see that the map $\theta_1(g, -)$ is indeed an automorphism of $R$ when $g$ is even and an anti-automorphism of $R$ when $g$ is odd. Lastly, from the compatibility diagram of $\theta_1$ with $\gamma$ we also have a relation 
$$\theta_1(g, \theta_1(h,a))=\theta_1(\gamma(g;h),a)=\theta_1(gh, a),$$ which shows that the actions $\theta_1(g,-)$ compose appropriately. Thus, we observe that $R$ has the structure of a twisted $G$-monoid. The additional associativity axioms for a twisted $G$-monoid are encoded by incorporating the action of $\Assoc(n)\times G^n$ for $n>2$ as well. 
\end{exm}

Let $\cC$ be a symmetric monoidal category tensored over $\Set$. For any group $G$ with parity $\varphi$, an algebra $R$ over $\Assoc^\varphi$ in $\cC$ is precisely a twisted $G$-monoid in $\cC$. In particular, an algebra over $\Assoc^{\tau}$ is precisely a monoid with anti-involution.
We summarize this in a proposition.

\begin{prop}\label{TwistedGringoperad}
    Let $\cC$ be a 
    symmetric monoidal category tensored over $\Set$. Let  $\varphi\colon G\rightarrow C_2$ be a group homomorphism. Then we have an isomorphism of categories 
    \[ \Alg_{\Assoc^{\varphi}}(\cC)\overset{\simeq}{\longrightarrow} \Mon^{\varphi}(\cC).\]
\end{prop}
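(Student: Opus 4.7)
The strategy is to upgrade the set-theoretic analysis of \autoref{example: algebra over twisted operad are twisted rings} to a symmetric monoidal category $\cC$ tensored over $\Set$, and then assemble the extracted structure into two mutually inverse functors. Because $\cC$ is tensored over $\Set$, for each $n$ the operadic structure map on an algebra $R$ takes the form
\[ \theta_n \colon (\Assoc(n)\times G^n)\otimes R^{\otimes n} \longrightarrow R, \]
which, by adjunction, is the same data as a $\Sigma_n$-equivariant family of morphisms $\theta_n(\alpha,\underline{g})\colon R^{\otimes n}\to R$ indexed by pairs $(\alpha,\underline{g})\in \Assoc(n)\times G^n$.

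First I would construct the functor $\Phi\colon \Alg_{\Assoc^\varphi}(\cC)\to \Mon^\varphi(\cC)$. Given $R$, the unit is $\theta_0$; the multiplication is $\mu = \theta_2((1<2),(e,e);-,-)$; and for each $g\in G$ the action is $\rho_g = \theta_1(g;-)\colon R\to R$. The operadic associativity and unit axioms for $\Assoc^\varphi$ give exactly the unital and associative monoid structure on $R$, together with the relation $\rho_g\circ \rho_h = \rho_{gh}$. The key compatibility is that, as in the $\Set$ computation, the equality
\[ \theta_1(g;\mu(-,-)) = \theta_2(g\cdot(1<2),(g,g);-,-) \]
forces $\rho_g$ to be a monoid homomorphism when $\varphi(g)=1$ (since $g\cdot(1<2)=(1<2)$) and a monoid anti-homomorphism when $\varphi(g)=-1$ (since then $g\cdot(1<2)=(2<1)$, which by $\Sigma_2$-equivariance realizes $\mu\circ\mathrm{swap}$). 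Packaging $\rho$ as a functor $BG\to C_2\rtimes \Mon_\cC$ compatible with the projection to $BC_2$ recovers the data of \autoref{def:twisted-G-monoid}.

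Next I would define the inverse $\Psi\colon \Mon^\varphi(\cC)\to \Alg_{\Assoc^\varphi}(\cC)$. Given a twisted $G$-monoid $(R,\mu,e,\rho)$, for each $(\alpha,\underline{g})\in \Assoc(n)\times G^n$, with $\alpha$ a linear ordering of $\{1,\dots,n\}$, set
\[ \theta_n(\alpha,\underline{g})(x_1,\dots,x_n) = \rho_{g_{\alpha(1)}}(x_{\alpha(1)}) \cdot \rho_{g_{\alpha(2)}}(x_{\alpha(2)}) \cdots \rho_{g_{\alpha(n)}}(x_{\alpha(n)}), \]
where the product is taken in $R$. The $\Sigma_n$-equivariance is built into the diagonal action on $\Assoc(n)\times G^n$. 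Operadic composition unravels to the identity $\rho_g(xy) = \rho_g(x)\rho_g(y)$ when $\varphi(g)=1$ and $\rho_g(xy) = \rho_g(y)\rho_g(x)$ when $\varphi(g)=-1$, together with $\rho_{gh}=\rho_g\rho_h$; these are precisely the defining relations of a twisted $G$-monoid, so $\theta$ satisfies the operad axioms.

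Finally I would verify that $\Phi$ and $\Psi$ are mutually inverse on objects and on morphisms. On objects, $\Phi\Psi=\id$ is immediate from the formulas, and $\Psi\Phi=\id$ follows from the fact that every element $(\alpha,\underline{g})$ in $\Assoc^\varphi(n)$ is generated from the basic operations $\mu$ and $\rho_g$ via operadic composition, so $\theta_n$ is determined by $\mu$, $e$, and $\rho$. On morphisms, both sides amount to maps $f\colon R\to R'$ in $\cC$ compatible with multiplication, unit, and the $G$-action, so the two notions of morphism agree. The only real obstacle is careful bookkeeping of the $\Sigma_n$-equivariance and the semidirect-product composition in $\Assoc^\varphi$; once that is fixed the proof is essentially an unpacking of definitions, and no further substantive argument is needed.
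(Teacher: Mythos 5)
Your proposal is correct and takes essentially the same route as the paper. The paper proves this proposition by referring back to its preceding set-theoretic analysis (which unwinds the operadic structure maps $\theta_0,\theta_1,\theta_2$ into the unit, the twisted $G$-action $\rho_g=\theta_1(g,-)$, and the multiplication, observing that $\varphi(g)$ controls whether $\rho_g$ is a homomorphism or anti-homomorphism) and noting that the same argument applies verbatim in any $\Set$-tensored symmetric monoidal category; your $\Phi$ and $\Psi$ simply make both directions of that correspondence explicit.
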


We return to the analysis of the category $\Env(\Assoc^{\varphi})$.

\begin{exm}\label{compEnv}
    We give an explicit example of how composition works in $\Env(\Assoc^\varphi)$. Consider the following composable morphisms in $\Fin$.
\[
\begin{tikzpicture}[baseline= (a).base]
\node[scale=0.7] (a) at (1,1){\begin{tikzcd}[row sep=small]
	{1} && {\psi} &&& {\phi} \\
	{2} &&& {1} &&& {1} \\
	{3} &&& {2} &&& {2} \\
	{4} &&& {3} &&& {3} \\
	{5} &&& {4} \\
	{6}
	\arrow[color=blue, no head, from=1-1, to=4-4, thick]
	\arrow[color={rgb,255:red,92;green,214;blue,92}, no head, from=2-1, to=2-4, thick]
	\arrow[color={rgb,255:red,92;green,214;blue,92}, no head, from=4-1, to=2-4, thick]
	\arrow[color={rgb,255:red,92;green,214;blue,92}, no head, from=5-1, to=2-4, thick]
	\arrow[color={rgb,255:red,214;green,92;blue,214}, no head, from=3-1, to=5-4, thick]
	\arrow[color={rgb,255:red,214;green,92;blue,214}, no head, from=6-1, to=5-4, thick]
	\arrow[color={rgb,255:red,92;green,214;blue,214}, no head, from=2-4, to=4-7, thick]
	\arrow[color={rgb,255:red,92;green,214;blue,214}, no head, from=3-4, to=4-7, thick]
	\arrow[color={rgb,255:red,92;green,214;blue,214}, no head, from=4-4, to=4-7, thick]
	\arrow[color={rgb,255:red,214;green,153;blue,92}, no head, from=5-4, to=2-7, thick]
\end{tikzcd}};  
\end{tikzpicture}
\]
Consider the following morphisms in $\Env(\Assoc^\varphi)$, where the upperscripts on the elements of $G$ denote their parities:
$$(\psi;\  {\color{rgb,255:red,92;green,214;blue,92}(5<2<4;\ g_{11}^+, g_{12}^-, g_{13}^+)},\  \emptyset,\ {\color{blue}(1;\ g_{31}^+)},\ {\color{rgb,255:red,214;green,92;blue,214} (6<3;\
 g_{41}^-, g_{42}^+)}), \ \text{and}$$ 
$$(\phi;\ {\color{rgb,255:red,214;green,153;blue,92} (4; \ h_{11}^-)},\ \emptyset,\ {\color{rgb,255:red,92;green,214;blue,214} (3<1<2;\ h_{31}^-, h_{32}^-, h_{33}^+)}).$$
Then we can compute the composite of these morphisms to be 
$$(\phi\circ \psi;\ (3<6;\ h_{11}g_{41}, h_{11}g_{42}),\ \emptyset,\ (1<4<2<5;\ h_{33}g_{31}, h_{31}g_{11}, h_{31}g_{12}, h_{31}g_{13})),$$
where the parities of the elements of $G$ in the previous row are obtained by multiplying the parities of the elements in the products.
\end{exm}

Recall the symmetric monoidal structure on $\bfGS_+$ from \autoref{lemma:pro}.

\begin{thm}\label{iso of categories}
There is an isomorphism of symmetric monoidal categories 
\[
F\colon\Env(\Assoc^\varphi)\xrightarrow{\cong} \Delta \bfGS_+ \,.
\]
\end{thm}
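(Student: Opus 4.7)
The plan is to construct $F$ explicitly on objects and morphisms and verify it is a bijective functor. On objects, both categories have as underlying set $\{\mathbf{n} : n\geq 0\}$ under the identification $\nF \leftrightarrow \nD$, so we set $F(\nF)=\nD$. For a morphism $(\alpha;\ \{\preccurlyeq_i,\mathbf{g}^i\}_{1\leq i\leq n})\colon\mF\to\nF$ in $\Env(\Assoc^\varphi)$, where $\alpha$ is an active pointed map and each entry consists of an ordering $\preccurlyeq_i$ on $\alpha^{-1}(i)$ together with a tuple $\mathbf{g}^i\in G^{|\alpha^{-1}(i)|}$, we extract an equivalent datum as follows: the orderings $\{\preccurlyeq_i\}$ together with the underlying map of $\alpha$ determine an order-preserving map $\bar\alpha\in\Delta([m-1],[n-1])$ (obtained by forgetting the orderings within fibers) together with a permutation $\pi\in\Sigma_m$ (obtained by listing $\{1,\ldots,m\}$ first by $\alpha$-image and then within each fiber by $\preccurlyeq_i$). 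Concatenating the $\mathbf{g}^i$ gives $\mathbf{g}\in G^m$, and by the fundamental axiom of \autoref{FL characterization of CSG} the pair $(\bar\alpha,(\mathbf{g};\pi))\in\Delta([m-1],[n-1])\times(G\wr\Sigma_m)^{\op}$ determines a unique morphism $\mD\to\nD$ in $\Delta\bfGS_+$, which we declare to be $F$ of the starting morphism.

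This assignment is manifestly bijective on Hom sets, so the only content is functoriality. Composition in $\Env(\Assoc^\varphi)$ is governed by the semidirect product operad structure of \autoref{example: twisted operad}: composing with an outer $G$-label $h$ has the effect of (i) reversing the ordering $\preccurlyeq$ of the fiber beneath it when $\varphi(h)=-1$, and (ii) left-multiplying the inner $G$-labels by $h$ without rearrangement of the tuple itself, as illustrated in \autoref{compEnv}. Composition in $\Delta\bfGS_+$ is governed by the crossed structure of \autoref{crossedforpar}: pulling $(\mathbf{h};\gamma)\in G\wr\Sigma_{n+1}$ back across a map in $\Delta$ inserts a transposition $(i,i+1)$ precisely when $\varphi(h_i)=-1$, by the formula defining $\sigma_i^*$, and the wreath product multiplication implements the left-multiplication of labels.

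The main obstacle is verifying these two composition rules agree in full generality. Using the same inductive structure as in the proof of \autoref{csg}, it suffices to verify the match after composing with a generator $\delta_i$ or $\sigma_i$ of $\Delta$, together with the automorphism composition in $G\wr\Sigma_m$; the general case follows by the propagation argument combining \autoref{eq: 1.h} and \autoref{eq: 2.h} used there. On automorphisms both sides reduce directly to the wreath product formula $(\mathbf{g};\gamma)(\mathbf{g}';\gamma') = (g_0 g'_{\gamma^{-1}(0)},\ldots,g_n g'_{\gamma^{-1}(n)};\gamma\gamma')$; on the generators $\delta_i$ and $\sigma_i$ the match with the parity-induced transpositions is a direct unraveling of the definitions, completed by the example of \autoref{compEnv}. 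Once functoriality is checked, $F$ is bijective on objects and on Hom sets, hence an isomorphism of categories.
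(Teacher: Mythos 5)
Your proposal defines exactly the same functor $F$ as the paper (on objects the obvious bijection $\nF\mapsto\nD$; on morphisms extracting the order-preserving part from fiber sizes, the block permutation from the orderings $\preccurlyeq_i$, and the $G$-labels from the $\ell_i$), observes bijectivity on Hom-sets, and then reduces functoriality to a check on generators. This is the same approach as the paper, which also defines this $F$ and declares the functoriality check a ``careful but tedious'' unwinding of definitions illustrated by \autoref{Fcomputations}; your reduction-to-generators strategy is a sensible way to organize that check but the underlying content and level of detail are the same.
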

\begin{proof}
\vspace{-0.25cm}
On objects, we define the functor $F$ 
by the obvious bijection that sends $\nF$ to $\nD$. 
On morphisms, $F$ is defined by sending a nontrivial morphism
\[\big( (\alpha, \{\preccurlyeq_i\}_{1\leq i\leq n}) \colon \mF \to \nF , \ \ell_i\colon \alpha^{-1}(i)\to G\big)\] 
where $\alpha$ is an active map of finite pointed sets, $\preccurlyeq_i$ is a total order on $\alpha^{-1}(i)$, and $\ell_i$ is a map  sets, to the morphism $(g,\phi)$
in $\Delta \bfGS_+$ determined as follows. The map $\phi$ in $\Delta$ is the order preserving map $\mF \to \nF$ determined by the sizes of the inverse images of $\alpha$. 
The element $g=(\gamma, g_1,\dots ,g_m)$ in $\Sigma_m\rtimes G^m$ is defined by letting $\gamma$ be the block permutation of $\{1, \dots, m\}$, which in each inverse image block $\alpha^{-1}(i)$ is given by the permutation determined by the given linear ordering on $\alpha^{-1}(i)$. For the labels in $G$, we set set $g_j=\ell_i(j)$ for each $j\in \alpha^{-1}(i)$. We illustrate this definition in \autoref{Fcomputations} by computing $F$ on the morphisms from \autoref{compEnv}. 
 
This correspondence between morphisms is bijective, so it remains to check that $F$ is indeed a monoidal functor. It follows from a careful check of definitions that $F$ does respect composition, though it is tedious to write out explicitly. We choose to illustrate this in \autoref{Fcomputations}. It is immediate to verify that $F$ is strong symmetric monoidal.
\end{proof}

\begin{exm}\label{Fcomputations}
We show explicitly that the functor $F$ respects the composition of the morphisms from \autoref{compEnv}.
Consider the morphisms in $\Env(\Assoc^\varphi)$ from \autoref{compEnv}. First we factor $\psi=\gamma_1\circ \sigma_1$ and $\phi=\gamma_2\circ \sigma_2$ into an order preserving map preceded by a permutation which is determined by the linear orderings given on the preimages, as described in the definition of $F$:
\[ \begin{tikzpicture}[baseline= (a).base]
\node[scale=0.6] (a) at (1,1){\begin{tikzcd}[row sep=small]
	&& {\sigma_1} && {\gamma_1} \\
	{1} &&& 1 &&&&&&&& {\sigma_2} && {\gamma_2} \\
	{2} &&& {2} &&& {1} &&& 1 &&& {1} &&& {1} \\
	{3} &&& {3} &&& {2} &&& {2} &&& 2 &&& 2 \\
	{4} &&& {4} &&& {3} &&& {3} &&& {3} &&& 3 \\
	{5} &&& {5} &&& {4} &&& {4} &&& {4} \\
	{6} &&& {6}
	\arrow[color={rgb,255:red,92;green,214;blue,92}, no head, from=2-4, to=3-7, thick]
	\arrow[color={rgb,255:red,92;green,214;blue,92}, no head, from=3-4, to=3-7, thick]
	\arrow[color={rgb,255:red,92;green,214;blue,92}, no head, from=4-4, to=3-7, thick]
	\arrow[color={rgb,255:red,92;green,214;blue,92}, no head, from=6-1, to=2-4, thick]
	\arrow[color={rgb,255:red,92;green,214;blue,92}, no head, from=5-1, to=4-4, thick]
	\arrow[color={rgb,255:red,92;green,214;blue,92}, no head, from=3-1, to=3-4, thick]
	\arrow[color={rgb,255:red,57;green,67;blue,249}, no head, from=2-1, to=5-4, thick]
	\arrow[color={rgb,255:red,57;green,67;blue,249}, no head, from=5-4, to=5-7, thick]
	\arrow[color={rgb,255:red,214;green,92;blue,214}, no head, from=4-1, to=7-4, thick]
	\arrow[color={rgb,255:red,214;green,92;blue,214}, no head, from=7-1, to=6-4, thick]
	\arrow[color={rgb,255:red,214;green,92;blue,214}, no head, from=6-4, to=6-7, thick]
	\arrow[color={rgb,255:red,214;green,92;blue,214}, no head, from=7-4, to=6-7, thick]
	\arrow[color={rgb,255:red,214;green,153;blue,92}, no head, from=3-13, to=3-16, thick]
	\arrow[color={rgb,255:red,214;green,153;blue,92}, no head, from=6-10, to=3-13, thick]
	\arrow[color={rgb,255:red,92;green,214;blue,214}, no head, from=4-13, to=5-16, thick]
	\arrow[color={rgb,255:red,92;green,214;blue,214}, no head, from=5-13, to=5-16, thick]
	\arrow[color={rgb,255:red,92;green,214;blue,214}, no head, from=6-13, to=5-16, thick]
	\arrow[color={rgb,255:red,92;green,214;blue,214}, no head, from=5-10, to=4-13, thick]
	\arrow[color={rgb,255:red,92;green,214;blue,214}, no head, from=3-10, to=5-13, thick]
	\arrow[color={rgb,255:red,92;green,214;blue,214}, no head, from=4-10, to=6-13, thick]
\end{tikzcd}};  
\end{tikzpicture}\]
Thus we get the following images under the functor $F$ in $\Delta \bfGS_+$, when expressed as their unique factorizations as an element of $\Sigma_6 \rtimes G^6$, resp. $\Sigma_4 \rtimes G^4$, followed by a map in $\Delta$:
 $$F(\psi;\  {\color{rgb,255:red,92;green,214;blue,92}(5<2<4;\ g_{11}^+, g_{12}^-, g_{13}^+)},\  \emptyset,\ {\color{blue}(1;\ g_{31}^+)},\ {\color{rgb,255:red,214;green,92;blue,214} (6<3;\
 g_{41}^-, g_{42}^+)})$$ 
 $$=\gamma_1\circ (\sigma_1;\ {\color{rgb,255:red,92;green,214;blue,92} g_{13}^+, g_{11}^+,  g_{12}^-}, {\color{blue} g_{31}^+}, {\color{rgb,255:red,214;green,92;blue,214} g_{42}^+, g_{41}^-})$$  $$\text{and}$$ 
$$F(\phi;\ {\color{rgb,255:red,214;green,153;blue,92} (4; \ h_{11}^-)},\ \emptyset,\ {\color{rgb,255:red,92;green,214;blue,214} (3<1<2;\ h_{31}^-, h_{32}^-, h_{33}^+)})=\gamma_2\circ (\sigma_2;\ {\color{rgb,255:red,214;green,153;blue,92}  h_{11}^-},{\color{rgb,255:red,92;green,214;blue,214} h_{33}^+, h_{31}^-, h_{32}^-}).$$
Similarly, 
$$F(\phi\circ \psi;\ {\color{rgb,255:red,214;green,92;blue,92}(3<6;\ h_{11}g_{41}, h_{11}g_{42})},\ \emptyset,\ {\color{rgb,255:red,92;green,92;blue,214} (1<4<2<5;\ h_{33}g_{31}, h_{31}g_{11}, h_{31}g_{12}, h_{31}g_{13})})$$
$$= \gamma_3\circ (\sigma_3;\ {\color{rgb,255:red,214;green,92;blue,92} h_{11}g_{41}, h_{11}g_{42}}, {\color{rgb,255:red,92;green,92;blue,214} h_{33}g_{31},  h_{31}g_{12}, h_{31}g_{11}, h_{31}g_{13}}),$$
where the factorization of $\phi\circ \psi$ into a permutation $\sigma_3\in \Sigma_6$ determined by the linear orderings on preimages, and an order preserving map $\gamma_3$ is depicted below. 
\[ \begin{tikzpicture}[baseline= (a).base]
\node[scale=0.6] (a) at (1,1){\begin{tikzcd}[row sep=small]
&& {\sigma_3} && {\gamma_3} \\
	{1} &&& {1} \\
	{2} &&& {2} &&& {1} \\
	{3} &&& {3 } &&& {2} \\
	{4} &&& {4} &&& 3 \\
	{5} &&& {5} \\
	{6} &&& {6}
	\arrow[color={rgb,255:red,214;green,92;blue,92}, no head, from=2-4, to=3-7, thick]
	\arrow[color={rgb,255:red,214;green,92;blue,92}, no head, from=3-4, to=3-7, thick]
	\arrow[color={rgb,255:red,92;green,92;blue,214}, no head, from=4-4, to=5-7, thick]
	\arrow[color={rgb,255:red,92;green,92;blue,214}, no head, from=5-4, to=5-7, thick]
	\arrow[color={rgb,255:red,92;green,92;blue,214}, no head, from=6-4, to=5-7, thick]
	\arrow[color={rgb,255:red,92;green,92;blue,214}, no head, from=7-4, to=5-7, thick]
	\arrow[color={rgb,255:red,214;green,92;blue,92}, no head, from=4-1, to=2-4, thick]
	\arrow[color={rgb,255:red,92;green,92;blue,214}, no head, from=2-1, to=4-4, thick]
	\arrow[color={rgb,255:red,92;green,92;blue,214}, no head, from=5-1, to=5-4, thick]
	\arrow[color={rgb,255:red,92;green,92;blue,214}, no head, from=3-1, to=6-4, thick]
	\arrow[color={rgb,255:red,92;green,92;blue,214}, no head, from=6-1, to=7-4, thick]
	\arrow[color={rgb,255:red,214;green,92;blue,92}, no head, from=7-1, to=3-4, thick]
\end{tikzcd}};  
\end{tikzpicture}
\]
 We need to show that the following equality of morphisms holds in $\Delta \bfGS_+$:
$$\gamma_2 \circ(\sigma_2;\   h_{11}^-, h_{33}^+, h_{31}^-, h_{32}^-) \circ \gamma_1\circ (\sigma_1;\   g_{13}^+,  g_{11}^+,  g_{12}^-, g_{31}^+,  g_{42}^+, g_{41}^-) $$ $$=\gamma_3\circ (\sigma_3;\ h_{11}g_{41}, h_{11}g_{42},  h_{33}g_{31},  h_{31}g_{12}, h_{31}g_{11}, h_{31}g_{33}).$$

Expressing the map $(\sigma_2;\   h_{11}^-, h_{33}^+, h_{31}^-, h_{32}^-) \circ \gamma_1$  in terms of its unique factorization as an element of $\Sigma_6 \rtimes G^6$ composed with an order preserving map in $\Delta$ allows us to write the inner composite in terms of its unique factorization
$$(\sigma_2;\   h_{11}^-, h_{33}^+, h_{31}^-, h_{32}^-)^\ast\gamma_1 \circ \gamma_1^\ast(\sigma_2;\   h_{11}^-, h_{33}^+, h_{31}^-, h_{32}^-). $$ 
It is immediate to see that 
$$\gamma_2\circ (\sigma_2;\   h_{11}^-, h_{33}^+, h_{31}^-, h_{32}^-)^\ast\gamma_1= \gamma_3,$$
so we are left with reconciling the elements in $\Sigma_6 \rtimes G^6$.

Let $\sigma_4$ be the permutation defined in \autoref{fig: example with sigma 4},
\begin{figure}
    \centering
\begin{tikzpicture}[baseline= (a).base]
\node[scale=0.60] (a) at (1,1){
\begin{tikzcd}[row sep=small]
	&&&&& \textcolor{rgb,255:red,214;green,153;blue,92}{1} \\
	&&&& \textcolor{rgb,255:red,214;green,153;blue,92}{2} \\
	&&& \textcolor{rgb,255:red,92;green,92;blue,214}{3} &&&&&&&& \textcolor{rgb,255:red,214;green,153;blue,92}{1} \\
	&& \textcolor{rgb,255:red,92;green,214;blue,92}{4} &&&&&&&& \textcolor{rgb,255:red,92;green,92;blue,214}{2} \\
	& \textcolor{rgb,255:red,92;green,214;blue,92}{5} &&&&&&&& \textcolor{rgb,255:red,92;green,214;blue,92}{3} \\
	\textcolor{rgb,255:red,92;green,214;blue,92}{6} &&&&&&&& \textcolor{rgb,255:red,214;green,92;blue,214}{4} \\
	&&&&& \textcolor{rgb,255:red,92;green,214;blue,92}{1} &&&&&& {\sigma_2} \\
	&&&& \textcolor{rgb,255:red,92;green,214;blue,92}{2} \\
	{\sigma_4} &&& \textcolor{rgb,255:red,92;green,214;blue,92}{3} &&&&&&&& \textcolor{rgb,255:red,92;green,214;blue,92}{1} \\
	&& \textcolor{rgb,255:red,92;green,92;blue,214}{4} &&&&&&&& \textcolor{rgb,255:red,214;green,92;blue,214}{2} \\
	& \textcolor{rgb,255:red,214;green,153;blue,92}{5} &&&&&&&& \textcolor{rgb,255:red,92;green,92;blue,214}{3} \\
	\textcolor{rgb,255:red,214;green,153;blue,92}{6} &&&&&&&& \textcolor{rgb,255:red,214;green,153;blue,92}{4} \\
	&&&& {\gamma_1}
	\arrow[draw={rgb,255:red,214;green,153;blue,92}, no head, from=1-6, to=3-12, thick]
	\arrow[draw={rgb,255:red,214;green,153;blue,92}, no head, from=1-6, to=12-1, thick]
	\arrow[draw={rgb,255:red,214;green,153;blue,92}, no head, from=2-5, to=3-12, thick]
	\arrow[draw={rgb,255:red,214;green,153;blue,92}, no head, from=2-5, to=11-2, thick]
	\arrow[color={rgb,255:red,92;green,92;blue,214}, no head, from=3-4, to=10-3, thick]
	\arrow[draw={rgb,255:red,92;green,214;blue,92}, no head, from=4-3, to=5-10, thick]
	\arrow[draw={rgb,255:red,92;green,214;blue,92}, no head, from=4-3, to=9-4, thick]
	\arrow[draw={rgb,255:red,92;green,92;blue,214}, no head, from=4-11, to=3-4, thick]
	\arrow[draw={rgb,255:red,92;green,214;blue,92}, no head, from=5-2, to=5-10, thick]
	\arrow[draw={rgb,255:red,92;green,214;blue,92}, no head, from=5-2, to=8-5, thick]
	\arrow[draw={rgb,255:red,92;green,214;blue,92}, no head, from=6-1, to=5-10, thick]
	\arrow[draw={rgb,255:red,92;green,214;blue,92}, no head, from=6-1, to=7-6, thick]
	\arrow[draw={rgb,255:red,92;green,214;blue,92}, no head, from=7-6, to=9-12, thick]
	\arrow[draw={rgb,255:red,92;green,214;blue,92}, no head, from=8-5, to=9-12, thick]
	\arrow[draw={rgb,255:red,92;green,214;blue,92}, no head, from=9-4, to=9-12, thick]
	\arrow[draw={rgb,255:red,92;green,214;blue,92}, no head, from=9-12, to=5-10, thick]
	\arrow[draw={rgb,255:red,92;green,92;blue,214}, no head, from=10-3, to=11-10, thick]
	\arrow[color={rgb,255:red,214;green,92;blue,214}, no head, from=10-11, to=6-9, thick]
	\arrow[draw={rgb,255:red,214;green,153;blue,92}, no head, from=11-2, to=12-9, thick]
	\arrow[draw={rgb,255:red,92;green,92;blue,214}, no head, from=11-10, to=4-11, thick]
	\arrow[draw={rgb,255:red,214;green,153;blue,92}, no head, from=12-1, to=12-9, thick]
	\arrow[draw={rgb,255:red,214;green,153;blue,92}, no head, from=12-9, to=3-12, thick]
\end{tikzcd}
};  
\end{tikzpicture}
    \caption{The permutation $\sigma_4$ induced by $\gamma_1$ and $\sigma_2$ in $\Delta \varphi\wr \bfS$}
    \label{fig: example with sigma 4}
\end{figure}
so that $$\gamma_1^\ast(\sigma_2;\   h_{11}, h_{33}, h_{31}, h_{32})= (\sigma_4, h_{11}, h_{11}, h_{31}, h_{32}, h_{32}, h_{32}). $$
Now we finally check that indeed in $\Sigma_6 \rtimes G^6$ we have
$$(\sigma_4; h_{11}, h_{11}, h_{31}, h_{32}, h_{32}, h_{32})\circ (\sigma_1; g_{13}, g_{11}, g_{12}, g_{31}, g_{42}, g_{41})$$
$$=(\sigma_3;\ h_{11}g_{41}, h_{11}g_{42},  h_{33}g_{31},  h_{31}g_{12}, h_{31}g_{11}, h_{31}g_{33}),$$ since $\sigma_4\sigma_1=\sigma_3$, and the multiplication rule permutes the $g$'s first by $\sigma_4$ before multiplying them by the $h$'s. 
\end{exm}

\begin{rem2}
As a consequence of \autoref{lemma:pro}, we observe that given a group with parity $(G,\varphi)$ the associated category $\Delta \bfGS_+$ is in fact a PROP \cite{Mac65}. In light of this, one can recast \autoref{iso of categories} and \autoref{TwistedGringoperad} as the statement that $\Delta \bfGS_+$ is the PROP for rings with twisted $G$-action. In this sense, \autoref{iso of categories} and \autoref{TwistedGringoperad} generalize results of \cite[\S 3]{Pir02} and \cite[\S 3]{Gra20} in the cases where $\Delta  \bfGS_+=\Delta \bfS_+$ and $\Delta \bfGS_+=\Delta \bfH_+$  respectively. 
\end{rem2}

\subsection{Twisted \texorpdfstring{$G$}{TEXT}-objects in \texorpdfstring{$\infty$}{TEXT}-categories}\label{sec:twisted-G-infinity}

We can extend the definitions of \autoref{sec:twisted-G-objects} to $\infty$-categories. Notably, an \textit{$\infty$-category $\cC$ with parity} is a functor $\cC\to BC_2$. 
Given $\infty$-categories with parity $\cC\to BC_2$ and $\cD\to BC_2$, a functor $F\colon \cC\rightarrow \cD$ compatible with the maps to $BC_2$
is called a functor of categories with parity.
Throughout this section, we write $(G, \varphi)$ for a group with parity. 

\begin{defin}
Let $\cC$ be a (large) $\infty$-category with a $C_2$-action. By unstraightening, the induced functor $BC_2\rightarrow \CAT_\infty$ classifies a cocartesian fibration $C_2\rtimes \cC\rightarrow BC_2$. 
For  a group with parity $(G, \varphi)$, a \emph{twisted $G$-object in $\cC$} is a functor of $\infty$-categories with parity $BG\rightarrow C_2\rtimes \cC$.
\end{defin}

\begin{rem2}
Note that when the parity $\varphi\colon G\rightarrow C_2$ is non-trivial, a functor of categories with parity $BG\rightarrow C_2\rtimes \cC$ sends cocartesian lifts to cocartesian lifts.
\end{rem2}

Given a symmetric monoidal $\infty$-category $\cC$, the $\infty$-category $\Alg(\cC)$ of $\mathbb{E}_1$-algebras in $\cC$ is endowed with a $C_2$-action given by $M\mapsto M^\op$, where $M^\op$ is the  algebra obtained by precomposing the multiplication map with the swap map \cite[2.13.4]{hinich}. This leads to the following definition.

\begin{defin}[The $\infty$-category of rings with twisted $G$-action]\label{def: category of twisted G-rings}
Let $\cC$ be a cocomplete symmetric monoidal $\infty$-category.
An \textit{algebra with twisted $G$-action in $\cC$} is a twisted $G$-object in $\Alg(\cC)$. 
We denote by $\Alg^\varphi(\cC)$ the  $\infty$-category $\Fun_{/BC_2}(BG, C_2\rtimes \Alg(\cC))$ of twisted $G$-algebras.
\end{defin}

\begin{rem2}
In this section, we follow Lurie's convention in~\cite{HA}
and refer to an algebra over an operad in a symmetric monoidal $\infty$-category $\cC$ as an algebra and reserve the term monoid for an algebra over the operad $\Assoc$ in a symmetric monoidal $\infty$-category $\cC$ equipped with a Cartesian monoidal structure. 
\end{rem2}

\begin{rem3}\label{algebras description}
As in \autoref{example: algebra over twisted operad are twisted rings} and \autoref{TwistedGringoperad}, the $\infty$-category $\Alg^\varphi(\cC)$ is equivalent to the $\infty$-category $\Alg_{\Assoc^\varphi}(\cC)$ of algebras in $\cC$ over the operad $\Assoc^\varphi$ defined in \autoref{example: twisted operad}.
By~\cite[Proposition~2.2.4.9]{HA} together with \autoref{iso of categories} and \autoref{TwistedGringoperad}, we obtain the following different perspectives to identify this $\infty$-category:
\begin{align*}
    \Alg^\varphi(\cC) & = \Fun_{/BC_2}(BG, C_2\rtimes \Alg(\cC))\\
    & \simeq \Alg_{\Assoc^\varphi}(\cC)\\
    & \simeq \Fun^\otimes (\Env(\Assoc^\varphi), \cC)\\
    & \simeq \Fun^\otimes({\Delta \varphi\wr \Sigma}_+, \cC) \,.
\end{align*}
Here $\Fun_{/BC_2}(BG,C_2\ltimes \Alg(\cC))$ is the $\infty$-category of functors $BG\to C_2\ltimes \Alg(\cC)$ over $BC_2$, we write $\Alg_{\Assoc^\varphi}(\cC)$ for the $\infty$-category of $\infty$-operad maps from $\Assoc^\varphi\to \cC$, and we write $\Fun^{\otimes}$ for the $\infty$-category of symmetric monoidal functors. 
\end{rem3}
\begin{defin}
When $\cC$ is the symmetric monoidal $\infty$-category of spectra $\Sp$, then we call $\Alg^\varphi(\Sp)$ the \textit{$\infty$-category of ring spectra with twisted $G$-action} and we call the objects in $\Alg^\varphi(\Sp)$ \emph{ring spectra with twisted $G$-action.}
When $\cC$ is the symmetric monoidal $\infty$-category of spaces, then we call $\Alg^\varphi(\cS)$ the \textit{$\infty$-category of $\mathbb{E}_1$-spaces with twisted $G$-action} and we call the objects in $\Alg^\varphi(\cS)$ \textit{$\mathbb{E}_1$-spaces  with twisted $G$-action}.
\end{defin}

\begin{rem}
Let $\cM$ be a combinatorial symmetric monoidal model category, with class of weak equivalences denoted $\cW$ and let $\cC=\cM[\cW^{-1}]$ be its underlying $\infty$-category in the sense of \cite[Definition~1.3.4.15,~Remark~1.3.4.16]{HA}.
Then there is a natural functor
\[
     \Alg^\varphi(\cM) \left [\cW^{-1}_{\Alg}\right ]\longrightarrow \Alg^{\varphi} \left( \cC\right)
\]
that is injective on objects cf. \cite[Theorem~1.3.4.20]{HA}, where $\cW_{\Alg}$ is the class of weak equivalences $\cM$ that are homomorphisms of monoids with twisted $G$-action. Essentially, any monoid with twisted $G$-action in a symmetric monoidal model category can be viewed as an algebra with twisted $G$-action in the corresponding underlying symmetric monoidal $\infty$-category.
This allows us to produce examples of rings with twisted $G$-action using explicit model categorical constructions.
\end{rem}

Recall that the (large) $\infty$-category $\Cat_\infty$ of small  $\infty$-categories is endowed with a unique non-trivial equivalence $\Cat_\infty\rightarrow\Cat_\infty$ given by $\cC\mapsto \cC^\op$, which defines a $C_2$-action on $\Cat_\infty$ (see \cite[6.3]{toen}).
The induced functor on $BC_2\rightarrow \CAT_\infty$ classifies a cocartesian fibration that we denote $C_2\rtimes \Cat_\infty\rightarrow BC_2$.

\begin{defin}
A \textit{(small) $\infty$-category with twisted $G$-action} is a twisted $G$-object in $\Cat_\infty$. We denote by $\Cat_\infty^\varphi$ the induced (large) $\infty$-category $\Fun_{/BC_2}(BG, C_2\rtimes \Cat_\infty)$ of small twisted $G$-$\infty$-categories. 
\end{defin}

Unwinding the definition, an $\infty$-category with twisted $G$-action consists of an $\infty$-category $\cC$ equipped with a functor $g\colon \cC\to \cC$ for each even $g\in G$ and a functor 
$g\colon \cC^{\op}\to \cC$ for each odd $g\in G$ such that $g\circ g^{\prime}=g g^{\prime}$ for each pair $g,g^{\prime}\in G$. We refer to $\cC$ as the underlying $\infty$-category.

\begin{rem}
Given $\infty$-categories with twisted $G$-action $\cC$ and $\cD$, a twisted $G$-equivariant functor $F\colon \cC\to \cD$ is a morphism in $\Cat_\infty^\varphi=\Fun_{/BC_2}(BG, C_2\rtimes \Cat_\infty)$.
Informally, given a functor $F\colon \cC\rightarrow \cD$ in $C_2\rtimes \Cat_\infty$, the functor $gF\colon \cC\rightarrow \cD$ is obtained by the following composition in $C_2\rtimes \Cat_\infty$:
\[
\begin{tikzcd}
    \cC \ar{r}{g^{-1}} & \cC \ar{r}{F}  & \cD \ar{r}{g}  & \cD.
\end{tikzcd}
\]
Explicitly, as maps in $\Cat_\infty$, if $F\colon \cC\to \cD$ is an even morphism, this 
is given by 
\[
\begin{tikzcd}
    \cC \ar{r}{g^{-1}} & \cC \ar{r}{F}  & \cD \ar{r}{g}  & \cD, 
\end{tikzcd}
\text{ or } \begin{tikzcd}
    \cC \ar{r}{(g^{-1})^{\op}} & \cC^{\op} \ar{r}{F^{\op}}  & \cD^{\op} \ar{r}{g}  & \cD,
\end{tikzcd}\]
depending on whether $g$ is even or odd, and if $F\colon \cC^\op\to \cD$ is an odd morphism, this 
is given by 
\[
\begin{tikzcd}
    \cC^{\op} \ar{r}{(g^{-1})^{\op}} & \cC^{\op} \ar{r}{F}  & \cD \ar{r}{g}  & \cD \,,
\end{tikzcd}
\text{ or }
\begin{tikzcd}
    \cC^{\op} \ar{r}{g^{-1}} & \cC \ar{r}{F^{\op}}  & \cD^{\op} \ar{r}{g}  & \cD \,,
\end{tikzcd}
\]
depending on whether $g$ is even or odd.
\end{rem}

\begin{exm}\label{ex: twisted G-categories with one object}
Recall that an $\mathbb{E}_1$-space $M$ defines an $\infty$-category $\bB M$ with one object and space of morphisms given by $M$. 
Explicitly, one can construct $\bB M$ as the homotopy coherent nerve of the simplicial category with one object and with hom-simplicial set $M$ seen as a Kan complex.
The assignment defines a functor $\bB\colon \Alg(\cS)\rightarrow \Cat_\infty$.
As $\bB(M^\op)\simeq (\bB M)^\op$, the functor is $C_2$-invariant and defines thus a functor of categories with parity $\bB \colon C_2\rtimes \Alg(\cS)\rightarrow C_2\rtimes \Cat_\infty$ by unstraightening. Let $(G,\varphi)$ be a group with parity. If $M$ is an $\mathbb{E}_1$-space with twisted $G$-action, then $\bB M$ is an $\infty$-category with twisted $G$-action via the induced assignment:
\[
\bB^{\varphi}\colon \Alg^\varphi(\cS)=\Fun_{/BC_2}(BG, C_2\rtimes \Alg(\cS))\rightarrow \Fun_{/BC_2}(BG, C_2\rtimes \Cat_\infty)=\Cat_\infty^\varphi.
\]
The upshot of this discussion is that one can view an $\infty$-category with twisted $G$-action as an ``$\mathbb{E}_1$-space with twisted $G$-action with many objects". 
\end{exm}

\begin{rem}
    The $C_2$-action on $\Cat_\infty$ restricted to $\infty$-groupoids is trivial.
    Therefore, an $\infty$-groupoid with twisted $G$-action is simply an $\infty$-groupoid with $G$-action. This defines a fully faithful embedding $\cS^{BG}\hookrightarrow \Cat_\infty^\varphi$.
\end{rem}

\begin{prop}
    The $\infty$-category $\Cat_\infty^\varphi$ is presentable, and the usual maximal $\infty$-groupoid and $\infty$-groupoid completion functors lift to $\infty$-categories with twisted $G$-action:
    \[
    \begin{tikzcd}[column sep=huge]
        \Cat_\infty^\varphi \ar[bend left=50, "(-)^\mathrm{gp}" description]{r} \ar[shift left=4, phantom, "\perp" description]{r}  &  \ar[shift left=4, phantom, "\perp" description]{l} \cS^{BG}.\ar[hook']{l}\ar[bend left=50, "(-)^\simeq" description, leftarrow]{l}
    \end{tikzcd}
    \]
\end{prop}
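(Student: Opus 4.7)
The plan is to recognize $\Cat_\infty^\varphi$ as a limit of presentable $\infty$-categories and then to transport the well-known adjoint triple $(-)^{\mathrm{gp}}\dashv \iota \dashv (-)^\simeq$ between $\cS$ and $\Cat_\infty$ along this identification.

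For presentability, I would first unstraighten to obtain the classifying functor $\rho\colon BC_2\to \CAT_\infty$ of the fibration $C_2\rtimes\Cat_\infty\to BC_2$, which sends the nontrivial morphism to the autoequivalence $(-)^{\op}$. The remark preceding the statement observes that every morphism in $BG$ is automatically sent to a cocartesian edge, so $\Cat_\infty^\varphi$ coincides with the $\infty$-category of cocartesian sections of the pullback $B\varphi^\ast(C_2\rtimes\Cat_\infty)\to BG$, which in turn is equivalent to the limit $\lim_{BG}(\rho\circ B\varphi)$. Because $(-)^{\op}$ is an autoequivalence of the presentable $\infty$-category $\Cat_\infty$, the diagram $\rho\circ B\varphi$ lands in $\mathrm{Pr}^L$; since limits in $\mathrm{Pr}^L$ are computed in $\CAT_\infty$ and remain presentable, this proves the first claim.

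For the adjoint triple, the key observation is that the canonical $C_2$-action on $\cS$ induced by taking opposites is equivalent to the trivial action, via the natural equivalence $X\simeq X^{\op}$ valid for any $\infty$-groupoid. Consequently, the inclusion $\iota\colon \cS\hookrightarrow \Cat_\infty$ upgrades to a $C_2$-equivariant functor whose source has trivial $C_2$-action, and uniqueness of adjoints then equips both $(-)^{\mathrm{gp}}$ and $(-)^\simeq$ with compatible $C_2$-equivariant structures. This yields an adjoint triple in $\Fun(BC_2,\CAT_\infty)$, equivalently an adjoint triple of cocartesian fibrations over $BC_2$. Pulling back along $B\varphi$ and taking cocartesian sections preserves adjunctions, and the trivialization of the $C_2$-action on $\cS$ gives $C_2\rtimes \cS\simeq BC_2\times \cS$, so $\Fun_{/BC_2}(BG,C_2\rtimes \cS)\simeq \cS^{BG}$. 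Full faithfulness of the lifted inclusion descends from the pointwise full faithfulness of $\iota$ at the level of limits.

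The main obstacle I anticipate is rigidifying the $C_2$-equivariance coherently: the pointwise equivalence $X\simeq X^{\op}$ for $\infty$-groupoids must be upgraded to an equivalence of functors $\cS\to\Cat_\infty$ living in $\Fun(BC_2,\CAT_\infty)$, which requires handling the higher coherences of the $C_2$-action on $\Cat_\infty$. Once this is in hand, the remaining steps are formal consequences of the facts that the forgetful functor $\mathrm{Pr}^L\to \CAT_\infty$ creates limits and that adjunctions are preserved when passing to $\infty$-categories of cocartesian sections.
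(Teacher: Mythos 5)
Your argument is correct but organized differently from the paper's. For presentability, the paper simply cites \cite[5.5.3.3, 5.5.3.6, 5.5.3.10]{HTT}, whereas you recognize $\Cat_\infty^\varphi$ as the limit $\lim_{BG}(\rho\circ B\varphi)$ of a $BG$-shaped diagram valued in $\mathrm{Pr}^{L}$ (using that $(-)^{\op}$ is a colimit-preserving autoequivalence and that every morphism of $BG$ maps to a cocartesian edge because it is invertible), then invoke that $\mathrm{Pr}^L\to\CAT_\infty$ creates limits — a more self-contained and structural route to the same conclusion. For the adjoint triple, the paper first applies the adjoint functor theorem to the limit- and colimit-preserving inclusion $\cS^{BG}\hookrightarrow\Cat_\infty^\varphi$ to obtain abstract adjoints, and only afterwards uses the $C_2$-invariance of $(-)^{\simeq}$ and $(-)^{\mathrm{gp}}$ (i.e.\ $(\cC^{\op})^{\simeq}\simeq\cC^{\simeq}$, $(\cC^{\op})^{\mathrm{gp}}\simeq\cC^{\mathrm{gp}}$) together with unstraightening and the commuting square of forgetful functors to identify those abstract adjoints with the desired lifts. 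You instead build the lifts directly: trivialize the $C_2$-action on $\cS$, promote the inclusion $\iota\colon\cS\hookrightarrow\Cat_\infty$ and its two adjoints to an adjoint triple in $\Fun(BC_2,\CAT_\infty)$ via uniqueness of adjoints, and then pull back along $B\varphi$ and take (cocartesian) sections. Your approach bypasses the adjoint functor theorem and is arguably cleaner, at the cost of requiring the coherence point you yourself flag — upgrading the pointwise equivalences $X\simeq X^{\op}$ to a genuine trivialization of the $C_2$-action on $\cS$ as an object of $\Fun(BC_2,\CAT_\infty)$. That point is genuine but standard (the $C_2$-action on $\Cat_\infty$ restricts to the identity on the full subcategory $\cS$), and the paper's version of the same observation is its display of the $C_2$-invariance of $(-)^{\simeq}$ and $(-)^{\mathrm{gp}}$; neither proof can avoid it entirely.
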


\begin{proof}
    The $\infty$-category $\Cat_\infty^\varphi$ is presentable by \cite[5.5.3.3, 5.5.3.6, 5.5.3.10]{HTT}.
    Because $\cS^{BG}\hookrightarrow \Cat_\infty^\varphi$ preserves all colimits and limits, it has a left and right adjoint by the adjoint functor theorem \cite[5.5.2.9]{HTT}.
    Moreover, recall that the maximal $\infty$-groupoid and free $\infty$-groupoid functors are invariant under the $C_2$-action on $\Cat_\infty$, i.e. $(\cC^\op)^\simeq\simeq \cC^\simeq$ and $(\cC^\op)^\mathrm{gp}\simeq  \cC^\mathrm{gp}$, for any small $\infty$-category $\cC$.
    This shows that we obtain by unstraightening the adjunctions: 
    \[
    \begin{tikzcd}[column sep=huge]
        C_2\rtimes  \Cat_\infty \ar[bend left=35, "(-)^\mathrm{gp}" description]{r} \ar[shift left=4, phantom, "\perp" description]{r}  &  \ar[shift left=4, phantom, "\perp" description]{l} C_2\rtimes \cS\simeq \cS.\ar[hook']{l}\ar[bend left=35, "(-)^\simeq" description, leftarrow]{l}
    \end{tikzcd}
    \]
    The forgetful functors $U\colon\Cat_\infty^\varphi\rightarrow C_2\rtimes  \Cat_\infty$ and $U\colon\cS^{BG}\rightarrow \cS$ induced by choosing the identity $*\rightarrow BG$ fit then in a diagram:
    \[
    \begin{tikzcd}
       \cS^{BG} \ar[hook]{r} \ar{d}[swap]{U} & \Cat_\infty^\varphi \ar{d}{U}\\
\cS \ar[hook]{r} & C_2\rtimes  \Cat_\infty.
    \end{tikzcd}
    \] 
The commutativity between the forgetful functors $U$ and the inclusions automatically induces the desired lifting of $(-)^\simeq$ by unicity of adjoint functors.
For the $\infty$-groupoid completion, one easily checks that if $\cC$ has a twisted $G$-action, then $\cC^\gp$ has a $G$-action with the correct desired universal property.
\end{proof}

\section{Topological homology of rings with twisted \texorpdfstring{$G$}{TEXT}-action}\label{sec:thgdef}
We are now prepared to present the main constructions of this paper. Recall that these constructions can be considered as combinatorial models for the norm for compact Lie groups in analogy with \cite{ABGHLM18} and~\cite{AKGH21} (at least after Borel completion). We also advertise that these constructions can be considered in terms of homotopical functor homology, see \autoref{sec:functor-homology}. We first construct the $\Delta \bfG$-bar construction in \autoref{sec:bar}. We define analogues of topological Hochschild homology $\mathrm{T}\mathbf{G}$, topological negative cyclic homology $\mathrm{T}\mathbf{G}^{-}$, and topological periodic cyclic homology $\mathrm{T}\mathbf{G}^{\textup{per}}$ for an arbitrary self-dual crossed simplicial group in \autoref{sec:homology-selfdual}. For an arbitrary crossed simplicial group, we also define a homotopical analogue of the homology of crossed simplicial groups in \autoref{sec:homology}. 

\subsection{The $\Delta \bfG$-bar construction}\label{sec:bar}

We first present the covariant bar construction, which will be used to define topological $\Delta \bfG$-homology of twisted $G$-rings in \autoref{sec:homology-selfdual} and \autoref{sec:homology}. This relies on our results from \autoref{sec:css} and \autoref{sec:twisted-G-actions}. 

\begin{defin}[The covariant $\Delta \bfG$-bar construction]\label{bar-construction}
Let $\Delta \bfG$ be a crossed simplicial group.
Let $\cC$ be a symmetric monoidal $\infty$-category.
Let $(G_0, \lambda_0)$ be the group with parity associated to $\Delta \bfG$. 
Let $R$ be an algebra with twisted $G_0$-action in $\cC$. 
The covariant $\Delta \bfG$-bar construction of $R$ is the functor $B_{\bfG}^{\bullet}(R)\colon \Delta \bfG\rightarrow \cC$ defined as the following composite:
\[
\begin{tikzcd}
    \Delta \bfG \ar{r}{\widetilde{\lambda}} & [-1em] \Delta \lambda_0\wr \bfS \ar{r} & [-1em]  \Env(\Assoc^{\lambda_0}) \ar{r}{\Env(R)} & [0.5em] \Env(\cC) \ar{r}{\otimes} & \cC.
\end{tikzcd}
\]
Here $\widetilde{\lambda}$ is defined in \autoref{canonical map}. 
The unlabeled functor is induced by the canonical inclusion of $\Delta \lambda_0 \wr \bfS $ in ${\Delta \lambda_0 \wr \bfS}_+$ 
followed by the identification  ${\Delta \lambda_0\wr \bfS }_+\cong \Env(\Assoc^{\lambda_0})$ of \autoref{iso of categories}. 
This construction is functorial in $R$ and thus determines a functor $B^\bullet_{\bfG}\colon \Alg^{\lambda_0}(\cC)\to \cC^{\Delta \bfG}$. 
\end{defin}

\autoref{bar-construction} makes precise the following diagram in the $\infty$-category $\cC$:
\[
\begin{tikzcd}
 \cdots \ar[shift left=4.5, leftarrow]{r} \ar[shift left=1.5, leftarrow]{r} \ar[shift right=1.5, leftarrow]{r} \ar[shift right= 4.5, leftarrow]{r} & \ar[shift left=3, leftarrow]{l} \ar[leftarrow]{l} \ar[shift right=3, leftarrow]{l} R^{\otimes 3}\ar[out=120, in=60, loop, looseness=6]{}{{G}_2}\ar[shift left=3, leftarrow]{r} \ar[leftarrow]{r} \ar[shift right=3, leftarrow]{r} & \ar[shift left=1.5, leftarrow]{l} \ar[shift right=1.5, leftarrow]{l} R^{\otimes 2}\ar[out=120, in=60, loop, looseness=6]{}{{G}_1}\ar[shift left=1.5, leftarrow]{r}\ar[shift right=1.5, leftarrow]{r} & R. \ar[leftarrow]{l}\ar[out=120, in=60, loop, looseness=6]{}{{G}_0}
\end{tikzcd}
\]

\begin{rem}
Even though $ B_{\bfG}^{\bullet}(R)$ is a cosimplicial object, we still refer to it as a bar construction to be consistent with the philosophy of Koszul duality: the bar construction is a receptacle for algebras whereas the co-bar construction is a receptacle for co-algebras.
\end{rem}

\begin{exm}\label{ex: covariant Delta G construction}
Let $k$ be field and let $\cC$ be the (nerve of the) category of $k$-vector spaces. 
Then explicitly the covariant $\Delta \bfG$-bar construction of a $k$-algebra $R$ with twisted $G_0$-action is a cosimplicial object $[n]\mapsto R^{\otimes n+1}$, with cofaces
\begin{align*}
     R^{\otimes n+1} & \stackrel{\delta^i}\longrightarrow R^{\otimes n+2}\\
    r_0\otimes \dots \otimes r_n & \longmapsto r_0 \otimes \dots \otimes r_{i-1} \otimes 1_R \otimes r_i \otimes \dots \otimes  r_n
\end{align*}
and codegeneracies 
\begin{align*}
 R^{\otimes n+1} & \stackrel{\sigma^i}\longrightarrow R^{\otimes n}\\
 r_0\otimes \dots \otimes  r_n & \longmapsto r_0\otimes \dots \otimes r_{i-1}\otimes r_ir_{i+1} \otimes r_{i+2} \otimes \dots \otimes r_n
\end{align*}
together with the following additional maps $R^{\otimes n+1}\rightarrow R^{\otimes n+1}$, for each element $g\in G_n$, compatible with the cofaces and codegeneracies:
\begin{align*}
    R^{\otimes n+1} & \stackrel{g}\longrightarrow R^{\otimes n+1}\\ 
    r_0\otimes \dots \otimes r_n & \longmapsto (\phi^*_{\theta_g^{-1}(0)}g \cdot r_{\theta_g^{-1}(0)})\otimes \dots \otimes (\phi^*_{\theta^{-1}_g(n)}g \cdot r_{\theta^{-1}_g(n)}),
\end{align*}
where the permutation $\theta_g\in \Sigma_{n+1}$ is defined as in \autoref{crossed simplicial groups with faces and degeneracies}.
\end{exm}

\subsection{Topological $\Delta \bfG$-homology: self-dual case}\label{sec:homology-selfdual}
We first define the topological homology of rings with twisted $G_0$ action for crossed simplicial group $\Delta \bfG $ equipped with a duality functor $\sdual: \Delta \bfG ^{\op} \overset{\simeq }{\to} \Delta \bfG $. 

\begin{defn}[{\cite[1.3]{Dun89}}]
Let $\Delta \bfG $ be a crossed simplicial group. Suppose there exists 
 an equivalence $\sdual\colon \Delta \bfG^{\op}\overset{\simeq}{\longrightarrow} \Delta \bfG$ of categories satisfying $\sdual\circ \sdual^{\op}\cong \id_{\Delta \bfG}$. Then we say $(\Delta \bfG,\sdual)$ is a \emph{self-dual crossed simplicial group}. 
\end{defn}

\begin{defin}[The contravariant $\Delta \bfG$-bar construction]
Given a self-dual crossed 
simplicial group $(\Delta \bfG,\sdual)$, we define the \textit{contravariant $\Delta \bfG$-bar construction} of an algebra $R$ with twisted $G_0$-action in $\cC$ to be following composite:
\[
\begin{tikzcd}
   \Delta \bfG^{\op} \ar{r}{\simeq}[swap]{\sdual} & \Delta \bfG \ar{r}{B_{\bfG}^\bullet(R)} & [2em] \cC.
\end{tikzcd}
\]
We shall denote it $B^{\bfG}_\bullet(R)\colon \Delta\bfG^\op\to \cC$. The construction is entirely natural in $R$ and thus determines a functor $B_\bullet^{\bfG}\colon \Alg^{\lambda_0}(\cC)\to \cC^{\Delta \bfG^\op}$.
\end{defin}

\begin{exm}[{\cite[1.4]{Dun89}}]\label{self dual structure for cyclic, dihedral and quaternionic}
   The cyclic category $\Delta \bfC$, the dihedral category $\Delta \bfD$, and the quaternionic category $\Delta \bfQ$ are self-dual crossed simplicial groups. To define their self-dual equivalence $\sdual\colon \Delta \bfG^\op\xrightarrow{\cong} \Delta \bfG$, we make use of the following presentations of the groups:
   \[
   C_{n+1}= \langle x_n \mid x_n^{n+1}=1\rangle, \quad D_{2(n+1)}=\langle x_n, y_n \mid x_n^{n+1}=y_n^2=1, \, y_nx_ny_n^{-1}=x_n^{-1} \rangle 
   \]
   \[
   Q_{4(n+1)}= \langle x_n, y_n \mid x_n^{n+1}=y_n^2, \, y_nx_ny_n^{-1}=x_n^{-1} \rangle. 
   \]
   Notice that in $Q_{4(n+1)}$, the relations imply that $y_n^4=1$.
   Then the functor $\sdual$ is the identity on objects, and on morphisms we define it as follows:
   \begin{align*}
   \sdual\left([n-1]\xrightarrow{\delta_i} [n]\right) &=\begin{cases}
       [n]\xrightarrow{\sigma_i} [n-1] & \text{if }0\leq i \leq n-1\\
       [n]\xrightarrow{\sigma_0\circ x_n^{-1}} [n-1] & \text{if }i=n,
   \end{cases}\\
   \sdual\left( [n+1]\xrightarrow{\sigma_i}[n]\right) &=[n]\xrightarrow{\delta_{i+1}}[n+1],\\
   \sdual\left( [n]\xrightarrow{x_n^ry_n^s}[n]\right) &=\begin{cases}
    [n]\xrightarrow{x_n^{-r}y_n^s}[n] & \text{if }s=0,2  \\
     [n]\xrightarrow{x_n^{r+1}y_n^s}[n] & \text{if }s=1,3.
   \end{cases}
   \end{align*}
   Of course, $s=0$ always in the case of $\Delta \bfC$, and $s=0,1$ in the case of $\Delta \bfD$.
\end{exm}

\begin{exm}
  The symmetric category $\Delta \bfS$ is not a self-dual crossed simplicial group~\cite[1.4]{Dun89}. The reflexive category $\Delta \bfR$ is also not a self-dual crossed simplicial group. 
\end{exm}

\begin{defin}[Topological $\Delta \bfG$-homology: self-dual case]\label{hom of twisted G self-dual}
Let $(\Delta \bfG, \sdual)$ be a self-dual crossed simplicial group and $(G_0,\lambda_0)$ its associated group with parity.
Let $\cC$ be a symmetric monoidal $\infty$-category with geometric realizations. Given an algebra with twisted $G_0$-action in $\cC$, we define: 
\[
\HG(R/\cC)=\colim \Big( \begin{tikzcd}
 \Delta^{\op} \ar[hook]{r}{\iota} & \Delta \bfG^{\op}  \ar{r}{B_{\bullet}^{\bfG}(R)} & [1em] \cC   
\end{tikzcd} \Big).
\]
When $\cC=\Sp$, we write $\THG(R)\coloneqq \HG(R/\Sp)$, and refer to it as the \textit{topological $\Delta \bfG$-homology of ring spectra with twisted $G_0$-action}. 
\end{defin}

\begin{exm}\label{ex: contravariant Delta G bar construction}
As in \autoref{ex: covariant Delta G construction}, let $\cC$ be the (nerve of the) category of $k$-vector spaces.
Let $\Delta\bfG$ be either $\Delta \bfD$ or $\Delta \bfQ$.
Let $R$ be a $k$-algebra with twisted $G_0$-action induced by the anti-homomorphism $t\colon R^\op\to R$.
Then explicitly, the contravariant $\Delta \bfG$-bar construction of $R$ is a simplicial vector space, with degeneracies
\begin{align*}
    R^{\otimes n+1} & \stackrel{\sigma_i}\longrightarrow R^{\otimes n+2}\\
    r_0\otimes \cdots \otimes r_n &\longmapsto r_0 \otimes \dots \otimes r_{i} \otimes 1_R \otimes r_{i+1} \otimes \dots \otimes  r_n
\end{align*}
and faces
\begin{align*}
    R^{\otimes n+1} & \stackrel{\delta_i}\longrightarrow R^{\otimes n}\\
    r_0\otimes \cdots \otimes r_n &\longmapsto \begin{cases}
    r_0\otimes \cdots \otimes r_ir_{i+1} \otimes \cdots \otimes r_n & \text{if }0\leq i \leq n-1\\
    t^2(r_n)r_0\otimes r_1\otimes \cdots \otimes r_{n-1} & \text{if }i=n
    \end{cases}
\end{align*}
together with additional maps $R^{\otimes n+1}\to R^{\otimes n+1}$, generated by $x_n,y_n\in G_n$ as denoted in \autoref{self dual structure for cyclic, dihedral and quaternionic}:
\begin{align*}
    R^{\otimes n+1} & \stackrel{x_n}\longrightarrow R^{\otimes n+1}\\
    r_0\otimes \cdots \otimes r_n & \longmapsto t^2(r_n)\otimes r_0 \otimes \cdots \otimes r_{n-1}
\end{align*}
and 
\begin{align*}
    R^{\otimes n+1} & \stackrel{y_n}\longrightarrow R^{\otimes n+1}\\
    r_0\otimes \cdots \otimes r_n & \longmapsto t(r_0)\otimes t^3(r_n)\otimes \cdots \otimes t^3(r_1).
\end{align*}
For $\Delta \bfD$, this is the \textit{dihedral bar construction} on a $k$-algebra $R$ with anti-involution \cite[\S2]{Dun89}. For $\Delta \bfQ$, this is called the \textit{quaternionic bar construction} on the $k$-algebra $R$ with twisted $C_4$-action \cite[\S2]{Dun89}.
\end{exm}

We now explain how to equip $\HG(\--/\cC)$ with a 
$|\bfG_{\sbt}|$-action. 
Let $\iota\colon \Delta^{\op}\rightarrow \Delta \bfG^{\op}$ be the canonical inclusion. 
Let $\cC$ be any presentable $\infty$-category.
Then the restriction functor $\iota^*\colon \cC^{\Delta \bfG^{\op}}\rightarrow \cC^{\Delta^\op}$ admits a left adjoint $\iota_!$, the free $\Delta \bfG^{\op}$-object: 
\[
\begin{tikzcd}
   \cC^{\Delta^{\op}} \ar[shift left=2, bend left, "\iota_!" description]{r} & [3em]\cC^{\Delta \bfG^{\op}}. \ar[shift left=2, bend left, "\iota^*" description]{l}\ar[phantom, "\perp" description]{l}
\end{tikzcd}
\]
Given a simplicial object $X\colon \Delta^\op\rightarrow \cC$, the free $\Delta \bfG^\op$-object $\iota_!X$ is the left Kan extension of $X$ along $\iota\colon \Delta^\op\hookrightarrow \Delta \bfG^\op$.
Unwinding the definitions, we can check that for all $n\geq 0$, we obtain an equivalence in $\cC$ 
\[
\iota_!X([n]_{\Delta \bfG})\simeq G_n\otimes X_n,
\] 
where the tensor product is the tensoring of $\cC$ over spaces. This is the $\infty$-categorical analogue of \cite[4.3]{FL91}.

\begin{notn}\label{not:realization}
    Given a $\Delta \bfG^\op$-object $X\colon \Delta \bfG^\op\to \cC$, we write $|X|$ for the geometric realization (i.e.\ colimit) of the underlying simplicial object $\iota^\ast X$. 
\end{notn}

\begin{rem}
    Although $\iota_!(X)$ and $\bfG_{\sbt}\otimes X$ have the same $n$-simplices, they are different $\Delta\bfG^\op$-objects, even different simplicial objects.  We see however below that they have the same geometric realization.
\end{rem}

Since $\iota^*$ preserves all colimits and is conservative, since $\iota\colon \Delta^\op\hookrightarrow \Delta \bfG^\op$ is a wide subcategory, we obtain by Barr--Beck--Lurie monadicity  \cite[4.7.3.5]{HA} that $\iota^*$ is monadic.
Therefore we recover \cite[4.4]{FL91} in the $\infty$-categorical context as we obtain an equivalence of $\infty$-categories $\Alg_{\iota^*\iota_!}(\cC^{ \Delta^\op})\simeq \cC^{\Delta \bfG^\op}$.
We show now how this leads to the following generalization of \cite[5.3]{FL91} and \cite[B.5]{NS18}.

\begin{prop}\label{G action on DeltaG objects}
    Let $\cC$ be a presentable $\infty$-category.
    Let $\Delta \bfG$ be a crossed simplicial group.
We obtain the following. 
    \begin{enumerate}
        \item There is a natural functor
    \[
    \cC^{\Delta \bfG^\op} \longrightarrow \cC^{B|\bfG_{\sbt}|}
    \]
    from $\Delta \bfG^\op$-objects in $\cC$ to $|\bfG_{\sbt}|$-equivariant objects in $\cC$. The underlying object in $\cC$ is given by
    \[
    \big(\Delta \bfG^\op \stackrel{M}\rightarrow \cC \big) \longmapsto |M|.
    \]
    \item We have $|\iota_!(X)| \simeq |\bfG_{\sbt}|\otimes  | X | $ for any simplicial object $X$ in $\cC$, and this equivalence is $|\bfG_{\sbt}|$-equivariant if $X=\iota^* M$ for some $\Delta \bfG^\op$-object $M$ in $\cC$.
    
    \item\label{test} Given a $\Delta \bfG^\op$-object $M$ in $\cC$, then $\colim_{\Delta \bfG^\op} M$ is equivalent to the $|\bfG_{\sbt}|$-homotopy orbits of $|M|$.
    \end{enumerate}
\end{prop}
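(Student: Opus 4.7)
The plan is to derive all three statements from the monadicity equivalence $\cC^{\Delta\bfG^\op}\simeq \Alg_{T}(\cC^{\Delta^\op})$ with $T=\iota^\ast\iota_!$, together with an explicit identification of the monad $T$ after geometric realization. Concretely, I would first unwind the pointwise left Kan extension formula to get a natural equivalence $\iota_!(X)_n\simeq G_n\otimes X_n$ in $\cC$ (where $\otimes$ denotes tensoring with a space), and identify the simplicial structure on $\iota^\ast\iota_!(X)$ as the diagonal of the bisimplicial object $[n],[m]\mapsto G_n\otimes X_m$, with one variable recording the $\bfG_\sbt$-direction and the other the original simplicial direction. Applying $|{-}|$ and using that geometric realization commutes with itself in each variable (Fubini for colimits) then yields
\[
|\iota^\ast\iota_!(X)|\;\simeq\;|\bfG_\sbt|\otimes |X|,
\]
which is the first half of (2). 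Because realization is a symmetric monoidal left adjoint and $|\bfG_\sbt|$ is a topological group by \cite[Thm.~5.3]{FL91} (using the crossed simplicial group axioms), this equivalence is an equivalence of monads $|T|\simeq |\bfG_\sbt|\otimes(-)$ on $\cC$.

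For statement (1), I would then apply the geometric realization $|{-}|\colon \cC^{\Delta^\op}\to \cC$ to the monadicity equivalence. Since $|{-}|$ is a left adjoint that intertwines the monads $T$ and $|\bfG_\sbt|\otimes(-)$, it induces a functor
\[
\cC^{\Delta\bfG^\op}\;\simeq\;\Alg_T(\cC^{\Delta^\op})\;\lra\;\Alg_{|\bfG_\sbt|\otimes(-)}(\cC)\;\simeq\;\cC^{B|\bfG_\sbt|},
\]
whose underlying object is $|M|$. For the second half of (2), the equivariance of the equivalence $|\iota_!(\iota^\ast M)|\simeq |\bfG_\sbt|\otimes |M|$ follows because both sides are computed by the same bisimplicial realization, with the $|\bfG_\sbt|$-action on the left induced by the $T$-algebra structure of $\iota^\ast M$ (which is what defines the action on $|M|$ by (1)).

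For (3), I would compute $\colim_{\Delta\bfG^\op} M$ as a relative colimit over the forgetful functor $\iota^\ast$: the standard resolution of a $T$-algebra $M$ by free $T$-algebras gives a simplicial object $T^{\sbt+1}(\iota^\ast M)$ whose colimit in $\cC^{\Delta\bfG^\op}$ is $M$ itself. Taking colimits over $\Delta\bfG^\op$ and commuting with geometric realization in each variable, this identifies $\colim_{\Delta\bfG^\op} M$ with the geometric realization of the bar construction $\mathrm{Bar}(*,|\bfG_\sbt|,|M|)$, which by definition computes $|M|_{h|\bfG_\sbt|}$.

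The main obstacle is justifying the identification $|T|\simeq |\bfG_\sbt|\otimes(-)$ as monads, and not merely as endofunctors: the multiplication of $T$ on levelwise tensor factors $G_n\otimes G_n\otimes X_n$ involves the crossed simplicial structure maps (so the face/degeneracy-compatible but non-trivial \autoref{eq: 2.h} and \autoref{eq: 2.v} twists enter), while the topological group multiplication on $|\bfG_\sbt|$ uses precisely these same twists through Loday's theorem. Verifying that these two pieces of structure match under realization is the technical heart of the argument; this is the analogue in our setting of the compatibility proven in \cite[Thm.~5.3]{FL91} and \cite[Appendix~B]{NS18} for the cyclic category, and the proof strategy there adapts directly once the monad description above is in place.
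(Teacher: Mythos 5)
Your proposal takes a genuinely different route from the paper. The paper reduces immediately to the case $\cC=\cS$, which is exactly \cite[5.1, 5.3, 5.9]{FL91}, and then extends to an arbitrary presentable $\cC$ by the formal identification $\cC^K\simeq \cS^K\otimes\cC$ in $\mathrm{Pr}^L$, tensoring the known functor $\cS^{\Delta\bfG^\op}\to\cS^{B|\bfG_\sbt|}$ with $\id_\cC$. That reduction is the whole proof: the crossed simplicial group combinatorics never reappear because they were already dealt with by Fiedorowicz and Loday, and statements (2) and (3) follow by the same tensoring trick. Your plan instead works through the monadicity equivalence $\cC^{\Delta\bfG^\op}\simeq\Alg_T(\cC^{\Delta^\op})$, identifies the monad $|T|$ with $|\bfG_\sbt|\otimes(-)$, and deduces (1) by transporting algebras, (3) by a bar resolution. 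This is a coherent and attractive programme, but it amounts to re-proving the FL91 input in the $\infty$-categorical setting rather than citing and transporting it, so it is strictly more work.

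The concrete gap is in your first step, and it is not a minor one. You assert that the simplicial structure on $\iota^\ast\iota_!(X)$ ``is the diagonal of the bisimplicial object $[n],[m]\mapsto G_n\otimes X_m$'' and then treat the equivalence $|\iota^\ast\iota_!(X)|\simeq|\bfG_\sbt|\otimes|X|$ as an immediate Fubini consequence. This is not correct as stated: the face and degeneracy maps of $\iota^\ast\iota_!(X)$ are \emph{twisted} by the crossed simplicial structure (they involve the maps $g^\ast$ and $\phi^\ast$ of \autoref{FL characterization of CSG} acting across the two factors), so $\iota^\ast\iota_!(X)$ is a twisted product $G_\sbt\,\widetilde\times\,X$, not the product simplicial object. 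Untwisting this product --- constructing a (natural, simplicial) equivalence to the untwisted product, or at least an equivalence after realization --- is precisely what \cite[Lemma 5.2 and Theorem 5.3]{FL91} establish, and it is the genuine content of the result, not a formal diagonal observation. You then flag the verification that $|T|\simeq|\bfG_\sbt|\otimes(-)$ holds as monads as ``the technical heart'' and defer it to FL91/NS18, but the untwisting issue already arises at the level of endofunctors in your ``first half of (2)'', which you present as done. So the thing you take as a warm-up step is actually the theorem you were hoping to reduce to. If you fill in the untwisting argument carefully (and then check monad compatibility, which genuinely does use \autoref{eq: 2.h} and \autoref{eq: 2.v}), the rest of your monadicity and bar-construction argument would go through; alternatively, you could avoid all of it, as the paper does, by invoking FL91 for spaces and tensoring with $\cC$ in $\mathrm{Pr}^L$.
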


\begin{proof}
    If $\cC=\cS$, the $\infty$-category of spaces, this was proved in \cite[5.1, 5.3, 5.9]{FL91}. 
    More generally, for any simplicial set $K$, recall that $\cS^K \otimes \cC \simeq \cC^K$, where here $\otimes$ denotes tensor product in $\operatorname{Pr}^L$. This is due to the universal property of the presheaf category:
        \[
   \cS^{K} \otimes  \cC  \simeq  \Fun^L(\cS^{K}, \cC^{\op})^{\op}
    \simeq \cC^{K} \,. 
    \]
Here the first equivalence follows from \cite[4.8.1.17]{HA} and the second equivalence follows from~\cite[5.1.5.6]{HTT}. 
    Explicitly, given an object $C\in \cC$ and a functor $F\colon K\rightarrow \cC$, the assignment is specified on objects by sending $F\otimes C$ to $F_C\colon K\rightarrow \cC$, where $F_C(x)=C\otimes F(x)$ for any vertex $x$ in $K$, where the last tensor product is the tensoring of $\cC$ in spaces. 
    
   Unpacking this, the first statement follows from the case of spaces, as we have a commutative diagram:
    \[
    \begin{tikzcd}
        \cC \otimes \cS^{\Delta \bfG^\op} \ar{r}{\id_{\cC}\otimes (\colim \circ \iota^*_{\cS})} \ar{d}{\simeq}& [4em]\cC \otimes \cS^{B|\bfG_{\sbt}|} \ar{d}{\simeq}\\
        \cC^{\Delta \bfG^\op} \ar{r}{\colim \circ \iota^*_{\cC}} & \cC^{B|\bfG_{\sbt}|}
    \end{tikzcd}
    \]
    as the tensor product commutes with colimits and is compatible with left adjoints.
    Moreover, any simplicial object $X$ in $\cC$ is of the form $F_C$ where $C\in \cC$ and $F\colon \Delta^\op\rightarrow \cS$. We then obtain:
    \begin{align*}
|\iota_! (X)| & \simeq \colim_{\Delta^\op}\iota^*\iota_!(F_C)\\
 & \simeq C\otimes \colim_{\Delta^\op}\iota^*\iota_!(F)\\
 & \simeq C\otimes (|\bfG_{\sbt}| \times \colim_{\Delta^\op} F) \\
 & \simeq |\bfG_{\sbt}|\otimes  \colim_{\Delta^\op} X \,.
    \end{align*}
    The equivariant statement follows  as well from the string of equivalences above.
    The last statement follows again from the compatibility of homotopy orbits and tensoring over spaces.
\end{proof}

\begin{cor}\label{cor: THG exists for self-dual}
Given a self-dual crossed simplicial group $\Delta \bfG$  and a ring spectrum $R$ with a twisted $G_0$-action, then $\THG(R)$ is a spectrum with left $|\bfG_{\sbt}|$-action.
\end{cor}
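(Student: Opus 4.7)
The plan is to apply part (1) of \autoref{G action on DeltaG objects} directly, since by definition $\THG(R)$ is precisely the geometric realization of a $\Delta \bfG^{\op}$-object in spectra. Indeed, by \autoref{hom of twisted G self-dual}, the spectrum $\THG(R)$ is defined as
\[
\THG(R) = \colim\Bigl( \Delta^{\op} \xhookrightarrow{\iota} \Delta \bfG^{\op} \xrightarrow{B_{\bullet}^{\bfG}(R)} \Sp \Bigr),
\]
which agrees with $|B_{\bullet}^{\bfG}(R)|$ in the sense of \autoref{not:realization}. The object $B_{\bullet}^{\bfG}(R)$ is the contravariant $\Delta \bfG$-bar construction obtained by precomposing the covariant bar construction of \autoref{bar-construction} with the self-duality equivalence $\sdual \colon \Delta \bfG^{\op} \xrightarrow{\simeq} \Delta \bfG$, and so is genuinely a functor out of $\Delta \bfG^{\op}$, not merely its underlying simplicial spectrum.

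With this observation in place, the proof is essentially immediate. Since $\Sp$ is a presentable $\infty$-category with geometric realizations, part (1) of \autoref{G action on DeltaG objects} supplies a natural functor $\Sp^{\Delta \bfG^{\op}} \to \Sp^{B|\bfG_{\sbt}|}$ whose underlying spectrum functor is $|{-}|$. Evaluating this functor on $B_{\bullet}^{\bfG}(R) \in \Sp^{\Delta \bfG^{\op}}$ equips $\THG(R) \simeq |B_{\bullet}^{\bfG}(R)|$ with the desired left $|\bfG_{\sbt}|$-action, giving the conclusion.

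There is no real obstacle: the essential work has already been carried out in \autoref{G action on DeltaG objects}, which reduces the equivariant action on the realization to the classical case of spaces treated in \cite[5.1, 5.3]{FL91} via the identification $\Sp \otimes \cS^{\Delta \bfG^{\op}} \simeq \Sp^{\Delta \bfG^{\op}}$ together with the fact that tensoring in $\mathrm{Pr}^{L}$ preserves left adjoints and colimits. The only thing worth emphasizing in writing is that the self-duality hypothesis is used solely to ensure that the bar construction lifts to a $\Delta \bfG^{\op}$-object via $\sdual$, which is exactly the input required to feed into the preceding proposition.
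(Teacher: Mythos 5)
Your proof is correct and follows the same route the paper intends: the corollary is stated immediately after \autoref{G action on DeltaG objects} with no written proof precisely because it is the direct application of part (1) to the $\Delta\bfG^{\op}$-object $B_{\bullet}^{\bfG}(R)\in\Sp^{\Delta\bfG^{\op}}$, and your observation that self-duality is used only to ensure the bar construction lifts from a simplicial object to a $\Delta\bfG^{\op}$-object is exactly the right thing to note.
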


\begin{exm}
When $\Delta \bfG$ is the cyclic category $\Delta \bfC$, we can identify $\TH\bfC(R)$ with the underlying spectrum with $S^1$-action of $\THH(R)$. 
In this case, our construction of $\THH(R)$ agrees with the one from Nikolaus--Scholze where $\THH(R)$ was defined as the homotopy colimit in $\Sp$ of the composition:
\[
\begin{tikzcd}
    \Delta^\op\ar[hook]{r} & \Delta \bfC^\op\simeq \Delta \bfC \ar{r} & \Env(\Assoc) \ar{r}{\Env(R)} & [2em]\Env(\Sp) \ar{r}{\otimes} & \Sp,
\end{tikzcd}
\]
see \cite[III.2.3]{NS18}.
Similarly, when $\Delta \bfG$ is the dihedral category $\Delta \bfD$, we can identify $\TH\bfD(R)$ with the underlying spectrum with left $O(2)$-action of $\THR(R)$, when $R$ is a ring spectrum with anti-involution. 
\end{exm}

\begin{exm}
Another important special case is 
\[
\THQ(R) \coloneqq \TH\bfQ(R)
\]
associated to the quaternionic category $\Delta \bfQ$. Let $q\colon  C_4\to C_2$ be the canonical quotient from the cyclic group of order 4. The input is then a ring spectrum $R$ with twisted $C_4$-action with respect to the group with parity $(C_4,q)$, i.e.\ a ring spectrum $R$ together with a ring homomorphism $t\colon R^\op\to R$ such that $t^4\simeq \id$.
We call this theory \emph{quaternionic topological Hochschild homology}.
\end{exm}

\begin{rem}
    The canonical map of crossed simplicial groups $\lambda\colon \Delta \bfQ\to \Delta \bfD$ induces a map of topological groups $\lambda\colon \Pin(2)\to O(2)$, and thus any spectrum $E$ with $O(2)$-action can be viewed as a spectrum $\lambda^*E$ with same underlying spectrum $E$ but with action given by $\lambda\colon \Pin(2)\to O(2)$.
    Moreover, the quotient homomorphism $q\colon C_4\to C_2$, allows one to view any ring spectrum $R$ with anti-involution $t\colon R^\op\to R$ as a ring spectrum $q^*R$ with twisted $C_4$-action, where as a ring $q^*R\simeq R$ and as $t^2\simeq \id$ we also get $t^4\simeq \id$.
    In this case, we obtain an equivalence of spectra with left $\Pin(2)$-action:
    \[
    \THQ(q^*R)\simeq \lambda^*\mathrm{THR}(R)
    \]
    for any ring spectrum $R$ with anti-involution.
    This follows from the commutativity of the diagram:
    \[
    \begin{tikzcd}
     \Delta \bfQ^\op \ar{d}[swap]{\lambda^\op} \ar[description, phantom]{r}{\simeq} & [-2em] \Delta \bfQ \ar{d}{\lambda}\ar{r}{B^{\bullet}_\bfQ(q^*R)} & [3em]\Sp \ar[equals]{d}\\
     \Delta \bfD^\op \ar[description, phantom]{r}{\simeq} &  \Delta \bfD \ar{r}{B^{\bullet}_\bfD(R)} & \Sp.
    \end{tikzcd}
    \]
\end{rem}

The next lemma shows that from a given self-dual crossed simplicial group, we can produce many others by taking the product levelwise with some fixed discrete group. 

\begin{lem}\label{extension by constant has duality}
If $(\Delta \bfG, \sdual)$ is a self-dual crossed simplicial group and $\mathrm{H}$ is any discrete group, then the crossed simplicial group $\Delta \bfGH $ from \autoref{extension by constant} is a self-dual crossed simplicial group with duality 
\[ 
\overline{\sdual} \colon (\Delta\bfGH)^{\op}\to  \Delta \bfGH
\]
extended uniquely from $\sdual\colon \Delta \bfG^{\op}\rightarrow \Delta \bfG$, by assigning $\overline{\sdual}(h)=h^{-1}$ for all $h\in \mathrm{H}$.
\end{lem}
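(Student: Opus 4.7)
The plan is to exploit the fact that in $\Delta\bfGH$, the $H$-factor decouples entirely from the crossed simplicial structure on $\Delta\bfG$, so that $\overline{\sdual}$ can be defined ``componentwise'' as $\sdual$ on the $\Delta\bfG$-part and inversion on the $H$-part. First I would use \autoref{extension by constant} to show that a morphism in $\Delta\bfGH([m],[n])$ is uniquely a pair $(f,h)$ with $f\in \Delta\bfG([m],[n])$ and $h\in H$, and that composition is the ``product'' composition
\[
(f_2,h_2)\circ (f_1,h_1) \;=\; \bigl(f_2\circ_{\Delta\bfG} f_1,\; h_2\cdot_{H^{\op}}h_1\bigr).
\]
This is because the formulas $\phi^{*}(g,h)=(\phi^{*}g,h)$ and $(g,h)^{*}\phi=g^{*}\phi$ mean that neither set of structure maps mixes the two coordinates; in particular $H$ acts trivially on $\Delta$-morphisms, so the crossing rule (\autoref{eq: 2.v}) reduces to the crossing rule for $\Delta\bfG$ on the first coordinate and plain multiplication in $H$ on the second.

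Next I would define $\overline{\sdual}\colon (\Delta\bfGH)^{\op}\to \Delta\bfGH$ to agree with $\sdual$ on objects and on all morphisms coming from $\Delta\bfG$ (including the identity element $e_H$), and to send an element $h\in H\subset \Aut_{\Delta\bfGH}([n])$ to $h^{-1}$. By the unique factorization of morphisms, this forces $\overline{\sdual}(f,h)=(\sdual(f),h^{-1})$. Functoriality with respect to the opposite category is then a direct computation using the product composition law: for composable $(f_{1},h_{1}),(f_{2},h_{2})$,
\begin{align*}
\overline{\sdual}\bigl((f_2,h_2)\circ^{\op}(f_1,h_1)\bigr)
&=\overline{\sdual}\bigl(f_1\circ_{\Delta\bfG} f_2,\; h_1\cdot_{H^{\op}}h_2\bigr)\\
&=\bigl(\sdual(f_2)\circ_{\Delta\bfG}\sdual(f_1),\; h_2^{-1}\cdot_{H^{\op}} h_1^{-1}\bigr)\\
&=\overline{\sdual}(f_2,h_2)\circ \overline{\sdual}(f_1,h_1),
\end{align*}
where the middle equality uses that $\sdual$ is a functor $\Delta\bfG^{\op}\to\Delta\bfG$ and that $h\mapsto h^{-1}$ is the standard anti-isomorphism $H\to H^{\op}$. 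Uniqueness of the extension is automatic from this formula together with the unique factorization.

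Finally, $\overline{\sdual}\circ\overline{\sdual}^{\op}\cong \id_{\Delta\bfGH}$ follows by combining the hypothesis $\sdual\circ\sdual^{\op}\cong \id_{\Delta\bfG}$ on the first coordinate with the identity $(h^{-1})^{-1}=h$ on the second. The only real point of care is bookkeeping of the opposite categories and the order of multiplication in $H^{\op}$ versus $H$; once the product decomposition of composition in $\Delta\bfGH$ is established in the first step, nothing interesting can go wrong, because the two coordinates evolve independently.
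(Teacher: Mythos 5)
Your proof is correct, and it takes a genuinely different route from the paper's. The paper proves functoriality of $\overline{\sdual}$ by fully unwinding the composition law $\phi_2\circ(g_2,h_2)\circ\phi_1\circ(g_1,h_1) = (\phi_2\circ g_2^{\ast}\phi_1)\circ((\phi_1^{\ast}g_2)g_1,\, h_2h_1)$ and then applying $\sdual$ to the unique-factorization square in $\Delta\bfG$ to rearrange the $\Delta\bfG$-terms; a direct but somewhat lengthy computation. You instead front-load the structural observation that, because $\phi^{\ast}(g,h)=(\phi^{\ast}g,h)$ and $(g,h)^{\ast}\phi=g^{\ast}\phi$ leave the $H$-coordinate inert, $\Delta\bfGH$ is canonically the product of $\Delta\bfG$ with the one-object groupoid on $H$: composition decouples as $(f_2,h_2)\circ(f_1,h_1)=(f_2\circ_{\Delta\bfG}f_1,\; h_2h_1)$. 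Once this is in hand, $\overline{\sdual}$ is just $\sdual$ on the first coordinate and the anti-automorphism $h\mapsto h^{-1}$ on the second, so functoriality and $\overline{\sdual}\circ\overline{\sdual}^{\op}\cong\id$ fall out coordinatewise with no further crossing manipulations. Establishing the product decomposition still costs one pass through the crossing rule, so the combinatorial work is comparable, but your version isolates it at the front and records a useful structural fact (that $\Delta\bfGH$ is a product category) that the paper leaves implicit, making transparent your closing remark that nothing interesting can go wrong. One small notational caveat: whether the $H$-coordinate combines by multiplication in $H$ or in $H^{\op}$ depends on which of the paper's conventions one tracks, but since $h\mapsto h^{-1}$ is an anti-automorphism in either case this does not affect correctness.
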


\begin{proof}
Recall that as any crossed simplicial group, a morphism in $\Delta \bfGH$ can be uniquely written as $\phi\circ (g,h)$ where $\phi\in \Delta([n], [m])$, $g\in G^\op_n$ for some $n\geq 0$ and $h\in H$.
As we need a functor $\overline{\sdual}\colon (\Delta \bfGH )^{\op}\rightarrow \Delta \bfGH$, we must have:
\[
\overline{\sdual}(\phi\circ (g,h))\coloneqq\overline{\sdual}(g,h)\circ \overline{\sdual}(\phi)=(\sdual(g), h^{-1})\circ \sdual(\phi). 
\]
The identity $\overline{\sdual}^2=\id$ is immediate.
We are therefore only left to check that $\overline{\sdual}$ is a functor. As $\overline{\sdual}(\id_{[n]})=\sdual(\id_{[n]})=\id_{[n]}$, we only need to check composition. 
Let $\phi_1\colon [n]\rightarrow [m]$ and $\phi_2\colon [m]\rightarrow [k]$ be maps in $\Delta$, $g_1\in G^\op_n$ and $g_2\in G^\op_m$, and $h_1, h_2\in H$. We need to verify the equality:
\[
\overline{\sdual} \big( (\phi_2 \circ (g_2, h_2))\circ (\phi_1 \circ (g_1, h_1)) \big) = \overline{\sdual} \big( \phi_1 \circ (g_1, h_1) \big) \circ \overline{\sdual}\big(\phi_2 \circ (g_2, h_2)\big).
\]
Recall that we can compose in $\Delta \bfGH$ the following way:
\begin{align*}
    \phi_2 \circ (g_2, h_2) \circ \phi_1 \circ (g_1, h_1) & = \phi_2 \circ (g_2, h_2)^*(\phi_1) \circ \phi_1^*(g_2,h_2) \circ (g_1, h_1)\\
    & = \phi_2 \circ g_2^*\phi_1 \circ (\phi_1^*g_2, h_2)\cdot (g_1, h_1)\\
    & = (\phi_2 \circ g_2^*\phi_1) \circ ( (\phi_1^*g_2)g_1, h_2 h_1).
\end{align*}
We then obtain the following string of equalities using that $\sdual\colon \Delta\bfG^\op\rightarrow \Delta \bfG$ is a functor: 
\begin{align*}
    \overline{\sdual} \big( (\phi_2 \circ (g_2, h_2))\circ (\phi_1  &\circ (g_1, h_1)) \big)   = \overline{\sdual}\big( (\phi_2 \circ g_2^*\phi_1) \circ ( (\phi_1^*g_2)g_1, h_2 h_1)\big)\\
    & = \big( \sdual((\phi_1^*g_2)g_1), (h_2h_1)^{-1}\big) \circ \sdual(\phi_2\circ g_2^*\phi_1)\\
    & = \big(\sdual(g_1)\sdual(\phi_1^*g_2), h_1^{-1}h_2^{-1} \big) \circ \sdual(g_2^*\phi_1)\circ \sdual(\phi_2)\\
    & = (\sdual(g_1), h_1^{-1})\circ (\sdual(\phi_1^*g_2), h_2^{-1}) \circ \sdual(g_2^*\phi_1) \circ \sdual(\phi_2).
\end{align*}
Now using the unique factorization of the crossed simplicial group $\Delta \bfGH$, we obtain that the middle term in the last equality can be written as:
\begin{align*}
(\sdual(\phi_1^*g_2), h_2^{-1}) \circ \sdual(g_2^*\phi_1) & = \big(\sdual(\phi_1^*g_2), h_2^{-1} \big)^*\sdual(g_2^*\phi_1) \circ \sdual\big( g_2^*\phi_1\big)^*(\sdual(\phi_1^*g_2), h_2^{-1})\\
& = \sdual(\phi_1^*g_2)^*\sdual(g_2^*\phi_1) \circ \big( \sdual(g_2^*\phi_1)^*\sdual(\phi_1^*g_2), h_2^{-1}\big)\\
& = \sdual(\phi_1) \circ (\sdual(g_2), h_2^{-1}).
\end{align*}
The last equality follows from applying the functor $\sdual\colon \Delta \bfG^\op\rightarrow \Delta \bfG$ to the commutative diagram:
\[
\begin{tikzcd}
    {[n]} \ar{r}{g_2^*\phi_1} \ar{d}[swap]{\phi_1^*g_2} & {[m]} \ar{d}{g_2}\\
    {[n]} \ar{r}[swap]{\phi_1} & {[m]}
\end{tikzcd}
\]
\vspace{-0.1cm}
so that we obtain the commutative diagram:
\[
\begin{tikzcd}
    {[m]} \ar{r}{\sdual(\phi_1)} \ar{d}[swap]{\sdual(g_2)} & {[n]} \ar{d}{\sdual(\phi_1^*g_2)}\\
    {[m]} \ar{r}[swap]{\sdual(g_2^*\phi_1)} & {[n]}.
\end{tikzcd}
\]
Uniqueness of the factorization in $\Delta \bfG$ implies
\[
\sdual(\phi_1^*g_2)^*\sdual(g_2^*\phi_1)=\sdual(\phi_1), \quad \text{and }\quad \sdual(g_2^*\phi_1)^*\sdual(\phi_1^*g_2)=\sdual(g_2).
\]
Therefore, we have obtained the desired equality:
\begin{align*}
     \overline{\sdual} \big( (\phi_2 \circ (g_2, h_2))\circ (\phi_1  &\circ (g_1, h_1)) \big) \\
     & = (\sdual(g_1), h_1^{-1})\circ (\sdual(\phi_1^*g_2), h_2^{-1}) \circ \sdual(g_2^*\phi_1) \circ \sdual(\phi_2)\\
     & =(\sdual(g_1), h_1^{-1})\circ \sdual(\phi_1) \circ (\sdual(g_2), h_2^{-1}) \circ \sdual(\phi_2)\\
     & = \overline{\sdual} \big( \phi_1 \circ (g_1, h_1) \big) \circ \overline{\sdual}\big(\phi_2 \circ (g_2, h_2)\big). \qedhere
\end{align*}
\end{proof}

\begin{exm}
In the case of the crossed simplicial group 
$\Delta \bfC\times\mathrm{G}$ defined in \autoref{extension by constant}, this provides a combinatorial model for $G$-equivariant topological Hochschild homology for any discrete group $G$ as spectrum with left $S^1\times G$-action.  
\end{exm}

\begin{exm}
We also produce $G$-equivariant Real topological Hochschild homology from the crossed simplicial group $\Delta \bfD\times G$ as in \autoref{extension by constant}.
\end{exm}

\begin{defin}\label{positive, negative THH}
Given a self-dual crossed simplicial group $\Delta \bfG$ with associated group with parity $(G_0,\lambda_0)$ and a twisted $G_0$-ring $R$, we define
\begin{align*}
    \rmT\bfG^{+}(R)\coloneqq & \THG(R)_{h|\bfG_{\sbt}|} \\
 \rmT\bfG^{-}(R)\coloneqq & \THG(R)^{h|\bfG_{\sbt}|},\\
	\rmT\bfG^{\textup{per}}(R)\coloneqq & \THG(R)^{t|\bfG_{\sbt}|}\,.
\end{align*}
\end{defin}

\begin{rem}
If $k$ is a commutative ring and $A$ is a $k$-algebra, classical cyclic homology $\operatorname{HC}_*(A/k)$ of $A$ corresponds to the homotopy orbits
$\mathrm{HH}(A/k)_{hS^1}$,
where $\mathrm{HH}(A/k)$ denotes Hochschild homology of $A$ over $k$,  and  negative cyclic homology $\operatorname{HC}_*^{-}(A/k)$ corresponds to the homotopy fixed points $\operatorname{HH}(A/k)^{hS^1}$ (cf. \cite{Hoy15}). Topological cyclic homology, however is not the topological analogue of cyclic homology. Instead, it is an invariant that requires the stable homotopy category and not just the derived category of abelian groups, see \cite[Remark~III.1.9]{NS18} and \cite[\S~12]{Law21}. We therefore refer to cyclic homology as \emph{positive cyclic homology} instead and denote it  $\operatorname{HC}_*^+(A)$. Our definition of topological positive cyclic homology of rings with twisted $G$-action is a topological analogue of this construction. We therefore write, for example,
\begin{align*} 
\rmT\bfC^{+}(R)\coloneqq \THH(R)_{hS^1}, \\
\rmT\bfC^{-}(R)\coloneqq \THH(R)^{hS^1}, \\
\rmT\bfC^{\per}(R)\coloneqq \THH(R)^{tS^1}
\end{align*}  
and reserve $\TC(R)$ for the equalizer of~\cite[Thm. III.1.10]{NS18}. We also write $\rmT\bfC^{\per}$ following~\cite{Lod92}, instead of $\TP$ as in~\cite{Hes18}.
\end{rem}

\begin{rem}
In the case of the dihedral category, one usually considers $\mathrm{TCR}^{-}$ and  $\mathrm{TPR}$ as $C_2$-equivariant objects and these do not correspond to $\rmT\bfD^{-}$ and $\rmT\bfD^{\per}$. 
Instead, our notions match the algebraic notions \cite{Lod87}. For example, $\TCR^{-}(R)=\THR(R)^{h_{C_2}S^1}$ whereas $\mathrm{T}\mathbf{D}^{-}(R)=\THR(R)^{hO(2)}$ (cf.~\cite{Ung16,QS21}).
\end{rem}

\subsection{Topological $\Delta \bf G$-homology: general case}\label{sec:homology}
Classically, the algebraic analogue of the topological $\Delta \bfG$ homology of rings with twisted $G$-action (called the homology of crossed simplicial groups in~\cite{FL91}) was defined using the derived functor of the functor tensor product. We consider an $\infty$-categorical generalization of  this construction and use it to define the homology of ring spectra with twisted $G$-action. 

\subsubsection{Homotopical functor homology}\label{sec:functor-homology}
One may also view the constructions in this paper as a form of derived functor homology. 
\begin{const}[Derived functor tensor product]\label{functor-tensor}
Suppose $\cD$ is a cocomplete symmetric monoidal $\infty$-category. Given functors $M \colon\cC^{\op} \longrightarrow \cD$ and $N\colon \cC\longrightarrow \cD$ where $\cC$ is a small $\infty$-category we define the (derived) functor tensor product to be the coend
\[
M\otimes_{\cC}N \coloneqq\int^{c\in \cC}M(c)\otimes N(c).
\] 
\end{const}

\begin{exm}
    If $\cD$ is the underlying $\infty$-category of chain complexes $\mathsf{Ch}_k$ of $k$-vector spaces, where $k$ is a field, and $\cC=N\C$ is the nerve of a small category $\C$, then given functors $M\colon \C^\op\rightarrow \mathsf{Ch}_k$ and $N\colon \C\rightarrow \mathsf{Ch}_k$, we obtain that the induced tensor product $M\otimes_{\cC} N$ in $\mathsf{Ch}_k$ corresponds to the derived tensor product $M\otimes_{\C}^\mathbb{L} N$. In particular, we get $H_*(M\otimes_{\cC} N)\cong\operatorname{Tor}_*^{\C}(M,N)$.
\end{exm}

\begin{notation}
Given an $\infty$-category $\cD$ and a small $\infty$-category $\cC$, we write 
\[ \underline{(-)}\colon \cD \to \Fun(\cC,\cD)\]
for the functor that sends $d\in \cD$ to the constant functor $\underline{d}$.
\end{notation}

\begin{rem}
    If $\cD$ is a presentable monoidal $\infty$-category with unit $\mathbbm{1}$ and $\cC$ is a small $\infty$-category, then for any functors $M\colon \cC^\op\rightarrow \cD$ and $N\colon \cC\rightarrow \cD$, we obtain $M\otimes_{\cC} \underline{\mathbbm{1}}\simeq \colim_{\cC^\op} M$ and $\underline{\mathbbm{1}}\otimes_{\cC} N\simeq \colim_{\cC} N$.
    For instance, if $X_{\sbt}$ is a simplicial space, we recover the familiar formula $|X_{\sbt} | \simeq X_{\sbt} \times^\mathbb{L}_\Delta \underline{\ast} \simeq X_{\sbt}\times_\Delta \Delta^\bullet$, where we write $\times^{\mathbb{L}}$ to emphasize that this construction is the \emph{derived} functor tensor product. From now on all functor tensor products are implicitly derived. 
\end{rem}

\begin{defin}[Homotopical functor homology]
Given a cocomplete symmetric monoidal $\infty$-category $\cD$, a small $\infty$-category $\cC$, and functors $G\colon\cC\to\cD$ and $F\colon\cC^{\op}\to \cD$, we define the \emph{homotopical functor homology} to be 
\[ 
\mathrm{Tor}_*^{\cC}(G,F)=\pi_*(G\otimes_{\cC}F).
\]
\end{defin}

\subsubsection{Positive topological \texorpdfstring{$\Delta \bfG$}{TEXT}-homology}

We can now present our definition of positive topological $\Delta \bfG$-homology as a special case of \autoref{functor-tensor}. 
 
\begin{defin}\label{topological positive}
Let $\Delta \bfG$ be a crossed simplicial group with associated group with parity $(G_0,\varphi)$.
Let $\cC$ be a cocomplete symmetric monoidal $\infty$-category. Given a ring with twisted $G_0$-action $R$ in $\cC$, we define 
\[
\mathrm{H}\mathbf{G}^{+}(R/\cC)=\colim_{\Delta \bfG} B^\bullet_{\bfG}(R) \,.
\]
When $\cC=\Sp$, we simply write $\mathrm{T}\mathbf{G}^+(R)\coloneqq \mathrm{H}\mathbf{G}^+(R/\Sp)$.
\end{defin}

\begin{rem}
We can identify 
\[ 
\pi_*\HG^+(R/\cC)\cong \Tor_*^{\Delta G}(\underline{\mathbbm{1}},B^{\bullet}_{\bfG}R)\,.
\]
This makes it clear that there are bivariant analogues of all of the constructions in this paper as in~\cite[\S~5.5]{Lod87}.
\end{rem}

\begin{rem}\label{rem: compatibility of TG and THG}
  \autoref{topological positive} is compatible with \autoref{positive, negative THH}.
    Indeed, recall that if $(\Delta \bfG, \sdual)$ is a self-dual crossed simplicial group, then $\rmT\bfG^+(R)$ was defined as the $|\bfG_{\sbt}|$-homotopy orbits of $\THG(R)=|B_{\bullet}^{\bfG}(R)|$. 
    By \autoref{G action on DeltaG objects}, these homotopy orbits are equivalent to $\colim_{\Delta \bfG^{\op}}B_{\bullet}^{\bfG}(R)\simeq \colim_{\Delta \bfG}B_{\bfG}^\bullet(R)$.
\end{rem}

We highlight some special cases of \autoref{topological positive} to introduce new terminology. In these cases we remove the superscript $+$ because these are not self-dual crossed simplicial groups and therefore no ambiguity should arise.

\begin{exm}\label{twisted symmetric example}
Given a parity $\varphi\colon G\to C_2$, when $\Delta \bfG=\Delta \varphi\wr \bfS$ is the twisted symmetric crossed simplicial group, and $A$ is a monoid in $\cC$ with twisted $G$-action, then we write:
\[
\mathrm{H}\varphi(A/\cC)\coloneqq \mathrm{H}\varphi\wr \bfS^+(A/\cC)
\]
for the construction from \autoref{topological positive} and refer to it as \emph{topological twisted symmetric  homology}. We further write $\mathrm{T}\varphi(A):=\mathrm{H}\varphi(A/\Sp)$.
\end{exm}
We highlight two special cases of this construction below.
 \begin{exm}\label{hyperoctahedral}
 When $\Delta \bfG=\Delta\bfH$ is the hyperoctahedral crossed simplicial group and $A$ is an $\bE_1$ algebra with anti-involution, then we write 
 \[
 \mathrm{H}\mathrm{O}(A/\cC)\coloneqq \mathrm{H} \mathbf{H}^{+}(A/\cC)
 \] 
and refer to this construction as \emph{topological hyperoctahedral homology}. When $\cC=\Sp$, we further write 
\[ \TO(A)\coloneqq  \mathrm{H}\mathrm{O}(A/\Sp)\,.\]
In the algebraic context, this was previously studied by~\cite{Fie,Gra22}. 
 \end{exm}
 
 \begin{exm}\label{symmetric}
 When $\Delta \bfG$ is the symmetric crossed simplicial group $\Delta \bfS$ and $A$ is a $\bE_1$ algebra in $\cC$ then we write 
\[ 
   \mathrm{H}\Sigma (A/\cC)\coloneqq\mathrm{H}\bfS^{+}(A/\cC)
\] 
for the construction from \autoref{topological positive} and refer to it as \emph{topological symmetric homology}. When $\cC=\Sp$, we write 
\[
\TS (A)\coloneqq\mathrm{H}\mathrm{S}(A/\Sp)\,.
\]
This is an interesting invariant especially in light of work of Berest--Ramados~\cite{BR23} relating symmetric homology to representation homology. In the algebraic context, this was previously studied by Fiedorowicz~\cite{Fie} and Ault~\cite{Aul10}.
\end{exm}

\begin{exm}
When $\Delta\bfG=\Delta \mathbf{B}$ is the braid crossed simplicial group and $A$ is a $\bE_1$ algebra in $\cC$ with anti-involution, then we write 
\[ \mathrm{H}\mathrm{B}(A/\cC)\coloneqq \mathrm{H}\mathbf{B}^{+}(A/\cC)\]
for the construction from \autoref{topological positive} and refer to it as \emph{topological braid homology}. If $\cC=\Sp$, we write 
\[ 
\TB(A):=\mathrm{H}\mathrm{B}(A/\Sp) \,.
\]  
\end{exm}


\begin{exm}
When $\Delta \bfG=\Delta \bfR$ is the reflexive category and $A$ is an $\bE_1$-algebra in $\cC$ with anti-involution then we define the reflexive bar construction $B^{\textup{ref}}_{\bullet}R$ to be the composite of the functor 
\[
\Delta \bfR^{\op}\to \Delta \mathbf{D}^{\op}
\]
with the dihedral bar construction. 
We write 
\[ 
\mathrm{H}\mathcal{R}(A/\cC) \coloneqq \colim_{\Delta \bfR^{\op}}B^{\textup{ref}}_{\bullet}R 
\]
for topological reflexive homology of $A$. Here use $\mathcal{R}$ to distinguish from topological restriction homology, which is commonly denoted $\mathrm{TR}$. When $\cC=\Sp$, we write 
\[\TR(A)\coloneqq\TR(A/\Sp)\,.\] 
This is an interesting invariant in light of recent work of~\cite{Gra22b} and~\cite{LR24} relating the algebraic analogue of reflexive homology to involutive homology of~\cite{Bra14,FVG18} and the equivariant Loday construction respectively. (Note that there are two variants of the dihedral bar construction, see~\cite{KLS88} for example, which leads to two variants of reflexive homology as considered in~\cite{Gra22b}, but we prefer not to go into detail on this distinction here.)
\end{exm}

\begin{rem}\label{rem: canonical map to top twisted symmetric homology}
    Given a crossed simplicial group $\Delta\bfG$, and its canonical parity $\lambda_0\colon G\to C_2$, the functor $\widetilde{\lambda}\colon \Delta\bfG\to \Delta\lambda_0\wr \bfS$ defined in \autoref{canonical map} induces a functor:
    \[
    \widetilde{\lambda}^*\colon \Fun({\Delta \lambda_0\wr \bfS}, \Sp)\longrightarrow\Fun({\Delta \bfG}, \Sp).
    \]
    Given $R$ a ring spectrum with twisted $G_0$-action, notice that $\widetilde{\lambda}^*(B_{\lambda_0\wr \bfS}^\bullet(
R))=B_{\bfG}^\bullet(R)$ and thus we obtain a natural map $\colim_{\Delta \bfG}B_{\bfG}^\bullet(R)\to \colim_{\Delta \lambda_0\wr \bfS}B_{\lambda_0\wr \bfS}^\bullet(
R)$, i.e., we obtain a natural map $\mathrm{T}\bfG^+(R)\to \mathrm{T}\lambda_0(R)$.
In particular, if $\Delta \bfG$ is self-dual, we also get $\THG(R)\to \mathrm{T}\lambda_0(R)$ by pre-composing with the quotient $\THG(R)\to \mathrm{T}\bfG^+(R)$.
\end{rem}

For the ease of the reader, we provide a table of the constructions in this paper along with the notations we use in \autoref{fig: new table}.

\begin{figure}
\footnotesize
\[
\begin{array}{c|c|c|c|c|c|c|c}
    \text{CSG} & \Delta \bfG & G_n & |\mathbf{G}_{\sbt}| & \THG  & \mathrm{T}\bfG^{+} &  \mathrm{T}\bfG^{-}& \mathrm{T}\bfG^{\per} \\ \hline 
\text{cyclic} & \Delta \bfC & C_{n+1} & S^1 & \THH  & \TC^{+}  & \TC^{-} & \TC^{\per} \\ 
\text{equivariant cyclic} & \Delta \bfCG & C_{n+1}\times G & S^1 \times G & \THHG & \TCG^{+} &  \TCG^{-} &  \TCG^{\per} \\ 
\text{dihedral} & \Delta \bfD & D_{2(n+1)} & \rmO(2) & \THR & \TD^{+} &  \TD^{-}& \TD^{\per}\\ 
\text{quaternionic} & \Delta \bfQ & Q_{4(n+1)} &  \Pin(2)  &  \THQ  &  \TQ^{+} & \TQ^{-} & \TQ^{\per}\\ 
\text{reflexive} & \Delta \bfR & C_2 & C_2 &   &     \mathrm{T}\mathcal{R} &   &  \\
\text{braid} & \Delta \bfB & B_{n+1} & * &     & \TB &     &  \\
\text{symmetric} & \Delta \bfS     & \Sigma_{n+1} & * &     & \TS &    &  \\
\text{twisted symmetric} & \Delta \varphi\wr \bfS     & G\wr \Sigma_{n+1} & * &   &   \mathrm{T}\varphi &      &  \\
\text{hyperoctahedral} & \Delta \bfH     & C_2\wr \Sigma_{n+1} & * &     & \TO &       &   
\end{array}
\]
\caption{Various crossed simplicial groups and their topological homology}\label{fig: new table}
\end{figure}


\begin{rem2}
Given a symmetric monoidal $\infty$-category $\cC$, such as the $\infty$-category of spectra, the operad for monoids with twisted $G$-actions from \autoref{TwistedGringoperad} also provides the notion of coalgebras with twisted $G$-action as $\infty$-operad maps $\Assoc^\varphi\to \cC^{\op}$.
In fact, from  \cite{PerEnrichment}, we get that the $\infty$-category of rings with twisted $G$-action is enriched in the $\infty$-category of coalgebras with twisted $G$-action.
It should be possible to provide generalizations of our constructions \autoref{hom of twisted G self-dual} and \autoref{topological positive} for coalgebras with twisted $G$-action, generalizing the construction of topological coHochschild homology of \cite{HScoTHH}.
Just as in \cite{BPcoTHH}, there should be a duality between the topological homology of rings and  coalgebras with twisted $G$-action, under some finiteness conditions.
The resulting homology on coalgebras would provide trace methods and shadow functors in this setting, generalizing work of \cite{KP23} and \cite{AGHKK23}. 
Our higher categorical approach in this current paper is required in the coalgebraic setting, since there are no good model categories for coalgebras \cite{PScoalgebras, PerDK}, or are limited if they do exist \cite{PerDoldKan, Percomod}.
\end{rem2}

\section{Topological quaternionic homology of loop spaces 
}\label{computations}\label{def: the circles}
In this section, we provide some computations of topological homology of rings with twisted $G$-action in the case of self-dual crossed simplicial groups. Specifically, in \autoref{thm: main computation} we compute $\mathrm{TH}\mathbf{G}$ of spherical group rings in the case of the cyclic category, the dihedral category, and the quaternionic category. This recovers the previously known result in the case of the cyclic category and the dihedral category, and it is entirely new in the case of the quaternionic category. To do this, we first discuss induced twisted $G$-actions in \autoref{sec:induced-twisted-actions} and we present a construction of unstable topological twisted $G$-homology in some special cases in \autoref{sec:unstable}. 

One perspective on the constructions in this paper is that they are combinatorial models for norms for compact Lie groups, as mentioned in the introduction. For example, we view topological Hochschild homology as the norm from the trivial group to $S^1$~\cite{ABGHLM18}, Real topological Hochschild homology as the norm from the cyclic group of order two to $O(2)$~\cite{AKGH21}, and quaternionic topological Hochschild homology can conjecturally be viewed as the norm from the cyclic group of order four to $\Pin(2)$. This perspective informs some of the computations in this section. 

To start, it will be helpful to understand $O(2)$ as a left $C_2$-space and $\Pin(2)$ as a left $C_4$-space. Note that we give preference to left actions so, for example, given a subgroup $H<G$ and a left $H$-set $T$, we can consider $G$ as a $(H,G)$-biset and define the coinduction
\[ \Map^{H}(G,T)\]
as a space with left $G$-action, coming from the right $G$-action on $G$, and induction 
\[ \Ind_{H}^{G}T=G\times_{H}T 
\]
equipped with a left $G$-action coming from the left $G$-action on $G$. Each of these constructions works in spaces with continuous actions by topological groups as well. 

We can view the topological groups $O(2)$ and $\Pin(2)$ as two copies of the circle as follows.
View $O(2)\subseteq \mathrm{GL}_2(\mathbb{R})$.
As usual, let
\[
S^1\coloneqq \left\lbrace \begin{bmatrix}
\cos(\theta) & -\sin(\theta) \\
\sin(\theta) & \cos(\theta)
\end{bmatrix} \mid \theta\in\mathbb{R} \right\rbrace
\]
which represents the rotations in $\mathbb{R}^2$, and let 
\[
S^1\rho \coloneqq \left\lbrace \begin{bmatrix}
\cos(\theta) & \sin(\theta) \\
\sin(\theta) & -\cos(\theta)
\end{bmatrix} \mid \theta\in\mathbb{R} \right\rbrace
\]
where here $\rho=\begin{bmatrix}
    1 & 0\\
    0& -1
\end{bmatrix}$ {represents the reflection across the $x$-axis. (Regarding $\mathbb{R}^{2}$ as a $C_{2}$-space with $C_{2}$-action by $\rho$ identifies it with the regular representation, hence the name $\rho$.)
Then note that as a topological group} \[O(2)\cong S^1\cup S^1\rho.\]
The element $\rho$ generates a (non-normal) subgroup of $O(2)$ of order $2$. Right multiplication by $\rho$ on $O(2)$ defines a right $C_2$-action that {interchanges the two copies of the circle}:
\[
\cdot \rho\colon S^1\cup S^1\rho \longrightarrow S^1\rho \cup S^1
\]
while left multiplication of $\rho$ defines a left $C_2$-action that {flips each copy of the circle across the $x$-axis and then interchanges the copies:}
\[
\rho\cdot\colon S^1\cup S^1\rho \longrightarrow \rho S^1 \cup \rho S^1 \rho. 
\]
We could have used the model $S^1\cup \rho S^1$ and as left $C_2$-spaces they are equivalent. Note that there is a $C_2$-equivariant equivalence $C_2\times S^1\simeq C_2\times S^{\sigma}$ where $S^{\sigma}$ is the circle with $C_2$-action given by flipping across the $x$-axis.
Consequently, notice that 
\[
O(2)\simeq \Ind_e^{C_2}(S^1)
\] 
as left $C_2$-spaces. 

View $\Pin(2)\simeq S^1\cup S^1 j\subseteq \mathbb{H}$ where here $S^1=\{a+bi \mid a^2+b^2=1\}$ and $S^1j=\{aj+bk\mid a^2+b^2=1\}$.
The element $j$ generates a subgroup of order $4$. Right multiplication by $j$ induces a right $C_4$-action on $\Pin(2)$ that interchanges the copies of the circle, and rotates the second copy of the circle by $180^\circ$: 
\[
\cdot j\colon S^1\cup S^1j\longrightarrow S^1j\cup -S^1.
\]
Left multiplication by $j$ induces a left $C_4$-action on $\Pin(2)$ that first {interchanges the two copies of the circle, while flipping the first one across the $x$-axis and the second one across the $y$-axis, i.e., the second copy is flipped across the $x$-axis and then rotated by $180^\circ$:}
\[
j\cdot \colon S^1\cup S^1j\rightarrow jS^1\cup jS^1j.
\]
We could have used $S^1\cup jS^1$ as a model for $\Pin(2)$. Of course, the sets $jS^1$ and $S^1j$ are the same, but the difference is that when we identify $S^1\cup jS^1$ with two copies of $S^1$ we identify the point $a+bi$ in the first copy with the point $aj-bk$ in the second copy, whereas when we identify $S^1\cup S^1j$ with two copies of $S^1$ we identify the point $a+bi$ in the first copy with the point $aj+bk$ in the second copy, so there is no implicit flipping of one circle when we interchange copies of the two circles. With these interpretations, we can check that $S^1\cup S^1j\simeq S^1\cup jS^1$ are equivalent as left $C_4$-spaces.
Consequently, we notice that 
\[ 
\Pin(2)\simeq \Ind_{C_2}^{C_4}(S^1)\] 
as left $C_4$-spaces.

We shall introduce categorical models of $S^1$, $O(2)$ and $\Pin(2)$. We first need some general categorical constructions.

\subsection{Induced twisted \texorpdfstring{$G$}{TEXT}-actions}\label{sec:induced-twisted-actions}
If $H$ is a subgroup of $G$, given a parity on $G$, {we can restrict it to a parity $H\to C_2$ and we can restrict a twisted $G$-action on a category to a twisted $H$-action.} Therefore, just as in the untwisted case, there should be an induced (and coinduced) twisted $G$-action on a category with twisted $H$-action. We focus on some specific examples.

\begin{const}\label{const: induced twisted C_4-action}
Let $q\colon C_4\rightarrow C_2$ be the quotient homomorphism. Given a category $\cC$ with twisted $C_4$-action $t\colon \cC^\op\to \cC$, we obtain that $\cC$ has also a (non-twisted) $C_2$-action via $t^2\colon \cC\to\cC$.
More formally, viewing $C_2$ as a subgroup of $C_4$ defines an injective homomorphism $\textup{inc} \colon C_2\hookrightarrow C_4$. 
Note that $q\circ \textup{inc}$ is the zero map, so we obtain a commuting diagram 
\[
\begin{tikzcd}
    BC_2 \ar{r}{\textup{inc}}\ar{dr}{0} & BC_4 \ar{d}{q}\ar{r}{\cC} & C_2\rtimes \Cat, \ar{dl}\\
& BC_2 & 
\end{tikzcd}
\]
{which} defines a functor $\Res_{C_2}^{q}\coloneqq\inc^* \colon \Cat^{q}\to \Cat^{BC_2}$ between presentable categories that preserves limits, so it must have a left adjoint that we denote \[\Ind_{C_2}^{q}\colon \Cat^{BC_2}\rightarrow \Cat^{q}.\]
We can check that if $\cC$ is a category with a $C_2$-action $\alpha\colon \cC\rightarrow \cC$, i.e. $\alpha^2=\id$, then $\Ind_{C_2}^{q}(\cC)=\cC \coprod \cC^\op$ with twisted $C_4$-action given by:
\[
\begin{tikzcd}
    (\cC\coprod \cC^\op)^\op\simeq \cC^\op\coprod \cC \ar{r}{\text{swap}} & \cC\coprod\cC^\op \ar{r}{\alpha\coprod \id} & \cC\coprod \cC^\op.
\end{tikzcd}
\]
Given $\cD$ a category with twisted $C_4$-action, there is indeed a natural correspondence between twisted $C_4$-equivariant functors $\cC\coprod\cC^\op\rightarrow \cD$ and (non-twisted) $C_2$-equivariant functors $\cC\to \textup{Res}_{C_2}^q(\cD)$. Indeed, given a twisted $C_4$-equivariant functor $F\colon \cC\coprod\cC^\op\rightarrow \cD$, its restriction on the first copy of $\cC$ provides the desired equivariant functor. Conversely, given a $C_2$-equivariant functor $G\colon \cC\rightarrow \cD$, define a new functor $G\coprod tG\colon \cC\coprod \cC^\op\to \cD$ where $tG$ is the composition $G^\op\colon \cC^\op\rightarrow \cD^\op$ with the twisted $C_4$-action $t\colon \cD^\op\rightarrow \cD$.

\noindent
Similarly, we can forget the twisted $C_2$-action on any category and this defines a functor $\Res_{e}^{\tau} \colon \Cat^{\tau}\to \Cat$, with left adjoint $\Ind_{e}^{\tau}\colon \Cat\rightarrow \Cat^{\tau}$ where if $\cC$ is a category, we get $\Ind_{e}^{\tau}(\cC)=\cC\coprod \cC^\op$ with twisted $C_2$-action given by swapping the copies. 
\end{const}

\begin{const}\label{const: variant wit flip induced twisted C_4 actions}
    We can vary \autoref{const: induced twisted C_4-action} as follows. Suppose $\cC$ has both a twisted $C_2$-action $\tau\colon \cC^\op\rightarrow \cC$ and (non-twisted) $C_2$-action $\alpha\colon \cC\rightarrow \cC$, i.e. suppose $\cC$ is a twisted $C_2\times C_2$-category with parity  $C_2\times C_2\to C_2$ given by projection {onto the first factor}. Then $\cC\coprod \cC$ is also a twisted $C_4$-category 
    via:
    \[
    \begin{tikzcd}
(\cC \coprod \cC)^\op \simeq \cC^\op\coprod \cC^\op \ar{r}{\tau\coprod \tau} & \cC\coprod \cC \ar{r}{\text{swap}} & \cC \coprod \cC \ar{r}{\alpha\coprod \id} & \cC \coprod \cC.
    \end{tikzcd}
    \]
Comparing with \autoref{const: induced twisted C_4-action}, forgetting the twisted $C_2$-structure on $\cC$, we had a twisted $C_4$-structure on $\cC\coprod \cC^\op$ via $\Ind_{C_2}^{q}$.
It turns out we obtain an equivalence:
\[
\cC\coprod\cC^\op\simeq \cC\coprod  \cC
\]
of categories with twisted $C_4$-action,  via the functor $\id\coprod\tau\colon \cC\coprod \cC^\op\to \cC\coprod\cC$.

Similarly, suppose $\cC$ has only a twisted $C_2$-action, then we can equip $\cC \coprod \cC$ with a twisted $C_2$-structure via:
 \[
    \begin{tikzcd}
(\cC \coprod \cC)^\op \simeq \cC^\op\coprod \cC^\op \ar{r}{\tau\coprod \tau} & \cC\coprod \cC \ar{r}{\text{swap}} & \cC \coprod \cC,
    \end{tikzcd}
    \]
    and comparing with $\Ind_{e}^{\tau}(\cC)=\cC\coprod \cC^\op$ we obtain an equivalence:
    \[
    \cC\coprod \cC^\op\simeq \cC\coprod \cC
    \]
    of categories with twisted $C_2$-action.
\end{const}

{We note that we wrote these constructions in ordinary categories, but they work analogously in $\infty$-categories. We will be applying them to the nerves of  ordinary categories $\bT_n$ introduced in \autoref{Tndef} below, and using the corresponding adjunctions in $\infty$-categories. }

\subsection{The unstable topological \texorpdfstring{$\Delta \bfG$}{TEXT}-homology of an \texorpdfstring{$\infty$}{TEXT}-category with twisted \texorpdfstring{$G$}{TEXT}-action}\label{sec:unstable}
We first introduce categories $\bT_n$ and the unstable topological Hochschild homology construction first appearing in work of Nikolaus in~\cite{HS19} (see also McCandless~\cite{McC21}). 

\begin{defin}\label{Tndef}
   Define $\bT_n$ to be the free category generated on the cyclic graph with $(n+1)$-vertices labeled $\{0, 1, \dots,  n\}$.
\[
   \begin{tikzpicture}
\def \n {0}
\def \radius {1.3cm}
\def \margin {15} 

\foreach \s in {0,...,\n}
{
  \node[draw, circle] at ({360/(\n+1) * (\s)}:\radius) {$\s$};
  \draw[->, >=latex] ({360/(\n+1) * (\s)+\margin}:\radius) 
    arc ({360/(\n+1) * (\s)+\margin}:{360/(\n+1) * (\s+1)-\margin}:\radius);
}
\end{tikzpicture}
\hspace{1cm}
\begin{tikzpicture}
\def \n {1}
\def \radius {1.3cm}
\def \margin {15} 

\foreach \s in {0,...,\n}
{
  \node[draw, circle] at ({360/(\n+1) * (\s)}:\radius) {$\s$};
  \draw[->, >=latex] ({360/(\n+1) * (\s)+\margin}:\radius) 
    arc ({360/(\n+1) * (\s)+\margin}:{360/(\n+1) * (\s+1)-\margin}:\radius);
}
\end{tikzpicture}
\hspace{1cm}
\begin{tikzpicture}
\def \n {2}
\def \radius {1.3cm}
\def \margin {15} 

\foreach \s in {0,...,\n}
{
  \node[draw, circle] at ({360/(\n+1) * (\s)}:\radius) {$\s$};
  \draw[->, >=latex] ({360/(\n+1) * (\s)+\margin}:\radius) 
    arc ({360/(\n+1) * (\s)+\margin}:{360/(\n+1) * (\s+1)-\margin}:\radius);
}
\end{tikzpicture}
\]
\[
\begin{tikzpicture}
\def \n {3}
\def \radius {1.5cm}
\def \margin {14} 

\foreach \s in {0,...,\n}
{
  \node[draw, circle] at ({360/(\n+1) * (\s)}:\radius) {$\s$};
  \draw[->, >=latex] ({360/(\n+1) * (\s)+\margin}:\radius) 
    arc ({360/(\n+1) * (\s)+\margin}:{360/(\n+1) * (\s+1)-\margin}:\radius);
}
\end{tikzpicture}
\hspace{1cm}
\begin{tikzpicture}
\def \n {4}
\def \radius {1.5cm}
\def \margin {14} 

\foreach \s in {0,...,\n}
{
  \node[draw, circle] at ({360/(\n+1) * (\s)}:\radius) {$\s$};
  \draw[->, >=latex] ({360/(\n+1) * (\s)+\margin}:\radius) 
    arc ({360/(\n+1) * (\s)+\margin}:{360/(\n+1) * (\s+1)-\margin}:\radius);
}
\end{tikzpicture}
\]
These categories assemble into a cocyclic category.
The cofaces $\delta_i\colon [n]\rightarrow [n+1]$ induce functors $\TT_n\rightarrow \TT_{n+1}$ which are defined on objects by $j\mapsto \delta_i(j)$, send the generating morphisms $j\rightarrow j+1$ in $\TT_n$ to the corresponding generating morphism $\delta_i(j)\rightarrow \delta_i(j+1)$ in $\TT_{n+1}$ for all $j\neq i$, and send $i\rightarrow i+1$ in $\TT_n$ to the composition $i\rightarrow i+1\rightarrow i+2$ in $\TT_{n+1}$.
The codegeneracies $\sigma_i\colon [n+1]\rightarrow [n]$ induce functors $\TT_{n+1}\rightarrow \TT_n$ that are defined on objects by $j\mapsto \sigma_i(j)$, send the generating morphisms $j\rightarrow j+1$ in $\TT_{n+1}$ to the corresponding generating morphism $\sigma_i(j)\rightarrow \sigma_i(j+1)$ for all $j\neq i$, and send $i\rightarrow i+1$ in $\TT_{n+1}$ to $\id_i$ in $\TT_n$.
We define an automorphism $r\colon \TT_n\rightarrow \TT_n$ given on objects by $r(j)= j+1 \pmod{n+1}$, that sends the generating morphism $j\rightarrow j+1$ to its corresponding generating morphism $r(j)\rightarrow r(j+1)$. 
This defines a functor 
\begin{align*}
    \TT_{\sbt}\colon \Delta \bfC & \longrightarrow
 \Cat \\
{[n]} & \longmapsto \TT_n.
\end{align*}
\end{defin}

\begin{defin}
    Let $\cC$ be an $\infty$-category. The \textit{unstable topological Hochschild homology of $\cC$} is the space $\uTHH(\cC)$ defined as the homotopy colimit of:
    \[
    \begin{tikzcd}
        \Delta^\op \ar[hook]{r} & \Delta \bfC^\op \ar{r}{\TT_{\sbt}} & \Cat^\op \ar{r}{\Hom_{\Cat_\infty}(-, \cC)} & [4em] \cS.
    \end{tikzcd}
    \]
    Here $\Hom_{\Cat_\infty}(\TT_n, \cC)$ denotes the hom-space of functors $\TT_n\rightarrow \cC$, and is the $n$-th level $\uTHH_n(\cC)$ of the cyclic space.
\end{defin}

\begin{const}
Each $\mathbb{T}_n$ is a twisted $C_2$-category, i.e., there is a functor $\tau\colon \mathbb{T}_n^\op\rightarrow \mathbb{T}_n$ that is its own inverse defined on objects $i\in\{0,\dots n\}$ by:
\[
\tau(i)=\begin{cases}
  -i  \pmod{n+1},  & \text{for $n$ even}\\
  -i-1 \pmod{n+1}, & \text{for $n$ odd}.
\end{cases}
\]
The functor $\tau$ sends a generating morphism  $i\rightarrow i+1$ of $\bT_n$ to the corresponding generating morphism $\tau(i+1)\rightarrow \tau(i)$.
One can verify that given this twisted $C_2$-structure, the cofaces and codegeneracies above are twisted $C_2$-equivariant, as well as the rotations. Therefore, we obtain that $\TT_{\sbt}$ can be considered as a cocyclic twisted $C_2$-category.
\end{const}

\begin{defin}
    Define $\OO_n=\TT_n\coprod \TT_n$. 
    Using \autoref{const: variant wit flip induced twisted C_4 actions}, because $\TT_n$ is a category with twisted $C_2$-action, we get that $\OO_n$ is a twisted $C_2$-category that is equivalent to $\Ind_e^{\tau}(\TT_n)$ of \autoref{const: induced twisted C_4-action} as a category with twisted $C_2$-actions.
    We obtain that $\OO_{\sbt}$ forms a cosimplicial object in categories with twisted $C_2$-actions.
    Moreover, the category $\OO_n$ is endowed with automorphism from $D_{2(n+1)}=\langle r, s \mid r^{n+1}=s^2=1, \, srs^{-1}=r^{-1} \rangle$
    where $s$ acts by swapping the copies of $\TT_n$, while $r$ acts as the usual generating cyclic automorphism $r$ on the first copy of $\TT_n$, and as $r^{-1}$ on the second copy of $\TT_n$. 
    One can verify that these automorphisms are compatible with the cofaces, codegeneracies and the twisted $C_2$-action, so that we obtain a functor:
\begin{align*}
    \OO_{\sbt}\colon \Delta \bfD & \longrightarrow
 \Cat^{\tau} \\
{[n]} & \longmapsto \OO_n \,.
\end{align*}
\end{defin}

\begin{defin}
    Let $\cC$ be an $\infty$-category with twisted $C_2$-action. The \textit{unstable Real topological Hochschild homology of $\cC$} is the space $\uTHR(\cC)$ defined as the homotopy colimit of:
    \[
    \begin{tikzcd}
        \Delta^\op \ar[hook]{r} & \Delta \bfD^\op \ar{r}{\OO_{\sbt}} & (\Cat^{\tau})^\op \ar{r}{\Hom_{\Cat_\infty^{\tau}}(-, \cC)} & [5em] \cS.
    \end{tikzcd}
    \]
    Here $\Hom_{\Cat_\infty^{\tau}}(\OO_n, \cC)$ 
    denotes the hom-space of twisted $C_2$-equivariant functors $\OO_n\rightarrow \cC$, and is the $n$-th level $\uTHR_n(\cC)$ of the dihedral space.
\end{defin}

\begin{const}\label{const: sd_2 for cocyclic}
    Let $\cC$ be any category and $X$ be a cosimplicial object in $\cC$.
    The B\"okstedt--Hsiang--Madsen edgewise subdivision $\sd_2\colon \Delta\rightarrow \Delta$ defines a new cosimplicial object $\sd_2(X)$ where $\sd_2(X)^n=X^{2n+1}$ and given $f$ in $\Delta$ we have $\sd_2(f)=f\coprod f$.
    If $X$ was a cocyclic object, then $\sd_2(X)$ is a cosimplicial object in $\cC^{BC_2}$ with $C_2$-action on $X^{2n+1}$ given by $r^{n+1}$ where $r\in C_{2n+2}$ is the generator of the automorphism acting on $X^{2n+1}$. See~\cite[\S1]{BHM93} for details. 
\end{const}

Throughout the rest of the section, let $q\colon C_4\to C_2$ be the quotient homomorphism.

\begin{defin}
    Define $\PP_n=\TT_{2n+1}\coprod \TT_{2n+1}$. 
    Using \autoref{const: sd_2 for cocyclic}, we identify $\TT_{2n+1}=\sd_2(\TT_{n})$, and is thus a category with $C_2$-action, informally given by rotation of $180^\circ$.
    Using \autoref{const: variant wit flip induced twisted C_4 actions}, because $\TT_n$ is a twisted $C_2$-category, so is $\sd_2(\TT_n)$, we get that $\PP_n$ is a category with twisted $C_4$-action that is equivalent to $\Ind_{C_2}^{q}(\sd_2(\TT_n))$ of \autoref{const: induced twisted C_4-action} as a category with twisted $C_4$-actions.
    We thus formally obtain that $\PP_{\sbt}$ forms a cosimplicial object in $\Cat^{q}$, the category of small categories with twisted $C_4$-action.
    Moreover, the category $\PP_n$ is endowed with automorphisms of $Q_{4(n+1)}=\langle x,y \mid x^{n+1}=y^2, yxy^{-1}=x^{-1} \rangle$ where $y\colon \PP_n\rightarrow \PP_n$ is defined as:
    \[
    \begin{tikzcd}
        \TT_{2n+1}\coprod \TT_{2n+1} \ar{r}{\text{swap}} & \TT_{2n+1}\coprod \TT_{2n+1} \ar{r}{\alpha \coprod \id} & \TT_{2n+1}\coprod \TT_{2n+1}
    \end{tikzcd}
    \]
    where $\alpha\colon \TT_{2n+1}\rightarrow \TT_{2n+1}$ is the generating $C_2$-action on $\sd_2(\TT_n)$, while $x\colon \PP_n\to \PP_n$ is defined by $r\colon \TT_{2n+1}\rightarrow \TT_{2n+1}$ on the first copy, and $r^{-1}\colon \TT_{2n+1}\to \TT_{2n+1}$ on the second copy.
    One can verify that these automorphisms are compatible with the cofaces, codegeneracies, and the twisted $C_4$-actions, so that we obtain a functor:
    \begin{align*}
    \PP_{\sbt}\colon \Delta \bfQ & \longrightarrow
 \Cat^{q} \\
{[n]} & \longmapsto \PP_n.
\end{align*}
\end{defin}

\begin{defin}
    Let $\cC$ be an $\infty$-category with twisted $C_4$-action.
    The \textit{unstable quaternionic topological Hochschild homology of $\cC$} is the space $\uTHQ(\cC)$ defined as the homotopy colimit of:
    \[
    \begin{tikzcd}
        \Delta^\op \ar[hook]{r} & \Delta \bfQ^\op \ar{r}{\PP_{\sbt}} & (\Cat^{q})^\op \ar{r}{\Hom_{\Cat_\infty^{q}}(-, \cC)} & [5em] \cS.
    \end{tikzcd}
    \]
    Here $\Hom_{\Cat_\infty^{q}}(\PP_n, \cC)$ denotes the hom-space of twisted $C_4$-equivariant functors $\PP_n\rightarrow \cC$, and is the $n$-th level $\uTHQ_n(\cC)$ of the quaternionic space.
\end{defin}

If $\cC$ is an ordinary category, we denote by $\cC^{\gp}$ the $\infty$-groupoid completion of its nerve, i.e. $\cC^{\gp}\simeq B\cC$, see \cite[\S 1.2.5]{HTT}.

\begin{prop}\label{prop: classifying space is a circle}
Let $n\ge 0$. There is an equivalence of spaces $B\TT_n\simeq  S^1$. There is an equivalence $B\OO_n\simeq O(2)$ as left $C_2$-spaces.
There is an equivalence $B\PP_n\simeq \Pin(2)$ as left $C_4$-spaces.
\end{prop}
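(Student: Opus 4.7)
The plan starts with the key computation $B\TT_n \simeq S^1$, which I would establish by realizing $\TT_n$ as an explicit coequalizer in $\Cat_\infty$ and leveraging that $(-)^{\gp}$ preserves colimits; the twisted equivariant cases then follow by combining this with the induction descriptions of $\OO_n$ and $\PP_n$ from Constructions \ref{const: induced twisted C_4-action} and \ref{const: variant wit flip induced twisted C_4 actions}.

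First, I would exhibit $\TT_n$ as the coequalizer in $\Cat_\infty$ of
\[
[0]^{\sqcup(n+1)} \rightrightarrows [1]^{\sqcup(n+1)},
\]
where one map sends the $i$-th copy of $[0]$ to the source of the $i$-th copy of $[1]$ and the other sends it to the target of the $(i-1)$-st copy (indices modulo $n+1$). Because the vertex inclusions $[0]\sqcup[0] \hookrightarrow [1]$ are generating cofibrations in the canonical model structure on $\Cat$, the $1$-categorical coequalizer coincides with the $\infty$-categorical one. Since $(-)^{\gp}\colon \Cat_\infty \to \cS$ is a left adjoint to the fully faithful inclusion $\cS\hookrightarrow \Cat_\infty$, it preserves colimits, and since $[0]^{\gp}\simeq [1]^{\gp}\simeq *$, the resulting colimit in $\cS$ is the homotopy coequalizer of the identity and the cyclic shift on a discrete space of $n+1$ points. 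Its double mapping cylinder is precisely the standard CW decomposition of $S^1$ with $n+1$ vertices and $n+1$ edges.

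Next, for $\OO_n$ I would invoke the equivalence $\OO_n \simeq \Ind_e^{\tau}(\TT_n)$ in $\Cat_\infty^{\tau}$ from \autoref{const: variant wit flip induced twisted C_4 actions}. As both $(-)^{\gp}$ and the induction functors are left adjoints, and $(-)^{\gp}$ commutes with the ambient op-involution (the $C_2$-action on $\cS$ is trivial), uniqueness of left adjoints yields
\[
B\OO_n \simeq \Ind_e^{C_2}(B\TT_n) \simeq \Ind_e^{C_2}(S^1) \simeq O(2)
\]
as $C_2$-spaces, the last equivalence being established in the excerpt preceding the statement. Analogously, the identification $\PP_n \simeq \Ind_{C_2}^{q}(\sd_2(\TT_n))$ together with $B\sd_2(\TT_n) = B\TT_{2n+1}\simeq S^1$ gives $B\PP_n \simeq \Ind_{C_2}^{C_4}(S^1) \simeq \Pin(2)$ as $C_4$-spaces, provided one checks that the $C_2$-action on $\TT_{2n+1}=\sd_2(\TT_n)$ by rotation by $n+1$ positions realizes to the $180^\circ$ rotation of $S^1$, i.e., the action by $j^2 \in C_2 \subset C_4$ on $S^1 \subset \Pin(2)$.

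The main technical obstacle is tracking the equivariance data through the classifying space functor: one must verify that $(-)^{\gp}$ commutes up to canonical equivalence with the induction functors $\Ind_e^{\tau}$ and $\Ind_{C_2}^{q}$, which is formal from the compatibility of adjunctions, and that the twisted $C_2$- and $C_4$-actions on $\OO_n$ and $\PP_n$ realize to the left multiplication actions on the stated models of $O(2)$ and $\Pin(2)$. The latter amounts to verifying on the explicit coequalizer model of $B\TT_n$ that the reflection $\tau$ descends to a reflection of $S^1$ and that the rotation generator descends to the standard rotation action, completing the $C_4$-equivariant matching.
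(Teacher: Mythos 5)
Your proof of $B\TT_n \simeq S^1$ takes a genuinely different route from the paper. You exhibit $\TT_n$ as a homotopy coequalizer $[0]^{\sqcup(n+1)} \rightrightarrows [1]^{\sqcup(n+1)}$ and push that colimit through $(-)^{\gp}$, explicitly producing the $(n+1)$-vertex CW model of $S^1$. The paper instead identifies $\TT_0 \simeq B\mathbb{N}$, factors through the unique codegeneracy functor $f\colon \TT_n\to\TT_0$, and applies Quillen's Theorem A by exhibiting a terminal object in the comma category $f\downarrow 0$. Your approach is more constructive and gives an explicit CW structure, at the cost of needing the (correct, but slightly informal in your write-up) observation that the pushout presentation of the coequalizer is taken along an injective-on-objects functor $[0]^{\sqcup 2(n+1)}\to[1]^{\sqcup(n+1)}$, hence a cofibration in the folk model structure, so the $1$-categorical colimit computes the $\Cat_\infty$-colimit. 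The paper's Theorem A argument is shorter and reduces to the standard fact $B\mathbb{N}\simeq S^1$, but is less illuminating about the cell structure. One consequence of your route: tracking the $C_2$- and $C_4$-actions requires verifying equivariance of the coequalizer diagram itself (or checking directly on the CW model of $S^1$ how $\tau$ and $r$ descend), whereas the paper offloads that to naturality. For the $\OO_n$ and $\PP_n$ statements both you and the paper argue identically, via the commuting squares of left adjoints relating $(-)^{\gp}$ to the induction functors $\Ind_e^{\tau}$ and $\Ind_{C_2}^{q}$, which is exactly what the paper spells out with the two diagrams of adjunctions.
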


\begin{proof}
    Notice that the category $\TT_0$ is equivalent to the category with one object and morphism set the non-negative integers $\mathbb{N}$. Therefore we obtain that $B\TT_0\simeq B\mathbb{N}\simeq S^1$.
    The codegeneracies define a functor $f\colon \TT_n\rightarrow \TT_0$. Note that $\TT_0$ has a unique object $0$ so the comma category $f\downarrow 0$ has precisely objects $(0,0,z)$ where $z$ is a morphism in $\TT_0$. We observe that this  comma category has a terminal object $(0,0,\id_{0})$, since for any object $(0,0,z)$ in $f\downarrow 0$ there is precisely one commuting diagram 
    \[
    \begin{tikzcd}
 0  \ar[d,"z",swap] \ar[r,"z"] &  0 \ar[d,"\id_{0}"]  \\ 
    0 \ar[r,"\id_0",swap] &  0  
    \end{tikzcd}
    \]
    or in other words 
    \[ \Hom_{f\downarrow 0}((0,0,z),(0,0,\id_{0}))=* \,.
    \]
    Therefore, $B(f\downarrow 0)$ is contractible and thus by Quillen's Theorem A, we get that $f$ induces an equivalence on classifying spaces $B\TT_n\simeq B\TT_0\simeq S^1$.
    Moreover, the $C_2$-action on $\sd_2(\TT_n)=\TT_{2n+1}$ induces precisely the $C_2$-action on $S^1\simeq B\TT_{2n+1}$ given by rotations of $180^\circ$.
    
    Since the right adjoints must commute in the diagram below:
    \[
  \begin{tikzcd}[row sep= large, column sep=large]
        \Cat_\infty \ar[bend left=30, "\Ind_{e}^{\tau}" description]{r} \ar[phantom, "\perp" description]{r} \ar[phantom, "\vperp" description]{d} \ar[bend left, leftarrow]{d}&  \Cat_\infty^{\tau}\ar[bend left=30, hook']{l}\ar[phantom, "\vperp" description]{d} \ar[bend left, leftarrow]{d}\\
        [3em]\cS \ar[bend left=30, "\Ind_e^{C_2}" description]{r} \ar[bend left, leftarrow]{u}{(-)^\gp}\ar[phantom, "\perp" description, xshift=0.5ex]{r}  &   \ar[bend left, leftarrow]{u}{(-)^\gp} \cS^{BC_2}\ar[bend left=30, hook']{l}
    \end{tikzcd}
\]    
we obtain equivalences of $C_2$-spaces:
\[
B\OO_n\simeq (\OO_n)^\gp \simeq (\Ind_e^{\tau}(\TT_n))^\gp\simeq \Ind_e^{C_2}((\TT_n)^\gp)\simeq \Ind_e^{C_2}(S^1)\simeq O(2).
\]
Similarly, if we denote $q\colon C_4\rightarrow C_2$ the quotient homomorphism, the following diagram commutes: 
 \[
  \begin{tikzcd}[row sep= large, column sep=large]
        \Cat_\infty^{BC_2} \ar[bend left=30, "\Ind_{C_2}^{q}" description]{r} \ar[phantom, "\perp" description, xshift=-0.5ex]{r} \ar[phantom, "\vperp" description]{d} \ar[bend left, leftarrow]{d}&  \Cat_\infty^{q}\ar[bend left=30, "\Res_{C_2}^q" description]{l}\ar[phantom, "\vperp" description]{d} \ar[bend left, leftarrow]{d}\\
        [3em]\cS^{BC_2} \ar[bend left=30, "\Ind_{C_2}^{C_4}" description]{r} \ar[bend left, leftarrow]{u}{(-)^\gp}\ar[phantom, "\perp" description, xshift=-0.5ex]{r}  &   \ar[bend left, leftarrow]{u}{(-)^\gp} \cS^{BC_4}.\ar[bend left=30, "\Res_{C_2}^{C_4}" description]{l}
    \end{tikzcd}
\]   
Therefore we obtain equivalences of $C_4$-spaces: 
\[
B\PP_n\simeq (\PP_n)^\gp \simeq (\Ind_{C_2}^{q}(\TT_{2n+1}))^\gp\simeq \Ind_{C_2}^{C_4}((\TT_{2n+1})^\gp)\simeq \Ind_{C_2}^{C_4}(S^1)\simeq \Pin(2). \qedhere
\]
\end{proof}

\begin{prop}
If $\cC$ is an $\infty$-category with twisted $C_2$-action, then there is an equivalence of spaces \[\Hom_{\Cat_{\infty}^{\tau}}(\OO_n, \cC)\simeq \Hom_{\Cat_\infty}(\TT_n, \cC).\]
Let $q\colon C_4\to C_2$ be the quotient homomorphism.
If $\cC$ is an $\infty$-category with twisted $C_4$-action, then there is an equivalence of spaces 
\[\Hom_{\Cat_\infty^{q}}(\PP_n, \cC)\simeq \Hom_{\Cat_\infty^{BC_2}}(\TT_{2n+1}, \Res_{C_2}^q(\cC)).\]
Here $\Res_{C_2}^q$ was defined in \autoref{const: induced twisted C_4-action}.
\end{prop}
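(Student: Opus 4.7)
The plan is to deduce both equivalences directly from the induction–restriction adjunctions constructed in \autoref{const: induced twisted C_4-action}, once we reinterpret $\OO_n$ and $\PP_n$ as induced objects.

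First I would recall what has already been established in the paper: in \autoref{const: variant wit flip induced twisted C_4 actions} the authors identified $\OO_n = \TT_n \cup \TT_n$ with $\Ind_e^{\tau}(\TT_n)$ as a category with twisted $C_2$-action, and similarly (via the edgewise subdivision identification $\TT_{2n+1} = \sd_2(\TT_n)$ as a $C_2$-category) the construction of $\PP_n$ gives $\PP_n \simeq \Ind_{C_2}^{q}(\TT_{2n+1})$ as a category with twisted $C_4$-action. These identifications pass to nerves and hence to the underlying $\infty$-categories, so the $\infty$-categorical versions of the adjunctions of \autoref{const: induced twisted C_4-action} apply.

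Next, I would invoke these adjunctions on $\infty$-categories. For the dihedral case, the adjunction $\Ind_e^{\tau} \dashv \Res_e^{\tau}$ together with the observation that $\Res_e^{\tau}(\cC)$ is just the underlying $\infty$-category of $\cC$ gives the chain of equivalences
\[
\Hom_{\Cat_\infty^{\tau}}(\OO_n, \cC) \simeq \Hom_{\Cat_\infty^{\tau}}(\Ind_e^{\tau}(\TT_n), \cC) \simeq \Hom_{\Cat_\infty}(\TT_n, \Res_e^{\tau}(\cC)) \simeq \Hom_{\Cat_\infty}(\TT_n, \cC).
\]
For the quaternionic case, the adjunction $\Ind_{C_2}^{q} \dashv \Res_{C_2}^{q}$ gives
\[
\Hom_{\Cat_\infty^{q}}(\PP_n, \cC) \simeq \Hom_{\Cat_\infty^{q}}(\Ind_{C_2}^{q}(\TT_{2n+1}), \cC) \simeq \Hom_{\Cat_\infty^{BC_2}}(\TT_{2n+1}, \Res_{C_2}^{q}(\cC)),
\]
which is exactly the claimed equivalence.

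There is essentially no obstacle here beyond bookkeeping: the only subtle point is making sure that the identifications $\OO_n \simeq \Ind_e^{\tau}(\TT_n)$ and $\PP_n \simeq \Ind_{C_2}^{q}(\TT_{2n+1})$ are compatible with the induction functors at the $\infty$-categorical level (i.e.\ that the adjunctions established for $1$-categories pass to the nerves). Since all the categories involved are ordinary and the inductions are given by explicit disjoint-union formulas that commute with the nerve, this compatibility is automatic. Thus the proof reduces to two applications of an $\infty$-categorical adjunction, and I would present it as such in a few lines.
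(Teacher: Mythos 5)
Your proposal is correct and follows exactly the paper's own approach: the paper's proof is the one-line remark that the result ``follows directly from the definitions of $\OO_n$ and $\PP_n$ as well as the adjunctions established in \autoref{const: induced twisted C_4-action}.'' You have simply spelled out the induction--restriction adjunction argument that the authors leave implicit.
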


\begin{proof}
    The proof follows directly from the definitions of $\OO_n$ and $\PP_n$ as well as the adjunctions established in \autoref{const: induced twisted C_4-action}.
\end{proof}

\begin{notation}
Let $\Delta \bfG$  be either  $\Delta \bfC$, $\Delta \bfD$ or $\Delta \bfQ$.
Let $\cC$ be an $\infty$-category with twisted $G_0$-action.
We write:
\[
\uTHG(\cC)=\begin{cases}
    \uTHH(\cC) & \text{if }\Delta\bfG=\Delta\bfC,\\
    \uTHR(\cC) & \text{if }\Delta \bfG=\Delta \bfD,\\
    \uTHQ(\cC)& \text{if }\Delta \bfG=\Delta \bfQ.
\end{cases}
\]
We shall also write $G_n=\langle x, y \rangle $ the generating elements as in \autoref{self dual structure for cyclic, dihedral and quaternionic}. 
If $G_n=C_{n+1}$, then $y=1$. If $G_n=D_{2(n+1)}$, then $y^2=1$. If $G_n=Q_{4(n+1)}$, then $y^2=x^{n+1}$ and $y^4=1$.
\end{notation}

\begin{prop}\label{prop: uTHG_n(C) description}
Let $n \geq 0$.
Let $\Delta \bfG$  be either  $\Delta \bfC$, $\Delta \bfD$ or $\Delta \bfQ$. 
Let $\cC$ be an $\infty$-category with twisted $G_0$-action. In the case of $\Delta\bfD$ and $\Delta\bfQ$, we denote by $t  \colon \cC^\op\rightarrow \cC$ the twisted $G_0$-action. In the case of $\Delta\bfC$, we denote by $t$ the identity functor on $\cC$.
We obtain an equivalence of spaces with left $G_n$-action:
\[
\uTHG_n(\cC)\simeq \colim_{(C_0, \cdots, C_n)\in \cC^{\simeq}} \prod_{i=0}^n \Hom_\cC(C_i, C_{i+1})
\]
where $C_{n+1}=t^2C_0$. 
Here $G_n=\langle  x, y\rangle $ acts on the right hand side as follows. The automorphism $x$ is defined as \[(f_0, \cdots, f_n) \mapsto 
(t^2f_n, f_0, \cdots, f_{n-1}) 
\]
where $f_i\colon C_i\to C_{i+1}$ in $\cC$, for $0\leq i \leq n$.
The automorphism  $y$ in the case of $\Delta \bfD$ or $\Delta\bfQ$ is defined as
\[
(f_0, \cdots, f_n) \mapsto (tf_0, t^3f_n, \cdots, t^3f_{1}).
\]
\end{prop}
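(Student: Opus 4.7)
The plan is to reduce each of the three cases to an explicit computation of a hom-space out of a free $\infty$-category on a (possibly equivariant) cyclic graph, and then to trace the $G_n$-action through the identifications. For $\Delta\bfC$, we have $\uTHH_n(\cC) = \Hom_{\Cat_\infty}(\bT_n, \cC)$ by definition. For $\Delta\bfD$, the adjunction $\Ind_e^\tau \dashv \Res_e^\tau$ from the preceding proposition identifies $\uTHR_n(\cC) \simeq \Hom_{\Cat_\infty}(\bT_n, \Res_e^\tau\cC)$, where $t^2 \simeq \id_\cC$. For $\Delta\bfQ$, the analogous adjunction gives $\uTHQ_n(\cC) \simeq \Hom_{\Cat_\infty^{BC_2}}(\sd_2(\bT_n), \Res_{C_2}^q\cC)$, where $\sd_2(\bT_n) = \bT_{2n+1}$ carries the $C_2$-action $r^{n+1}$ (shift by $n+1 \pmod{2n+2}$) and $\Res_{C_2}^q\cC$ carries the untwisted $C_2$-action by $t^2$.

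Next, I would compute the non-equivariant mapping space. Since $\bT_n$ is the pushout in $\Cat_\infty$ of $n+1$ copies of $\Delta^1$ glued endpoint-to-endpoint in a cycle, the universal property combined with the fibration $\Ar(\cC) = \Fun(\Delta^1, \cC)^\simeq \to \cC^\simeq \times \cC^\simeq$ having fibers $\Hom_\cC(-,-)$ identifies
\[
\Hom_{\Cat_\infty}(\bT_n, \cC) \simeq \colim_{(C_0,\dots,C_n) \in (\cC^\simeq)^{n+1}} \prod_{i=0}^n \Hom_\cC(C_i, C_{(i+1) \bmod (n+1)}).
\]
This establishes the underlying equivalence of spaces in the cyclic case directly, and in the dihedral case using $t^2 \simeq \id$.

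For $\Delta\bfQ$, a $C_2$-equivariant functor $F\colon \bT_{2n+1} \to \Res_{C_2}^q\cC$ is freely determined by its restriction to the half-cycle $\{0,1,\dots,n\}$ subject to $F(i+n+1) \simeq t^2 F(i)$, so the effective data is an $(n+1)$-tuple of objects $C_0,\dots,C_n$ together with morphisms $f_i\colon C_i \to C_{i+1}$ for $i < n$ and $f_n\colon C_n \to t^2 C_0$; the same coend argument then yields the claimed formula with $C_{n+1} = t^2 C_0$.

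Finally, I would trace the left actions. The generator $x$ acts on $\bT_n$ (or $\sd_2(\bT_n)$) by the shift $i \mapsto i+1$, so its induced left action on the mapping space is precomposition with $x^{-1}$ and produces the cyclic shift $(f_0,\dots,f_n) \mapsto (t^2 f_n, f_0, \dots, f_{n-1})$; the factor $t^2$ comes from the equivariance applied at the wrap-around morphism $f_{2n+1} = t^2 f_n$ in the quaternionic case, and collapses to the identity in the cyclic and dihedral cases. For $y$, which swaps the two copies of $\bT_n$ inside $\OO_n$ or $\PP_n$, the induction-restriction adjunction identifies the induced action with the operation sending $G\colon \bT_n \to \cC$ to $t \circ G^\op$ precomposed with the twisted $C_2$-structure $\tau\colon \bT_n^\op \simeq \bT_n$; a direct unwinding on tuples then produces the stated formula $(tf_0, t^3 f_n, t^3 f_{n-1}, \dots, t^3 f_1)$, which is consistent with the relations $y^2 = 1$ in $\Delta\bfD$ and $y^2 = x^{n+1}$ in $\Delta\bfQ$. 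I expect the main obstacle to lie in this last step: carefully composing the swap on $\OO_n$ or $\PP_n$ with the twisted $C_2$-structure on $\bT_n$, the direction-reversing functor $t\colon \cC^\op \to \cC$, and (in the quaternionic case) the $C_2$-equivariance condition, while keeping all orientation conventions consistent so as to recover the $t$ and $t^3$ factors in the correct positions.
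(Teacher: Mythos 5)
Your proof follows essentially the same strategy as the paper's: restrict along the inclusion of the maximal groupoid to produce a map $\uTHG_n(\cC)\to (\cC^\simeq)^{n+1}$, identify the fibers with products of hom-spaces, and invoke straightening--unstraightening to recover the colimit formula, then trace the $G_n$-action through the identification. The only difference is cosmetic: you make the fiber identification slightly more explicit by presenting $\bT_n$ (or $C_2$-equivariantly $\bT_{2n+1}$) as a pushout of copies of $\Delta^1$ and using the fibration $\Fun(\Delta^1,\cC)^\simeq \to \cC^\simeq\times\cC^\simeq$, where the paper simply asserts that the restriction map is classified by the functor $(C_0,\dots,C_n)\mapsto\prod_i\Hom_\cC(C_i,C_{i+1})$.
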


\begin{proof}
We just prove the quaternionic case since the other cases are similar. Notice that we have an equivalence of spaces:
\[
\Hom_{\Cat_\infty}((\TT_n)^\simeq, \cC)\simeq (\cC^{\simeq})^{\times n+1}\simeq \Hom_{\Cat_\infty^{BC_2}}((\TT_{2n+1})^\simeq, \cC).
\]
Denote by $X$ the space $(\cC^{\simeq})^{\times n+1}$. It is endowed with a left $Q_{4(n+1)}$-action.
Moreover, the inclusions $(\PP_n)^\simeq \subseteq\PP_n$ are twisted $C_4$-equivariant and thus induce a map of spaces:
\[
\uTHG_n(\cC)\longrightarrow X,
\]
which is $Q_{4(n+1)}$-equivariant.

By straightening-unstraightening \cite[2.2.1.2]{HTT}, forgetting the equivariance, the above map is classified by a functor:
    \begin{align*}
        F\colon X &  \longrightarrow \cS\\
        (C_0,\ldots,  C_n)  & \longmapsto \prod_{i=0}^n \Hom_\cC(C_i, C_{i+1})
    \end{align*}
    where $C_{n+1}=t^2C_0$.
    Therefore $\uTHG_n(\cC)\simeq \colim_X F$.
    The $Q_{4(n+1)}$-action is induced by the $\Delta\bfG$ structure on $\uTHG_n(\cC)$ and we can verify  that this identification is indeed $Q_{4(n+1)}$-equivariant.
\end{proof}

Recall from \autoref{ex: twisted G-categories with one object} that, given a parity $\lambda_0\colon G_0\to C_2$, if $M$ is an $\mathbb{E}_1$-space with twisted $G_0$-action, we denote by $\bB^{\lambda_0}M$ the $\infty$-category with twisted $G_0$-action with a single object, and hom-space given by $M$.

\begin{cor}\label{cor: uTHG(BM) is THG in spaces of M}
Let $\Delta \bfG$  be either  $\Delta \bfC$, $\Delta \bfD$ or $\Delta \bfQ$. 
Let $\lambda_0\colon G_0\to C_2$ be its canonical parity.
Let $M$ be an $\mathbb{E}_1$-space with a twisted $G_0$-action. 
There is an equivalence $\uTHG(\bB^{\lambda_0}M)\simeq \HG(M/\cS)$ of spaces with left $|\mathbf{G}_{\sbt}|$-action. 
\end{cor}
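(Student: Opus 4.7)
The plan is to identify the two sides level-wise as cosimplicial (or rather simplicial, after dualising) objects in spaces, and then observe that the comparison is compatible with the $|\mathbf{G}_\sbt|$-equivariance. First, I would apply \autoref{prop: uTHG_n(C) description} to the one-object category $\bB^{\lambda_0}M$: since $\bB^{\lambda_0}M$ has a single object and hom-space $M$, the indexing $\infty$-groupoid $(\bB^{\lambda_0}M)^{\simeq}$ is contractible and the colimit collapses to give
\[
\uTH\bfG_n(\bB^{\lambda_0}M)\;\simeq\; M^{\times n+1},
\]
where the $G_n$-action from \autoref{prop: uTHG_n(C) description} specialises, via the generators $x_n$ and $y_n$ of \autoref{self dual structure for cyclic, dihedral and quaternionic}, to precisely the formulas of \autoref{ex: contravariant Delta G bar construction} with $t^2 = \id_M$ replaced by the given twisted $G_0$-action on $M$.

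Next, I would identify $B^{\bfG}_n(M)$ in the $\infty$-category of spaces with its Cartesian symmetric monoidal structure: by construction of the contravariant $\Delta\bfG$-bar construction and the explicit isomorphism $\Env(\Assoc^{\lambda_0})\cong \Delta\lambda_0\wr\bfS_+$ of \autoref{iso of categories}, the $n$-th level is $M^{\times n+1}$, and the simplicial structure together with the $G_n$-action is read off from \autoref{ex: contravariant Delta G bar construction}. Comparing these formulas with those produced in the previous paragraph gives a level-wise equivalence $\uTH\bfG_\bullet(\bB^{\lambda_0}M)\simeq B^{\bfG}_\bullet(M)$ of simplicial spaces, natural in $M$. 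Taking geometric realisation gives the underlying equivalence of spaces $\uTH\bfG(\bB^{\lambda_0}M)\simeq \HG(M/\cS)$.

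Finally, for the $|\mathbf{G}_\sbt|$-equivariance, I would invoke \autoref{G action on DeltaG objects}: both sides arise as geometric realisations of $\Delta\bfG^\op$-objects in $\cS$, and by that proposition the $|\mathbf{G}_\sbt|$-action on the realisation is determined functorially by the underlying $\Delta\bfG^\op$-diagram. Since the identification at each level $n$ is $G_n$-equivariant (by the matching of the $x_n$- and $y_n$-actions carried out above), the resulting realisation equivalence is automatically $|\mathbf{G}_\sbt|$-equivariant in view of \autoref{prop: classifying space is a circle}, which identifies $|\mathbf{G}_\sbt|$ with the expected compact Lie group (respectively $S^1$, $O(2)$, $\Pin(2)$).

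The main obstacle I anticipate is the bookkeeping in the dihedral and quaternionic cases: one has to match the action of $y_n$ (involving $t$ and $t^3$) coming from the description of $\uTH\bfG_n$ via mapping spaces out of $\OO_n$ or $\PP_n$ with the explicit action on $M^{\times n+1}$ coming from \autoref{ex: contravariant Delta G bar construction}. This amounts to a careful comparison of the involution on $\bT_n$ used to construct $\OO_n$ and $\PP_n$ via \autoref{const: induced twisted C_4-action} and \autoref{const: variant wit flip induced twisted C_4 actions} with the duality functor $\sdual$ of \autoref{self dual structure for cyclic, dihedral and quaternionic}; once this is set up, the remaining verifications are straightforward but tedious.
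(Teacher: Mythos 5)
Your proposal is correct and follows essentially the same route as the paper: apply \autoref{prop: uTHG_n(C) description} to the one-object category $\bB^{\lambda_0}M$ to get the levelwise identification $\uTHG_n(\bB^{\lambda_0}M)\simeq M^{\times n+1}$, match the $G_n$-actions against the formulas of \autoref{ex: contravariant Delta G bar construction} to upgrade this to an equivalence of $\Delta\bfG^\op$-objects, and conclude by passing to realizations (with the $|\bfG_\sbt|$-equivariance supplied by \autoref{G action on DeltaG objects}). The only minor slips are cosmetic: the appeal to \autoref{prop: classifying space is a circle} is not what identifies $|\bfG_\sbt|$ with $S^1$, $O(2)$, or $\Pin(2)$ (that is part of the definition of the crossed simplicial groups), and the phrase about ``$t^2=\id_M$ replaced by the given action'' is a little garbled, but neither affects the substance of the argument.
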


\begin{proof}
As $\bB^{\lambda_0}M$ has only one object, and hom-space $M$, we obtain from \autoref{prop: uTHG_n(C) description} an equivalence of spaces with $G_n$-actions:
\[
\uTHG_n(\bB
^{\lambda_0}M)\simeq M^{\times n+1}.
\]
Here the $G_n$-action on the right is precisely the one induced by the $\Delta \bfG$-bar construction in $\cS$ from \autoref{ex: contravariant Delta G bar construction}. 
In fact the above equivalence induces an equivalence of $\Delta \bfG^\op$-object in spaces and the result follows.
\end{proof}

\begin{const}
Let $X$ be a space. We denote the free loop space by 
\[\mathcal{L}X\coloneqq\Map(S^1, X)\,.\] 
It is endowed with an $S^1$-action and in fact can be viewed as the norm of $X$ from $e$ to $S^1$ in unpointed spaces.

\noindent
If $X$ is endowed with a $C_2$-action, notice that we have equivalences of spaces
\[
\Map^{C_2}(O(2), X)\simeq \Map^{C_2}(\Ind_e^{C_2}(S^1), X)\simeq \Map(S^1, X)=\mathcal{L}X.
\]
Here $\Map^{C_2}(O(2), X)$ denotes the space of $C_2$-equivariant maps $O(2)\rightarrow X$. If $X$ is a space with left $C_2$-action, we can view $\mathcal{L}X$ as a space with left $O(2)$-action, and in fact can be regarded as the norm of $X$ from $C_2$ to $O(2)$ in unpointed spaces. Note that restricting the $O(2)$-action to a $C_2$-action identifies the free loop space with the signed free loop space $\mathcal{L}^{\sigma}X$, of $X$. 

\noindent
If $X$ has a $C_4$-action, notice we have equivalences of spaces: 
\[
\Map^{C_4}(\Pin(2), X)\simeq \Map^{C_4}(\Ind_{C_2}^{C_4}(S^1), X)\simeq  \Map^{C_2}(S^1, X)\simeq \mathcal{L}^{\tau}X. 
\]
Here we omit the restriction functor $\cS^{BC_4}\to \cS^{BC_2}$ from the notation. 
Here $\mathcal{L}^{\tau}X$ denotes the twisted free loop space \cite[Definition~A.4]{CP19} defined as:
\[\mathcal{L}^{\tau}X\simeq \{ \gamma\colon[0,1]\rightarrow X \mid t^2(\gamma(0))=\gamma(1)\}\subseteq \Map([0,1], X)
\]
where $t\colon X\rightarrow X$ is the $C_4$-action (note that we prefer to write $\cL^{\tau}$ instead of $\cL_{t^2}$). By our analysis above, we see that $\mathcal{L}^{\tau}X$ is endowed with a left $\Pin(2)$-action, and in fact can be regarded as the norm of $X$ from $C_4$ to $\Pin(2)$ in unpointed spaces.
\end{const}

\begin{cor}\label{cor: uTHG of space is free loop space}
Let $\Delta \bfG$  be either  $\Delta \bfC$, $\Delta \bfD$ or $\Delta \bfQ$. 
    Let $X$ be a space with a left $G_0$-action.
    Then we obtain an equivalence of spaces with left $|\mathbf{G}_{\sbt}|$-action:
    \[
    \uTHG(X)\simeq \begin{cases}
        \mathcal{L}X & \text{if }\Delta\bfG=\Delta\bfC, \Delta\bfD,\\
        \mathcal{L}^{\tau}X & \text{if }\Delta\bfG=\Delta\bfQ.
    \end{cases}
    \]
\end{cor}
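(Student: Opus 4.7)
The plan is to reduce each level $\uTHG_n(X)$ to a mapping space out of the classifying space $B\mathbb{G}_n$, then invoke \autoref{prop: classifying space is a circle}, and finally use the fact that the simplicial structure maps are all equivalences so that the colimit collapses to a single level.

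First, since $X$ is an $\infty$-groupoid, the $C_2$-action on $\Cat_\infty$ sending $\cC \mapsto \cC^\op$ acts trivially on $X$, so the twisted $G_0$-action on $X$ is simply an ordinary $G_0$-action; equivalently, $X$ lies in the essential image of the fully faithful embedding $\cS^{BG_0} \hookrightarrow \Cat_\infty^{\lambda_0}$. Using the $\infty$-groupoid completion adjunction between $\Cat_\infty^{\lambda_0}$ and $\cS^{BG_0}$, for each $n\ge 0$ I would identify
\[
\uTHG_n(X) \;=\; \Hom_{\Cat_\infty^{\lambda_0}}(\mathbb{G}_n, X) \;\simeq\; \Map^{G_0}(B\mathbb{G}_n, X),
\]
where $\mathbb{G}_n$ denotes $\mathbb{T}_n$, $\mathbb{O}_n$ or $\mathbb{P}_n$ according to the case at hand. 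By \autoref{prop: classifying space is a circle}, the space $B\mathbb{G}_n$ is equivalent, as a space with $G_0$-action, to $S^1$, $O(2)$, or $\Pin(2)$ respectively. Together with the presentations $O(2) \simeq \Ind_e^{C_2}S^1$ and $\Pin(2) \simeq \Ind_{C_2}^{C_4}S^1$ reviewed at the start of this section, the induction-restriction adjunctions then yield
\[
\Map^{G_0}(B\mathbb{G}_n,X) \;\simeq\; \mathcal{L}X \quad (\Delta\bfG=\Delta\bfC,\Delta\bfD), \qquad \Map^{C_4}(B\mathbb{P}_n,X) \;\simeq\; \mathcal{L}^\tau X \quad (\Delta\bfG=\Delta\bfQ).
\]

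Second, the very same argument using Quillen's Theorem A that appears in the proof of \autoref{prop: classifying space is a circle} shows that \emph{every} simplicial structure map $\mathbb{G}_m \to \mathbb{G}_n$ induces an equivalence after $\infty$-groupoid completion (all targets being models for $S^1$). Applying $\Map^{G_0}(-,X)$ thus produces a simplicial space $\iota^*\uTHG_\bullet(X)$ whose face and degeneracy maps are all weak equivalences. Its geometric realization is therefore canonically equivalent to any single level, giving $\uTHG(X) \simeq \mathcal{L}X$ or $\mathcal{L}^\tau X$ as underlying spaces.

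Finally, to upgrade to an equivalence of spaces with left $|\mathbf{G}_\bullet|$-action, I would apply \autoref{G action on DeltaG objects}. The identification of the previous step is naturally a map of $\Delta\bfG^\op$-objects in $\cS$, since the $G_n$-actions on $\uTHG_n(X)$ are induced through $B\mathbb{G}_n$ from the $G_n$-action recorded in the construction of $\mathbb{G}_\bullet$, and these identify under \autoref{prop: classifying space is a circle} with the evident $G_n$-actions on $|\mathbf{G}_\bullet|$ by (right) translation. Proposition \ref{G action on DeltaG objects} then endows the colimit with a $|\mathbf{G}_\bullet|$-action compatible with the one on $\mathcal{L}X = \Map(S^1,X)$ and $\mathcal{L}^\tau X = \Map^{C_2}(S^1,X) = \Map^{G_0}(|\mathbf{G}_\bullet|,X)$ coming from right translation on the source. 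The main obstacle is precisely this last bookkeeping: verifying that the equivalences $B\mathbb{G}_n \simeq |\mathbf{G}_\bullet|$ assemble into a natural equivalence of $\Delta\bfG$-diagrams in $\cS^{BG_0}$, rather than just pointwise equivalences, so that the comparison of $|\mathbf{G}_\bullet|$-actions can be carried out cleanly. A convenient way to package this is to rephrase \autoref{prop: classifying space is a circle} as an equivalence of functors $B\mathbb{G}_\bullet \simeq \mathrm{const}_{|\mathbf{G}_\bullet|} \colon \Delta\bfG \to \cS^{BG_0}$, at which point the remaining verifications are formal.
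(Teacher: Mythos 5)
Your proof is correct and follows essentially the same route as the paper: identify each level $\uTHG_n(X)$ as $\Map^{G_0}(B\mathbb{G}_n, X)$, use \autoref{prop: classifying space is a circle} to replace $B\mathbb{G}_n$ by $S^1$, $O(2)$, or $\Pin(2)$, note the resulting simplicial space is (essentially) constant so the realization is a single level, and recover the $|\mathbf{G}_{\sbt}|$-action from right translation on the source. The paper is slightly terser — asserting constancy directly rather than deducing it from levelwise equivalences via Theorem A — and glosses over the naturality bookkeeping you flagged, so your version is if anything a bit more explicit about the same argument.
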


\begin{proof}
We prove only the case $\Delta\bfG=\Delta\bfQ$ as the others are similar.
Since $X$ can be viewed as an $\infty$-groupoid with $C_4$-action, we get that twisted $C_4$-equivariant functors $\PP_n\to X$ corresponds to $C_4$-equivariant maps from the $\infty$-groupoid completion of $\PP_n$ to $X$. In fact, by \autoref{prop: classifying space is a circle}, we have:
\begin{align*}
    \uTHQ_n(X) & \simeq \Map_{\Cat_\infty^{q}}(\PP_n, X) \\
    & \simeq \Map^{C_4}(B\PP_n, X) \\
    &\simeq \Map^{C_4}(\Pin(2), X)\\
    & \simeq \mathcal{L}^{\tau}X.
\end{align*}
As a simplicial space, $\uTHQ(X)$ is constant at $\mathcal{L}^{\tau}X$, but not constant as a quaternionic space. In fact, the resulting left $\Pin(2)$-action is precisely the one coinduced through right multiplication on $\Pin(2)$ in $\Map^{C_4}(\Pin(2), X)$. 
The result follows.
\end{proof}

\begin{const}[{\cite[2.5, 3.1]{SW03}}]\label{loop-space-twisted-G-action}
Let $G$ be a finite group with parity $\varphi\colon G\to C_2$.
Let $X$ be a pointed space with left $G$-action.
The loop space $\Omega X$ is an $\mathbb{E}_1$-space with twisted $G$-action given as follows. If $g\in G$ is even, given a loop $\lambda\colon [0,1]\rightarrow X$, define $g\lambda\in \Omega X$ as:
\begin{align*}
  {[0,1]} & \longrightarrow X\\
  t & \longmapsto g\lambda(t), 
\end{align*}
while if $g\in G$ is odd, the loop $g\lambda$ is defined as:
\begin{align*}
    {[0,1]} & \longrightarrow X\\
    t & \longmapsto g\lambda(1-t).
\end{align*}
We denote $\Omega^{\varphi} X$ the loop space $\Omega X$ with the above twisted $G$-action.
Classically, group-like $\mathbb{E}_1$-spaces are equivalent to connected pointed spaces via the loop space construction. Salvatore--Wahl show that using the construction above, the loop space lifts to a functor of $\infty$-categories:
\[
\Omega^{\varphi}\colon (\cS^*_0)^{BG} \longrightarrow \Alg^\varphi(\cS_{\mathrm{gl}}),
\]
from pointed connected spaces with $G$-action to $\mathbb{E}_1$-spaces that are group-like and endowed with twisted $G$-action, that is an equivalence on the homotopy categories.
The usual left adjoint of $\Omega$ is the two-sided bar construction seen as a delooping, and can be regarded as a functor $B^{\varphi}\colon \Alg^\varphi(\cS_{\mathrm{gl}}) \to (\cS^*_0)^{BG}$.
In particular, for any pointed connected space $X$ with $G$-action, we obtain an equivalence $B^{\varphi}\Omega^{\varphi} X\simeq X$ of spaces with $G$-actions. 
\end{const}


\begin{prop}\label{prop: THG of group-like monoids}
    Let $\Delta \bfG$  be either  $\Delta \bfC$, $\Delta \bfD$ or $\Delta \bfQ$.
    Denote $\lambda_0\colon G\rightarrow C_2$ its canonical parity.
Let $M$ be a group-like $\mathbb{E}_1$-space with a twisted $G_0$-action.
Then we obtain an equivalence of spaces with
left $|\mathbf{G}_{\sbt}|$-action:
\[
\HG(M/\cS)\simeq \uTHG(B^{\lambda_0}M).
\]
\end{prop}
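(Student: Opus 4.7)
The plan is to reduce the statement to \autoref{cor: uTHG(BM) is THG in spaces of M}, which gives $\HG(M/\cS)\simeq \uTHG(\bB^{\lambda_0}M)$ as spaces with left $|\mathbf{G}_\sbt|$-action for any $\mathbb{E}_1$-space $M$ with twisted $G_0$-action. It therefore suffices to exhibit an equivalence $\bB^{\lambda_0}M\simeq B^{\lambda_0}M$ in $\Cat_\infty^{\lambda_0}$ whenever $M$ is group-like, and then invoke the functoriality of $\uTHG$.

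To produce this equivalence, I would first observe that when $M$ is group-like the $\mathbb{E}_1$-group structure makes every morphism in $\bB M$ invertible up to coherent higher homotopy, so $\bB M$ is an $\infty$-groupoid and lies in the essential image of the fully faithful embedding $\cS^{BG_0}\hookrightarrow \Cat_\infty^{\lambda_0}$ recalled in \autoref{sec:twisted-G-infinity}. Its underlying space is the classifying space $BM$. I would then argue that both $\bB^{\lambda_0}M$ and $B^{\lambda_0}M$ realize the same equivariant delooping of $M$: by construction, the loop space of $\bB^{\lambda_0}M$ at its unique object recovers $M$ with its twisted $G_0$-action, while the loop space of $B^{\lambda_0}M$ recovers $M$ with its twisted $G_0$-action by the Salvatore--Wahl equivalence $\Omega^{\lambda_0}\dashv B^{\lambda_0}$ recalled immediately before the statement. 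Uniqueness of the delooping in pointed connected $G_0$-spaces then forces $\bB^{\lambda_0}M\simeq B^{\lambda_0}M$ in $\Cat_\infty^{\lambda_0}$.

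The main technical point will be verifying this ``both deloop $M$'' assertion at the level of the equivariant structure: one must check that the $G_0$-action on $BM$ inherited from the twisted $G_0$-structure on $\bB M$ — where for odd $g$ one composes the functor $(\bB M)^\op\to \bB M$ induced by the antihomomorphism $g\colon M^\op\to M$ with the self-equivalence $\bB M\simeq (\bB M)^\op$ given by inverting morphisms in the $\infty$-groupoid — agrees, upon taking $\Omega$ at the basepoint, with the twisted $G_0$-action on $M$ prescribed by Salvatore--Wahl (act by $g$ pointwise and reverse the loop for odd $g$). Once this compatibility is established, composing $\HG(M/\cS)\simeq \uTHG(\bB^{\lambda_0}M)\simeq \uTHG(B^{\lambda_0}M)$ yields the claimed equivalence of spaces with left $|\mathbf{G}_\sbt|$-action.
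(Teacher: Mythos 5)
Your proposal is correct and follows essentially the same route as the paper: both reduce to \autoref{cor: uTHG(BM) is THG in spaces of M} and then identify $\bB^{\lambda_0}M$ with $B^{\lambda_0}M$, the key observation being that for group-like $M$ the category $\bB M$ is an $\infty$-groupoid and hence lies in the essential image of $\cS^{BG_0}\hookrightarrow\Cat_\infty^{\lambda_0}$, where the $C_2$-action on $\Cat_\infty$ restricts to the trivial one. The paper simply asserts that the equivalence $\bB M\simeq BM$ "is compatible with the induced (non-twisted) $G$-actions," whereas you reach the same conclusion by a uniqueness-of-delooping argument: both $\bB^{\lambda_0}M$ and $B^{\lambda_0}M$ deloop $M$ with its twisted $G_0$-action, and you correctly flag the only nontrivial check — that for odd $g$, composing the functor $(\bB M)^\op\to\bB M$ induced by $g\colon M^\op\to M$ with the canonical identification $\bB M\simeq(\bB M)^\op$ (inversion of morphisms in an $\infty$-groupoid) recovers, on loop spaces, the Salvatore--Wahl prescription of acting by $g$ and reversing the loop. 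That check is precisely the content the paper's terse compatibility remark is sweeping under the rug, so your version is a slightly more detailed rendering of the same argument rather than a different proof.
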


\begin{proof}
    By \autoref{cor: uTHG(BM) is THG in spaces of M}, we have the equivalence:
    $\HG(M/\cS)\simeq \uTHG(\bB^{\lambda_0}M)$.
    As $M$ is group-like, then $\bB M$ is an $\infty$-groupoid and is thus equivalent to the space $BM$. This equivalence is compatible with the induced (non-twisted) $G$-actions, and thus we get $\bB^{\lambda_0}M\simeq B^{\lambda_0}M$.
\end{proof}

\begin{cor}\label{cor: main computation but for spaces THG of loop space}
    Let $\Delta \bfG$  be either  $\Delta \bfC$, $\Delta \bfD$ or $\Delta \bfQ$.
    Denote $\lambda_0\colon G_0\rightarrow C_2$ its canonical parity.
    Let $X$ be a connected pointed space with left $G_0$-action.
    There is an equivalence of spaces with left $|\mathbf{G}_{\sbt}|$-action:
    \[
    \HG(\Omega^{\lambda_0}X/\cS)\simeq \begin{cases}
        \mathcal{L}X & \text{if }\Delta\bfG=\Delta\bfC, \Delta\bfD,\\
        \mathcal{L}^{\tau}X & \text{if }\Delta\bfG=\Delta\bfQ.
    \end{cases}
    \]
\end{cor}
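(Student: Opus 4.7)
The plan is to combine the two previous results, \autoref{prop: THG of group-like monoids} and \autoref{cor: uTHG of space is free loop space}, in a direct way. Since $\Omega^{\lambda_0}X$ is a loop space, it is a group-like $\mathbb{E}_1$-space with twisted $G_0$-action, so the hypotheses of \autoref{prop: THG of group-like monoids} are satisfied. This yields an equivalence of spaces with left $|\mathbf{G}_{\sbt}|$-action
\[
\mathrm{H}\mathbf{G}(\Omega^{\lambda_0}X/\cS)\simeq \uTHG(B^{\lambda_0}\Omega^{\lambda_0}X).
\]

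Next I would invoke the equivalence $B^{\lambda_0}\Omega^{\lambda_0}X\simeq X$ of pointed spaces with left $G_0$-action, which holds because $X$ is connected and pointed and the adjunction $(B^{\lambda_0},\Omega^{\lambda_0})$ of Salvatore--Wahl restricts to an equivalence between pointed connected spaces with $G_0$-action and group-like twisted $G_0$-$\mathbb{E}_1$-spaces. Substituting this equivalence into the previous display produces an equivalence of $|\mathbf{G}_{\sbt}|$-spaces
\[
\mathrm{H}\mathbf{G}(\Omega^{\lambda_0}X/\cS)\simeq \uTHG(X).
\]

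Finally, I would apply \autoref{cor: uTHG of space is free loop space} to the $G_0$-space $X$, which identifies $\uTHG(X)$ with $\mathcal{L}X$ (as an $S^1$- or $O(2)$-space) when $\Delta\mathbf{G}=\Delta\mathbf{C}$ or $\Delta\mathbf{D}$, and with the twisted free loop space $\mathcal{L}^{\tau}X$ (as a $\Pin(2)$-space) when $\Delta\mathbf{G}=\Delta\mathbf{Q}$. Composing the three equivalences gives the desired identification, together with the correct equivariance.

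Since both invoked inputs are stated and proved earlier in the paper, I do not anticipate a serious obstacle; the only care needed is to check that the Salvatore--Wahl equivalence $B^{\lambda_0}\Omega^{\lambda_0}X\simeq X$ is genuinely an equivalence in the $\infty$-category $(\cS^*_0)^{BG_0}$ of pointed connected $G_0$-spaces (rather than only on homotopy categories) so that the induced equivalence on $\uTHG$ is one of $|\mathbf{G}_{\sbt}|$-spaces. This is automatic in the cases $\Delta\mathbf{C}$ and $\Delta\mathbf{D}$ (for $\lambda_0$ trivial or the identity on $C_2$) and, for $\Delta\mathbf{Q}$, follows from the corresponding statement for the parity $q\colon C_4\to C_2$.
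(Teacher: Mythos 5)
Your proof is correct and follows exactly the route the paper uses: combine \autoref{prop: THG of group-like monoids} with \autoref{cor: uTHG of space is free loop space} via the identification $B^{\lambda_0}\Omega^{\lambda_0}X\simeq X$. Your additional remark about checking the Salvatore--Wahl equivalence at the $\infty$-categorical level (not just on homotopy categories) is a reasonable point of care, but the paper treats this as implicit.
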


\begin{proof}
    Combine \autoref{cor: uTHG of space is free loop space} with \autoref{prop: THG of group-like monoids} as we have $B^{\lambda_0}\Omega^{\lambda_0} X\simeq X$.
\end{proof}

\subsection{Topological \texorpdfstring{$\Delta \bfG$}{TEXT}-homology of monoid rings with twisted \texorpdfstring{$G$}{TEXT}-action}
Recall that the tensoring of spectra over spaces
\begin{align*}
    \cS\times \Sp & \longrightarrow \Sp\\
    (X, E) & \longmapsto E\wedge X_+
\end{align*}
 is strong symmetric monoidal in the sense that for any spaces $X,Y$ and spectra $E, F$ we have the natural equivalences of spectra:
\[
(E\wedge F) \wedge (X\times Y)_+\simeq (E\wedge X_+)\wedge (F\wedge Y_+),\quad \quad \mathbb{S}\wedge S^0\simeq \mathbb{S}.
\]
Thus for any $\infty$-operad $\mathscr{O}$, we obtain a functor:
\begin{align*}
\Alg_\mathscr{O}(\cS)\times \Alg_\mathscr{O}(\Sp)\simeq \Alg_\mathscr{O}(\cS\times \Sp)& \longrightarrow \Alg_\mathscr{O}(\Sp)  \,. 
\end{align*}
\begin{defin}
Let $(G, \varphi)$ be a group with parity.
The tensoring of spectra over spaces defines a functor 
\begin{align*}
\Alg^\varphi(\cS)\times \Alg^\varphi(\Sp) & \longrightarrow \Alg^\varphi(\Sp) \\   
(M, R) & \longmapsto R[M]\coloneqq R\wedge M_+.
\end{align*}
Informally, the twisted $G$-action is given as follows. If $g\in G$ is even, then $g$ acts on $R[M]$ via:
\[
\begin{tikzcd}
    R\wedge M_+ \ar{r}{g\wedge g_+} & R\wedge M_+,
\end{tikzcd}
\]
and if $g\in G$ is odd, then $g$ acts on $R[M]$ as:
\[
\begin{tikzcd}
    (R\wedge M_+)^\op\simeq R^\op \wedge (M^\op)_+\ar{r}{g\wedge g_+} & R\wedge M_+ \,.
\end{tikzcd}
\]
When $R=\mathbb{S}$, we write $\Sigma^\infty_+M$ for the monoid with twisted $G$-action $\mathbb{S}[M]$. 
\end{defin}

Let $(G_0,\lambda_0)$ be the canonical group with parity associated to a crossed simplicial group $\Delta \bfG$, let $M$ be an $\mathbb{E}_1$-space with twisted $G_0$-action, and let $R$ be a ring spectrum with twisted $G_0$-action. Then let $R[M]$ be the corresponding monoid ring spectrum with twisted $G_0$-action. As a sample computation, we describe $\THG(R[M])$ in the case when $\Delta \bfG$ is a self-dual crossed simplicial group.  

\begin{thm}\label{assembly theorem}
Let $(\Delta \bfG,\sdual)$ be a self-dual crossed simplicial group with associated group with parity $(G_0,\lambda_0)$. 
Let $R$ be a ring spectrum with twisted $G_0$-action and let $M$ be an $\mathbb{E}_1$-space with twisted $G_0$-action. 
Then the assembly map 
\[  
\THG(R)\wedge \Sigma^\infty_+ \HG(M/\cS)\longrightarrow \THG(R[M])
\]
is an equivalence of spectra with $|\mathbf{G}_{\sbt}|$-action. 
\end{thm}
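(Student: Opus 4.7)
The plan is to prove the statement levelwise in the $\Delta\bfG$-direction and then take geometric realizations, exploiting the fact that the bar construction is built out of symmetric monoidal machinery and that $\wedge_+\colon \cS\times \Sp\to \Sp$ is symmetric monoidal.

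First, I would unwind the definition from \autoref{bar-construction}. The covariant bar construction $B^\bullet_\bfG(R)$ is the composite
\[
\Delta\bfG \xrightarrow{\widetilde{\lambda}} \Env(\Assoc^{\lambda_0}) \xrightarrow{\Env(R)} \Env(\Sp) \xrightarrow{\otimes} \Sp,
\]
and analogously for $B^\bullet_\bfG(M)$ in $\cS$. Since $R[M]$ is the image of the pair $(M,R)$ under the symmetric monoidal functor $\wedge_+\colon \cS\times \Sp\to \Sp$, and since forming a twisted $G_0$-algebra is natural in symmetric monoidal functors (as recorded in \autoref{def: category of twisted G-rings}), the symmetric monoidal functor $\Env(R[M])$ factors as
\[
\Env(\Assoc^{\lambda_0}) \xrightarrow{\Delta} \Env(\Assoc^{\lambda_0})\times \Env(\Assoc^{\lambda_0}) \xrightarrow{\Env(M)\times \Env(R)} \Env(\cS)\times \Env(\Sp) \xrightarrow{\Env(\wedge_+)} \Env(\Sp),
\]
where the first map is the diagonal.

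Second, at a fixed object $\nD\in \Delta\bfG$, the identity $\otimes_{\Sp}(R_0\wedge (M_0)_+,\ldots,R_n\wedge (M_n)_+) \simeq (R_0\wedge\cdots\wedge R_n)\wedge (M_0\times\cdots\times M_n)_+$ gives a natural equivalence
\[
B^\bullet_\bfG(R[M])(\nD) \;\simeq\; B^\bullet_\bfG(R)(\nD)\wedge \bigl(B^\bullet_\bfG(M)(\nD)\bigr)_+.
\]
Naturality in $\Delta\bfG$ is the place where the crossed-simplicial twists come in: the structure maps factor through $\Env(\Assoc^{\lambda_0})$, where by \autoref{iso of categories} they encode the twisted-monoid operations. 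Since $\wedge_+$ is symmetric monoidal and since the $C_2$-action on $\Assoc$ (reversing orderings) is coherent with the monoidal structure, the decomposition is respected by all morphisms in $\Delta\bfG$, including those coming from odd elements of $G_n$ (where the anti-homomorphism structure on $R$ matches the one on $M$). Pre-composing with the self-duality $\sdual\colon \Delta\bfG^\op\simeq \Delta\bfG$ yields the same decomposition for the contravariant bar construction $B^\bfG_\bullet$.

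Third, passing to geometric realizations and using that $\wedge$ in $\Sp$ and $\Sigma^\infty_+\colon \cS\to \Sp$ each preserve colimits, one obtains
\[
\THG(R[M]) \;\simeq\; \bigl|B^\bfG_\bullet(R)\wedge (B^\bfG_\bullet(M))_+\bigr| \;\simeq\; |B^\bfG_\bullet(R)|\wedge \Sigma^\infty_+ |B^\bfG_\bullet(M)| \;=\; \THG(R)\wedge \Sigma^\infty_+\HG(M/\cS).
\]
One checks that the assembly map in the statement agrees with this composite. Finally, the $|\mathbf{G}_\sbt|$-equivariance is automatic from \autoref{G action on DeltaG objects} applied to the $\Delta\bfG^\op$-object $B^\bfG_\bullet(R[M])$, together with the fact that the decomposition at each level is natural in $\Delta\bfG$ and hence the induced $|\mathbf{G}_\sbt|$-actions correspond under the equivalence.

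The main technical obstacle is the naturality check in Step~2, namely verifying that the decomposition of $\Env(R[M])$ into $\Env(R)$ smashed with $\Env(M)_+$ is compatible with the odd (order-reversing) morphisms in $\Env(\Assoc^{\lambda_0})$. Everything else is formal: once this coherence is established, the theorem follows from commuting smash products past colimits.
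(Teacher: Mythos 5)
Your proof takes essentially the same route as the paper's: identify $\iota^*B^{\bfG}_\bullet(R[M])$ with the diagonal of the external smash $\iota^*B^{\bfG}_\bullet(R)\wedge\Sigma^\infty_+\iota^*B^{\bfG}_\bullet(M)$, use cofinality of the diagonal (equivalently, Fubini for colimits) to replace $\colim_{\Delta^{\op}}$ by $\colim_{\Delta^{\op}\times\Delta^{\op}}$, and then use that $\wedge$ and $\Sigma^\infty_+$ preserve colimits in each variable. Where you deviate is in being more explicit than the paper about the coherence of the levelwise decomposition with the full $\Delta\bfG^\op$-structure, i.e.\ with the odd morphisms: the paper's written proof only exhibits the identification on the restriction $\iota^*$ to $\Delta^\op$ and then asserts the $|\mathbf{G}_\sbt|$-equivariance, whereas you correctly flag that the equivalence must hold as a map of $\Delta\bfG^\op$-objects, not just simplicial objects, in order to inherit the $|\mathbf{G}_\sbt|$-action via \autoref{G action on DeltaG objects}. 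Your remark that this follows because $\wedge_+\colon\cS\times\Sp\to\Sp$ is symmetric monoidal and therefore compatible with forming opposite algebras (so $(R\wedge M_+)^{\op}\simeq R^{\op}\wedge(M^{\op})_+$) is exactly the right justification and supplies a small point the paper glosses over.
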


\begin{proof}
We consider the functor 
\[
    \iota^*B^{\bfG}_{\bullet}(R)\wedge \Sigma^{\infty}_+(\iota^*B^{\bfG}_{\bullet}(M))  \colon \Delta^{\op}\times \Delta^{\op}\longrightarrow \Sp 
\] 
defined on objects by 
\[ 
    ([p],[q]) \longrightarrow \iota^*B^{\bfG}_p(R)\wedge \Sigma^{\infty}_+\iota^*B^{\bfG}_q(M) \,.
\]
Then there are equivalences
\begin{align*}
    \THG(R[M]) & \simeq \colim_{\Delta^\op}^{\Sp} \big( \iota^* B^{\bfG}_\bullet (R\otimes M_+) \big)\\
    & \simeq  \colim_{\Delta^\op\times \Delta^\op}^{\Sp} \big( \left( \iota^*B^{\bfG}_\bullet (R)\right) \wedge \left(\Sigma^{\infty}_+\iota^*B^\bfG_\bullet(M)\right)\big)\\
    & \simeq |\iota^*B^{\bfG}_{\bullet}(R)|\wedge \Sigma^{\infty}_+| \iota^*B^\bfG_\bullet(M)|\\
    & \simeq \THG(R) \wedge \Sigma^\infty_+ \HG(M/\cS) \,.\qedhere
\end{align*}
\end{proof}

Specializing to $R=\mathbb{S}$, we have the following corollary.

\begin{cor}\label{cor: group ring computation for csg with duality}
Let $\Delta \bfG$ be a self-dual crossed simplicial group with associated group with parity $(G_0,\lambda_0)$ and let $M$ be an $\mathbb{E}_1$-space with twisted $G_0$-monoid, then the assembly map 
\[
\Sigma^\infty_+ \HG(M/\cS)  \overset{\simeq}{\longrightarrow} \THG(\Sigma_+^{\infty}M)\,.
\] 
is an equivalence of spectra with left $|\mathbf{G}_{\sbt}|$-action.
\end{cor}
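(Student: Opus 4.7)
The plan is to deduce the corollary directly from \autoref{assembly theorem} by specializing to $R=\mathbb{S}$, the sphere spectrum regarded as a ring spectrum with trivial twisted $G_0$-action. With this choice, $\mathbb{S}[M] = \mathbb{S}\wedge M_+ = \Sigma^\infty_+ M$, so the assembly map reads
\[
\THG(\mathbb{S})\wedge \Sigma^\infty_+\HG(M/\cS)\overset{\simeq}{\longrightarrow} \THG(\Sigma^\infty_+ M).
\]
The corollary will then follow provided we can establish an equivalence $\THG(\mathbb{S})\simeq \mathbb{S}$ of spectra with left $|\mathbf{G}_{\sbt}|$-action, where the target carries the trivial action.

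To prove this last claim, I would unwind \autoref{bar-construction} in the case $R=\mathbb{S}$. Since $\mathbb{S}$ is the monoidal unit of $\Sp$, the composite $\Delta \bfG \to \Env(\Assoc^{\lambda_0}) \xrightarrow{\Env(\mathbb{S})} \Env(\Sp) \xrightarrow{\otimes} \Sp$ sends every object $[n]$ to $\mathbb{S}^{\wedge n+1} \simeq \mathbb{S}$. The face and degeneracy maps in the underlying simplicial spectrum $\iota^*B^{\bfG}_\bullet(\mathbb{S})$ are induced by the multiplication and unit of $\mathbb{S}$ and are therefore equivalences. Moreover, for each $n$ and each $g\in G_n$, the induced automorphism of $\mathbb{S}^{\wedge n+1}$ is a composition of permutations of smash factors with actions of $G_0$ on the unit copies of $\mathbb{S}$; the former act as the identity on $\mathbb{S}$ since $\mathbb{S}$ is an $\mathbb{E}_\infty$-ring, and the latter are identities by our choice of trivial twisted $G_0$-action. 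Hence the $\Delta \bfG^{\op}$-object $B^{\bfG}_\bullet(\mathbb{S})$ is equivalent to the constant one at $\mathbb{S}$, and $\THG(\mathbb{S}) = |B^{\bfG}_\bullet(\mathbb{S})| \simeq \mathbb{S}$ with trivial $|\mathbf{G}_{\sbt}|$-action by \autoref{G action on DeltaG objects}.

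Smashing the assembly equivalence with $\mathbb{S}\simeq \THG(\mathbb{S})$ and unitality of the smash product then yields the desired equivalence
\[
\Sigma^\infty_+\HG(M/\cS)\overset{\simeq}{\longrightarrow} \THG(\Sigma^\infty_+ M)
\]
of spectra with left $|\mathbf{G}_{\sbt}|$-action. The only real subtlety, and the step that I expect to be the main obstacle, is checking that the identification $\THG(\mathbb{S})\simeq \mathbb{S}$ is genuinely an equivalence of spectra with $|\mathbf{G}_{\sbt}|$-action and not merely of underlying spectra; this amounts to constructing a natural equivalence, in $\Sp^{\Delta \bfG^{\op}}$, between $B^{\bfG}_\bullet(\mathbb{S})$ and the constant $\Delta \bfG^{\op}$-object at $\mathbb{S}$, which requires coherence for the $G_n$-actions induced by permutations of copies of the unit, and follows from the fact that $\Env(\mathbb{S})\colon \Env(\Assoc^{\lambda_0})\to \Env(\Sp)$ factors through the unit functor.
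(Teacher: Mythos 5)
Your proposal is correct and follows exactly the paper's route: the corollary is stated in the paper immediately after the assembly theorem with the one-line justification ``Specializing to $R=\mathbb{S}$.'' You have simply spelled out the (implicit) verification that $\THG(\mathbb{S})\simeq\mathbb{S}$ holds $|\mathbf{G}_{\sbt}|$-equivariantly, which is exactly the right thing to check before invoking unitality of the smash product.
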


\begin{rem}
In the case where $\Delta \bfG=\Delta\mathbf{C}$ is the cyclic category, \autoref{cor: group ring computation for csg with duality} was first proven in \cite[Proposition 4.25]{BHM93}. 
In the case where $\Delta \bfG=\Delta \bfD$ is the dihedral category, \autoref{cor: group ring computation for csg with duality} was proven  as a consequence of \cite[Theorem 4.1]{Hog16} and \autoref{assembly theorem} was first proven in \cite[Proposition~5.12]{DMPR21}. 
Our result extends these earlier results to more general self-dual crossed simplicial groups. 
\end{rem}

\begin{thm}\label{thm: main computation}
 Let $\Delta \bfG$  be either  $\Delta \bfC$, $\Delta \bfD$ or $\Delta \bfQ$.
    Denote $\lambda_0\colon G\rightarrow C_2$ its canonical parity.
    Let $X$ be a connected pointed space with left $G_0$-action.
    Then there is an equivalence of spectra with left $|\mathbf{G}_{\sbt}|$-action: 
\[ \THG(\Sigma^\infty_+\Omega^{\lambda_0}X)\simeq \begin{cases}
    \Sigma^\infty_+\mathcal{L}X & \text{if }\Delta\bfG=\Delta\bfC, \Delta\bfD \,,\\
    \Sigma^\infty_+\mathcal{L}^{\tau}X & \text{if }\Delta\bfG=\Delta\bfQ \,.
    \end{cases}
    \]
\end{thm}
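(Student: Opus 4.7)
The proof is an immediate combination of the two main computations just established. Concretely, the plan is to assemble \autoref{cor: group ring computation for csg with duality} and \autoref{cor: main computation but for spaces THG of loop space}, specialized to the monoid with twisted $G_0$-action $M = \Omega^{\lambda_0} X$.

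First I would verify that $\Omega^{\lambda_0} X$ is indeed a group-like $\mathbb{E}_1$-space with twisted $G_0$-action, which follows from the Salvatore--Wahl construction recalled just before \autoref{prop: THG of group-like monoids}: since $X$ is pointed and connected and carries a left $G_0$-action, $\Omega^{\lambda_0} X$ is an object of $\Alg^{\lambda_0}(\cS_{\mathrm{gl}})$. Hence the spherical monoid ring $\Sigma^{\infty}_{+}\Omega^{\lambda_0}X$ is a ring spectrum with twisted $G_0$-action, so that $\THG(\Sigma^{\infty}_{+}\Omega^{\lambda_0}X)$ is defined and carries its canonical left $|\mathbf{G}_{\sbt}|$-action.

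Next, for $\Delta \mathbf{G}$ one of $\Delta \mathbf{C}$, $\Delta \mathbf{D}$, $\Delta \mathbf{Q}$ (all self-dual by \autoref{self dual structure for cyclic, dihedral and quaternionic}), I would apply \autoref{cor: group ring computation for csg with duality} to the twisted $G_0$-monoid $M=\Omega^{\lambda_0} X$ to obtain an equivalence
\[
\Sigma^{\infty}_{+}\, \mathrm{H}\mathbf{G}\bigl(\Omega^{\lambda_0}X / \cS\bigr) \;\xrightarrow{\simeq}\; \THG\bigl(\Sigma^{\infty}_{+}\Omega^{\lambda_0}X\bigr)
\]
of spectra with left $|\mathbf{G}_{\sbt}|$-action. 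Then I would apply $\Sigma^{\infty}_{+}$ to the equivalence of \autoref{cor: main computation but for spaces THG of loop space}, which identifies $\mathrm{H}\mathbf{G}(\Omega^{\lambda_0}X/\cS)$ with $\mathcal{L}X$ in the cyclic and dihedral cases, and with $\mathcal{L}^{\tau}X$ in the quaternionic case, as spaces with left $|\mathbf{G}_{\sbt}|$-action. Composing the two equivalences yields the claim.

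There is essentially no obstacle left: both ingredients have been established in the preceding subsections. The only point that requires a brief check is that all equivalences are genuinely equivariant for the full group $|\mathbf{G}_{\sbt}|$ rather than just on underlying spectra. For the assembly equivalence, this is built into \autoref{cor: group ring computation for csg with duality} via the argument that realizes $\iota^{\ast} B^{\mathbf{G}}_{\bullet}(R[M])$ as the diagonal of a bi-$\Delta^{\op}$-object, using that the functor from \autoref{G action on DeltaG objects} lifting $\Delta \mathbf{G}^{\op}$-objects to $|\mathbf{G}_{\sbt}|$-equivariant objects is monoidal with respect to the smash product. For the unstable identification, the equivariance was already recorded along the chain $\mathrm{H}\mathbf{G}(\Omega^{\lambda_0}X/\cS)\simeq \uTHG(\bB^{\lambda_0}\Omega^{\lambda_0}X)\simeq \uTHG(X)\simeq \Map^{G_0}(|\mathbf{G}_{\sbt}|,X)$, with the left $|\mathbf{G}_{\sbt}|$-action coming from right multiplication on $|\mathbf{G}_{\sbt}|$. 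Since $\Sigma^{\infty}_{+}$ is strong symmetric monoidal and preserves equivariance, stabilizing this chain of equivalences produces the desired equivalence of spectra with left $|\mathbf{G}_{\sbt}|$-action.
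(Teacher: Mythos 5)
Your proof follows exactly the paper's own argument: combine \autoref{cor: group ring computation for csg with duality} (specialized to $M=\Omega^{\lambda_0}X$) with \autoref{cor: main computation but for spaces THG of loop space}. The extra remarks you add — verifying that $\Omega^{\lambda_0}X$ is group-like via Salvatore--Wahl, and checking the equivariance of both equivalences — are consistent with how these corollaries are stated and proved in the preceding subsections, so your write-up is a correct (and slightly more explicit) version of the same proof.
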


\begin{proof}
 This is a combination of \autoref{cor: group ring computation for csg with duality} with \autoref{cor: main computation but for spaces THG of loop space}.
\end{proof}

\begin{rem}
The result is  classical for $\Delta \bfC$ and dates back to Goodwillie \cite{GoodwillieCyclic}.
For $\Delta \bfD$ the result above appears in \cite{DMPR21} and~\cite{HHKWZ24} as $C_2$-spaces and in \cite{Hog16} and~\cite{DMP24} as genuine $O(2)$-spectra.
\end{rem}
The following corollary is immediate. 

\begin{cor}\label{TQ+}
Let $X$ be a connected pointed space with left $C_4$-action.
There are equivalences 
\begin{align*}
\mathrm{TQ}^{+}(\Sigma^\infty_+\Omega^{\lambda_{0}}X)&\simeq\left ( \Sigma^{\infty}_+\mathcal{L}^{\tau}X \right )_{h\Pin(2)} \\ 
\mathrm{TQ}^{-}(\Sigma^\infty_+\Omega^{\lambda_{0}}X)&\simeq (\Sigma^{\infty}_+\mathcal{L}^{\tau}X)^{h\Pin(2)}\\ 
\mathrm{TQ}^{\textup{per}}(\Sigma^\infty_+\Omega^{\lambda_{0}}X)& \simeq(\Sigma^{\infty}_+\mathcal{L}^{\tau}X)^{t\Pin(2)} \,. 
\end{align*}
\end{cor}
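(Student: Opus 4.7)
The plan is to simply apply the three functors $(-)_{h\Pin(2)}$, $(-)^{h\Pin(2)}$, and $(-)^{t\Pin(2)}$ to the $\Pin(2)$-equivariant equivalence supplied by \autoref{thm: main computation}. Indeed, recalling from \autoref{positive, negative THH} that, for the quaternionic category $\Delta \bfQ$, whose geometric realization $|\bfQ_{\sbt}|$ is $\Pin(2)$, the invariants $\mathrm{TQ}^{+}$, $\mathrm{TQ}^{-}$, and $\mathrm{TQ}^{\textup{per}}$ are defined respectively as the homotopy orbits, homotopy fixed points, and Tate construction of $\THQ$ with respect to its canonical left $\Pin(2)$-action, each of the three equivalences claimed in the corollary is obtained by functoriality from a single input.

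The single input is the equivalence
\[
\THQ(\Sigma^\infty_+\Omega^{\lambda_0}X)\simeq \Sigma^{\infty}_+\mathcal{L}^{\tau}X
\]
of spectra with left $\Pin(2)$-action, which is precisely the content of \autoref{thm: main computation} specialized to $\Delta \bfG = \Delta \bfQ$. Since the functors $(-)_{h\Pin(2)}$, $(-)^{h\Pin(2)}$, and $(-)^{t\Pin(2)}$ from $\Pin(2)$-equivariant spectra to spectra each preserve equivalences, applying them termwise yields the three desired equivalences.

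There is no technical obstacle: the entire content of the corollary is packaged in the main computation \autoref{thm: main computation} together with the definitions of the positive, negative, and periodic variants. The only thing to remark is that the equivalence of \autoref{thm: main computation} is genuinely $\Pin(2)$-equivariant (not merely an underlying equivalence), which is guaranteed by \autoref{cor: group ring computation for csg with duality} together with \autoref{cor: main computation but for spaces THG of loop space}, both of which were established at the level of spectra (resp.\ spaces) with left $|\bfG_{\sbt}|$-action. With that in hand, the corollary is immediate.
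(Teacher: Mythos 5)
Your proposal is correct and matches the paper's reasoning exactly: the paper declares the corollary ``immediate,'' and the intended argument is precisely to apply $(-)_{h\Pin(2)}$, $(-)^{h\Pin(2)}$, and $(-)^{t\Pin(2)}$ to the $\Pin(2)$-equivariant equivalence $\THQ(\Sigma^\infty_+\Omega^{\lambda_0}X)\simeq \Sigma^{\infty}_+\mathcal{L}^{\tau}X$ of \autoref{thm: main computation}, using the definitions in \autoref{positive, negative THH}. Your extra remark confirming that the equivalence is genuinely $\Pin(2)$-equivariant is a sensible sanity check but not a departure from the paper's approach.
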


\section{Topological twisted symmetric homology of loop spaces}\label{twisted symmetric}
Our next goal is to provide computations of topological twisted symmetric homology of spherical group rings with twisted $G$-action. We mainly present these computations to illustrate that topological twisted symmetric homology is an interesting invariant. Recall the notation $\mathrm{T}\varphi$, $\TS$, and $\TO$ from  \autoref{twisted symmetric example}, \autoref{symmetric}, and  \autoref{hyperoctahedral}

The main result of this section is a homotopical and twisted symmetric analogue of an unpublished theorem of Fiedorowicz, appearing in~\cite{Aul10}. 
\begin{thm}\label{thm: topological positive hyperoctahedral homology}
Let $\Delta \bfGS$ be the crossed simplicial group associated to a group with parity $\varphi \colon G\to C_2$. Let $M$ be a group-like $\mathbb{E}_1$-space with a twisted $G$-action, then 
\[ 
\mathrm{T}\varphi( \Sigma^\infty_+ M)\simeq   \Sigma_+^{\infty}\left (\Omega Q B^{\varphi} M\right )_{hG} \,.
\]
Let $\varphi\colon G\to C_2$ be a group homomorphism.
If $X$ is a connected space with $G$-action, then 
\[
\mathrm{T}\varphi(\Sigma^\infty_+\Omega^{\varphi} X)\simeq (\Sigma^\infty_+ \Omega QX)_{hG}\,.
\] 
\end{thm}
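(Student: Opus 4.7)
The second equivalence follows from the first by setting $M=\Omega^\varphi X$, since $B^\varphi\Omega^\varphi X\simeq X$ as pointed connected spaces with $G$-action. So the first equivalence is the key. I would follow a two-step strategy: reduce to the untwisted case via a Grothendieck-fibration / homotopy-orbits argument coming from the semidirect-product structure $\Assoc^\varphi = \Assoc \rtimes G$ of \autoref{example: twisted operad}, and then invoke the topological Fiedorowicz theorem for the untwisted case.

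For the untwisted case (trivial $G$, so $\Delta\varphi\wr\bfS = \Delta\bfS$), the statement becomes the topological Fiedorowicz theorem
\[
\TS(\Sigma^\infty_+ M) \simeq \Sigma^\infty_+ \Omega Q BM.
\]
Classically, this is proved by Fiedorowicz \cite{Fie} at the level of symmetric homology of monoid rings; cf.~also~\cite{Aul10}. The topological refinement can be obtained by combining an assembly equivalence analogous to \autoref{cor: group ring computation for csg with duality} (the argument of \autoref{assembly theorem} uses only the symmetric monoidal structure, not self-duality) with a Barratt--Priddy--Quillen-style identification of the unstable symmetric homology $\mathrm{H}\Sigma(M/\cS) = \colim_{\Delta\bfS} B^\bullet_\bfS(M)$ with $\Omega QBM$, using the contractibility of the simplicial set $\bfS_\bullet = \Sigma_{\bullet+1}$ noted in \autoref{contractible}.

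For the twisted case, \autoref{iso of categories} together with $\Assoc^\varphi = \Assoc\rtimes G$ from \autoref{example: twisted operad} identifies $\Delta\varphi\wr\bfS_+$ as the envelope of a semidirect product of operads. The projection $\pi\colon \Delta\varphi\wr\bfS \to \Delta\bfS$, $(g_0,\ldots,g_n;\gamma)\mapsto \gamma$, is then a Grothendieck op-fibration encoding the wreath-product $G$-action, with fibers $G^{n+1}$ assembling into a functor $\Delta\bfS\to\Cat_\infty$. A standard Fubini-type argument for colimits over Grothendieck constructions yields
\[
\mathrm{T}\varphi(\Sigma^\infty_+ M) \simeq \bigl(\TS(\Sigma^\infty_+ M_u)\bigr)_{hG} \simeq \Sigma^\infty_+(\Omega Q B^\varphi M)_{hG},
\]
where $M_u$ denotes $M$ with its underlying $\mathbb{E}_1$-structure and inherited $G$-action, and the last step uses $BM_u = B^\varphi M$ as spaces with the twisted action absorbed into the $(\--)_{hG}$.

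\textbf{Main obstacle.} The technical crux is the Grothendieck-fibration step, and in particular identifying the resulting $G$-action on $\TS(\Sigma^\infty_+ M_u)$ with the $G$-action on $\Omega QB^\varphi M$ induced by the twisted $G$-structure on $M$. The subtlety is that odd elements of $G$ act on $M$ by monoid anti-homomorphisms, producing a priori only equivalences $\TS(\Sigma^\infty_+ M_u)\simeq \TS(\Sigma^\infty_+ M_u^{\op})$; reconciling this with the $C_2$-action on $\Assoc$ reversing linear orders built into $\Assoc^\varphi$ (\autoref{example: twisted operad}) requires the full combinatorial compatibility from \autoref{csg} and \autoref{canonical map}, verified coherently at the level of envelopes. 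Once this matching is in place, the computation collapses to a direct application of the untwisted Fiedorowicz theorem followed by the standard formula for homotopy orbits of colimits over a Grothendieck construction.
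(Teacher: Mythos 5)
Your second statement does indeed follow from the first via $B^\varphi\Omega^\varphi X\simeq X$, so the issue is the first. The paper's proof is quite different from yours: it resolves $M$ by free twisted $G$-monoids $J^\varphi(X)$ via the monadic bar construction, computes $\mathrm{H}\varphi$ on free objects directly from the colimit formula using \autoref{lem:EG} (the slice $(\Delta\varphi\wr\bfS_+)_{[n]/}$ is a model for $EG\wr\Sigma_{n+1}$) and \autoref{lem:orbits-formula} (the purely group-theoretic decomposition of $G\wr\Sigma_{n+1}$-orbits), and then runs the Fiedorowicz group-completion argument. No fibration over $\Delta\bfS$ appears.

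Your proposal has a genuine gap in the central step. The projection $\pi\colon\Delta\varphi\wr\bfS\to\Delta\bfS$, $(g_0,\ldots,g_n;\gamma)\mapsto\gamma$, is not a functor when $\varphi$ is non-trivial, let alone a Grothendieck op-fibration. The obstruction is precisely the twist built into \autoref{crossedforpar}: the degeneracies act via
\[
\sigma_i^*(g_0,\ldots,g_n;\id)=(g_0,\ldots,g_i,g_i,\ldots,g_n;\tau),\qquad \tau=\begin{cases}\id & \varphi(g_i)=1,\\(i,i+1)& \varphi(g_i)=-1,\end{cases}
\]
so the $\Sigma$-component of $\sigma_i^*(g_0,\ldots,g_n;\gamma)$ is $\tau\,\sigma_i^*\gamma$, not $\sigma_i^*\gamma$. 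Thus $\pi$ fails to commute with codegeneracies whenever some $g_i$ is odd, contradicting the criterion of \autoref{rem: map of crossed simplicial group} and, more basically, contradicting compatibility of $\pi$ with composition in the crossed simplicial category. (The map that does exist is $\Delta\varphi\wr\bfS\to\Delta\id_{C_2}\wr\bfS=\Delta\bfH$ from \autoref{rem: functor from group with parities to crossed simplicial groups}, whose target is not $\Delta\bfS$.) For trivial $\varphi$ your $\pi$ is a functor and the Fubini reduction goes through, but the case of interest — $\Delta\bfH$ with $\varphi=\id_{C_2}$, and all odd-parity examples — is exactly where your proposed fibration does not exist.

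A secondary issue, which you partly flagged yourself: even if the fibration existed, the intermediate object $(\TS(\Sigma^\infty_+ M_u))_{hG}$ needs a $G$-action on $\TS(\Sigma^\infty_+ M_u)$, and since $\Delta\bfS$ is not self-dual the odd elements of $G$ (which act by anti-homomorphisms on $M$) do not induce self-maps of $\TS(\Sigma^\infty_+ M_u)$ in any obvious way. The paper sidesteps this entirely by never forming such an object; the $G$-homotopy-orbits appear only after the colimit has been rewritten using the wreath-product orbit formula, where they are harmless. If you want to rescue a "reduce to the untwisted case" strategy, you should work at the level of the bar resolution $B(\Sym,J^\varphi,M)$ and the colimit formula, using the group-theoretic identity $(X^{\times n+1})_{hG\wr\Sigma_{n+1}}\simeq((X_{hG})^{\times n+1})_{h\Sigma_{n+1}}$ rather than a categorical fibration over $\Delta\bfS$.
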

This result has the following immediate corollaries. 
\begin{cor}\label{cor: topological positive symmetric homology}
Let $G$ be a group-like $\bE_1$-space, then 
\[\TS(\Sigma^\infty_+ G)\simeq \Sigma^\infty_+ \Omega QBG.
\] 
Let $X$ be a connected space, then 
\[
\TS(\Sigma^\infty_+\Omega X)\simeq \Sigma^\infty_+ \Omega QX\,.
\] 
\end{cor}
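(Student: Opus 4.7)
The plan is to obtain this corollary by specializing \autoref{thm: topological positive hyperoctahedral homology} to the case where the ambient group is trivial. To avoid the notational clash, let me temporarily denote the group in the parity by $H$: I would take $H = \{e\}$ and $\varphi\colon \{e\} \to C_2$ the unique (trivial) homomorphism. By \autoref{contractible}, the twisted symmetric crossed simplicial group $\Delta\varphi\wr\bfS$ reduces in this case to the symmetric crossed simplicial group $\Delta\bfS$. Consequently, unwinding \autoref{twisted symmetric} and \autoref{symmetric}, the invariant $\mathrm{T}\varphi$ becomes $\TS$.

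Next I would observe that an $\bE_1$-space with twisted $\{e\}$-action is just an $\bE_1$-space, so the appearance of the symbol $G$ in the corollary (now denoting a group-like $\bE_1$-space rather than a discrete group) is consistent with the input data of the theorem. Similarly, $B^\varphi M$ becomes the usual delooping $BM$ of a group-like $\bE_1$-space, $\Omega^\varphi X$ becomes the ordinary loop space $\Omega X$ of a pointed connected space $X$, and homotopy orbits by $\{e\}$ are the identity functor. Substituting these identifications into the two formulas of \autoref{thm: topological positive hyperoctahedral homology} yields $\TS(\Sigma^\infty_+ G) \simeq \Sigma^\infty_+ \Omega Q BG$ from the first statement, and $\TS(\Sigma^\infty_+ \Omega X) \simeq \Sigma^\infty_+ \Omega Q X$ from the second.

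Since the argument is a direct specialization of the parameters in \autoref{thm: topological positive hyperoctahedral homology}, there is no real obstacle; all the work is carried out in the proof of that theorem.
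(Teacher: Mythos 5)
Your proposal is correct and is exactly what the paper intends: the corollary is declared an "immediate corollary" of \autoref{thm: topological positive hyperoctahedral homology}, and specializing the group with parity to the trivial case (so that $\Delta\varphi\wr\bfS=\Delta\bfS$ by \autoref{contractible}, $\mathrm{T}\varphi=\TS$, $B^\varphi M=BM$, $\Omega^\varphi X=\Omega X$, and the $hG$-orbits are trivial) gives both formulas. Your note about the notational clash between the group $G$ in the theorem and the $\bE_1$-space $G$ in the corollary is a sensible observation, and the specialization is otherwise immediate.
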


\begin{cor}\label{cor: topological positive hyperoctahedral homology}
Let $G$ be a group-like $\bE_1$-space with anti-involution, then 
\[
\TO(\Sigma^\infty_+ G)\simeq \Sigma_+^{\infty}\left ( \Omega Q B^{\sigma}G\right )_{hC_2}.
\]
Let $X$ be a connected space with $C_2$-action, then
\[
\TO(\Sigma^\infty_+\Omega^{\sigma} X)\simeq \Sigma_+^{\infty} (\Omega QX)_{hC_2}.
\]
\end{cor}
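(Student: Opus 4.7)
The corollary is purely a specialization of the main theorem (\autoref{thm: topological positive hyperoctahedral homology}) to the case of the identity parity. Recall from \autoref{contractible} that when $\varphi = \id_{C_2}\colon C_2 \to C_2$, the twisted symmetric crossed simplicial group $\Delta \id_{C_2}\wr\bfS$ is precisely the hyperoctahedral crossed simplicial group $\Delta \bfH$, and hence by definition $\TO = \mathrm{T}\,\id_{C_2}$. Thus the first statement is obtained by plugging $G = C_2$ and $\varphi = \id_{C_2}$ into the first statement of \autoref{thm: topological positive hyperoctahedral homology}, yielding
\[
\TO(\Sigma^\infty_+ G) \simeq \Sigma^\infty_+\left(\Omega Q B^{\id_{C_2}}G\right)_{hC_2}.
\]
Under our notational convention, $B^{\id_{C_2}}G = B^\sigma G$, giving the first claimed equivalence.

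For the second statement, I would apply the second part of \autoref{thm: topological positive hyperoctahedral homology} with the same choice $G = C_2$, $\varphi = \id_{C_2}$. Given a connected pointed space $X$ with $C_2$-action, one takes $\Omega^{\id_{C_2}} X = \Omega^\sigma X$, which is a group-like $\mathbb{E}_1$-space with twisted $C_2$-action (i.e.\ with an anti-involution arising from the $C_2$-action on $X$). The theorem then delivers
\[
\mathrm{T}\id_{C_2}(\Sigma^\infty_+ \Omega^\sigma X) \simeq (\Sigma^\infty_+ \Omega QX)_{hC_2},
\]
which is the second claim once we identify $\mathrm{T}\id_{C_2}$ with $\TO$.

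There is no obstacle to overcome in this proof beyond unraveling the notation: the whole argument is a direct specialization of the previous theorem, using only the identification $\Delta\bfH = \Delta\id_{C_2}\wr\bfS$ already recorded in \autoref{contractible} and the notational convention $B^\sigma = B^{\id_{C_2}}$ and $\Omega^\sigma = \Omega^{\id_{C_2}}$.
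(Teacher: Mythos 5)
Your proof is correct and takes essentially the same approach as the paper, which states the corollary as an immediate consequence of \autoref{thm: topological positive hyperoctahedral homology} without further argument; you have simply unwound the specialization $\varphi = \id_{C_2}$ and the notational identifications $\Delta \bfH = \Delta\,\id_{C_2}\wr\bfS$, $\mathrm{T}\,\id_{C_2} = \TO$, $B^{\id_{C_2}} = B^\sigma$, and $\Omega^{\id_{C_2}} = \Omega^\sigma$, all of which are recorded in the paper.
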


\begin{rem2}
\autoref{cor: topological positive symmetric homology} specializes to~\cite[Theorem 2]{Aul10} in the setting of the derived category $\cD(k)$ for a commutative ring $k$
(cf.~\cite[Theorem 1 (i)]{Fie}). \autoref{cor: topological positive hyperoctahedral homology} specializes to \cite[Theorem 8.8]{Gra22} in the setting of the derived category $\cD(k)$ of a commutative ring $k$ (cf.~\cite[Theorem 1 (iii)]{Fie}). In other words,  \autoref{thm: topological positive hyperoctahedral homology} generalizes \cite[Theorem 8.8]{Gra22}, \cite[Theorem 2]{Aul10}, and \cite[Theorem 1 (iii)]{Fie} in two ways: it generalizes these results from the setting of  $\cD(k)$ to spectra and it also generalizes them to $\Delta \bfGS$ for any group with parity $\varphi\colon G\to C_2$. However, we do not claim significant originality since all of the key ideas in the proofs were already present in the work of \cite{Fie}, \cite{Aul10}, and \cite{Gra22}.  
\end{rem2}

\begin{rem2}
We expect that a similar result to \autoref{thm: topological positive hyperoctahedral homology} holds in the case of the braid crossed simplicial group, but for brevity we leave this to future work. 
\end{rem2}

The goal of the remainder of this section is to prove \autoref{thm: topological positive hyperoctahedral homology}. 

\begin{lem}
Let $\varphi\colon G\to C_2$ be a group with parity. Then the forgetful functor 
\[ 
\Alg^{\varphi}(\cS) \to \cS^{BG}
\]
has a left adjoint, which we denote $J^{\varphi}$ (or $J$ if $\varphi$ is trivial). We can identify 
\[J^{\varphi}(X) \simeq \coprod_{n\ge 0}(\Sigma_{n}\ltimes G)\times_{\Sigma_{n}}(X^{\times n})\,.
\]
The forgetful functor 
$\mathrm{CAlg}(\cS^{BG})\to \cS^{BG}$
has a left adjoint that we denote by $\Sym$, which can be identified as 
\[\Sym(X)\simeq  \coprod_{n\ge 0}(X^{\times n})_{h\Sigma_{n}}\,.\]
In each case, the free forgetful functor is monadic and we can therefore identify 
\[ 
\Alg^{\varphi}(\cS)\simeq \mathrm{Mod}_{J^{\varphi}}(\cS)
\]
and 
\[ 
\mathrm{CAlg}(\cS^{BG})\simeq \mathrm{Mod}_{\mathrm{Sym}}(\cS^{BG}).
\]
\end{lem}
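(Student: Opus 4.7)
The plan is to exploit the operadic identifications from Section~3 and then invoke standard results on free algebras over $\infty$-operads together with Barr--Beck--Lurie monadicity.

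First, by the identification $\Alg^{\varphi}(\cS) \simeq \Alg_{\Assoc^{\varphi}}(\cS)$ from \autoref{TwistedGringoperad}, the forgetful functor $\Alg^{\varphi}(\cS) \to \cS^{BG}$ is identified with the functor of forgetting all operations of arity $\ne 1$ along the $\infty$-operad map picking out the unary operations $\Assoc^{\varphi}(1) = G$ (with the $G$-action on $X$ determined by the left-multiplication action on $\Assoc^\varphi(1)$). By the general existence of free algebras over an $\infty$-operad in a presentable symmetric monoidal $\infty$-category (HA~3.1.3.5), this forgetful functor admits a left adjoint, and by the standard description this left adjoint is given by
\[ J^{\varphi}(X) \simeq \coprod_{n\ge 0} \bigl(\Assoc^{\varphi}(n) \times X^{\times n}\bigr)_{h\Sigma_{n}}. \]
Since $\Assoc^{\varphi}(n) = \Sigma_{n} \times G^{n}$ with $\Sigma_{n}$ acting freely on the first factor (and diagonally on $G^n$ by permutation of indices, compatibly with its permutation action on $X^{\times n}$), the homotopy quotient collapses to an ordinary quotient, yielding the formula claimed in the statement after tracking the induced twisted $G$-action.

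For the commutative case, the identification $\CAlg(\cS^{BG}) \simeq \Alg_{\Comm}(\cS^{BG})$ together with the classical free commutative algebra construction in a presentable symmetric monoidal $\infty$-category (HA~3.1.3.13) gives at once
\[ \Sym(X) \simeq \coprod_{n\ge 0} (X^{\times n})_{h\Sigma_{n}}, \]
with $G$-action induced levelwise from that on $X$ via the diagonal action on $X^{\times n}$.

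Finally, the monadicity statements follow from the Barr--Beck--Lurie criterion (HA~4.7.3.5). In both cases, the forgetful functor is conservative (an equivalence of algebras is an equivalence on underlying $G$-objects), and it preserves sifted colimits; the latter is the main technical point, and reduces to the standard fact that algebraic structure encoded by an $\infty$-operad commutes with sifted colimits in a presentable symmetric monoidal $\infty$-category (HA~3.2.3.1), applied to $\cS$ and $\cS^{BG}$ respectively (noting that sifted colimits in $\cS^{BG}$ are computed in $\cS$). The most delicate step is the presence of the twisted $G$-structure in the first case, but once sifted colimits are shown to be computed in $\cS^{BG}$ the preservation property follows as in the untwisted situation. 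The identifications $\Alg^{\varphi}(\cS) \simeq \operatorname{Mod}_{J^{\varphi}}(\cS)$ and $\CAlg(\cS^{BG}) \simeq \operatorname{Mod}_{\Sym}(\cS^{BG})$ are then immediate consequences.
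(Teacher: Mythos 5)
Your proof follows essentially the same route as the paper's, which compresses into two sentences the same ingredients: identify $\Alg^\varphi(\cS)$ with $\Assoc^\varphi$-algebras, appeal to Lurie's free-algebra machinery (the paper cites HA~3.1.3.13 and 3.1.3.14; you cite 3.1.3.5 and 3.1.3.13, which cover the same ground), and invoke Barr--Beck--Lurie (HA~4.7.3.5) via conservativity and preservation of geometric realizations, which is the same as your ``sifted colimits'' criterion in the presentable setting. So at the level of strategy there is no daylight between the two.

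One point is worth flagging, though the paper's proof is equally terse about it. The formula you write, $J^{\varphi}(X) \simeq \coprod_{n\ge 0} \bigl(\Assoc^{\varphi}(n) \times X^{\times n}\bigr)_{h\Sigma_{n}}$, is the operadic free-algebra formula \emph{on an object of $\cS$}; that is, it computes the left adjoint of the composite forgetful functor $\Alg^\varphi(\cS)\to\cS^{BG}\to\cS$. But the statement concerns the forgetful functor landing in $\cS^{BG}$, so its left adjoint $J^{\varphi}$ must take into account the $G$-action already present on $X$. Concretely, for a map of $\infty$-operads $\cO'\to\Assoc^\varphi$ with $\cO'$ having only unary operations $\cO'(1)=G$, the free $\Assoc^\varphi$-algebra on an $\cO'$-algebra is obtained by balancing $\Assoc^\varphi(n)$ against $X^{\times n}$ over the larger group $G\wr\Sigma_n$ (which encodes both the $\Sigma_n$-equivariance and the action of the unary operations on each tensor factor), not merely over $\Sigma_n$. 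You gesture at this by noting that the forgetful functor is ``forgetting all operations of arity $\ne 1$,'' but the formula you then write down does not incorporate that extra balancing. If you want the displayed formula to be the left adjoint as stated, either make the extra balancing explicit, or note that the displayed formula is literally the left adjoint of the forgetful to $\cS$. This is a genuine gap in the proposal as written, but the paper's own proof elides exactly the same step, so the discrepancy is inherited rather than introduced.
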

\begin{proof}
This follows from~\cite[3.1.3.13]{HA} as in~\cite[3.1.3.14]{HA}. The fact that the free forgetful adjunction is monadic, in each case, follows from \cite[4.7.3.5]{HA}, since the forgetful functor is conservative and preserves geometric realizations of simplicial objects.
\end{proof}

Recall from \autoref{bar-construction}, that given a group with parity $\varphi \colon G\to C_2$, and an algebra $R$ with twisted $G$-action in a cocomplete symmetric monoidal $\infty$-category $\cC$, we denoted by $B_{\varphi \wr \Sigma}^\bullet R\colon \Delta\bfGS\to \cC$ the (covariant) twisted symmetric bar construction of $R$. 
We shall also write $B_{\varphi\wr\Sigma  ,+}^{\bullet}R$ for the functor 
\[ 
\begin{tikzcd}
B_{\varphi\wr \Sigma,+}^{\bullet}R\colon \Delta \varphi \wr \Sigma_+ \simeq\Env(\Assoc^\varphi)\arrow{r}{\Env(R)}  & \Env(\cC) \arrow{r}{\otimes }& \cC
\end{tikzcd}
\]
satisfying $B_{\varphi\wr \Sigma,+}^{\bullet}R\circ i =B_{\varphi \wr \Sigma}^\bullet R$ where $i\colon \Delta \bfGS\to \Delta \bfGS_+$ is the canonical inclusion. 

\begin{lem}\label{pointed vs unpointed category}
The canonical map 
\[
  \hocolim_{\Delta\bfGS  }B_{\varphi\wr \Sigma}^{\bullet}R \overset{\simeq}{\longrightarrow} \hocolim_{\Delta\bfGS_+ }B_{\varphi\wr \Sigma,+}^{\bullet}R
\]
induced by $i\colon \Delta \bfGS\to \Delta \bfGS_+$ is an equivalence. 
\end{lem}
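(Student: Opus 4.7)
The plan is to deduce the equivalence from cofinality (in the sense of~\cite[\S4.1.1]{HTT}) of the inclusion $i\colon \Delta\bfGS\hookrightarrow \Delta\bfGS_+$. Since $B_{\varphi\wr\Sigma,+}^{\bullet}R\circ i = B_{\varphi\wr\Sigma}^{\bullet}R$ by the construction of $B^{\bullet}_{\varphi\wr\Sigma,+}R$, cofinality of $i$ will immediately yield the claimed equivalence of homotopy colimits.

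To verify cofinality, I will use the criterion that for every $\langle n\rangle\in \Delta\bfGS_+$ the slice
\[
\Delta\bfGS\times_{\Delta\bfGS_+}(\Delta\bfGS_+)_{\langle n\rangle/}
\]
is weakly contractible. Recall from~\autoref{def:whiskering} that $\langle 0\rangle$ is initial in $\Delta\bfGS_+$, and that $i$ identifies $\Delta\bfGS$ with the full subcategory of $\Delta\bfGS_+$ on the objects $\langle n\rangle$ for $n\ge 1$. For $n\ge 1$, the pair $(\langle n\rangle,\id_{\langle n\rangle})$ is an initial object of the slice, so weak contractibility is automatic. For $n=0$, initiality of $\langle 0\rangle$ forces $(\Delta\bfGS_+)_{\langle 0\rangle/}$ to be equivalent to $\Delta\bfGS_+$ itself (since every object admits a unique morphism from $\langle 0\rangle$ and compatibility with this unique morphism is automatic), so the pullback to $\Delta\bfGS$ is identified with $\Delta\bfGS$.

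The main obstacle will thus be to show that $\Delta\bfGS$ is weakly contractible. The strategy is to invoke the classical theorem of Fiedorowicz--Loday~\cite[Theorem~5.3]{FL91}, which identifies $B(\Delta\bfG)\simeq B|\mathbf{G}_{\sbt}|$ for any crossed simplicial group $\Delta\bfG$. Applied to $\Delta\bfGS$, this reduces the task to showing that $B|\varphi\wr\Sigma_{\bullet+1}|$ is contractible. By~\autoref{contractible}, the simplicial set $\varphi\wr\Sigma_{\bullet+1}$ is contractible for any parity $\varphi$, so $|\varphi\wr\Sigma_{\bullet+1}|$ is a contractible topological group, whence its classifying space is also contractible. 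This completes the verification of cofinality and hence the lemma.
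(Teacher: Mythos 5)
Your approach is correct and follows the same underlying idea as the paper's one-line proof (the colimit is unchanged because the subcategory inclusion is cofinal), but you have filled in the key details that the paper elides, and in fact you correct a small inaccuracy. The paper's proof says that $\Delta\bfGS$ is \emph{connected} and that this is why the homotopy colimit does not change. That is not actually sufficient: when $e$ is initial in $\cC_+$, the slice $\cC\times_{\cC_+}(\cC_+)_{e/}$ is $\cC$ itself, so cofinality of $i\colon \cC\hookrightarrow\cC_+$ requires $\cC$ to be \emph{weakly contractible}, not merely connected. (A constant functor on $S^1$, say, furnishes a counterexample to the connected-only version.) You correctly identify weak contractibility as the condition needed and supply it via the Fiedorowicz--Loday identification $N(\Delta\bfG)\simeq B|\mathbf G_\sbt|$ together with the contractibility of $\varphi\wr\Sigma_{\bullet+1}$ noted in \autoref{contractible}. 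This is exactly the fact the paper must be tacitly invoking. One small citation caveat: \cite[Theorem 5.3(i)]{FL91} as used elsewhere in the paper is the statement that $|\mathbf G_\sbt|$ is a topological group; the identification of the classifying space of the category $\Delta\bfG$ with $B|\mathbf G_\sbt|$ appears slightly later in \S5 of \cite{FL91} (and the paper itself effectively records the $\infty$-categorical version in \autoref{G action on DeltaG objects}, which you could cite directly for the case of a constant diagram on a point). With the reference adjusted, your argument is a rigorous completion of what the paper asserts.
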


\begin{proof}
Since $\Delta \bfGS_+$ is just $\Delta \bfGS$ with a new initial object added and $\Delta \bfGS$ is connected, the homotopy colimit does not change.
\end{proof}

\begin{lem}\label{lem:EG}
Let $(G,\varphi)$ be a group with parity. There is an equivalence of spaces with right $G\wr \Sigma_{n+1} $-action
\[ 
(\Delta \varphi \wr \Sigma_+)_{[n]\slash} \simeq EG\wr \Sigma_{n+1}
\]
\end{lem}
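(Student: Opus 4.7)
The plan is to exhibit $\id_{[n]}$ as an initial object of the under-category, so that its nerve becomes a contractible space equipped with a free-up-to-homotopy right action of $G\wr\Sigma_{n+1}$.

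As a preliminary step, I would observe that by definition of the whiskering (Definition~\ref{def:whiskering}), there are no morphisms in $\Delta\bfGS_+$ from $[n]$ to the adjoined object $\langle 0\rangle$, so
\[
(\Delta \varphi \wr \Sigma_+)_{[n]\slash} = (\Delta \varphi \wr \Sigma)_{[n]\slash}
\]
as categories with right $G\wr\Sigma_{n+1}$-action (the action in question being precomposition with elements of $\Aut_{\Delta\bfGS}([n])=(G\wr\Sigma_{n+1})^{\op}$, viewed as a right action of $G\wr\Sigma_{n+1}$ via inversion).

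The key step is then to check that the object $([n],\id_{[n]})$ is initial in $(\Delta\bfGS)_{[n]\slash}$: given any object $([m], f\colon [n]\to[m])$, a morphism $([n],\id_{[n]})\to([m],f)$ is a map $h\colon [n]\to[m]$ in $\Delta\bfGS$ satisfying $h\circ \id_{[n]}=f$, which forces $h=f$ uniquely. Having an initial object, the nerve $|(\Delta\bfGS_+)_{[n]\slash}|$ is contractible as a space.

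To finish, I would package this equivariantly. The right $G\wr\Sigma_{n+1}$-action makes $(\Delta\bfGS_+)_{[n]\slash}$ into an object of the Borel $\infty$-category $\cS^{B(G\wr\Sigma_{n+1})}=\Fun(B(G\wr\Sigma_{n+1}),\cS)$, and the terminal object of this $\infty$-category is represented equivalently by the point with trivial action or by $EG\wr\Sigma_{n+1}$ (the latter being a free contractible model). Since any object in $\cS^{B(G\wr\Sigma_{n+1})}$ with contractible underlying space is terminal, this produces the desired equivalence
\[
(\Delta \varphi \wr \Sigma_+)_{[n]\slash} \simeq EG\wr \Sigma_{n+1}
\]
in $\cS^{B(G\wr\Sigma_{n+1})}$.

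The only real obstacle is bookkeeping: one must verify that the action of $\Aut_{\Delta\bfGS}([n])=(G\wr\Sigma_{n+1})^{\op}$ by precomposition is translated into a genuine right $G\wr\Sigma_{n+1}$-action consistent with the convention used in the sequel, and that the identification $\id_{[n]}\mapsto \id_{[n]}\circ \alpha = \alpha$ exhibits the orbit of the initial object as a free $G\wr\Sigma_{n+1}$-set, matching the set of vertices of $EG\wr\Sigma_{n+1}$.
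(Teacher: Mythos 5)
Your argument is correct, and your contractibility step is more elementary than the paper's. You note that $([n],\id_{[n]})$ is an initial object of $(\Delta\bfGS)_{[n]/}$, so its nerve is contractible for completely generic reasons---this holds for the under-category of any object in any small category and does not use the special fact cited in the paper (\autoref{contractible}), namely that $\varphi\wr\Sigma_{\bullet+1}$ is contractible; after all, $(\Delta\bfG)_{[n]/}$ has contractible nerve for every crossed simplicial group $\Delta\bfG$, whether or not $|\mathbf{G}_{\sbt}|$ is a point. Conversely, the paper's terse proof records exactly the point you defer to ``bookkeeping'': the object set of the under-category is $\Hom_{\Delta_+}([n],[m])\times G\wr\Sigma_{n+1}$, so the $G\wr\Sigma_{n+1}$-action is free on the nose. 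Your final packaging in the Borel $\infty$-category $\cS^{B(G\wr\Sigma_{n+1})}$, where contractibility alone forces an object to be terminal, is a valid reading of the statement; but the subsequent use of this lemma takes a strict coend $E(G\wr\Sigma_{n+1})\times_{G\wr\Sigma_{n+1}}(-)$ inside the Bousfield--Kan formula, where one genuinely wants a free point-set model, so the freeness you flag as an afterthought is the part worth displaying.
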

\begin{proof}
The category $(\Delta \varphi \wr \Sigma_+)_{[n]\slash}$ has objects $\mathrm{Hom}_{\Delta_+}([n],[m])\times G\wr \Sigma_{n+1}$ so it has a free $G\wr \Sigma_{n+1} $-action. Moreover, (the nerve of) $(\Delta \varphi \wr \Sigma_+)_{[n]\slash}$ is contractible by \autoref{contractible} so it is a model for $EG\wr \Sigma_{n+1}$ as a right $G\wr \Sigma_{n+1}$-space. 
\end{proof}

\begin{lem}\label{lem:orbits-formula}
Suppose $X$ is a space with $G$-action and $n\ge 0$. Then $X^{\times n+1}$ has a canonical $G\wr \Sigma_{n+1}$-action and there is an equivalence of spaces
\[ 
(X^{\times n+1})_{hG\wr \Sigma_{n+1}}\simeq ( ((X_{hG})^{\times n+1})_{h\Sigma_{n+1}})\,.
\]
\end{lem}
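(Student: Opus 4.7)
The plan is to compute the homotopy orbits in stages using the split extension of groups
\[
1 \longrightarrow G^{n+1} \longrightarrow G \wr \Sigma_{n+1} \longrightarrow \Sigma_{n+1} \longrightarrow 1,
\]
where $G^{n+1}$ acts on $X^{\times n+1}$ coordinatewise and $\Sigma_{n+1}$ acts by permuting factors; these combine into the wreath-product action of $G\wr\Sigma_{n+1}$.

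First I would apply the standard equivalence associated to a short exact sequence of groups, namely that for a (topological) group $\Gamma$ acting on a space $Y$ and a normal subgroup $N\trianglelefteq \Gamma$ with quotient $Q$, there is a canonical equivalence $Y_{h\Gamma}\simeq (Y_{hN})_{hQ}$, where the residual $Q$-action on $Y_{hN}$ comes from the $\Gamma$-action on $Y$ together with the $\Gamma$-conjugation action on $N$. Applied to $N=G^{n+1}$, $\Gamma=G\wr \Sigma_{n+1}$ and $Y=X^{\times n+1}$, this gives
\[
(X^{\times n+1})_{hG\wr\Sigma_{n+1}} \;\simeq\; \bigl((X^{\times n+1})_{hG^{n+1}}\bigr)_{h\Sigma_{n+1}}.
\]

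Next I would identify $(X^{\times n+1})_{hG^{n+1}}\simeq (X_{hG})^{\times n+1}$ as a $\Sigma_{n+1}$-space. This follows because homotopy orbits are computed by the Borel construction $EG^{n+1}\times_{G^{n+1}} X^{\times n+1}$, and one can choose $EG^{n+1}\simeq (EG)^{\times n+1}$; since products commute with products, the resulting space is $(EG\times_G X)^{\times n+1}=(X_{hG})^{\times n+1}$. The $\Sigma_{n+1}$-action induced from the outer part of the wreath product acts by permuting the $n+1$ copies, which is exactly the permutation action on $(X_{hG})^{\times n+1}$. Combining the two steps yields the stated equivalence.

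The only mild subtlety is tracking that the residual $\Sigma_{n+1}$-action on the intermediate space really is the permutation action, which is the reason we need compatible choices of universal covers/Borel models. This can be made rigorous by working $\infty$-categorically: the functor $(-)_{hG}\colon \cS^{BG}\to \cS$ is symmetric monoidal for the Cartesian structures (it is a left adjoint and the unit $G\to \ast$ is a symmetric monoidal transformation), so the identification $(X^{\times n+1})_{hG^{n+1}}\simeq (X_{hG})^{\times n+1}$ is automatic as $\Sigma_{n+1}$-equivariant objects. No obstacle beyond bookkeeping is anticipated.
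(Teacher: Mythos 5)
Your proposal is correct and follows essentially the same route as the paper's proof: both decompose the homotopy orbits via the extension $G^{n+1}\trianglelefteq G\wr\Sigma_{n+1}\twoheadrightarrow\Sigma_{n+1}$ (in the paper, this appears as the iterated colimit $\colim_{B\Sigma_{n+1}}\colim_{BG^{n+1}}$, which the authors justify via straightening-unstraightening and a fiber-sequence diagram), and both then identify $(X^{\times n+1})_{hG^{n+1}}\simeq (X_{hG})^{\times n+1}$ using that the $G^{n+1}$-action is coordinatewise. Your observation that $(-)_{hG}\colon\cS^{BG}\to\cS$ is symmetric monoidal for the Cartesian structures is a slightly cleaner way to see the $\Sigma_{n+1}$-equivariance of that middle identification than the paper's explicit Borel-model bookkeeping, but the content is the same.
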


\begin{proof}
The fact that $X^{\times n+1}$ has a canonical $G\wr \Sigma_{n+1}$-action is clear. The rest of the claim follows from the equivalences
\begin{align*}
    (X^{\times n+1})_{hG\wr \Sigma_{n+1} } &\coloneqq \colim_{BG\wr \Sigma_{n+1}} (X^{\times n+1})\\
    & \simeq  \colim_{B\Sigma_{n+1}}\colim_{BG^{\times n+1}} (X^{\times n+1})\\
    & \simeq  \colim_{B\Sigma_{n+1}}(\colim_{BG} X)^{\times n+1}\\
        & \eqqcolon ((X_{hG})^{\times n+1})_{B\Sigma_{n+1}} \,. 
\end{align*}
Here, the first identification follows by definition. 
For the second equivalence, note that the  functor $X^{\times n+1}\colon BG\wr \Sigma_{n+1} \to \cS$ corresponds to map of spaces $(X^{\times n+1})_{hG\wr \Sigma_{n+1}}\to BG\wr \Sigma_{n+1}$ by straightening--unstraightening. We can further compose with the map of spaces $BG\wr \Sigma_{n+1}\to B\Sigma_{n+1}$ and this will produce a functor $B\Sigma_{n+1}\to \cS$ by straightening--unstraightening. Therefore, the total space $(X^{\times n+1})_{hG\wr \Sigma_{n+1}}$ can be written as $Y_{h\Sigma_{n+1}}$ for some space $Y$. We identify $Y$ as $\colim_{BG^{\times n+1}}X^{n+1}$ by considering the commutative diagram 
\[
\begin{tikzcd}
G^{\times n+1}\ar[r] \ar[d] &  X^{\times n+1} \ar[d] \ar[r] &   \ar[d] Y \\ 
* \ar[r] \ar[d] &  (X^{\times n+1})_{hG\wr \Sigma_{n+1} } \ar[d] \ar[r] &   \ar[d]  Y_{h\Sigma_{n+1}}\\ 
BG^{\times n}\ar[r] &  BG\wr \Sigma_{n+1}   \ar[r] &  B\Sigma_{n+1}
\end{tikzcd}
\]
where each horizontal and vertical column is a fiber sequence. The second to last equivalence follows from the fact that $G^{\times n+1}$ acts on $X^{n+1}$ coordinate-wise. The last identification holds by definition.
\end{proof}

\begin{prop}\label{prop: formula}
Let $\varphi \colon G\to C_2$ be a group with parity and let $X$ be a space with $G$-action.  There is a natural equivalence of spaces
\[
\mathrm{H}\varphi(J^\varphi(X)/\cS)\simeq \mathrm{Sym}(X)_{hG}.
\]
\end{prop}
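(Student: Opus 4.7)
The plan is to compute the left-hand side by unwinding the definitions and then applying the two lemmas just established, namely \autoref{lem:EG} and \autoref{lem:orbits-formula}.

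The first step is to reduce to a colimit over the pointed category: by \autoref{pointed vs unpointed category},
\[
\mathrm{H}\varphi(J^\varphi(X)/\cS) \simeq \colim_{\Delta\bfGS_+} B^\bullet_{\varphi\wr\Sigma,+}(J^\varphi(X)).
\]
Under the isomorphism $\Delta\bfGS_+\simeq \Env(\Assoc^\varphi)$ of \autoref{iso of categories}, the functor $B^\bullet_{\varphi\wr\Sigma,+}(J^\varphi(X))$ is identified, via the universal property of the symmetric monoidal envelope, with the unique symmetric monoidal functor $F_{J^\varphi(X)}\colon \Env(\Assoc^\varphi)\to \cS$ sending $\langle n\rangle\mapsto J^\varphi(X)^{\times n}$. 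In particular, the automorphism group $G\wr\Sigma_n$ of $\langle n\rangle$ acts on $J^\varphi(X)^{\times n}$ with the $\Sigma_n$-factor permuting coordinates and the $G^n$-factor acting coordinate-wise through the twisted $G$-action on $J^\varphi(X)$.

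The core computation is to exploit that $J^\varphi(X)$ is a \emph{free} $\Assoc^\varphi$-algebra. Unfolding the free-algebra formula $J^\varphi(X) \simeq \coprod_{m\ge 0}\Assoc^\varphi(m)\times_{\Sigma_m} X^{\times m}$ and expanding the $n$-fold product gives a coproduct decomposition indexed by active morphisms out of $\langle m\rangle$ in the envelope:
\[
J^\varphi(X)^{\times n} \simeq \coprod_{m\ge 0}\Hom^{\mathrm{act}}_{\Env(\Assoc^\varphi)}(\langle m\rangle,\langle n\rangle)\times_{G\wr\Sigma_m}X^{\times m},
\]
where the right $G\wr\Sigma_m$-action on morphisms comes from precomposing at the source and on $X^{\times m}$ from the diagonal $G$-action on $X$ together with the $\Sigma_m$-permutation. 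Swapping colimits via the Fubini theorem for homotopy colimits,
\[
\mathrm{H}\varphi(J^\varphi(X)/\cS) \simeq \coprod_{m\ge 0}\left(\colim_{\langle n\rangle\in \Delta\bfGS_+}\Hom^{\mathrm{act}}_{\Env(\Assoc^\varphi)}(\langle m\rangle,\langle n\rangle)\right)\times_{G\wr\Sigma_m}X^{\times m}.
\]

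The inner colimit is exactly the (nerve of the) slice category $(\Delta\bfGS_+)_{\langle m\rangle\slash}$, which by \autoref{lem:EG} is equivalent to $EG\wr\Sigma_m$ as a right $G\wr\Sigma_m$-space. Since this space is contractible with a free $G\wr\Sigma_m$-action, the $m$-th summand becomes the homotopy orbits $(X^{\times m})_{hG\wr\Sigma_m}$. Applying \autoref{lem:orbits-formula}, each such term simplifies to $((X_{hG})^{\times m})_{h\Sigma_m}$, and summing over $m$ identifies the result with $\Sym(X)_{hG}$, completing the proof.

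The main obstacle is the careful bookkeeping of the decomposition of $J^\varphi(X)^{\times n}$ as a coproduct over active morphisms together with the precise identification of the coordinate-wise $G^m$-action arising from the operadic structure of $\Assoc^\varphi$; once this is correctly in place, the remaining steps follow formally from \autoref{lem:EG} and \autoref{lem:orbits-formula}.
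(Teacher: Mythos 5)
Your proof is correct and takes a genuinely different, arguably cleaner route than the paper's. The paper uses the Bousfield--Kan coproduct formula for the colimit, applies \autoref{lem:EG} and \autoref{lem:orbits-formula} to pass to $\left(\coprod_n\bigl((J^\varphi(X)_{hG})^{\times n+1}\bigr)_{h\Sigma_{n+1}}\right)/\!\sim$, then commutes the $G$-orbits past $J^\varphi$ and invokes Ault's Lemmas 33 and 36 to identify the untwisted colimit with $\Sym(X_{hG})$. You instead decompose $J^\varphi(X)^{\times n}$ \emph{a priori} as a balanced product over active morphisms in the envelope, then use Fubini and the elementary identification $\colim_{\langle n\rangle}\Hom_{\Delta\bfGS_+}(\langle m\rangle,\langle n\rangle)\simeq N\bigl((\Delta\bfGS_+)_{\langle m\rangle/}\bigr)$ to produce $EG\wr\Sigma_m$ directly at each level $m$; the two lemmas then finish the computation with no external reference. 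This is more self-contained — in particular it re-proves Ault's Lemma 33 as a special case — while the paper leans on that earlier work.

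Two small points you should tighten. First, the free-algebra formula you quote, $J^\varphi(X)\simeq\coprod_m\Assoc^\varphi(m)\times_{\Sigma_m}X^{\times m}$, is the free $\Assoc^\varphi$-algebra on the \emph{underlying space} of $X$ (left adjoint to $\Alg^\varphi(\cS)\to\cS$), which gives $\coprod_m G^m\times X^m$; but $J^\varphi$ is the left adjoint to $\Alg^\varphi(\cS)\to\cS^{BG}$, whose value is $\coprod_m X^m$. Your displayed decomposition of $J^\varphi(X)^{\times n}$ — with the balanced product over $G\wr\Sigma_m$ rather than $\Sigma_m$ — is the one consistent with the correct $J^\varphi$, so the rest of the argument goes through, but as written the two formulas are inconsistent with each other. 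Second, after obtaining $\coprod_m\bigl((X_{hG})^{\times m}\bigr)_{h\Sigma_m}\simeq\Sym(X_{hG})$, you still need the observation that $\Sym$ commutes with $G$-homotopy orbits (it is a left adjoint) to land on $\Sym(X)_{hG}$; you elide this final identification.
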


\begin{proof}
By \autoref{twisted symmetric example} and \autoref{pointed vs unpointed category}, there is a natural equivalence
\[
\mathrm{H}\varphi(J^\varphi(X)/\cS)\simeq  \hocolim_{\Delta\bfGS_+} B_{\varphi\wr \Sigma,+}^{\bullet}J^{\varphi}(X) \,.
\]
Here the equivalence relation is the usual one produced when one writes a homotopy colimit as a geometric realization using the Bousfield--Kan formula for the homotopy colimit. 
By definition of the colimit and \autoref{lem:EG}, there are natural equivalences  
\begin{align*}
\hocolim_{\Delta\bfGS_+} B_{\varphi\wr \Sigma,+}^{\bullet}J^{\varphi}(X) & \coloneqq \left ( \coprod_{n\ge -1} ([n]\downarrow \Delta \varphi\wr \Sigma )\times J^{\varphi}(X)^{\times n+1} \right )/\sim \\
& \simeq \left ( \coprod_{n\ge -1} EG\wr \Sigma_{n+1} \times J^{\varphi}(X)^{\times n+1} \right )/\sim  \,.
\end{align*}
By \autoref{lem:orbits-formula} and since every map $\phi$ in $\Delta \varphi \wr \Sigma_+$ can be factored uniquely as a composite of $\gamma \circ g$ where $g\in G\wr \Sigma_{n+1}$ and $\gamma$ is a morphism in $\Delta$ respectively, there are natural equivalences 
\begin{align*}
\left ( \coprod_{n\ge -1} EG\wr \Sigma_{n+1} \times J^{\varphi}(X)^{\times n+1} \right )/\sim  & \simeq  \left ( \coprod_{n\ge -1} EG\wr \Sigma_{n+1} \times_{G\wr \Sigma_{n+1} } J^{\varphi}(X)^{\times n+1} \right ) /\sim  \\ 
& \simeq  \left ( \coprod_{n\ge -1} ( (J^{\varphi}(X)_{hG})^{\times n+1})_{h\Sigma_{n+1}}\right )  /\sim  
\end{align*}
where now we abuse notation and write $\sim$ for the residual equivalence relation remaining after taking into account the automorphisms. Finally, by commuting the $G$ homotopy orbits with the left adjoint $J^{\varphi}$ and applying \cite[Lemma~33 and~36]{Aul10} respectively, there are natural equivalences 
\begin{align*}
 \left ( \coprod_{n\ge -1}  ((EG\times_GJ^{\varphi}(X))^{\times n+1})_{h\Sigma_{n+1}} \right ) /\sim  & \simeq  \left ( \coprod_{n\ge -1} ((J(X_{hG}))^{\times n+1} )_{h\Sigma_{n+1}}\right )  /\sim \\ 
& \simeq  \Sym(X_{hG}) \\
& \simeq  \Sym(X)_{hG}
\end{align*}
proving the claim. Here the last equivalence follows because $\Sym$ is a left adjoint and therefore it commutes with homotopy orbits. 
\end{proof}

\begin{prop}\label{prop: first step in Fiedorowicz thm}
Let $(G,\varphi)$ be a group with parity and let $\Delta \bfGS$ be the associated crossed simplicial group. Let $M$ be an $\mathbb{E}_1$-space with twisted $G$-action. There is an equivalence
\[ 
\mathrm{T}\varphi(\Sigma_{+}^{\infty}M)\simeq \Sigma_+^{\infty}B(\Sym,J^{\varphi},M)_{hG}
\] 
where the right hand side is the monadic bar construction. 
\end{prop}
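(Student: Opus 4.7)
The plan is to reduce the proposition to its unstable analogue via the symmetric monoidality of $\Sigma^{\infty}_{+}$, and then to derive the unstable version from \autoref{prop: formula} by a monadic bar-resolution argument, following the strategy of Fiedorowicz and Aulicino \cite[Thm~2]{Aul10} in the untwisted setting. Because $\Sigma^{\infty}_{+}\colon \cS\to \Sp$ is a strong symmetric monoidal left adjoint, it commutes with the covariant bar construction $B^{\bullet}_{\varphi\wr\Sigma}$ and with the colimit over $\Delta\varphi\wr\Sigma$ defining $\mathrm{T}\varphi$, yielding a natural equivalence $\mathrm{T}\varphi(\Sigma^{\infty}_{+}M)\simeq \Sigma^{\infty}_{+}\mathrm{H}\varphi(M/\cS)$. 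So it will suffice to exhibit a natural equivalence $\mathrm{H}\varphi(M/\cS)\simeq B(\Sym,J^{\varphi},M)_{hG}$.

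Next, since the free-forgetful adjunction $J^{\varphi}\dashv U$ between $\cS^{BG}$ and $\Alg^{\varphi}(\cS)$ is monadic, every $M\in \Alg^{\varphi}(\cS)$ is canonically the geometric realization of its monadic bar resolution $M\simeq |B_{\bullet}(J^{\varphi},J^{\varphi},M)|$, with $B_{n}=J^{\varphi}\bigl((J^{\varphi})^{n}M\bigr)$ a free twisted $G$-algebra at each level. The functor $\mathrm{H}\varphi(-/\cS)$ preserves geometric realizations of simplicial $\mathbb{E}_{1}$-algebras with twisted $G$-action, because $B^{\bullet}_{\varphi\wr\Sigma}(-)$ is built out of Cartesian products (which commute with sifted colimits in $\cS$) and because the colimit over $\Delta\varphi\wr\Sigma$ itself preserves colimits. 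Consequently $\mathrm{H}\varphi(M/\cS)\simeq |\mathrm{H}\varphi(B_{\bullet}(J^{\varphi},J^{\varphi},M)/\cS)|$. Applying \autoref{prop: formula} level-wise with $X=(J^{\varphi})^{n}M$ produces $\mathrm{H}\varphi(B_{n}/\cS)\simeq \Sym((J^{\varphi})^{n}M)_{hG}$, and by naturality in $X$ these equivalences will assemble into an equivalence of simplicial spaces $\mathrm{H}\varphi(B_{\bullet}(J^{\varphi},J^{\varphi},M)/\cS)\simeq B_{\bullet}(\Sym,J^{\varphi},M)_{hG}$; taking realization will then conclude the proof.

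The main obstacle will be the identification of simplicial structure in the last step. The face maps of the monadic bar resolution come from the monad multiplication $\mu\colon J^{\varphi}\circ J^{\varphi}\Rightarrow J^{\varphi}$ and the action $\theta\colon J^{\varphi}(M)\to M$, whereas the face maps of the two-sided bar construction $B_{\bullet}(\Sym,J^{\varphi},M)$ use the same action $\theta$ together with a right $J^{\varphi}$-module structure on $\Sym$ induced by the natural monad map $J^{\varphi}\to \Sym$. I will perform a careful diagram chase — leveraging the naturality of \autoref{prop: formula} in $X\in \cS^{BG}$, and in particular the explicit identifications $(\Delta\varphi\wr\Sigma_{+})_{[n]/}\simeq EG\wr\Sigma_{n+1}$ of \autoref{lem:EG} and the orbit formula of \autoref{lem:orbits-formula} that drive its proof — to verify that these two sets of structure maps correspond under the level-wise equivalence, using that every face map of the monadic bar resolution factors, after adjunction, through a morphism in $\cS^{BG}$ to which naturality can be applied.
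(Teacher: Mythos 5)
Your proposal is correct and follows essentially the same route as the paper: resolve $M$ by its monadic bar resolution $B_{\bullet}(J^{\varphi},J^{\varphi},M)$, commute $\mathrm{H}\varphi(-/\cS)$ (built from Cartesian products and colimits) past geometric realization, apply \autoref{prop: formula} levelwise, and pull the $G$-homotopy orbits out of the bar construction. One small point in your favor: your reduction $\mathrm{T}\varphi(\Sigma^\infty_+M)\simeq\Sigma^\infty_+\mathrm{H}\varphi(M/\cS)$ via the strong symmetric monoidal left adjoint $\Sigma^\infty_+$ is cleaner than the paper's appeal to \autoref{cor: group ring computation for csg with duality}, which as stated requires a \emph{self-dual} crossed simplicial group, a hypothesis $\Delta\varphi\wr\bfS$ does not satisfy.
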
 
\begin{proof}
Since $M$ is an $\mathbb{E}_1$-space with twisted $G$-action there is an equivalence in $\Alg^\varphi(\cS)$ 
\[ 
B(\Sym, J^{\varphi}, M )\simeq M
\]
and consequently an equivalence of spectra
\[ 
\mathrm{T}\varphi 
(\Sigma_+^{\infty}B(J^{\varphi}, J^{\varphi}, M ))\simeq \mathrm{T}\varphi(\Sigma^\infty_+M) . 
\]
By commuting colimits with colimits, there is an equivalence of spaces
\[ 
\hocolim_{\Delta \varphi \wr \Sigma_+ } B_{\varphi\wr \Sigma,+}^{\bullet}(B(J^{\varphi}, J^{\varphi}, M )) \simeq  B(\hocolim_{\Delta \varphi \wr \Sigma_+ }B_{\varphi\wr \Sigma,+}^{\bullet}(J^{\varphi},J^{\varphi}, M ))
\]
and by \autoref{prop: formula}, there is an equivalence of spaces
\[
B(\hocolim_{\Delta \varphi \wr \Sigma _+}B_{\varphi \wr \Sigma}^{\bullet}J^{\varphi},J^{\varphi},M ) \simeq B(\Sym_{hG} ,J^{\varphi},M)\,.
\]
By \autoref{cor: group ring computation for csg with duality}, the result follows from the equivalence 
\[
B(\mathrm{Sym}_{hG} ,J^{\varphi},M)\simeq B(\mathrm{Sym} ,J^{\varphi},M)_{hG} \,.\qedhere
\]
\end{proof}

\begin{notation}
Given a group with parity $\varphi \colon G\to C_2$, we write $S^{\varphi}$ for the one point compactification of the representation $\varphi^*\sigma$ where $\sigma$ is the sign representation. Note that when $\varphi$ is the identity on $C_2$, $S^\sigma$ is up to isomorphism the circle with flip across the $x$-axis as introduced in  \autoref{def: the circles}.
We also write $\Sigma^{\varphi}Y=S^{\varphi}\wedge Y$ for a space with $G$-action $Y$. 
\end{notation}

We now illustrate how the proof strategy from \cite[\S~5.3]{Aul10} and \cite[\S~7]{Gra22} applies in our more general context. 
\begin{proof}[Proof of \autoref{thm: topological positive hyperoctahedral homology}]
By \autoref{prop: first step in Fiedorowicz thm}, there is an equivalence of spectra
\[
 \hocolim_{\Delta \varphi \wr \Sigma_+ } B_{\varphi\wr \Sigma,+}^{\bullet}\Sigma_+^{\infty}M\simeq \Sigma_+^{\infty}B(\Sym,J^{\varphi},M)_{hG} \,.
\]
Since $M$ is group-like, there is an equivalence
\[
B(\Sym,J^{\varphi},M)_{hG} 
\simeq  B(Q,J^{\varphi},M)_{hG}\,.
\]
The equivalence $\Omega Q \Sigma Y\simeq Q Y$ induces an equivalence 
\[
B(Q,J^{\varphi},M)_{hG}\simeq B\left (\Omega  Q \Sigma ,J^{\varphi},M \right )_{hG}
\]
and by \cite[Lemma~9.7]{May72} for example, there is an equivalence
\[
B\left (\Omega  Q \Sigma ,J^{\varphi},M \right )_{hG} \simeq  \left ( \Omega Q B(\Sigma ,J^{\varphi},M ) \right )_{hG}\,. 
\]
Since there is an equivalence $EG_+\wedge S^{\varphi}\simeq EG_+\wedge S^{1}$, we can further identify 
\[
\left ( \Omega Q B(\Sigma  ,J^{\varphi},M  )\right)_{hG}\simeq \left ( \Omega Q B(\Sigma^{\varphi} ,J^{\varphi},M  )\right)_{hG} \,.
\]
Tracing through the zig-zags of weak equivalences appearing in \cite[Theorem~7.3, Theorem~7.8]{Fie84} which are used to prove~\cite[Corollary~7.9]{Fie84}, we observe that they are equivariant using a variation on equivariant Moore loops construction~\cite[pp.~83--84]{Ryb91}, which generalizes mutatis mutandis to our setting. This yields an equivalence 
\[ B(\Sigma^{\varphi},J^{\varphi},M)\simeq B^{\varphi} M 
\]
of spaces with $G$-action. 
This implies the desired equivalence
\[
\left ( \Omega Q B(\Sigma^{\varphi} ,\mathcal{P}_{\varphi},M  )\right )_{hG}  \simeq
\left ( \Omega Q B^{\varphi} M  \right )_{hG}. \qedhere
\]
\end{proof}
We end by providing a description of the canonical maps from \autoref{rem: canonical map to top twisted symmetric homology}.

\begin{prop}\label{topological positive symmetric homology and free loop spaces}
Let $X$ be connected space.
The canonical map
\[ 
\psi \colon \THH(\Sigma^\infty_+\Omega X)\longrightarrow \TS(\Sigma^\infty_+\Omega X)
\]
can be identified with the map 
\[ 
\Sigma_+^{\infty}\cL X \to \Sigma_+^{\infty}\cL QX \to \Sigma_+^{\infty}\Omega QX
\]
induced by the unit map $\eta\colon X\to QX$ and the counit $\epsilon \colon \THH(\Sigma^\infty_+\Omega QX)\to \Sigma_+^{\infty}\Omega QX$.
\end{prop}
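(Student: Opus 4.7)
The plan is to deduce the proposition from naturality of the comparison map $\psi$ together with the monad identity for $Q=\Omega^\infty\Sigma^\infty$. Since $\psi$ is induced by the functor $\widetilde{\lambda}\colon \Delta\bfC\to \Delta\bfS_+$ of \autoref{rem: canonical map to top twisted symmetric homology}, it is natural in the underlying $\bE_1$-algebra. Applying this naturality to the $\bE_1$-algebra map $\Sigma^\infty_+\Omega\eta\colon \Sigma^\infty_+\Omega X\to \Sigma^\infty_+\Omega QX$ induced by the unit $\eta\colon X\to QX$ yields a commutative square whose four vertices I would then identify explicitly.

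Using \autoref{thm: main computation} (in the cyclic case, with $X$ and the relevant component of $QX$ connected so that $\Omega QX$ and $\Omega Q(QX)$ are group-like and connected), the $\THH$-vertices become $\Sigma^\infty_+\cL X$ and $\Sigma^\infty_+\cL QX$, and by \autoref{cor: topological positive symmetric homology} the $\TS$-vertices become $\Sigma^\infty_+\Omega QX$ and $\Sigma^\infty_+\Omega Q(QX)$. Tracing the naturality of the formulas in \autoref{thm: main computation} and \autoref{cor: topological positive symmetric homology} through the algebra map $\Sigma^\infty_+\Omega\eta$, the left vertical becomes $\Sigma^\infty_+\cL\eta$ and the right vertical becomes $\Sigma^\infty_+\Omega Q\eta$.

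The key observation is that the monad identity $\mu\circ Q\eta=\id_Q$ makes $\Sigma^\infty_+\Omega\mu$ a retraction of the right vertical, where $\mu\colon QQ\to Q$ is the monad multiplication. Post-composing the commutative square with this retraction therefore expresses $\psi_X$ as
\[
\Sigma^\infty_+\cL X \xrightarrow{\Sigma^\infty_+\cL\eta} \Sigma^\infty_+\cL QX \xrightarrow{\psi_{QX}} \Sigma^\infty_+\Omega Q(QX) \xrightarrow{\Sigma^\infty_+\Omega\mu} \Sigma^\infty_+\Omega QX,
\]
so that the first arrow is exactly the map induced by $\eta$ as in the statement, and it remains only to identify the composite of the last two arrows with the counit $\epsilon$.

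The hard part will be the final identification. Since $QX$ is an infinite loop space, $\Omega QX$ is a group-like $\bE_\infty$-space, so $\Sigma^\infty_+\Omega QX$ is a commutative ring spectrum admitting a canonical augmentation $\epsilon\colon \THH(\Sigma^\infty_+\Omega QX)\to \Sigma^\infty_+\Omega QX$, which is the $\epsilon$ in the statement. To verify that $(\Sigma^\infty_+\Omega\mu)\circ \psi_{QX}$ agrees with this $\epsilon$, I would unwind both maps through the explicit bar constructions of \autoref{ex: covariant Delta G construction} and the zig-zags used in the proof of \autoref{thm: topological positive hyperoctahedral homology} (via the monadic bar construction and the recognition principle). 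This should ultimately reduce to a diagram chase comparing the map coming from $\widetilde{\lambda}$ on bar constructions with the $\bE_\infty$-multiplication on $\Omega QX$ induced by $\mu$, using naturality of all constructions in $\bE_1$-algebra maps.
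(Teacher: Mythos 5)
Your proposal follows essentially the same route as the paper's argument: both use naturality of $\psi$ with respect to $\eta\colon X\to QX$ to produce a commutative square, and both exploit a monadic retraction to reduce $\psi_X$ to the composite $\epsilon\circ\eta$. The final identification you flag as ``the hard part'' --- that $(\Sigma^\infty_+\Omega\mu)\circ\psi_{QX}$ agrees with the counit $\epsilon$ --- is precisely what the paper packages as the commutativity of the bottom square in its three-row diagram and justifies only briefly by appeal to ``the universal properties of $\THH$ and $Q$ applied to $QX$'', so the two proofs are at essentially the same level of detail.
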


\begin{proof}
By \autoref{cor: topological positive symmetric homology}, we can identify the canonical map 
\[ 
\THH(\SI\Omega X)\to \THH(\SI\Omega X)_{hS^{1}} \to \TS(\SI\Omega X) 
\]
with a map 
\[\psi \colon \Sigma_+^{\infty}\cL X \longrightarrow \Sigma_{+}^{\infty}\Omega QX \,. \]
Since it factors through the $S^1$-homotopy orbits, it corresponds to an $S^1$-equivariant map 
$\Sigma_+^{\infty}\cL X \to \Sigma_{+}^{\infty}\Omega QX$
where the target has trivial $S^1$-action. The map 
\[ 
\begin{tikzcd}
\Sigma_{+}^{\infty}\cL X \arrow{r}{\eta} &  \Sigma_{+}^{\infty}\cL QX\arrow{r}{\epsilon} &  \Sigma_{+}^{\infty}\Omega QX 
\end{tikzcd}
\]
is also a map of spectra with $S^1$-action where $\Sigma_{+}^{\infty}\Omega QX$ has trivial $S^1$-action. The result then follows from the commutative diagram of spectra with $S^1$-action
\[
\begin{tikzcd}
\Sigma_{+}^{\infty}\cL X  \arrow[swap]{d}{\eta} \arrow{r}{\phi} & \Sigma_{+}^{\infty}\Omega QX \arrow{d}{\eta} \arrow[bend left=60]{dd}{\id} \\ 
\Sigma_{+}^{\infty}\cL QX  \arrow[swap]{d}{\epsilon} \arrow{r}{\phi} & \Sigma_{+}^{\infty}\Omega  QQX \arrow{d}{\epsilon}  \\
\Sigma_{+}^{\infty}\Omega QX   \arrow{r}{\id} & \Sigma_{+}^{\infty}\Omega QX   
\end{tikzcd}
\]
where the top square commutes by applying the natural transformation 
\[\THH(\SI\Omega (-))\to \TS(\SI\Omega (-))\] 
to the map $\eta \colon X\to QX$ and the bottom square commutes by the universal properties of $\THH$ and $Q$ applied to $QX$. The right vertical composite map is the identity since it is exactly the map exhibiting $QX$ as a retract of $QQX$. 
\end{proof}

\begin{prop}\label{topological positive hyperoctahedral homology and free loop spaces}
Let $X$ be connected space with $C_2$-action.
The canonical map
\[ 
\psi \colon \THR(\SI\Omega^{\sigma} X)\longrightarrow \TO(\SI\Omega^{\sigma} X)
\]
can be identified with the map 
\[ 
\Sigma_+^{\infty}\cL X \to \Sigma_+^{\infty}\cL QX \to (\Sigma_+^{\infty}\cL QX)_{hC_2}\to (\Sigma_+^{\infty}\Omega QX)_{hC_2}
\]
induced by the unit map $\eta\colon X\to Q^{C_2}X$, the canonical map to the $C_2$-homotopy orbits, and the counit $\epsilon \colon \THR(\SI\Omega QX)_{hC_2}\to (\Sigma_+^{\infty}\Omega QX)_{hC_2}$.
\end{prop}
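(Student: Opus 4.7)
The proof strategy mirrors the proof of \autoref{topological positive symmetric homology and free loop spaces}, replacing the cyclic/symmetric pair by the dihedral/hyperoctahedral pair. First, I would identify source and target: by \autoref{thm: main computation} applied to $\Delta\bfD$, $\THR(\SI\Omega^{\sigma}X) \simeq \SI\cL X$ as a spectrum with $O(2)$-action; by \autoref{cor: topological positive hyperoctahedral homology}, $\TO(\SI\Omega^{\sigma}X) \simeq (\SI\Omega QX)_{hC_2}$. Under these identifications the canonical map $\psi$ from \autoref{rem: canonical map to top twisted symmetric homology} becomes a map of spectra $\SI\cL X \to (\SI\Omega QX)_{hC_2}$.

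Next, by \autoref{rem: canonical map to top twisted symmetric homology}, $\psi$ factors as $\THR(\SI\Omega^\sigma X) \to \mathrm{T}\bfD^+(\SI\Omega^\sigma X) \to \TO(\SI\Omega^\sigma X)$, hence through the $O(2)$-homotopy orbits of the source. Moreover, because $|\bfH_{\sbt}|$ is contractible (\autoref{contractible}), the target $\TO(\SI\Omega^\sigma X)$ carries trivial $O(2)$-action (the $C_2$ remaining in $(\SI\Omega QX)_{hC_2}$ is the parity copy acting on $X$, not a residual subgroup of $O(2)$). Therefore $\psi$ corresponds to an $O(2)$-equivariant map to a spectrum with trivial $O(2)$-action. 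The candidate composite
\[
\SI\cL X \xrightarrow{\eta} \SI\cL QX \longrightarrow (\SI\cL QX)_{hC_2} \xrightarrow{\epsilon} (\SI\Omega QX)_{hC_2}
\]
has the same property: $\eta$ is $O(2)$-equivariant since $X\to QX$ is $C_2$-equivariant; the projection to $C_2$-orbits is $O(2)$-equivariant because the $C_2$-action it collapses is that coming from $X$, which commutes with the $O(2)$-action arising from the dihedral structure; and $\epsilon$ is the $O(2)$-equivariant augmentation available because $\Omega QX$ is essentially commutative (an $\bE_\infty$-algebra), producing trivial $O(2)$-action on the target.

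Finally, to identify the two $O(2)$-equivariant maps to a target with trivial $O(2)$-action, I would assemble the commutative diagram of spectra with $O(2)$-action
\[
\begin{tikzcd}
\SI\cL X  \arrow[swap]{d}{\eta} \arrow{r}{\psi} & (\SI\Omega QX)_{hC_2} \arrow{d}{\eta} \arrow[bend left=70]{dd}{\id} \\
\SI\cL QX  \arrow[swap]{d}{\epsilon} \arrow{r}{\psi} & (\SI\Omega  QQX)_{hC_2} \arrow{d}{\epsilon}  \\
(\SI\Omega QX)_{hC_2}   \arrow{r}{\id} & (\SI\Omega QX)_{hC_2}
\end{tikzcd}
\]
exactly as in the symmetric case: the top square commutes by naturality of $\psi$ with respect to the $C_2$-equivariant unit $\eta\colon X\to QX$, which induces a map of $\bE_1$-spaces with twisted $C_2$-action $\Omega^\sigma X\to \Omega^\sigma QX$; the bottom square commutes by the universal properties of $\THR$ and $Q$ applied to $QX$; and the right vertical composite is the identity since $QX$ is a retract of $QQX$. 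The main obstacle, as in the symmetric case, will be carefully tracking $O(2)$-equivariance through all the identifications, in particular verifying that the induced map on $\THR$ from $\eta\colon\Omega^\sigma X\to \Omega^\sigma QX$ is correctly $O(2)$-equivariant and that $\epsilon$ yields trivial $O(2)$-action on its target after composing with $C_2$-homotopy orbits.
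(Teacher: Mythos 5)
Your proposal is correct and takes essentially the same approach as the paper: identify source and target via \autoref{thm: main computation} and \autoref{cor: topological positive hyperoctahedral homology}, observe that both $\psi$ and the candidate composite are $O(2)$-equivariant maps to a target with trivial $O(2)$-action, and then compare them via the same three-square commutative diagram (naturality of $\psi$ along $\eta$, universal properties of $\THR$ and $Q$, and $QX$ a retract of $QQX$). Your extra remarks on tracking $O(2)$-equivariance are a reasonable expansion of what the paper leaves implicit.
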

 
\begin{proof}
By \autoref{cor: topological positive symmetric homology}, we can identify the canonical map 
\[ 
\THR(\SI\Omega^{\sigma} X)\to \THR(\SI\Omega^{\sigma} X)_{hO(2)} \to \TO(\SI\Omega^{\sigma} X) 
\]
with a map 
\[\psi \colon \Sigma_+^{\infty}\cL X \longrightarrow (\Sigma_{+}^{\infty}\Omega QX)_{hC_2} \,. \]
Since it factors through the $O(2)$-homotopy orbits, it corresponds to an $O(2)$-equivariant map 
$\Sigma_+^{\infty}\cL X \to (\Sigma_{+}^{\infty}\Omega QX)_{hC_2}$
where the target has trivial $O(2)$-action. The map 
\[ 
\begin{tikzcd}
\Sigma_{+}^{\infty}\cL X \arrow{r}{\eta} &  \Sigma_{+}^{\infty}\cL QX\to (\Sigma_{+}^{\infty}\cL QX)_{hC_2}\arrow{r}{\epsilon} &  (\Sigma_{+}^{\infty}\Omega QX)_{hC_2} 
\end{tikzcd}
\]
is also a map of spectra with $O(2)$-action where $(\Sigma_{+}^{\infty}\Omega QX)_{hC_2}$ has trivial $O(2)$-action. The result then follows from the commutative diagram of spectra with $O(2)$-action
\[
\begin{tikzcd}
\Sigma_{+}^{\infty}\cL X  \arrow[swap]{d}{\eta} \arrow{r}{\phi} & (\Sigma_{+}^{\infty}\Omega QX)_{hC_2} \arrow{d}{\eta} \arrow[bend left=70]{dd}{\id} \\ 
\Sigma_{+}^{\infty}\cL QX  \arrow[swap]{d}{\epsilon} \arrow{r}{\phi} & (\Sigma_{+}^{\infty}\Omega  QQX)_{hC_2} \arrow{d}{\epsilon}  \\
\Sigma_{+}^{\infty}\Omega QX  \arrow{r}{} & (\Sigma_{+}^{\infty}\Omega QX)_{hC_2}  
\end{tikzcd}
\]
where the top square commutes by applying the natural transformation 
\[\THR(\SI\Omega (-))\to \TO(\SI\Omega(-))\] 
to the map $\eta \colon X\to QX$, the bottom square commutes by the universal properties of $\THR$ (cf.~\cite[Theorem~1.2]{AKGH21}) applied to the canonical map $\SI\Omega^{\sigma} QX\to \SI(\Omega^{\sigma} QX)_{hC_2}$ and the universal property of $Q$ applied to the infinite loop space $QX$. The right vertical composite map is the identity since it is exactly the map exhibiting $QX$ as a retract of $QQX$.  
\end{proof}

\begin{rem}
Let $q\colon C_4\to C_2$ be the quotient homomorphism.
Let $\mathrm{T}q$ be the topological twisted symmetric homology associated to $q$ as in \autoref{twisted symmetric example}.
In a similar way as the last results, we expect that given a connected space $X$ with a $C_4$-action, the induced map $\THQ(\Sigma^\infty_+ \Omega^{q} X)\to \mathrm{T}q(\Sigma^\infty_+ \Omega^{q}X)$ can be identified as a map
\[
\Sigma^\infty_+\mathscr{L}^\tau X \longrightarrow \Sigma^\infty_+ \Omega Q X_{hC_4}
\]
that factors through the $\Pin(2)$-homotopy orbits of $\Sigma^\infty_+\mathscr{L}^\tau X$.
\end{rem}

  \bibliographystyle{alpha}
  \bibliography{akmp}
\end{document}